\newcommand{\putinbox}[1]{\noindent\fbox{\parbox{\textwidth}{#1}}}
\newcommand{\rightQ}[2]{\left.\raisebox{.2em}{$#1$}\middle/\raisebox{-.2em}{$#2$}\right.}
\newcommand{\leftQ}[2]{\left.\raisebox{-.2em}{$#2$}\middle\backslash\raisebox{.2em}{$#1$}\right.}
\newcommand{\mc}{\mathcal}
\newcommand{\co}{\colon\thinspace}
\newtheorem{thmA}{Theorem}
\newtheorem {theorem}{Theorem}[section]
\newtheorem {lemma} [theorem] {Lemma}
\newtheorem {proposition} [theorem] {Proposition}
\newtheorem {corollary} [theorem] {Corollary}
\newtheorem {claim}{Claim}
\newtheorem* {claim*}{Claim}
\newtheorem* {subclaim*}{Subclaim}
\newtheorem {case}{Case}
\newtheorem {subcase}{Subcase}[case]
\newtheorem {notation} [theorem] {Notation}
\newtheorem {convention} [theorem] {Convention}
\newtheorem {assumption} [theorem] {Assumption}
\newtheorem {hypothesis} [theorem] {Hypothesis}
\theoremstyle{definition}
\newtheorem{remark}[theorem]{Remark}
\newtheorem {definition} [theorem] {Definition}
\newtheorem {example} [theorem] {Example}
\def\Z {\mathbb Z}
\def\R {\mathbb R}
\def\Hyp {\mathbb H}
\def\U {\mathfrak U}
\def\Stab{\mathrm{Stab}}
\def\CAT {\ensuremath{\operatorname{CAT}}}
\def\mc {\mathcal}
\def\onto {\, {\twoheadrightarrow}\, }
\newcommand{\Fone}{F_{\ref{lem:closetohyperplane}}}
\newcommand{\Ftwo}{F_{\ref{lem:attractivecarrier}}}
\def\Rgen {R}  
\def\Rfin {R_{\star}}
\newcommand{\defkappa}{\Fone(Q+1) + 2 Q + C + \frac{\sqrt{n}}{2} + 2}
\newcommand{\defD} {\max\left\{2\sqrt{n}\kappa+\sqrt{n},\ \Ftwo(0) + 6\eta + 2f(\delta) + 6\delta,\ 2C+1 \right\} + 1}
\newcommand{\deftau}{\Fone(Q)+\left(3+ \frac{1}{\sin(\frac{\theta_n}{2})}\right)Q + 4\eta+f(\delta) + 2\delta + \frac{\sqrt{n}}{2}}
\newcommand{\defomega}{2 \Ftwo(0) + \left(8+\frac{2}{\sin(\frac{\theta_n}{2})}\right)Q + 20\eta + 8f(\delta) + 16\delta + 2 \sqrt{n}}
\newcommand{\llangle}{\langle\negthinspace\langle}
\newcommand{\rrangle}{\rangle\negthinspace\rangle}
\newcommand{\acts}{\curvearrowright}
\newcommand{\hatx}{\widehat X}
\newcommand{\defeq}{:=}
\newcommand{\diam}{\operatorname{diam}}
\newcommand{\gprod}[3]{({#1}\,|\,{#2})_{#3}}
\begin{document}

\title{Specializing cubulated relatively hyperbolic groups}

\author[D. Groves]{Daniel Groves}
\address{Department of Mathematics, Statistics, and Computer Science,
University of Illinois at Chicago,
322 Science and Engineering Offices (M/C 249),
851 S. Morgan St.,
Chicago, IL 60607-7045}
\email{groves@math.uic.edu}

\author[J.F. Manning]{Jason Fox Manning}
\address{Department of Mathematics, 310 Malott Hall, Cornell University, Ithaca, NY 14853}
\email{jfmanning@math.cornell.edu}

\thanks{The first author was supported in part by NSF Grants DMS-1507067 and DMS-1904913.  The second author was in residence at ICERM for part of this work during the Illustrating Mathematics conference, supported by NSF grant DMS-1439786.  For a further part of this work he was visiting Cambridge University and thanks Trinity College and DPMMS for their hospitality.  Thanks also to the Simons Foundation for support under grant (\#524176, JFM). We thank the referees for helpful comments and corrections which improved this paper}

\begin{abstract}
  In \cite{VH}, Agol proved the Virtual Haken and Virtual Fibering Conjectures by confirming a conjecture of Wise:  Every cubulated hyperbolic group is virtually special.  We extend this result to cocompactly cubulated relatively hyperbolic groups with minimal assumptions on the parabolic subgroups.    Our proof proceeds by first recubulating to obtain an improper action with controlled stabilizers (a \emph{weakly relatively geometric} action), and then Dehn filling to obtain many cubulated hyperbolic quotients.  We apply our results to prove the Relative Cannon Conjecture for certain cubulated or partially cubulated relatively hyperbolic groups.

  One of our main results (Theorem~\ref{t:RH Agol}) recovers via different methods a theorem of Oreg\'on-Reyes \cite{OregonReyes}.  
\end{abstract}

\date{\today}

\maketitle
\section{Introduction}
If $G$ is a group with a proper, cellular cocompact action on a \CAT$(0)$ cube complex $X$, we say $G$ is \emph{cubulated by $X$}.
  (Some authors drop the cocompactness assumption.)  Actions on cube complexes typically arise via a construction of Sageev \cite{Sageev} from collections of \emph{codimension-one} subgroups.

A cube complex is \emph{special} if it admits a locally isometric immersion to the Salvetti complex of some right-angled Artin group (see \cite{HW08}, where this notion was introduced, and called \emph{A-special}). 
The group $G$ is \emph{specially cubulated by $X$} if the action can be chosen to be free, with quotient a special cube complex.  (We may also say that $G$ acts \emph{co-specially} on $X$.)  Such a group $G$ embeds into a finitely generated right-angled Artin group, so in particular it is linear over $\Z$ and hence residually finite.  It also inherits many useful separability properties from the right-angled Artin group.  

The group $G$ is \emph{virtually specially cubulated by $X$} if there is a finite index subgroup $G_0$ of $G$ which acts freely on $X$ with quotient a special, compact cube complex.  The $\Z$--linearity and nice separability properties of $G_0$ are passed on to $G$, so this is nearly as useful a property as being specially cubulated.  Indeed, the Virtual Haken and Virtual Fibering Theorems of Agol \cite{VH} are proved by showing that any hyperbolic group which is cubulated is virtually specially cubulated.

We note that not all cubulated groups are virtually specially cubulated.  Indeed Wise showed there are infinite cubulated groups with no finite index subgroups \cite{WiseCSC}; Burger and Mozes showed there are even infinite simple cubulated groups \cite{BurgerMozes97,BurgerMozes00}.  By taking free products of such groups, one can produce cubulated \emph{relatively} hyperbolic groups which are not residually finite and hence not virtually specially cubulated.  This suggests that to extend Agol's result about cubulated hyperbolic groups to relatively hyperbolic groups it is necessary to make some assumptions about the parabolic subgroups.

\subsection{Main results}
  The following appear to us to be the minimal possible assumptions on a relatively hyperbolic pair $(G,\mc{P})$ acting geometrically on a \CAT$(0)$ cube complex $X$, which might allow us to conclude that $G$ acts virtually co-specially on $X$.

\begin{assumption}\label{ass:weak}
  For each hyperplane stabilizer $S$ and each $P\in \mc{P}$ the intersection $S\cap P$ is separable in $P$. 
  \end{assumption}

\begin{assumption}  \label{ass:DCS}
 For each pair of hyperplane stabilizers  $S_1, S_2$ and each $P \in \mc{P}$, the double coset $\left( S_1 \cap P \right) \left( S_2 \cap P \right)$ is separable in $P$.
\end{assumption}

The following is our main result, which recovers via different methods a theorem of Oreg\'on-Reyes \cite{OregonReyes} (see Section~\ref{ss:otherwork} for the equivalence of our theorems).

\begin{restatable}{thmA}{RHAgol} \label{t:RH Agol}\cite[Theorem 1.2]{OregonReyes}
 Suppose that $(G,\mc{P})$ is relatively hyperbolic and that $G$ acts properly and cocompactly on a \CAT$(0)$ cube complex $X$ so Assumptions~\ref{ass:weak} and~\ref{ass:DCS} are satisfied.
 Then $G$ acts virtually co-specially on $X$. 
\end{restatable}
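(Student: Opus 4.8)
The plan is to reduce the problem to Agol's theorem \cite{VH} for cubulated hyperbolic groups: after a recubulation we arrange that suitable relatively hyperbolic Dehn fillings of $(G,\mc P)$ produce \emph{hyperbolic, cubulated} quotients, so that Agol's theorem applies; feeding this information back through the fillings then shows that $(G,\mc P)$ has strong enough separability properties to run the Haglund--Wise machinery on the original action $G\acts X$. The first, preliminary, point is that the hyperplane stabilizers of $X$ are relatively quasiconvex in $(G,\mc P)$: each hyperplane is a convex subcomplex on which its stabilizer acts cocompactly, so these stabilizers are undistorted in $G$, and since $G$ is hyperbolic relative to $\mc P$ and quasi-isometric to $X$ this yields relative quasiconvexity; the same holds for intersections of pairs of hyperplane stabilizers. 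Note also that, by Assumptions~\ref{ass:weak} and~\ref{ass:DCS}, the peripheral pieces $S\cap P$ of hyperplane stabilizers, and the double cosets $(S_1\cap P)(S_2\cap P)$, are separable in $P$.

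The technical heart is a recubulation producing a \emph{weakly relatively geometric} action. Starting from the codimension-one subgroups given by the hyperplane stabilizers of $X$, I would enlarge the collection by adjoining, for each $P\in\mc P$, walls constructed inside the peripheral cosets, using the finite-index subgroups of $P$ furnished by Assumptions~\ref{ass:weak} and~\ref{ass:DCS} to separate the relevant pieces $S\cap P$ and double cosets $(S_1\cap P)(S_2\cap P)$. These assumptions are exactly what is needed to make the new peripheral walls interact with one another, and with the old hyperplanes, in a controlled ``finite-type'' way along each peripheral coset. Applying Sageev's construction to the enlarged collection should yield a new \CAT$(0)$ cube complex $Y$ with a cocompact (in general improper) $G$--action that is weakly relatively geometric: every cell stabilizer is either finite or a full relatively quasiconvex subgroup conjugate into some $P\in\mc P$, and the peripheral cell stabilizers fall into finitely many conjugacy classes. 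I expect this to be the main obstacle: controlling the combinatorics of the peripheral walls well enough that the action on $Y$ is weakly relatively geometric and that the cube complex will descend under Dehn filling is precisely where Assumption~\ref{ass:DCS} is used essentially.

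With the weakly relatively geometric action $G\acts Y$ in hand, I would invoke the theory of cubical Dehn filling: for all sufficiently deep filling tuples $(N_P\trianglelefteq P)_{P\in\mc P}$ — chosen compatibly with the finitely many peripheral cell stabilizers of $Y$ via the separability noted above — the quotient $\bar G = G/\llangle\,\bigcup_P N_P\,\rrangle$ is hyperbolic, and $Y$ gives rise to a \CAT$(0)$ cube complex $\bar Y$ carrying a proper cocompact $\bar G$--action, with the hyperplane stabilizers of $Y$ mapping to quasiconvex subgroups of $\bar G$. By Agol's theorem, $\bar G$ is virtually special, and in particular its quasiconvex subgroups are separable. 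Running the Dehn-filling argument in reverse — for a relatively quasiconvex $H\le G$ with separable peripheral pieces and an element $g\notin H$, choosing a deep filling in which $H$ survives quasiconvex and $g$ has image outside the image of $H$, and then using that $\bar G$ is QCERF — shows that $(G,\mc P)$ is relatively QCERF: every relatively quasiconvex subgroup of $G$ whose intersections with the peripherals are separable is itself separable, and the analogous statement holds for the relevant double cosets of such subgroups.

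Finally I would return to $G\acts X$. By the preliminary step, its hyperplane stabilizers $S$, their pairwise intersections $S_1\cap S_2$, and the associated contact double cosets are relatively quasiconvex with separable peripheral pieces (Assumptions~\ref{ass:weak} and~\ref{ass:DCS}), hence separable in $G$ by the previous step. The standard consequences of hyperplane-subgroup separability and double-coset separability then allow us to pass to a finite-index subgroup of $G$ whose action on $X$ has no self-intersecting, one-sided, directly self-osculating, or inter-osculating hyperplanes; by the Haglund--Wise criterion, this finite-index subgroup acts freely co-specially on $X$, which completes the proof.
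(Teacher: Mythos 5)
Your overall architecture matches the paper's (recubulate to a weakly relatively geometric action, Dehn fill to virtually special hyperbolic quotients, pull separability back to $G$, finish with the Haglund--Wise criterion \cite[Corollary 4.3]{HaglundWise10}), but there is a concrete gap in the Dehn filling step. You claim that for deep fillings the quotient $\overline{G}$ acts \emph{properly} cocompactly on a \CAT$(0)$ cube complex $\overline{Y}$, so that Agol's theorem \cite{VH} applies directly. This fails: the cell stabilizers of the recubulated complex are infinite (they are amalgams/graphs of full parabolic subgroups with finite edge groups), and under the fillings one actually uses (with $P/N_P$ infinite hyperbolic and virtually special) their images in $\overline{G}$ remain infinite, so the $\overline{G}$--action on $\overline{Y}$ is cocompact but improper. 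Even if you instead fill with finite-index kernels $N_P\le P$, the stabilizer images are graphs of finite groups with finite edge groups, hence possibly infinite virtually free, and properness still fails; moreover the fact that $\overline{Y}=\leftQ{Y}{K}$ is again \CAT$(0)$ is itself a nontrivial filling theorem. The paper's proof of Theorem~\ref{t:quotient VS} must therefore invoke \cite[Theorem D]{Omnibus} on hyperbolic groups acting \emph{improperly} with quasi-convex virtually special stabilizers (together with \cite[Corollary 6.5]{Omnibus} and \cite{osin:peripheral,agm}), rather than Agol's theorem applied to a proper cubulation of the quotient. Without this improper-action input (or some substitute), your step ``by Agol's theorem, $\overline{G}$ is virtually special'' does not go through.

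Two further points of divergence are worth flagging. First, the recubulation, which you correctly identify as the technical heart, is only sketched, and your proposed mechanism (adjoining new walls inside peripheral cosets, with Assumption~\ref{ass:DCS} used essentially there) differs from what is actually done and attributes the wrong role to Assumption~\ref{ass:DCS}: the paper's recubulation uses only Assumption~\ref{ass:weak}, building for each hyperplane $H$ a ``broadcaster'' $B=\langle\Stab(H),B_1,\dots,B_k\rangle$ (an amalgam with finite-index subgroups of stabilizers of the peripheral complexes meeting $H$, via Mart\'inez-Pedroza \cite{MP09}) and replacing the hyperplane wall by the wall determined by the scattering $B\cdot H$; Assumption~\ref{ass:DCS} enters only at the very end, to get separability of the products $S_1S_2$ needed for the Haglund--Wise criterion. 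Second, pushing double coset separability through Dehn fillings is not automatic: the paper needs a genuine generalization of \cite[Proposition 6.2]{Hwide} (its Appendix~\ref{app:DCS}, leading to Theorem~\ref{t:big fill DC sep}), which your phrase ``the analogous statement holds for the relevant double cosets'' elides.
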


\begin{remark}\label{rem:minimal}
  Suppose that $(G,\mc{P})$ is relatively hyperbolic and that $G$ acts properly and cocompactly on a \CAT$(0)$ cube complex $X$.  It follows from Sageev--Wise \cite[Theorem 1.1]{SageevWise15} that for each $P \in \mc{P}$ there exists a convex $P$--cocompact sub-complex $Y_P$ in $X$.  It follows from Haglund--Wise \cite[Corollary 4.3]{HaglundWise10} that each $P$ acts virtually co-specially on $Y_P$ when $(G,\mc{P})$ satisfies Assumptions~\ref{ass:weak} and~\ref{ass:DCS}.  In fact the converse is also true using work from \cite[Appendix A]{OregonReyes}; see Subsection~\ref{ss:otherwork} for details.
  If $G$ acts virtually co-specially on $X$, then each $P$ also acts virtually co-specially on $Y_P$.  This justifies our belief that Assumptions~\ref{ass:weak} and~\ref{ass:DCS} are minimal.
\end{remark}

\begin{remark}
  In case $G$ is cubulated and the elements of $\mc{P}$ are virtually abelian, Assumptions~\ref{ass:weak} and~\ref{ass:DCS} always hold.  This applies in particular to cubulated fundamental groups of finite volume hyperbolic manifolds and orbifolds.
\end{remark}

We use the special case where elements of $\mc{P}$ are abelian to prove the Relative Cannon Conjecture for groups which are cubulated (Corollary~\ref{cor:cubeRC}) and for those which admit a weakly relatively geometric action on a \CAT$(0)$ cube complex (Theorem~\ref{t:wRG RC}).  This second result strengthens one of Einstein--Groves \cite{EinsteinG}.  See Definition~\ref{def:wrg} for the definition of a weakly relatively geometric action.

  While both assumptions hold whenever $G$ acts virtually co-specially, it is unclear whether Assumption~\ref{ass:DCS} is really necessary (see \cite[Problem 13.38]{Wise:ICM}).  In case the parabolics are hyperbolic, Assumption~\ref{ass:DCS} follows from Assumption~\ref{ass:weak}, by work of Minasyan \cite{minasyan:subsetgferf}.

  Much of our analysis does not depend on Assumption~\ref{ass:DCS}, and in particular under only Assumption~\ref{ass:weak} we are able to prove that various relatively quasi-convex subgroups are separable.  In particular Theorem~\ref{t:big fill separable} and Theorem~\ref{t:recubulate} have the following consequence.  We do not see how to obtain this result via Oreg\'on-Reyes' methods.
  \begin{thmA}\label{t:Hyp sep}
    Suppose $(G,\mc{P})$ is relatively hyperbolic and that $G$ acts properly cocompactly on a \CAT$(0)$ cube complex so that Assumption~\ref{ass:weak} is satisfied.  Then every hyperplane stabilizer is separable in $G$. 
  \end{thmA}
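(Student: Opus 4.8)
The plan is to reduce separability of a hyperplane stabilizer in $G$ to separability of a quasiconvex subgroup in a hyperbolic group, via relatively hyperbolic Dehn filling, and then to invoke Agol's theorem \cite{VH}. Concretely the argument should be an application of Theorems~\ref{t:recubulate} and~\ref{t:big fill separable}, with Assumption~\ref{ass:weak} entering only to control the peripheral directions of the fillings.

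First I would observe that a hyperplane stabilizer $S$ is relatively quasiconvex in $(G,\mc{P})$: the carrier of the hyperplane is a convex subcomplex of $X$ on which $S$ acts cocompactly, so $S$ is convex-cocompact in $X$, and since $G$ acts geometrically on $X$ while $(G,\mc{P})$ is relatively hyperbolic, such subgroups are relatively quasiconvex. Because the given action of $G$ on $X$ need not be compatible with Dehn filling of the peripheral structure, the next step is to \emph{recubulate}: by Theorem~\ref{t:recubulate} we may replace $X$ by a \CAT$(0)$ cube complex on which $G$ acts \emph{weakly relatively geometrically}. The subgroup $S$ remains relatively quasiconvex (this is intrinsic to the pair $(G,\mc{P})$), and the separability hypotheses needed downstream — in particular that $S\cap P$ is separable in each $P\in\mc{P}$ — persist, since they concern $S$ and $\mc{P}$, which are unchanged.

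The heart of the matter is Theorem~\ref{t:big fill separable}. Given $g\in G\setminus S$, one chooses a sufficiently deep peripheral Dehn filling $G\onto\bar G$; the key point, and the place Assumption~\ref{ass:weak} is used, is that the filling kernels inside the peripheral subgroups can be chosen to respect the separability of $S\cap P$ in $P$, so that after filling the image $\bar S$ of $S$ still does not contain the image $\bar g$ of $g$. For a suitably generic such filling one gets simultaneously: $\bar G$ is hyperbolic; the filling is injective on $S$ and $\bar S$ is quasiconvex in $\bar G$; and, because the action was arranged to be weakly relatively geometric, the work of Einstein--Groves \cite{EinsteinG} guarantees that $\bar G$ also acts properly and cocompactly on a \CAT$(0)$ cube complex. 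Thus $\bar G$ is a cubulated hyperbolic group, hence virtually special by \cite{VH}, and therefore every quasiconvex subgroup of $\bar G$ — in particular $\bar S$ — is separable.

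Finally I would assemble the pieces: since $\bar S$ is separable in $\bar G$ and $\bar g\notin\bar S$, there is a finite-index subgroup $\bar H\le\bar G$ with $\bar S\le\bar H$ and $\bar g\notin\bar H$; its full preimage $H$ in $G$ is finite-index, contains $S$ (which maps into $\bar S\le\bar H$), and omits $g$ (since $\bar g\notin\bar H$). As $g\in G\setminus S$ was arbitrary, $S$ is separable in $G$. The main obstacle is engineering a single Dehn filling that does everything at once — makes $\bar G$ hyperbolic \emph{and} cubulated, is injective on $S$ with $\bar S$ quasiconvex, \emph{and} separates $g$ from $S$ — where the last requirement is precisely what forces Assumption~\ref{ass:weak}, since it amounts to choosing the peripheral fillings so as to separate finitely many prescribed cosets from $S\cap P$ inside each $P$.
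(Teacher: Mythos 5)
Your overall skeleton is the paper's: show $S$ is relatively quasi-convex (Lemma~\ref{lem:stabH_RQC}), recubulate via Theorem~\ref{t:recubulate}, then apply the Dehn-filling separability argument of Theorem~\ref{t:big fill separable} and pull a separating finite quotient back to $G$. But there is a genuine gap at the step where you conclude that the filled group $\bar G$ is virtually special. You assert that, because the action is weakly relatively geometric, Einstein--Groves \cite{EinsteinG} gives a \emph{proper} cocompact action of $\bar G$ on a \CAT$(0)$ cube complex, so Agol's theorem \cite{VH} applies. This fails: in a weakly relatively geometric action the cell stabilizers are allowed to be infinite (full parabolics, and more generally amalgams of parabolics and finite groups over finite groups -- see Definition~\ref{def:wrg} and Theorem~\ref{thm:stabilizersaregraphs}), and after a Dehn filling their images in $\bar G$ are typically still infinite (images of $P$ are $P/N_P$, and images of the amalgamated stabilizers are graphs of such quotients with finite edge groups, hence infinite even when the $P/N_P$ are finite). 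So the induced action of $\bar G$ on $\leftQ{\widehat X}{K}$ is cocompact but \emph{not} proper, $\bar G$ is not thereby ``cubulated,'' and \cite{VH} cannot be invoked directly; moreover \cite{EinsteinG} concerns relatively geometric actions (finite or full parabolic stabilizers), which is not what Theorem~\ref{t:recubulate} provides, and even in that setting the quotient action is improper. The paper's replacement for this step is Theorem~\ref{t:quotient VS}: combining the filling results of \cite{osin:peripheral,agm,Omnibus} with \cite[Theorem D]{Omnibus} (virtual specialness of hyperbolic groups acting cocompactly, possibly improperly, with quasi-convex virtually special cell stabilizers) to produce virtually special hyperbolic fillings, after which quasi-convex subgroups of $\bar G$ are separable by Theorem~\ref{t:vs qcerf}. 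Without this ingredient your chain ``$\bar G$ cubulated hyperbolic $\Rightarrow$ virtually special $\Rightarrow$ $\bar S$ separable'' breaks at the first link.

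Two smaller inaccuracies: the filling need not be (and is not claimed to be) injective on $S$ -- what is needed, and what \cite[Propositions 4.5, 4.7]{Hwide} provide for long, $S$--wide fillings, is that the image of $S$ is quasi-convex and misses the image of $g$; and Assumption~\ref{ass:weak} does not enter ``only to control the peripheral directions of the fillings'' -- it is also the hypothesis powering the recubulation itself, since the broadcasters of Proposition~\ref{prop:amalgam} are built by choosing finite-index subgroups of $\Stab(Y_i)$ containing $\Stab(H)\cap \Stab(Y_i)$ and avoiding prescribed finite sets, which is exactly where separability of $S\cap P$ in $P$ is used.
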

More generally Theorem~\ref{t:big fill separable} states that relatively quasi-convex subgroups are separable whenever their intersections with parabolics are separable in those parabolics.

\subsection{Relationship to work of Agol and Oreg\'on-Reyes}\label{ss:otherwork}
Formally, Theorem~\ref{t:RH Agol} generalizes Agol's main theorem in~\cite{VH}.  However, we use that theorem in an essential way to prove~\cite[Theorem D]{Omnibus}, which is used in an essential way in our proof of Theorem~\ref{t:RH Agol}.  In particular we do not give a new proof here of Agol's theorem.

We stated above that Oreg\'on-Reyes' main theorem in~\cite{OregonReyes} is equivalent to ours.  At first glance his hypotheses seem slightly weaker, but they are equivalent by a result proved in~\cite[Appendix A]{OregonReyes}, as we explain in this subsection.  We do not use this equivalence (Proposition~\ref{prop:equivalence}) anywhere in our paper, but provide it for the convenience of the reader.

Oreg\'on-Reyes uses the following terminology.
\begin{definition}
  Let $G$ be a group, acting properly and cocompactly on a cube complex $X$.  Then $(G,X)$ is a \emph{cubulated group}.  If $H<G$ acts properly cocompactly on some convex subcomplex $Y\subset X$ then $H$ is a \emph{convex subgroup}.
  
  The cubulated group $(G,X)$ is \emph{special} if $G$ acts freely and the quotient $\leftQ{X}{G}$ is special.  It is \emph{virtually special} if there is a finite index subgroup $G'<G$ so that $(G',X)$ is special.  
\end{definition}

The conclusions of~\cite[Theorem 1.2]{OregonReyes} and our Theorem~\ref{t:RH Agol} are the same.  However as we have mentioned the hypotheses look different.  Here is Oreg\'on-Reyes' hypothesis.
\begin{hypothesis}\label{hyp:or}
  $(G,X)$ is a cubulated group, and $(G,\mc{P})$ is relatively hyperbolic, where each $P\in \mc{P}$ is a convex subgroup of $G$, preserving a convex subcomplex $Y_P$ so that $(P,Y_P)$ is virtually special.
\end{hypothesis}
Our hypothesis can be stated as follows.
\begin{hypothesis}\label{hyp:gm}
  $(G,X)$ is a cubulated group, and $(G,\mc{P})$ is relatively hyperbolic, satisfying Assumptions~\ref{ass:weak} and~\ref{ass:DCS}.
\end{hypothesis}

\begin{proposition}\label{prop:equivalence}
  Hypotheses~\ref{hyp:or} and~\ref{hyp:gm} are equivalent.
\end{proposition}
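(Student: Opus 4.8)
Both hypotheses include the assertions that $(G,X)$ is a cubulated group and that $(G,\mc P)$ is relatively hyperbolic, so the plan is to fix these and prove that, for each $P\in\mc P$, the condition ``$P$ is a convex subgroup of $G$ preserving a convex subcomplex $Y_P$ with $(P,Y_P)$ virtually special'' is equivalent to Assumptions~\ref{ass:weak} and~\ref{ass:DCS}. One direction is essentially contained in Remark~\ref{rem:minimal}; the other uses the machinery of \cite[Appendix A]{OregonReyes}.

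For the implication Hypothesis~\ref{hyp:gm}$\Rightarrow$Hypothesis~\ref{hyp:or} I would argue as follows. By Sageev--Wise \cite[Theorem 1.1]{SageevWise15} each $P$ preserves a convex $P$--cocompact subcomplex $Y_P\subseteq X$, so $P$ is a convex subgroup. Since $Y_P$ is convex it is full, so its hyperplanes are exactly the restrictions of the hyperplanes of $X$ crossing $Y_P$; moreover no two distinct hyperplanes of $X$ restrict to the same hyperplane of $Y_P$, so the stabilizer in $P$ of a restricted hyperplane equals $S\cap P$, where $S$ is the $G$--stabilizer of the corresponding hyperplane of $X$. Thus the hyperplane stabilizers of $P\acts Y_P$ lie among the subgroups $S\cap P$, which are separable in $P$ by Assumption~\ref{ass:weak}, and whose pairwise products are separable in $P$ by Assumption~\ref{ass:DCS}; Haglund--Wise's criterion for virtual specialness \cite[Corollary 4.3]{HaglundWise10} then yields that $(P,Y_P)$ is virtually special.

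For the reverse implication, fix $P\in\mc P$ and hyperplane stabilizers $S,S_1,S_2$ of $(G,X)$; the task is to show that $S\cap P$ and the double coset $(S_1\cap P)(S_2\cap P)$ are separable in $P$. The strategy is to realise these as convex--cocompact data for a proper cocompact cube complex action of $P$, and then to invoke the separability of convex--cocompact subgroups and of their double cosets in a virtually compact special group. Since $S$ acts cocompactly on the carrier of its hyperplane and $P$ acts cocompactly on $Y_P$, both are convex subgroups of $(G,X)$, and one would try to understand $S\cap P$ via the intersection of, or the bridge between, the two convex subcomplexes.

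The main obstacle is that the $Y_P$ supplied by Hypothesis~\ref{hyp:or} is an arbitrary convex $P$--cocompact subcomplex, so a given hyperplane $\hat h$ need not cross $Y_P$, nor any of its $P$--translates; then $S\cap P$ is not visibly a hyperplane stabilizer, nor even obviously a convex--cocompact subgroup, of $P\acts Y_P$. Reconciling the geometry of $Y_P$ with the carriers of arbitrary hyperplane stabilizers, and using relative hyperbolicity to control the parabolic intersections $S\cap P$, is exactly what is carried out in \cite[Appendix A]{OregonReyes}, and I would quote that work rather than reprove it here.
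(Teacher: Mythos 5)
Your argument is correct and follows essentially the same route as the paper: the direction from Assumptions~\ref{ass:weak} and~\ref{ass:DCS} to Hypothesis~\ref{hyp:or} via Sageev--Wise cores and the Haglund--Wise criterion \cite[Corollary 4.3]{HaglundWise10}, and the converse by deferring to \cite[Appendix A]{OregonReyes}. The paper is just slightly more specific in the converse, quoting \cite[A.15]{OregonReyes} to see that $S\cap P$ is a convex subgroup of $(P,Y_P)$ (precisely the obstacle you identify) and \cite[A.1]{OregonReyes} for separability of convex subgroups and their double cosets in a virtually special cubulated group.
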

\begin{proof}
  Assume that $(G,X)$ satisfies~\ref{hyp:or}.  If $S$ is a hyperplane stabilizer it is a convex subgroup of $G$.  
  By~\cite[A.15]{OregonReyes}, the subgroup $S\cap P$ is a convex subgroup of $(P,Y_P)$.  Thus by~\cite[A.1]{OregonReyes}, Assumptions~\ref{ass:weak} and~\ref{ass:DCS} hold.

  Now assume that $(G,X)$ satisfies~\ref{hyp:gm}.  As pointed out in Remark~\ref{rem:minimal}, each $P\in \mc{P}$ acts properly and cocompactly on some convex subcomplex $Y_P\subset X$ by a theorem of Sageev and Wise.  Under Assumptions~\ref{ass:weak} and~\ref{ass:DCS}, Haglund--Wise show that $(P,Y_P)$ is virtually special.
\end{proof}

\subsection{Our strategy}
  
Our proof of Theorem~\ref{t:RH Agol} has two steps.  The first, which takes up the bulk of this paper, is to replace the given cubulation of $G$ by a \emph{weakly relatively geometric} cubulation.  In the following definition, recall that a \emph{full} subgroup of a relatively hyperbolic group is one whose intersection with any parabolic subgroup is either finite or finite index in the parabolic.
\begin{definition} \label{def:wrg}
  If $(G,\mc{P})$ is relatively hyperbolic, and $X$ is a \CAT$(0)$ cube complex, an action $G\acts X$ is \emph{weakly relatively geometric} if the following hold:
  \begin{enumerate}
    \item each $P\in \mc{P}$ fixes some point of $X$;
    \item $G$ acts cocompactly on $X$; and
    \item\label{wrg:stabs}  if $\sigma$ is a cell of $X$ with infinite stabilizer, then
      \begin{enumerate}
      \item $\Stab(\sigma)$ is full relatively quasi-convex, and
      \item\label{wrg:stabs:gog} $\Stab(\sigma)$ is the fundamental group of a graph of groups, where each edge group is finite and each vertex group is either finite or full parabolic.
      \end{enumerate}
  \end{enumerate}
\end{definition}
The notion of a \emph{relatively geometric} action was defined in \cite{EinsteinG}; it can be obtained from the above by strengthening \eqref{wrg:stabs} to the requirement that every cell stabilizer is either finite or full parabolic.

Under Assumption~\ref{ass:weak} we put a new wallspace structure on the vertices of $X$.  We first find appropriate sub-complexes $\{X_P\mid P\in \mc{P}\}$ stabilized by the peripheral subgroups and having certain ``superconvexity'' properties.  Translating these around we get a $G$--equivariant family of \emph{parabolic} sub-complexes with uniformly bounded pairwise intersection.  For each ($G$--orbit of) hyperplane $H$ of $X$, we amalgamate the hyperplane stabilizer with finite index subgroups of the stabilizers of all the parabolic sub-complexes it meets, obtaining a relatively quasi-convex subgroup $B$.  Now we associate a wall $\mc{W}_H$ to $H$, declaring two vertices $v,w$ to be on the same ``side'' of this wall if there is a path joining them which crosses $B \cdot  H$ an even number of times.  These finitely many walls are completed to a $G$--equivariant wallspace structure $\mc{W}$ on $X^{(0)}$.  This is described in more detail in Section~\ref{sec:wallspace}.  In Section~\ref{sec:ultrafilters} we prove that the action on the cube complex dual to this wallspace structure is weakly relatively geometric, establishing:

\begin{restatable}{thmA}{recubulate}\label{t:recubulate}
  Suppose $(G,\mc{P})$ is relatively hyperbolic, and that $G$ acts properly cocompactly on a \CAT$(0)$ cube complex $X$, and that this action satisfies Assumption~\ref{ass:weak}.
  
There exists a weakly relatively geometric action of $G$ on a CAT$(0)$ cube complex $\hatx$.
\end{restatable}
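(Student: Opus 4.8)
The plan is to replace the hyperplane wallspace of $X$ by a coarser $G$--equivariant wallspace on the vertex set $X^{(0)}$, whose dual \CAT$(0)$ cube complex will be $\hatx$; the new walls are built from the hyperplanes of $X$ by forgetting enough in the parabolic directions that the peripheral subgroups --- which act properly on $X$ --- acquire fixed points in $\hatx$. First I would build a $G$--equivariant family of \emph{parabolic subcomplexes} $\{gX_P : g\in G,\ P\in\mc{P}\}$: take the convex $P$--cocompact subcomplexes supplied by Sageev--Wise \cite[Theorem 1.1]{SageevWise15} and enlarge each $P$--equivariantly to a \emph{superconvex} $X_P$, still carrying a cocompact $P$--action. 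Superconvexity is what lets one control how geodesics and hyperplanes of $X$ interact with the $gX_P$; the outputs of this step are packing estimates --- the $gX_P$ have uniformly bounded pairwise coarse intersection, and every ball of $X$ crosses only boundedly many of them --- and this is where the constants $\kappa$, $D$, $\tau$, $\omega$, assembled from the relative hyperbolicity and cube complex data $\eta$, $f(\delta)$, $\delta$, $Q$, $C$, $n$, $\theta_n$, get calibrated.

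Next, for each $G$--orbit of hyperplanes of $X$ I fix a representative $H$, noting that $\Stab(H)$ is relatively quasi-convex. For each parabolic subcomplex $gX_P$ that $H$ crosses, Assumption~\ref{ass:weak} allows me to choose a finite--index subgroup of $\Stab(gX_P)$ containing the separable subgroup $\Stab(H)\cap\Stab(gX_P)$; amalgamating $\Stab(H)$ with these finitely many (by the packing estimate) finite--index parabolic subgroups, along their intersections with $\Stab(H)$, produces a subgroup $B=B_H<G$. A combination argument in the relatively hyperbolic setting then shows that $B_H$ is relatively quasi-convex, decomposes as a graph of groups with vertex groups $\Stab(H)$ and the chosen finite--index parabolic subgroups and with finite edge groups, and has full parabolic peripheral structure. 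I attach the wall $\mc{W}_H$ to $H$ by declaring $v,w\in X^{(0)}$ to lie on the same side exactly when some --- equivalently every --- combinatorial path between them crosses the hyperplane set $B_H\cdot H$ an even number of times; relative quasi-convexity of $B_H$ is precisely what makes $B_H\cdot H$ disconnect $X$, so that this is a genuine two--sided wall. The $G$--orbit of these walls consists of finitely many orbits, and the packing estimates bound the number of walls separating two vertices by a linear function of their distance in $X$, so $(X^{(0)},\mc{W})$ is a wallspace. Sageev's construction then yields the cube complex $\hatx$ with a $G$--action, cocompact because there are finitely many orbits of cocompactly stabilised walls over the cocompact $G$--set $X^{(0)}$.

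It remains to verify the conditions of Definition~\ref{def:wrg}. For condition (1): only finitely many walls cross a given parabolic subcomplex $X_P$ --- there are finitely many $\Stab(X_P)$--orbits of such walls since $X_P$ is cocompact, and each such orbit is finite because, by the amalgamation above, the stabiliser of any wall crossing $X_P$ contains a finite--index subgroup of $\Stab(X_P)$ --- so $P$ preserves the bounded subset of $\hatx$ spanned by the ultrafilters of the vertices of $X_P$, and therefore fixes a point. For condition (3): a cell $\sigma$ of $\hatx$ is dual to a finite pairwise--crossing family of walls, so $\Stab(\sigma)$ is contained in a finite intersection of conjugates of the $B_H$, which is again relatively quasi-convex and full (intersections of relatively quasi-convex subgroups are relatively quasi-convex, and intersections of full subgroups are full); the graph of groups required in condition~(3)(b) is then read off from the Bass--Serre trees of the amalgams defining the $B_H$, using accessibility over finite subgroups and the fact that an infinite cell stabiliser, being concentrated along the parabolic subcomplexes in a suitable sense, contains no one--ended non--parabolic piece.

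I expect the two genuinely hard parts to be quantitative. The wallspace construction of Section~\ref{sec:wallspace} has to thread a narrow needle: the new walls must be coarse enough that only finitely many cross each $X_P$ (which is what buys condition (1)), yet fine enough that $B_H\cdot H$ still disconnects $X$ and the resulting wallspace is locally finite --- managing this tension is exactly what the elaborate constants above are for. And the ultrafilter analysis of Section~\ref{sec:ultrafilters} must pin down the cell stabilisers of $\hatx$ sharply enough to extract condition (3), in particular the graph--of--groups decomposition with finite edge groups and finite--or--full--parabolic vertex groups, from the combinatorics of how the walls and the amalgams $B_H$ cross one another; I would expect this to be the more delicate of the two.
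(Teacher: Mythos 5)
The first half of your proposal is essentially the paper's construction: super-attractive peripheral complexes, amalgamation of each hyperplane stabilizer with finite-index subgroups of the stabilizers of the peripheral complexes it meets (Proposition~\ref{prop:amalgam}, via Mart\'inez-Pedroza's combination theorem~\ref{thm:eduardo} and Assumption~\ref{ass:weak}), parity walls over the scattering $B\cdot H$, and the fixed-point argument for parabolics via finiteness of the set of walls cutting a given peripheral complex (Proposition~\ref{prop:finiteperipheralsubcomplex}).  Two small corrections there: two-sidedness of the wall comes from the hyperplanes of the scattering being distinct and disjoint, not from relative quasi-convexity of $B_H$; and cocompactness of $G\acts\hatx$ does not follow merely from ``finitely many orbits of cocompactly stabilised walls'' --- the paper deduces it from relative cocompactness \cite[Theorem 7.12]{hruskawise:finiteness} together with the finiteness of the peripheral duals, i.e.\ exactly the finiteness you prove for condition (1), so the ingredient is present but your stated reason is not a proof (Corollary~\ref{cor:cocompact}).

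The genuine gap is in condition (3) of Definition~\ref{def:wrg}.  Your key step ``a cell $\sigma$ is dual to a finite pairwise-crossing family of walls, so $\Stab(\sigma)$ is contained in a finite intersection of conjugates of the $B_H$'' fails for the cells that matter most: the vertices of $\hatx$.  A vertex is an ultrafilter $\U$; it is dual to no wall, and $\Stab(\U)$ stabilizes no individual wall (it only preserves a choice of halfspace for every wall), so it is not contained in any conjugate of any $B_H$.  Since every cube stabilizer is an intersection of vertex stabilizers, the vertex case carries all the content, and it is exactly what Section~\ref{sec:ultrafilters} of the paper is for: (i) an ultrafilter with unbounded characteristic set forces entire peripheral complexes into $N_0(\U)$, and infinite stabilizers meet parabolics only in the expected way (Propositions~\ref{prop:NofU}, \ref{prop:unboundedNU}, \ref{prop:parabolicsinstabU}, resting on the quantitative Lemmas~\ref{lem:NJU}, \ref{lem:pcinN}, \ref{lem:weaklyprincipal}); (ii) full relative quasi-convexity, hence finite generation, of $\Stab(\U)$ comes from strong quasi-convexity of cube stabilizers \cite[Theorem 3.26]{Omnibus} together with Tran--Hruska (Corollary~\ref{cor:FRQ}), not from an intersection of wall stabilizers; (iii) the graph-of-groups decomposition is obtained from a specific bipartite $\Stab(\U)$--tree $\Lambda$ built from the peripheral complexes in $N_0(\U)$ and their mutual projections (Theorem~\ref{thm:stabilizersaregraphs}).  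Your substitute --- accessibility over finite subgroups plus the assertion that an infinite cell stabilizer ``contains no one-ended non-parabolic piece'' because it is ``concentrated along the parabolic subcomplexes in a suitable sense'' --- presupposes finite generation you have not established, and the quoted phrase is precisely the statement that has to be proved; nothing in your construction rules out, a priori, a one-ended non-parabolic (say surface) subgroup stabilizing an ultrafilter, and ruling this out is what the calibrated constants $\kappa, D, \tau, \omega$ and the tree $\Lambda$ are for.
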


The existence of a weakly relatively geometric action is the main input to the second step in our proof of Theorem~\ref{t:RH Agol}, which is completed in Section~\ref{sec:Dehn fill}.  The weakly relatively geometric action allows us to apply the tools in \cite{Omnibus} to find a rich family of hyperbolic virtually special quotients of $G$ (Theorem~\ref{t:quotient VS}).  We then use these quotients to prove separability of the hyperplane subgroups and double cosets in the original cubulation.  By a criterion of Haglund--Wise \cite[Corollary 4.3]{HaglundWise10}, this separability implies virtual specialness.
 
\subsection{Outline and conventions}
We assume the reader is familiar with the theory of relatively hyperbolic groups and relatively quasi-convex subgroups (see Hruska~\cite{HruskaQC}) and also the theory of special cube complexes (see \cite{RAAGs}).  We always use the locally \CAT$(0)$ metric on a non-positively curved cube complex, and notions such as distance and convexity refer to this metric.

In Section~\ref{sec:wallspace}, under the assumption that a relatively hyperbolic $G$ acts properly and cocompactly on a \CAT$(0)$ cube complex $X$, we recall and establish some basic properties of this action.  Under Assumption~\ref{ass:weak}, we define {\em augmented walls} from the hyperplanes of $X$ which will be used to define a new wallspace structure on $X$.  In Section~\ref{sec:ultrafilters} the cube complex associated to this wallspace structure is analyzed and Theorem~\ref{t:recubulate} is proved.  In Section~\ref{sec:Dehn fill} Dehn fillings are used to deduce Theorem~\ref{t:RH Agol}, as well as Theorem~\ref{t:Hyp sep}.   In Section~\ref{sec:examples} we provide an example which Theorem~\ref{t:RH Agol} implies is virtually special, but whose virtual specialness does not seem to follow from previously known results.  In Section~\ref{sec:Cannon} we provide the promised applications towards the Relative Cannon Conjecture.  Finally, in Appendix~\ref{app:DCS} we prove a technical generalization of a result from \cite{Hwide} which is required for the proof of Theorem~\ref{t:RH Agol}.

\begin{convention} \label{conv:P_infinite}
In this paper, whenever we speak of a relatively hyperbolic group pair $(G,\mc{P})$ we always assume that each element of $\mc{P}$ is infinite.  We may always ensure this by removing the finite elements from $\mc{P}$, which does not affect relative hyperbolicity.  The removal of these finite parabolics also does not affect whether a given action is weakly relatively geometric in the sense of Definition~\ref{def:wrg}.
\end{convention}

Note also that we use the notation $H^g = g H g^{-1}$.

\section{A new wallspace structure}\label{sec:wallspace}

\putinbox{Throughout this section and the next we fix a relatively hyperbolic group pair $(G,\mc{P})$ and a proper cocompact $G$--action on a \CAT$(0)$ cube complex $X$ satisfying Assumption~\ref{ass:weak}.}\medskip

The goal of this section and the next is to prove Theorem~\ref{t:recubulate}.  In the current section we describe a wallspace structure which gives rise to a new \CAT$(0)$ cube complex $\hatx$; in the next we prove that the $G$--action on $\hatx$ is weakly relatively geometric.  Our wallspace has underlying set $X^{(0)}$; the new walls come from amalgamations of the original hyperplanes with carefully chosen finite index subgroups of parabolic groups.  They are similar in spirit to the \emph{augmented hyperplanes} used in \cite{AGM_msqt}, but we do not work in an augmented cube complex as in that paper.

\begin{definition}\label{def:wall space}
A \emph{wall} in a set $S$ is a partition of $S$ into two nonempty subsets usually written $W=\{W^+,W^-\}$.  The two elements of the partition are called the \emph{halfspaces} from the wall.
A wall \emph{separates} $a$ from $b$ in $S$ if $a$ and $b$ lie in different halfspaces from the wall.
A \emph{space with walls} is a pair $(S,\mc{W})$ where $\mc{W}$ is a collection of nonempty subsets of $S$ which is closed under complementation and which satisfies the finiteness condition:
\begin{equation}\label{wallspacefiniteness}
  \forall x,y\in S,\ \#\left\{W\subseteq \mc{W}\mid W\mbox{ separates }x\mbox{ from }y\right\}<\infty  \tag{$\star$}
\end{equation}

Two walls $W = \{W^+,W^-\}$ and $V = \{V^+,V^-\}$ are said to \emph{cross} if for every choice of $\epsilon_V,\epsilon_W\in \{+,-\}$, the intersection $W^{\epsilon_W}\cap V^{\epsilon_V}$ is non-empty.  
\end{definition}
  This is a special case of a \emph{wallspace} as studied in \cite{hruskawise:finiteness} (see also the references therein, including \cite{Sageev,Nica,ChatterjiNiblo,HaglundPaulin}).  In the current paper we use the term ``wallspace'' in this paper interchangeably with ``space with walls''. 
A wallspace gives rise to a \CAT$(0)$ cube complex in a way which we revisit in the next section.  Conversely, the hyperplanes of the \CAT$(0)$ cube complex $X$ give a wallspace structure on $X^{(0)}$ in an obvious way.  In the current section we put a less obvious wallspace structure on $X^{(0)}$ which includes information about the peripheral structure.  To begin, we need to see that peripheral structure in the cube complex $X$.

\subsection{Peripheral complexes}
In this subsection we find some sub-complexes of $X$ associated to the peripheral subgroups $\mc{P}$ and record some of their properties.  In contrast to the peripheral sub-complexes used in \cite{AGM_msqt}, our peripheral complexes are not necessarily disjoint, though they have bounded overlap with each other.

The following definition is a slight variant of \cite[Definition 2.5]{Einstein-Hierarchies} (the difference being that we do not consider actually thin triangles to be relatively thin).

\begin{definition}
 Suppose that $M$ is a geodesic metric space, let $a,b,c \in M$ and let $\Delta(a,b,c)$ be a geodesic triangle.  Let $\pi \co \Delta(a,b,c) \to Y_{abc}$ be the map to the comparison tripod.  For $\nu \ge 0$, we say that $\Delta(a,b,c)$ is {\em $\nu$--thin} if for all $p \in Y_{abc}$ we have $\diam\left(\pi^{-1}(p)\right) \le \nu$.
 
 If $U \subseteq M$ is a subset then $\Delta(a,b,c)$ is {\em $\nu$--thin relative to $U$} if it is not $\nu$--thin and
 for every $p \in Y_{abc}$ either
\begin{enumerate}
 \item $\diam\left( \pi^{-1}(p) \right) \le \nu$; or
 \item $\pi^{-1}(p) \subseteq N_{\nu}(U)$.
\end{enumerate}
\end{definition}

\begin{definition}[Thin and fat parts of triangles] \label{def:triangle terminology}
  Let $\Delta(a,b,c)$ be a geodesic triangle in a metric space $M$, and let $\nu>0$.  Let $\pi\co \Delta(a,b,c) \to Y_{abc}$ be the map to the comparison tripod.  Let $o$ be the central point of the tripod.  The \emph{internal points} of $\Delta(a,b,c)$ are the points in $\pi^{-1}(o)$.
  
  The \emph{$\nu$--fat part} of $\Delta(a,b,c)$ is the union of those fibers of $\pi$ with diameter $\ge \nu$.
  
  Let $x\in\{a,b,c\}$, and let $l_x$ be the leg of the tripod joining $\pi(x)$ to $o$.  Let $t_x\subseteq l_x$ be a maximal connected subset containing $\pi(x)$ and so that $\diam\pi^{-1}(p)< \nu$ for all $p\in t_x$.  The set $\pi^{-1}(t_x)$ is a union of two segments starting at $x$, called the \emph{$\nu$--corner segments at $x$}.
  
\end{definition}
We remark that the $\nu$--fat part of a triangle is always closed.  If the metric on $M$ is convex (for example \CAT$(0)$) then any triangle is the union of its $\nu$--fat part and $\nu$--corner segments.
\begin{lemma}\label{lem:fatpart bounds thinness}
  If $\Delta$ is a triangle in a convex metric space which is $\nu$--thin relative to a set $Z$, and the fat part of some side has length at most $L$, then $\Delta$ is $2(L+\nu)$--thin.
\end{lemma}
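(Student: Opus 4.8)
The plan is to bound the diameter of every fiber of the comparison map $\pi\co\Delta\to Y_{abc}$ directly. Write $\Delta=\Delta(a,b,c)$; let $\alpha,\beta,\gamma$ be the lengths of the legs $l_a,l_b,l_c$; and let $m_{ab},m_{bc},m_{ca}$ be the three internal points, so that $\pi^{-1}(o)=\{m_{ab},m_{bc},m_{ca}\}$ with $m_{xy}$ on the side $[x,y]$. Say it is the fat part of $[a,b]$ that has length at most $L$. For $t\in[0,\alpha]$ put $f(t)=d(u_t,v_t)$, where $u_t\in[a,b]$ and $v_t\in[a,c]$ are the points at distance $t$ from $a$, and define $g$ on $[0,\beta]$ (from the corner $b$, along $[b,a]$ and $[b,c]$) and $h$ on $[0,\gamma]$ (from the corner $c$) analogously. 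The basic facts I would use are: $f,g,h$ are $2$--Lipschitz and vanish at $0$; they are convex because the ambient metric is convex; $\diam\pi^{-1}(p)=f(t)$ whenever $p\in l_a$ lies at distance $t<\alpha$ from $\pi(a)$, and similarly for $l_b,l_c$; and, at the far endpoints, $f(\alpha)=d(m_{ab},m_{ca})$, $g(\beta)=d(m_{ab},m_{bc})$, $h(\gamma)=d(m_{ca},m_{bc})$.

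First I would dispose of the fibers over the two legs $l_a$ and $l_b$, which are the legs whose fibers meet the short-fat side $[a,b]$. The part of the fat part of $[a,b]$ between $a$ and $m_{ab}$ is, up to the single point $m_{ab}$, the set $\{u_t:f(t)\ge\nu\}$; by convexity of the metric (this is the remark just before the lemma) it is a segment $[u_{\tau_a},m_{ab}]$, or empty. Write $\phi_a=\alpha-\tau_a$ for its length (so $f(\tau_a)=\nu$), and likewise $\phi_b$ near $b$; then $\phi_a+\phi_b$ equals the length of the fat part of $[a,b]$, hence $\phi_a+\phi_b\le L$. Since $f$ is $2$--Lipschitz and $f(\tau_a)=\nu$, we get $f(\alpha)\le\nu+2\phi_a$, and since $f$ is convex with $f(0)=0$ this bound is in fact the maximum of $f$ on $[0,\alpha]$. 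Thus every fiber over $l_a$ has diameter at most $\nu+2\phi_a\le\nu+2L$, and symmetrically every fiber over $l_b$ has diameter at most $\nu+2\phi_b\le\nu+2L$.

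It remains to handle the fibers over the third leg $l_c$; here one cannot argue directly, because these fibers touch only $[c,a]$ and $[c,b]$, never the good side $[a,b]$. I would instead estimate the endpoint value $h(\gamma)=d(m_{ca},m_{bc})$ by detouring through the third internal point:
\[
 d(m_{ca},m_{bc})\ \le\ d(m_{ca},m_{ab})+d(m_{ab},m_{bc})\ =\ f(\alpha)+g(\beta)\ \le\ (\nu+2\phi_a)+(\nu+2\phi_b)\ \le\ 2(L+\nu).
\]
Because $h$ is convex with $h(0)=0$, its maximum on $[0,\gamma]$ is attained at $\gamma$, so every fiber over $l_c$ has diameter at most $d(m_{ca},m_{bc})\le 2(L+\nu)$; and $\diam\pi^{-1}(o)=\max\{f(\alpha),g(\beta),h(\gamma)\}\le 2(L+\nu)$ as well. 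Combining the three cases, every fiber of $\pi$ has diameter at most $2(L+\nu)$, which is exactly the assertion that $\Delta$ is $2(L+\nu)$--thin.

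I expect the one real subtlety to be the leg $l_c$ opposite the chosen side: the trick is to notice that convexity of the metric lets one collapse the whole leg to its endpoint fiber $\{m_{ca},m_{bc}\}$, and that this last pair of points can then be controlled through $m_{ab}\in[a,b]$, so that only the fat part of $[a,b]$ is ever used. In fact this argument uses the relative-thinness hypothesis only insofar as it guarantees the statement is not vacuous; if $\Delta$ were already $\nu$--thin it would trivially be $2(L+\nu)$--thin.
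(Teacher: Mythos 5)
Your proof is correct and takes essentially the same route as the paper's: the paper bounds $\operatorname{insize}(\Delta)=\diam\left(\pi^{-1}(o)\right)$ by exactly your estimates $d(m_{ab},m_{ca})\le \nu+2\phi_a$, $d(m_{ab},m_{bc})\le \nu+2\phi_b$ and the triangle inequality through $m_{ab}$, and then uses the fact that in a convex metric space a bound on the insize bounds the thinness, which is precisely the convexity-of-$f,g,h$ argument you spell out for the legs. The only differences are that you make that last step explicit rather than quoting it, and you note (correctly) that the set $Z$ itself plays no role beyond non-vacuity.
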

\begin{proof}
  In the terminology of Definition~\ref{def:triangle terminology}, $\operatorname{insize}(\Delta)=\diam(\pi^{-1}(o))$.  In a convex metric space, $\operatorname{insize}(\Delta)\le K$ implies that $\Delta$ is $K$--thin, for any $K\ge 0$.  It therefore suffices to bound $\operatorname{insize}(\Delta)$.

  If the $\nu$--fat part of $\Delta$ is nonempty, then, using convexity of the metric, $\pi^{-1}(o)$ is in the fat part.  Let $s$ be the side of $\Delta$ whose fat part has length at most $L$, and let $x$ be the point of $\pi^{-1}(o)$ contained in $s$.  Let $y_1$ and $y_2$ be the other two points.  We have $d(x,y_1)\le 2 L_1+\nu$, and $d(x,y_2)\le 2L_2+\nu$, where $L_1+L_2 = L$.  Thus  $d(y_1,y_2)\le 2L+2\nu$, and $\operatorname{insize}(\Delta)\le 2L+2\nu$.
\end{proof}
\begin{definition}
 Let $M$ be a geodesic metric space and $\mc{U}$ a collection of subspaces.  We say that the pair $(M,\mc{U})$ has {\em $\nu$--relatively thin triangles} if for every geodesic triangle $\Delta(a,b,c)$, either $\Delta(a,b,c)$ is $\nu$--thin or else there exists $U \in \mc{U}$ so that $\Delta(a,b,c)$ is $\nu$--thin relative to $U$.
\end{definition}

The following is an immediate consequence of \cite[Proposition 2.7]{Einstein-Hierarchies} (see also the proofs of \cite[Theorem 4.1, Proposition 4.2]{SageevWise15} and \cite[$\S8$]{drutusapir}).

\begin{proposition}\label{prop:RH implies relatively thin}
Fix $x \in X$ and let $\mc{U} = \left\{ gP \cdot x \mid P \in \mc{P}, gP \in \rightQ{G}{P} \right\}$.  There exists $\nu$ so that $(X,\mc{U})$ has $\nu$--relatively thin triangles.
\end{proposition}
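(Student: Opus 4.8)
The plan is to reduce the statement to its purely group-theoretic counterpart and then invoke \cite[Proposition 2.7]{Einstein-Hierarchies}; an alternative would be to run everything through the tree-graded asymptotic cone characterisation of relative hyperbolicity of \cite{drutusapir}. First I would fix a finite generating set of $G$, write $\Gamma$ for the corresponding Cayley graph, and keep the basepoint $x$. Since $G$ acts properly and cocompactly on $X$, the orbit map $\iota\co\Gamma\to X$, $g\mapsto gx$, is a $G$--equivariant $(\lambda,\epsilon)$--quasi-isometry for suitable $\lambda\geq 1$, $\epsilon\geq 0$; I would also fix a coarse inverse $\bar\iota\co X\to\Gamma$. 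Note that $X$, carrying the locally \CAT$(0)$ metric, is a convex geodesic metric space, so the notion of relatively thin triangles and Lemma~\ref{lem:fatpart bounds thinness} both apply to it. The essential bookkeeping observation is that $\iota$ carries the left coset $gP$ onto $gP\cdot x$, so $\mc{U}$ is precisely the $\iota$--image of the family of peripheral cosets of $\Gamma$.

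Next I would quote \cite[Proposition 2.7]{Einstein-Hierarchies} (see also \cite[\S 8]{drutusapir} and \cite[Theorem 4.1, Proposition 4.2]{SageevWise15}) to obtain a constant $\nu_0$ so that $\bigl(\Gamma,\{gP\mid P\in\mc{P},\ gP\in\rightQ{G}{P}\}\bigr)$ has $\nu_0$--relatively thin triangles. If that proposition is already formulated for an arbitrary space admitting a geometric action of $(G,\mc{P})$, one may apply it directly to $G\acts X$ and omit the transport step below; otherwise one proceeds as follows.

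To transport relative thinness across $\iota$: given a geodesic triangle $\Delta=\Delta(a,b,c)$ in $X$, push its three sides back by $\bar\iota$ to obtain quasigeodesics in $\Gamma$, and let $\Delta'$ be a genuine geodesic triangle on the vertices $\bar\iota(a),\bar\iota(b),\bar\iota(c)$. By the relative Morse property for relatively hyperbolic groups (see \cite{drutusapir}, \cite{HruskaQC}), the sides of $\Delta'$ stay within a uniformly bounded Hausdorff distance of those quasigeodesics once one permits detours into a controlled neighbourhood of the peripheral cosets. Applying $\nu_0$--relative thinness to $\Delta'$ and pushing the resulting dichotomy forward through $\iota$ — using $\iota\bigl(N_r(gP)\bigr)\subseteq N_{\lambda r+\epsilon}(gP\cdot x)$ and the fact that insize and thinness are distorted by at most an affine amount under a quasi-isometry — produces a uniform $\nu=\nu(\nu_0,\lambda,\epsilon)$ for which $\Delta$ is either $\nu$--thin or $\nu$--thin relative to some $gP\cdot x\in\mc{U}$. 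Lemma~\ref{lem:fatpart bounds thinness} streamlines this: it reduces the whole comparison to bounding the length of the fat part of a single side of each triangle, which behaves well under a quasi-isometry.

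I expect the transport step to be the only genuine obstacle. The point is that quasigeodesics in a relatively hyperbolic group are not globally Morse — they can dive arbitrarily deep into peripheral cosets — so one must use the \emph{relative} Morse lemma and check that the second clause of ``$\nu$--thin relative to $U$'', namely $\pi^{-1}(p)\subseteq N_\nu(U)$, survives both these controlled excursions and the quasi-isometry $\iota$. Everything else is routine quasi-isometry bookkeeping, which is presumably why \cite[Proposition 2.7]{Einstein-Hierarchies} is regarded as giving the result immediately.
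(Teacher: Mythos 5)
Your primary route is exactly the paper's: the paper offers no argument beyond declaring the statement an immediate consequence of \cite[Proposition 2.7]{Einstein-Hierarchies}, with the same pointers to \cite{SageevWise15} and \cite{drutusapir} that you give. Since Einstein's proposition already applies to a geometric action of $(G,\mc{P})$ on the space itself, your fallback quasi-isometry transport step is not needed, and your proposal matches the paper's (citation-level) proof.
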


The following lemma is obvious.

\begin{lemma}\label{lem:Haus RT}
  Suppose $(M,\mc{U})$ has $\nu$--relatively thin triangles, and that $\mc{V}$ is a collection of subspaces so that every $U\in \mc{U}$ is contained in some $V\in \mc{V}$.  Then $(M,\mc{V})$ has $\nu$--relatively thin triangles.
\end{lemma}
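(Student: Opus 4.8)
Suppose $(M,\mc{U})$ has $\nu$–relatively thin triangles, and that $\mc{V}$ is a collection of subspaces so that every $U\in \mc{U}$ is contained in some $V\in \mc{V}$. Then $(M,\mc{V})$ has $\nu$–relatively thin triangles.

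\textbf{Proof proposal.} The plan is simply to unwind the definitions and use monotonicity of metric neighborhoods. Given an arbitrary geodesic triangle $\Delta(a,b,c)$ in $M$, I would first dispose of the case in which it is $\nu$--thin, where there is nothing to prove. Otherwise, the hypothesis that $(M,\mc{U})$ has $\nu$--relatively thin triangles furnishes some $U\in\mc{U}$ such that $\Delta(a,b,c)$ is $\nu$--thin relative to $U$; in particular $\Delta(a,b,c)$ is not $\nu$--thin, and, writing $\pi\co\Delta(a,b,c)\to Y_{abc}$ for the comparison map, every fiber $\pi^{-1}(p)$ either has diameter at most $\nu$ or is contained in $N_\nu(U)$.

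Next I would invoke the hypothesis relating $\mc{U}$ and $\mc{V}$ to choose $V\in\mc{V}$ with $U\subseteq V$. Since $U\subseteq V$ we have $N_\nu(U)\subseteq N_\nu(V)$, so each fiber $\pi^{-1}(p)$ with $\pi^{-1}(p)\subseteq N_\nu(U)$ also satisfies $\pi^{-1}(p)\subseteq N_\nu(V)$. Combined with the fact that $\Delta(a,b,c)$ is not $\nu$--thin, this is exactly the assertion that $\Delta(a,b,c)$ is $\nu$--thin relative to $V$. Since $\Delta(a,b,c)$ was arbitrary, $(M,\mc{V})$ has $\nu$--relatively thin triangles.

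There is no genuine obstacle here; the only point requiring care is the bookkeeping in the definition of ``$\nu$--thin relative to'' a set, which builds in the requirement that the triangle \emph{not} be $\nu$--thin --- a condition already satisfied in the case under consideration. For this reason the lemma is indeed immediate.
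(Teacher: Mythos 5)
Your proof is correct and is precisely the routine unwinding of definitions the paper has in mind (the paper simply labels the lemma ``obvious'' and gives no argument). The one point needing care --- that ``$\nu$--thin relative to $V$'' requires the triangle not to be $\nu$--thin, which is already guaranteed in the relevant case --- is handled correctly.
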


The next theorem is an immediate consequence of Sageev--Wise \cite[Theorem 1.1]{SageevWise15}.

\begin{theorem} \label{th:SW core}
Let $x \in X$ be as in Proposition~\ref{prop:RH implies relatively thin}.  For each $P \in \mc{P}$ there exists a convex $P$--invariant sub-complex $Z_P \subseteq X$ with $x \in Z_P$ so that $\leftQ{Z_P}{P}$ is compact.
\end{theorem}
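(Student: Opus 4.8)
The plan is to obtain this as an essentially immediate consequence of \cite[Theorem 1.1]{SageevWise15}, together with a short argument to force the fixed basepoint into the core. Since $(G,\mc{P})$ is relatively hyperbolic, each $P\in\mc{P}$ is a maximal parabolic subgroup and is therefore relatively quasi-convex (see \cite{HruskaQC}); as $G$ acts properly and cocompactly on the \CAT$(0)$ cube complex $X$, the theorem of Sageev--Wise provides a convex $P$--invariant subcomplex $Y_P\subseteq X$ on which $P$ acts cocompactly. The only feature not already delivered by that theorem is that $Y_P$ contain the point $x$ fixed in Proposition~\ref{prop:RH implies relatively thin}, so the task is to enlarge $Y_P$ to a convex, $P$--invariant subcomplex that still has compact quotient under $P$ but now contains $x$.

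First I would note that the orbit $P\cdot x$ lies within bounded Hausdorff distance of $Y_P$. Writing $r=d(x,Y_P)$ and using $P$--invariance of $Y_P$, we get $d(p\cdot x,Y_P)=d(p\cdot x,p\cdot Y_P)=d(x,Y_P)=r$ for every $p\in P$, so $P\cdot x\subseteq N_r(Y_P)$; conversely, fixing $y\in Y_P$, cocompactness of $\leftQ{Y_P}{P}$ yields an $R$ with $Y_P\subseteq N_R(P\cdot y)\subseteq N_{R+d(x,y)}(P\cdot x)$. I would then let $Z_P$ be the convex hull of $Y_P\cup(P\cdot x)$ in $X$, that is, the intersection of all halfspaces of $X$ containing this set. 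This $Z_P$ is a convex subcomplex, it contains $x$, and since $Y_P\cup(P\cdot x)$ is $P$--invariant so is $Z_P$.

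It remains to check that $\leftQ{Z_P}{P}$ is compact, and this is the step I expect to require the only real work: one must know that passing to the convex hull has not moved points far from $Y_P$. Here I would invoke the standard fact that in a finite-dimensional \CAT$(0)$ cube complex (and $\dim X<\infty$ because $G\acts X$ is proper and cocompact) the convex hull of any subset of the $s$--neighborhood of a convex subcomplex $C$ is contained in the $s'$--neighborhood of $C$, with $s'$ depending only on $s$ and $\dim X$. Applying this to $C=Y_P$ and the set $Y_P\cup(P\cdot x)\subseteq N_r(Y_P)$ gives $Z_P\subseteq N_{s'}(Y_P)$ for some $s'=s'(r,\dim X)$; combined with the $P$--invariance of $Z_P$ and the cocompactness of $\leftQ{Y_P}{P}$ (plus local finiteness of $X$), it follows that $Z_P$ is contained in finitely many $P$--orbits of cubes, i.e.\ $\leftQ{Z_P}{P}$ is compact. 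This completes the proof.
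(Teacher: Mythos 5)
Your proposal is correct and follows essentially the same route as the paper, which deduces the statement directly from \cite[Theorem 1.1]{SageevWise15} and treats the condition $x \in Z_P$ as immediate (the core can be taken to be a cocompact convex subcomplex through the orbit of $x$). Your explicit fix for the basepoint --- replacing $Y_P$ by the combinatorial hull of $Y_P \cup P\cdot x$ and using that hulls of bounded neighborhoods of convex subcomplexes stay in a bounded neighborhood --- is a valid way to supply that detail, and is the same kind of bounded-hull fact (adapted from Section 5 of \cite{SageevWise15}) that the paper invokes later in the proof of Proposition~\ref{prop:peripheral}.
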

For each $P \in \mc{P}$ fix some $Z_P$ as in Theorem~\ref{th:SW core}, and let $\mc{Z} = \left\{ g \cdot  Z_P \mid P \in \mc{P}, g \right\}$, where $g$ ranges over a set of coset representatives of $\rightQ{G}{P}$.
Combining Proposition~\ref{prop:RH implies relatively thin} with Lemma~\ref{lem:Haus RT} and Theorem~\ref{th:SW core} yields the following.
\begin{corollary} \label{cor:RT}
There exists $\delta \ge 0$ so that $(X,\mc{Z})$ has $\delta$--relatively thin triangles.
\end{corollary}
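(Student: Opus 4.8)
The plan is to deduce this directly by combining Proposition~\ref{prop:RH implies relatively thin}, Theorem~\ref{th:SW core}, and Lemma~\ref{lem:Haus RT}, with the only real content being a containment check. The key observation is that each member of the family $\mc{U}$ from Proposition~\ref{prop:RH implies relatively thin} is contained in a member of $\mc{Z}$. Indeed, fix $P\in\mc{P}$ and $g\in G$. Since $x\in Z_P$ and $Z_P$ is $P$--invariant (Theorem~\ref{th:SW core}), we have $P\cdot x\subseteq Z_P$, and applying $g$ gives $gP\cdot x\subseteq g\cdot Z_P$. Moreover $g\cdot Z_P$ depends only on the coset $gP$: if $p\in P$ then $gp\cdot Z_P=g\cdot Z_P$, again by $P$--invariance of $Z_P$. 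Hence the family $\mc{Z}$ defined using coset representatives coincides with $\{g\cdot Z_P\mid g\in G,\ P\in\mc{P}\}$, and every $U=gP\cdot x\in\mc{U}$ lies in the element $g\cdot Z_P\in\mc{Z}$.

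With this in hand the corollary is immediate. By Proposition~\ref{prop:RH implies relatively thin} there is a $\nu\ge 0$ so that $(X,\mc{U})$ has $\nu$--relatively thin triangles. Applying Lemma~\ref{lem:Haus RT} with $\mc{V}=\mc{Z}$ — legitimate by the containment just established — we conclude that $(X,\mc{Z})$ also has $\nu$--relatively thin triangles. Setting $\delta=\nu$ completes the proof.

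There is no genuine obstacle here: the one point requiring care is purely bookkeeping, namely that enlarging each parabolic orbit $gP\cdot x$ to the (uniformly coarser) convex subcomplex $g\cdot Z_P$ does not force any change in the thinness constant. That is exactly what Lemma~\ref{lem:Haus RT} provides, so the argument is a short assembly of the stated results rather than anything new.
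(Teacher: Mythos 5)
Your proof is correct and matches the paper exactly: the corollary is stated there as an immediate consequence of Proposition~\ref{prop:RH implies relatively thin}, Theorem~\ref{th:SW core}, and Lemma~\ref{lem:Haus RT}, with the same containment $gP\cdot x\subseteq g\cdot Z_P$ being the only thing to check. Your verification that $g\cdot Z_P$ depends only on the coset $gP$ is a reasonable bit of bookkeeping the paper leaves implicit.
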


\medskip
\putinbox{The constant $\delta$ from Corollary~\ref{cor:RT} will remain fixed for the remainder of this section and the next.}
\medskip

The following definition is a slight variant of \cite[Definition 5.1]{Einstein-Hierarchies}.
\begin{definition}
Let $\nu \ge 0$ and let $\phi \co \R_{\ge 0} \to \R_{\ge 0}$ be a function. Suppose that $M$ is a geodesic metric space, and that $\mc{U}$ is a collection of subspaces of $M$.  The pair $(M,\mc{U})$ is a {\em $(\nu,\phi)$--relatively hyperbolic pair} if
\begin{enumerate}
 \item $(M,\mc{U})$ has $\nu$--relatively thin triangles; and
 \item For all $r \ge 0$ and all $F_1, F_2 \in \mc{U}$ with $F_1 \ne F_2$, we have
 \[	\diam \left( N_r(F_1) \cap N_r(F_2) \right) \le \phi(r)	.	\]
\end{enumerate}
The subspaces $\mc{U}$ are called {\em peripheral subspaces}.
\end{definition}

The following is an immediate consequence of \cite[Theorems 4.1, 5.1 and A.1]{drutusapir} and Proposition~\ref{prop:RH implies relatively thin}.

\begin{lemma} \label{lem:RHP}
Let $x \in X$ and $\mc{U}$ be as in Proposition~\ref{prop:RH implies relatively thin}
and let $\delta$ be as in Corollary~\ref{cor:RT}.
There is a function $f_0 \co \R_{\ge 0} \to \R_{\ge 0}$ so that $(X,\mc{U})$ is a $(\delta,f_0)$--relatively hyperbolic pair.
\end{lemma}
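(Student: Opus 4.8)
The plan is to verify separately the two clauses in the definition of a $(\delta,f_0)$--relatively hyperbolic pair; each reduces to a statement already established or cited above. For the \emph{relatively thin triangles} condition: by Proposition~\ref{prop:RH implies relatively thin}, $(X,\mc{U})$ has $\nu$--relatively thin triangles for some $\nu\ge 0$. Since $x\in Z_P$ and $Z_P$ is $P$--invariant, $gP\cdot x\subseteq gZ_P$ for all $g$ and all $P$, so every element of $\mc{U}$ lies inside an element of $\mc{Z}$; hence by Lemma~\ref{lem:Haus RT} the constant $\delta$ of Corollary~\ref{cor:RT} may be taken equal to $\nu$, and the first clause is then literally Proposition~\ref{prop:RH implies relatively thin}. (If a larger $\delta$ has been fixed for other reasons, the clause still holds by the evident monotonicity of the thinness conditions in their parameter: a $\nu$--thin triangle is $\delta$--thin, and a triangle that is $\nu$--thin relative to $U$ is, since $N_\nu(U)\subseteq N_\delta(U)$, either $\delta$--thin or $\delta$--thin relative to $U$.)

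For the \emph{bounded coarse intersection of peripheral subspaces} condition I would invoke the Drutu--Sapir characterization of relative hyperbolicity in terms of asymptotically tree-graded spaces. The orbit map $g\mapsto g\cdot x$ is a quasi-isometry $G\to X$ (the action is proper and cocompact on the proper geodesic space $X$) carrying the collection of left cosets of the peripheral subgroups exactly onto $\mc{U}$; moreover each $P\cdot x$ is quasi-isometrically embedded in $X$, since $P$ acts cocompactly on the convex --- hence isometrically embedded --- subcomplex $Z_P\supseteq P\cdot x$ (Theorem~\ref{th:SW core}), so the peripheral subgroups are finitely generated and the cosets are uniformly undistorted. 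By \cite[Theorems~4.1,~5.1 and~A.1]{drutusapir}, relative hyperbolicity of $(G,\mc{P})$ then implies that $X$ is asymptotically tree-graded with respect to $\mc{U}$, using that this property is a quasi-isometry invariant. One of the defining properties of an asymptotically tree-graded space is a uniform bound, for each fixed radius $r$, on the diameters of the intersections of the $r$--neighborhoods of distinct pieces; setting $f_0(r)$ equal to this bound --- say the supremum of $\diam\left(N_r(F_1)\cap N_r(F_2)\right)$ over distinct $F_1,F_2\in\mc{U}$ --- yields a function $f_0\co\R_{\ge 0}\to\R_{\ge 0}$ satisfying the second clause.

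There is no real obstacle here; the content is entirely in matching up definitions. The only points needing care are the constant-bookkeeping noted above --- that the single $\delta$ of Corollary~\ref{cor:RT} also governs thinness for the finer family $\mc{U}$ --- and the routine check that the Drutu--Sapir theorems, stated for Cayley graphs, transfer to $X$, which is immediate since $X$ is a proper geodesic space quasi-isometric to $G$ and $\mc{U}$ is the image of the peripheral cosets under that quasi-isometry with the cosets undistorted by Theorem~\ref{th:SW core}.
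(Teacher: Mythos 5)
Your argument is correct and follows essentially the same route as the paper, which simply records this lemma as an immediate consequence of Proposition~\ref{prop:RH implies relatively thin} together with \cite[Theorems 4.1, 5.1 and A.1]{drutusapir}; your two clauses (monotonicity of the thinness constant, and transferring the bounded coarse intersection of peripheral cosets through the orbit quasi-isometry via the asymptotically tree-graded characterization) are exactly the checks implicit in that citation.
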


The following easy fact is left to the reader.
\begin{lemma} \label{lem:Haus RHP}
 Let $M$ be a geodesic metric space and suppose that $\mc{U}$ is a set of subspaces so that $(M,\mc{U})$ is a $(\nu,\phi)$--relatively hyperbolic pair (for some $\nu \ge 0$ and function $\phi$).  Suppose further that $\mc{V}$ is a collection of subspaces of $M$ and that there is $r \ge 0$ and a bijection $\rho \co \mc{U} \to \mc{V}$ so that for all $U \in \mc{U}$ we have $U \subseteq \rho(U) \subseteq N_r(U)$.  Then $(M,\mc{V})$ is  $(\nu,\phi')$--relatively hyperbolic pair where $\phi'(x) = \phi(x+r)$.
\end{lemma}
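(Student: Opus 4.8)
The plan is to verify, for $(M,\mc{V})$, the two defining conditions of a $(\nu,\phi')$--relatively hyperbolic pair, deducing each from the corresponding property of $(M,\mc{U})$ by means of the containment $U\subseteq\rho(U)\subseteq N_r(U)$.

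First I would dispatch the relatively thin triangles condition by quoting Lemma~\ref{lem:Haus RT}. Since $\rho$ is a bijection, every element of $\mc{V}$ has the form $\rho(U)$ for some $U\in\mc{U}$, so in particular every $U\in\mc{U}$ is contained in the element $\rho(U)\in\mc{V}$. As $(M,\mc{U})$ has $\nu$--relatively thin triangles by the definition of a $(\nu,\phi)$--relatively hyperbolic pair, Lemma~\ref{lem:Haus RT} immediately gives that $(M,\mc{V})$ has $\nu$--relatively thin triangles.

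Next I would check the coarse-intersection bound. Fix $s\ge 0$ and distinct $V_1,V_2\in\mc{V}$, and write $V_i=\rho(U_i)$; injectivity of $\rho$ gives $U_1\ne U_2$. From $\rho(U_i)\subseteq N_r(U_i)$ and the triangle inequality one gets $N_s(V_i)=N_s(\rho(U_i))\subseteq N_{s+r}(U_i)$, hence
\[
  N_s(V_1)\cap N_s(V_2)\;\subseteq\;N_{s+r}(U_1)\cap N_{s+r}(U_2).
\]
Since $(M,\mc{U})$ is $(\nu,\phi)$--relatively hyperbolic and $U_1\ne U_2$, the right-hand side has diameter at most $\phi(s+r)=\phi'(s)$, so $\diam\!\left(N_s(V_1)\cap N_s(V_2)\right)\le\phi'(s)$, as required. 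This completes both verifications.

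I do not expect any genuine obstacle: the statement is essentially bookkeeping. The only point meriting (minor) care is keeping track of which inclusion is used where — the left inclusion $U\subseteq\rho(U)$ is what feeds Lemma~\ref{lem:Haus RT} to transport the thin-triangles property, while the right inclusion $\rho(U)\subseteq N_r(U)$ is what controls the coarse intersections; note that no monotonicity of $\phi$ is needed, since the bound is invoked at radius $s+r$ directly.
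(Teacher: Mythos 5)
Your proof is correct and is exactly the intended argument: the paper leaves this lemma to the reader, and the natural proof is precisely what you give — the inclusion $U\subseteq\rho(U)$ feeds Lemma~\ref{lem:Haus RT} to transfer the $\nu$--relatively thin triangles property, while $\rho(U)\subseteq N_r(U)$ gives $N_s(\rho(U_i))\subseteq N_{s+r}(U_i)$ and hence the coarse-intersection bound $\phi(s+r)=\phi'(s)$, using injectivity of $\rho$ to see that distinct elements of $\mc{V}$ come from distinct elements of $\mc{U}$. No gaps; nothing further is needed.
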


The following is an immediate consequence of Lemmas~\ref{lem:RHP} and~\ref{lem:Haus RHP}.
\begin{corollary} \label{cor:RHP}
Let $\mc{Z}$ be as in Corollary~\ref{cor:RHP}, let $r_0$ be the maximum diameter of $\leftQ{Z_P}{P}$ for $P \in \mc{P}$, let $f_0$ be the function from Lemma~\ref{lem:RHP}, let $f_1$ be the function defined by $f_1(x) = f_0(x+r_0)$, and let $\delta$ be the constant from Corollary~\ref{cor:RT}. Then
$(X,\mc{Z})$ is a $(\delta,f_1)$--relatively hyperbolic pair.
\end{corollary}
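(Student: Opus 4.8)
The plan is to deduce this from Lemma~\ref{lem:Haus RHP}, exactly as the surrounding text suggests: apply that lemma to the pair $(X,\mc{U})$ of Lemma~\ref{lem:RHP}, which is a $(\delta,f_0)$--relatively hyperbolic pair, taking $\mc{Z}$ in the role of $\mc{V}$ and $r_0$ in the role of $r$. Concretely, what I need to supply is (i) a bijection $\rho\co\mc{U}\to\mc{Z}$ and (ii) the verification that $U\subseteq\rho(U)\subseteq N_{r_0}(U)$ for every $U\in\mc{U}$; Lemma~\ref{lem:Haus RHP} then yields at once that $(X,\mc{Z})$ is a $(\delta,\phi')$--relatively hyperbolic pair with $\phi'(x)=f_0(x+r_0)=f_1(x)$.

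For the map, I would start from the assignment $gP\mapsto g\cdot Z_P$ on peripheral cosets. Since $Z_P$ is $P$--invariant (Theorem~\ref{th:SW core}), if $g'P=gP$ then $g'=gp$ with $p\in P$ and $g'Z_P=gpZ_P=gZ_P$, so this is independent of the coset representative; its image is exactly the set $\mc{Z}$ as defined just after Theorem~\ref{th:SW core}, so it is onto $\mc{Z}$. The two containments are immediate: because $x\in Z_P$ and $Z_P$ is $P$--invariant, $P\cdot x\subseteq Z_P$, hence $gP\cdot x\subseteq gZ_P$; and because $\leftQ{Z_P}{P}$ has diameter at most $r_0$, we have $d(z,P\cdot x)\le r_0$ for every $z\in Z_P$ (here $d(\cdot,\cdot)$ in the quotient is $\inf_{p\in P}d(z,px)$, and $x\in Z_P$), so $gZ_P\subseteq N_{r_0}(gP\cdot x)$. (One may use the closed $r_0$--neighborhood, or replace $r_0$ by $r_0$; by monotonicity of $f_0$ this changes nothing.)

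The one point requiring genuine argument — and the only place the relative-hyperbolicity hypothesis really enters beyond Lemma~\ref{lem:RHP} — is that $gP\mapsto gZ_P$ descends to a \emph{bijection} $\rho\co\mc{U}\to\mc{Z}$; that is, that distinct elements of $\mc{U}$ are indexed by genuinely distinct peripheral cosets, so that $\rho$ is both well defined on $\mc{U}$ and injective. Here I would argue: if $g_1P_1\cdot x=g_2P_2\cdot x$ with $P_i\in\mc{P}$, then after conjugating we may assume $g_1=e$, so $P_1$ and $g_2P_2g_2^{-1}$ both stabilize the infinite set $P_1\cdot x$; for any $h$ in this common stabilizer, $h\cdot x\in P_1\cdot x$ gives $h\in P_1\cdot\Stab(x)$ with $\Stab(x)$ finite by properness, and applying this with $h$ ranging over $g_2P_2g_2^{-1}$ shows $P_1\cap P_2^{g_2}$ has finite index in the infinite group $P_2^{g_2}$, hence is infinite. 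Almost malnormality of the peripheral structure of a relatively hyperbolic group then forces $P_1=P_2$ and $g_2\in P_1$, so $g_2P_2=g_1P_1$ and $\rho$ is unambiguously defined and injective. (Alternatively one can bypass the bijection and argue directly: for distinct $F_i=g_iZ_{P_i}\in\mc{Z}$, the containments above give $N_r(F_i)\subseteq N_{r+r_0}(g_iP_i\cdot x)$, the displayed malnormality argument shows $g_1P_1\cdot x\ne g_2P_2\cdot x$ in $\mc{U}$, and condition~(2) of Lemma~\ref{lem:RHP} bounds $\diam(N_r(F_1)\cap N_r(F_2))$ by $f_0(r+r_0)=f_1(r)$, while condition~(1) is Corollary~\ref{cor:RT}.) I expect no other difficulties; everything else is bookkeeping with Hausdorff neighborhoods.
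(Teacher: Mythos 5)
Your proposal is correct and follows the paper's route exactly: the paper derives Corollary~\ref{cor:RHP} as an immediate consequence of Lemma~\ref{lem:RHP} and Lemma~\ref{lem:Haus RHP}, applied with $\mc{V}=\mc{Z}$ and $r=r_0$, which is precisely your argument. The verifications you supply (the containments $gP\cdot x\subseteq gZ_P\subseteq N_{r_0}(gP\cdot x)$ and the coset/orbit correspondence via almost malnormality and properness) are just the details the paper treats as immediate, and they check out.
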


The subspaces $\mc{Z}$ from Corollary~\ref{cor:RHP} are not the ones that we want.  Instead, we want them to satisfy the following condition,
which is a slightly stronger condition than that of `attractive' in \cite[Definition 5.2]{Einstein-Hierarchies} in that we insist that {\em most} of the geodesic $[a,b]$ is contained in $Z$, not just {\em some} of it.

\begin{definition}
Let $M$ be a complete {\CAT}(0) space and ${\eta} \ge 0$.  A convex subspace $Z \subseteq M$ is {\em ${\eta}$--super-attractive} if for any $K \ge 0$ and any pair of points $a,b \in N_K(Z)$, all but the initial and terminal segments of $[a,b]$ of length $({\eta}+K)$ are contained in $Z$.
\end{definition}
\begin{remark}\label{rem:attractiveness}
  Let $K\co \R_{\ge 0}\to \R_{\ge 0}$ be a function.  
  Einstein \cite[Definition 5.2]{Einstein-Hierarchies} defines a subspace $Z$ of a geodesic space to be \emph{$K$--attractive} if, for every $m \ge 0$, every geodesic with endpoints in the $m$--neighborhood of $Z$ either meets $Z$ or has length at most $K(m)$.  If $Z$ is $\eta$--super-attractive, it is not hard to see that $Z$ is $K$--attractive, for $K(m)=2(\eta+m)$.
\end{remark}

We now fix notation for closest point projections.
\begin{notation}
Suppose that $M$ is complete and \CAT$(0)$, and that $A$ is a closed convex subset of $M$.  We write  $\pi_A \co M \to A$ for the closest point projection map. 
\end{notation}

  \begin{lemma}\label{lem:projection quad}
    Suppose that $M$ is complete \CAT$(0)$, $\mc{C}$ is a collection of closed convex subspaces, and $(M,\mc{C})$ is $(\nu,\phi)$--relatively hyperbolic.  Let $Z\in \mc{C}$, let $\pi_Z\co M \to Z$ be closest point projection, and let $a,b\in X$.  Let $Q$ be the geodesic quadrilateral with vertices $\pi_Z(a),a,b,\pi_Z(b)$.
    Suppose
  \begin{equation*}
    \nu' = 4\nu + 2\phi(\nu),\ \nu'' = 4\nu+2\phi(\nu'),\mbox{and } \Delta(\nu,\phi)=\nu'+\nu''.
  \end{equation*}
    
    Then either $d(\pi_Z(a),\pi_Z(b))\le  \phi(\nu')+2\nu+3\nu'$ or the quadrilateral $Q$ is $\Delta(\nu,\phi)$--slim.\footnote{Recall a polygon is \emph{$\mu$--slim} if each side is contained in the $\mu$--neighborhood of the union of the other sides.}
  \end{lemma}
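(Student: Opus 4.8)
The statement is a standard ``projection quadrilateral is slim'' lemma in the relatively hyperbolic setting, and the proof is a hyperbolic-geometry argument using relative thinness of triangles together with the bounded-coarse-intersection condition. The idea is to triangulate $Q$ by adding a diagonal, analyze the two resulting triangles using $\nu$--relative thinness, and then either find that the two projection points are coarsely close (the first alternative), or propagate slimness back through the decomposition to get that $Q$ is $\Delta(\nu,\phi)$--slim.

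\textbf{Key steps.} First I would add the diagonal $[\pi_Z(a),b]$, splitting $Q$ into triangles $T_1 = \Delta(\pi_Z(a),a,b)$ and $T_2 = \Delta(\pi_Z(a),b,\pi_Z(b))$. For each $T_i$, relative hyperbolicity gives us two cases: $T_i$ is $\nu$--thin, or $T_i$ is $\nu$--thin relative to some $C_i\in\mc{C}$. In the thin case the triangle is $\nu$--slim, contributing a small additive error. In the relatively-thin case, the fat part of each side is contained in the $\nu$--neighborhood of $C_i$; the key point is that a side meeting $Z$ only coarsely (e.g.\ a side incident to $\pi_Z(a)$) together with the fact that $Z$ is convex and $\pi_Z$ is closest-point projection forces a geodesic whose fat part lies near both $Z$ and $C_i$. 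If $C_i\ne Z$, then the coarse-intersection bound $\diam(N_r(Z)\cap N_r(C_i))\le\phi(r)$ caps the length of the fat part by roughly $\phi(\nu)$ (after absorbing the $\nu$--neighborhoods), so by Lemma~\ref{lem:fatpart bounds thinness} the triangle $T_i$ is $2(\phi(\nu)+\nu)$--slim — which is why the constants $\nu' = 4\nu+2\phi(\nu)$ and $\nu'' = 4\nu+2\phi(\nu')$ appear, as the slimness bound for $T_1$ feeds into the analysis of $T_2$ via the shared diagonal. If instead $C_i = Z$, then one of the sides incident to $\pi_Z(a)$ (which is already on $Z$) together with convexity and the closest-point-projection property forces $\pi_Z(a)$ and $\pi_Z(b)$ to be within the asserted bound $\phi(\nu')+2\nu+3\nu'$, giving the first alternative.

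\textbf{Assembling the dichotomy.} Working from $T_1$ outward: either we land in the ``$C_1=Z$ forces the projections close'' case (done), or $T_1$ is $\nu'$--slim (where $\nu'$ covers both the genuinely-thin subcase and the $C_1\ne Z$ subcase). Given $T_1$ is $\nu'$--slim, I then run the same analysis on $T_2$, where now the relevant coarse-intersection / thinness estimate is taken at scale $\nu'$ rather than $\nu$ (since points on the diagonal $[\pi_Z(a),b]$ are only controlled up to $\nu'$), yielding either the projection bound or that $T_2$ is $\nu''$--slim with $\nu'' = 4\nu+2\phi(\nu')$. Combining: each side of $Q$ lies in the $(\nu'+\nu'')$--neighborhood of the union of the others, so $Q$ is $\Delta(\nu,\phi) = \nu'+\nu''$--slim.

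\textbf{Main obstacle.} The delicate part is the case analysis when a triangle is $\nu$--thin relative to some $C_i$ and ruling in/out $C_i = Z$: one must carefully use that $\pi_Z$ is the closest-point projection (so the angle at $\pi_Z(a)$ between $[\pi_Z(a),a]$ and $Z$ is at least $\pi/2$, hence $[\pi_Z(a),a]$ leaves a neighborhood of $Z$ quickly) to argue that the fat part of the side from $\pi_Z(a)$ cannot be long and near $Z$ unless the whole projection segment is short. Getting the constants to match $\phi(\nu')+2\nu+3\nu'$ exactly requires bookkeeping the $\nu$--neighborhood errors from relative thinness against the $\phi$--bounds from bounded coarse intersection, but this is routine once the geometric picture is set up correctly.
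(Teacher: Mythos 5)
You use the same decomposition as the paper (a diagonal cutting $Q$ into two triangles; up to swapping $a$ and $b$ your diagonal is the mirror of theirs), but the mechanics of your case analysis have a genuine gap, and it sits exactly at the hard point of the lemma. For $T_1=\Delta(\pi_Z(a),a,b)$ you claim that when $T_1$ is $\nu$--thin relative to some $C_1\neq Z$, the coarse-intersection bound with $Z$ caps its fat part by roughly $\phi(\nu)$. But no side of $T_1$ lies in, or is a priori near, $Z$: its sides are the projection leg $[\pi_Z(a),a]$, the free side $[a,b]$, and the diagonal, and the only contact with $Z$ is the single point $\pi_Z(a)$. Nothing prevents $[a,b]$ and the diagonal from making a long common excursion through $C_1$ (think of $C_1$ a large flat crossed by both), so the fat part of $T_1$ admits no bound in terms of $\nu$ alone and the claimed $\nu'$--slimness of $T_1$ is unjustified. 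Your other branch is also off: if $C_1=Z$, relative thinness together with the closest-point property only says the fat part of the leg $[\pi_Z(a),a]$ has length at most $\nu$ (a point of that leg at distance $t$ from $\pi_Z(a)$ is at distance exactly $t$ from $Z$), so by Lemma~\ref{lem:fatpart bounds thinness} the triangle is $4\nu$--thin; it says nothing about $d(\pi_Z(a),\pi_Z(b))$, so the first alternative of the dichotomy cannot come from this case, and as written your argument has no mechanism that ever produces it.

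What is actually needed, and what the paper does, is to analyze the triangle containing the $Z$--side \emph{first}. The triangle with vertices $\pi_Z(a),\pi_Z(b)$ and one of $a,b$ is unconditionally $\nu'$--thin: relative to $Z$ one uses the projection leg as above, and relative to $W\neq Z$ one uses that $[\pi_Z(a),\pi_Z(b)]\subseteq Z$, so its fat part has diameter at most $\phi(\nu)$. (In your labelling this easy triangle is $T_2$, which moreover contains a projection leg, so it needs no ``scale $\nu'$'' input from $T_1$ at all --- your ordering is inverted.) This thinness plus the closest-point property give a Gromov product bound of $\nu'$ at the projection vertex, which shows the diagonal lies in $N_{\nu'}(Z)$ except for an initial or terminal piece of length about $\nu'$. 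Only with that control in hand can one treat the triangle containing $[a,b]$: in its relatively-thin-with-respect-to-$V\neq Z$ case one splits according to where the central tripod point of the first triangle sits on the diagonal (corner segment at the far vertex, fat part, or corner segment at the projection vertex). One position caps the fat part by $\phi(\nu')$ --- note $\phi(\nu')$, not $\phi(\nu)$, which is precisely why $\nu''$ involves $\phi(\nu')$ --- and the other two positions contradict the assumed lower bound on $d(\pi_Z(a),\pi_Z(b))$; that contradiction is where the ``projections close'' alternative genuinely arises. Your proposal is missing this transfer of information across the diagonal and the accompanying case analysis, so the central step of the proof is not there.
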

  \begin{proof}
    We assume that \begin{equation}\label{eq:projectionsfar} d(\pi_Z(a),\pi_Z(b)) > \phi(\nu')+\nu+3\nu'.\end{equation}

    We divide $Q$ by a diagonal into an ``upper'' triangle with vertices $a,b,\pi_Z(b)$ and a ``lower'' triangle with vertices $a,\pi_Z(a),\pi_Z(b)$.  See Figure~\ref{fig:slimquad}.
    \begin{figure}[htbp]
      \centering
      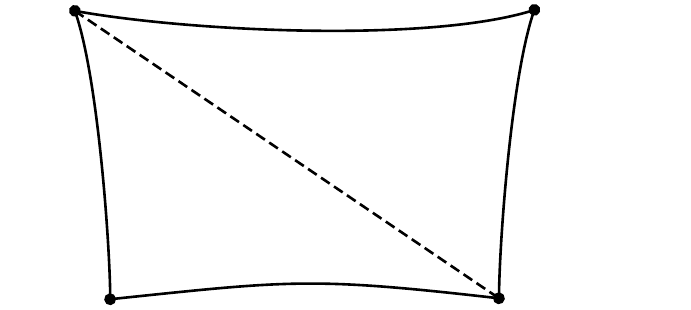
      \caption{Two points and their projections to $Z$ form the vertices of a slim quadrilateral.}
      \label{fig:slimquad}
    \end{figure}

  We show that $Q$ is $\Delta(\nu,\phi)$--slim by showing that the lower triangle is $\nu'$--thin and the upper triangle is $\nu''$--thin.

  We examine the lower triangle first.  We may suppose it is not $\nu$--thin.  There is therefore some $W\in \mc{C}$ so that it is $\nu$--thin relative to $W$.

  If $W=Z$, we argue as follows.  Let $a_1$ be the point of the fat part of $[a,\pi_Z(a)]$ closest to $a$.  Since $d(a_1,Z)\le \nu$, the segment $[a_1,\pi_Z(a)]$ can have length at most $\nu$.  The segment $[a_1,\pi_Z(a)]$ contains the $\nu$--fat part of $[a,\pi_Z(a)]$, and so the lower triangle is $4\nu$--thin by Lemma~\ref{lem:fatpart bounds thinness}.

  In case $W\ne Z$, we argue that the $\nu$--fat part of $[\pi_Z(a),\pi_Z(b)]$ is bounded.  Indeed, the entire segment is contained in $Z$, so the part which is also in a $\nu$--neighborhood of $W$ has diameter bounded by $\phi(\nu)$.  Since the $\nu$--fat part of one side of the lower triangle has length at most $\phi(\nu)$, Lemma~\ref{lem:fatpart bounds thinness} shows that the lower triangle is $2(\phi(\nu)+\nu)$--thin.  In either case, we conclude that the lower triangle is $4\nu+2\phi(\nu)$--thin.

  We now show the upper triangle is $\nu''$--thin.  Again, we may assume that the upper triangle is not $\nu$--thin, so it is $\nu$--thin relative to some $V\in \mc{C}$.  

  If $V=Z$, then we argue as for the lower triangle, concluding that the $\nu$--fat part of the side $[b,\pi_Z(b)]$ must have length at most $\nu$, so the upper triangle is $4\nu$--thin by Lemma~\ref{lem:fatpart bounds thinness}.

  If $V\ne Z$, we argue as follows.  We first note something about the lower triangle:  Since $[\pi_Z(a),\pi_Z(b)]$ lies entirely in $Z$, and $\pi_Z(a)$ is the closest point of $Z$ to $a$,
  the Gromov product $\gprod{a}{\pi_Z(b)}{\pi_Z(a)}$ is at most $\nu'$.  Let $x$ be the point on the diagonal $[a,\pi_Z(b)]$ which is in the preimage of the central point of the comparison tripod for the lower triangle.  The segment $[x,\pi_Z(b)]$ lies in $N_{\nu'}(Z)$ and has length in the interval $[d(\pi_Z(a),\pi_Z(b))-\nu',d(\pi_Z(a),\pi_Z(b))]$.

  Now consider the location of $x$ in the \emph{upper} triangle.  There are three cases, depending on whether $x$ is in the $\nu$--corner segment at $a$, the $\nu$--fat part, or the $\nu$--corner segment at $\pi_Z(b)$.

  Suppose $x$ lies in the $\nu$--corner segment at $a$.  Then the $\nu$--fat part of the upper triangle meets the diagonal in a subsegment of $[x,\pi_Z(b)]\subseteq N_{\nu'}(Z)$.  Since $V\ne Z$, this subsegment has length at most $\phi(\nu')$.  By Lemma~\ref{lem:fatpart bounds thinness}, the upper triangle is $\nu'' = (2\nu+2\phi(\nu'))$--thin.

  Suppose next that $x$ lies in the $\nu$--fat part of the upper triangle.  This $\nu$--fat part can only extend $\phi(\nu')$ past $x$.  Consider the point $w$ on the $\nu$--fat part of $[b,\pi_Z(b)]$ closest to $\pi_Z(b)$.  The corresponding point on $[a,\pi_Z(b)]$ is within $\nu'$ of $[\pi_Z(a),\pi_Z(b)]\subseteq Z$ (and exactly $\nu$ away from $w$), so $d(w,\pi_Z(b))=d(w,Z)\le \nu+\nu'$.  This is also the distance from the $\nu$--fat part of $[a,\pi_Z(b)]$ to $\pi_Z(b)$, so we have $d(x,\pi_Z(b))\le \phi(\nu')+\nu+\nu'$.  Since $d(x,\pi_Z(a))\le 2\nu'$, we have $d(\pi_Z(a),\pi_Z(b))\le \phi(\nu')+\nu+3\nu'$, a contradiction to \eqref{eq:projectionsfar}.

  Finally suppose $x$ lies in the corner segment of the upper triangle adjacent to $\pi_Z(b)$.  Let $x'$ be the point on $[\pi_Z(a),\pi_Z(b)]$ which is in the preimage of the central point of the comparison tripod for the lower triangle.  The lower triangle is $\nu'$--thin, so $d(x,x')\le\nu'$.  Thus the point $x'$ is at most $\nu+\nu'$ from the corner segment of $[b,\pi_Z(b)]$ adjacent to $\pi_Z(b)$.  Since $d(x',b)\ge d(\pi_Z(b),b)$, this corner segment must have length at most $\nu+\nu'$.  Since $d(x',a)\ge d(\pi_Z(a),a)$ and the lower triangle is $\nu'$--thin, the distance from $\pi_Z(a)$ to $x'$ is at most $\nu'$.  We conclude that $d(\pi_Z(a),\pi_Z(b))\le d(\pi_Z(a),x')+d(x',\pi_Z(b))\le \nu + 2\nu'$, again contradicting \eqref{eq:projectionsfar}.
  \end{proof}

\begin{corollary} \label{cor:bounded_proj}   
Suppose that $M$ is complete and \CAT$(0)$, $\mc{C}$ is a collection of convex subspaces, and $(M,\mc{C})$ is $(\nu,\phi)$--relatively hyperbolic.  There exists a constant $C(\nu,\phi)$, depending only on $\nu$ and $\phi$ so that for any distinct $Z,Z' \in \mc{C}$ we have
\[	\diam(\pi_Z(Z')) \le C(\nu,\phi)	.	\]
\end{corollary}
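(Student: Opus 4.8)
The plan is to derive this from Lemma~\ref{lem:projection quad} together with the coarse-intersection clause in the definition of a $(\nu,\phi)$--relatively hyperbolic pair. Fix distinct $Z,Z'\in\mc{C}$; since $\diam(\pi_Z(Z'))$ is the supremum of $d(\pi_Z(a),\pi_Z(b))$ over $a,b\in Z'$, it suffices to bound this distance for arbitrary $a,b\in Z'$ by a constant depending only on $\nu$ and $\phi$. I would apply Lemma~\ref{lem:projection quad} to $Z$, $a$, $b$, writing $\nu',\nu'',\Delta(\nu,\phi)$ for the quantities appearing there. In the first alternative of that lemma we get $d(\pi_Z(a),\pi_Z(b))\le\phi(\nu')+2\nu+3\nu'$ and are done, so the substantive case is when the quadrilateral $Q$ with vertices $\pi_Z(a),a,b,\pi_Z(b)$ is $\Delta(\nu,\phi)$--slim. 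Set $\Delta=\Delta(\nu,\phi)$ and $\gamma=[\pi_Z(a),\pi_Z(b)]$; note $\gamma\subseteq Z$ by convexity, and by $\Delta$--slimness every point of $\gamma$ lies within $\Delta$ of one of the three sides $[\pi_Z(a),a]$, $[a,b]$, $[b,\pi_Z(b)]$.

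The key observation I would use is an elementary ``corner estimate'': for $p$ on the geodesic $[a,\pi_Z(a)]$ and any $z\in Z$, the triangle inequality gives $d(p,z)\ge d(a,z)-d(a,p)\ge d(a,\pi_Z(a))-d(a,p)=d(p,\pi_Z(a))$, so $\pi_Z(a)$ is a nearest point of $Z$ to $p$ as well. Hence a point $q\in\gamma\subseteq Z$ that lies within $\Delta$ of some $p\in[a,\pi_Z(a)]$ satisfies $d(p,\pi_Z(a))\le\Delta$, whence $d(q,\pi_Z(a))\le 2\Delta$; symmetrically, a point of $\gamma$ within $\Delta$ of $[b,\pi_Z(b)]$ lies within $2\Delta$ of $\pi_Z(b)$. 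The only remaining option for a point $q\in\gamma$ is to lie within $\Delta$ of $[a,b]\subseteq Z'$; then $q\in N_\Delta(Z)\cap N_\Delta(Z')$, and since $Z\ne Z'$ this set has diameter at most $\phi(\Delta)$ by the coarse-intersection clause of the definition.

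To finish, I would parametrize $\gamma$ by arc length: the points within $2\Delta$ of $\pi_Z(a)$ form an initial subsegment of length $\le 2\Delta$, those within $2\Delta$ of $\pi_Z(b)$ form a terminal subsegment of length $\le 2\Delta$, and, by the previous paragraph, every point of $\gamma$ in neither subsegment lies within $\Delta$ of $Z'$, hence in a subset of $\gamma$ of diameter $\le\phi(\Delta)$. Since these three subsets cover $\gamma$, its length $d(\pi_Z(a),\pi_Z(b))$ is at most $4\Delta+\phi(\Delta)$. The corollary then holds with $C(\nu,\phi)=\max\{\phi(\nu')+2\nu+3\nu',\,4\Delta(\nu,\phi)+\phi(\Delta(\nu,\phi))\}$, a quantity depending only on $\nu$ and $\phi$. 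I expect no real obstacle here: the only point requiring thought is the corner estimate, and it is immediate from the triangle inequality; the rest is bookkeeping with Lemma~\ref{lem:projection quad} and the definition of a relatively hyperbolic pair.
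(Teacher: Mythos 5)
Your proof is correct and takes essentially the same route as the paper: apply Lemma~\ref{lem:projection quad} to $a,b\in Z'$, and in the slim case cover the geodesic $[\pi_Z(a),\pi_Z(b)]\subseteq Z$ by the points near the two sides $[a,\pi_Z(a)]$, $[b,\pi_Z(b)]$ (which stay close to the corners $\pi_Z(a)$, $\pi_Z(b)$) and the points near $[a,b]\subseteq Z'$ (whose diameter is bounded by $\phi(\Delta(\nu,\phi))$ via the coarse-intersection clause). Your explicit triangle-inequality corner estimate simply spells out the step the paper attributes to \CAT$(0)$ geometry, at the harmless cost of getting $4\Delta(\nu,\phi)+\phi(\Delta(\nu,\phi))$ instead of $2\Delta(\nu,\phi)+\phi(\Delta(\nu,\phi))$.
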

\begin{proof}
Let $\nu', \nu''$ and $\Delta(\nu,\phi)$ be as in Lemma~\ref{lem:projection quad}. We set $C(\nu,\phi) = \max\{ \phi(\nu') + 2\nu + 3\nu', 2\Delta(\nu,\phi) + \phi(\Delta(\nu,\phi)) \}$.

Fix $a,b \in Z'$.  We have to prove that $d(\pi_Z(a),\pi_Z(b)) \le C(\nu,\phi)$.  Let $Q$ be the geodesic quadrilateral with vertices $\pi_Z(a),a,b,\pi_Z(b)$.  By Lemma~\ref{lem:projection quad} either $d(\pi_Z(a),\pi_Z(b)) \le \phi(\nu') + 2\nu + 3\nu'$ or $Q$ is $\Delta(\nu,\phi)$--slim.  

In the first case, we are done.  Thus, suppose that $Q$ is $\Delta(\nu,\phi)$--slim.  Therefore, the geodesic $\pi_Z(a),\pi_Z(b)$ is contained in the $\Delta(\nu,\phi)$--neighborhood of the other three sides.  
Using \CAT$(0)$ geometry, we see that the only points on $[\pi_Z(a),\pi_Z(b)]$ which lie within $\Delta(\nu,\phi)$ of $[a,\pi_Z(a)]$ lie within $\Delta(\nu,\phi)$ of $\pi_Z(a)$.  Similarly, the only points on $[\pi_Z(a),\pi_Z(b)]$ lying within $\Delta(\nu,\phi)$ of $[b,\pi_Z(b)]$ lie within $\Delta(\nu,\phi)$ of $\pi_Z(b)$.  Since $[a,b] \subseteq Z'$, the diameter of the set of points on $[\pi_Z(a),\pi_Z(b)]$ lying within $\Delta(\nu,\phi)$ of $[a,b]$ is at most $\phi(\Delta(\nu,\phi))$.  Thus, in case $Q$ is $\Delta(\nu,\phi)$--slim, we have 
\[	d(\pi_Z(a),\pi_Z(b)) \le 2\Delta(\nu,\phi) + \phi(\Delta(\nu,\phi)),	\]
as required.
\end{proof}

The following is a strengthened version of the last assertion in \cite[Proposition 5.3, p.23]{Einstein-Hierarchies}.
\begin{proposition}\label{prop:peripheral}
There exists ${\eta}$ so that for each $P \in \mc{P}$ there exists a convex ${\eta}$--super-attractive $P$--invariant sub-complex $Y_P \subseteq X$ so that $Z_P \subseteq Y_P$ and $\leftQ{Y_P}{P}$ is compact. 
\end{proposition}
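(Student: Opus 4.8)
The plan is to enlarge the subcomplex $Z_P$ of Theorem~\ref{th:SW core} to a bounded neighbourhood of itself and then take the combinatorial convex hull. Fix $\Delta:=\Delta(\delta,f_1)$ and $C:=C(\delta,f_1)$, the constants furnished by Lemma~\ref{lem:projection quad} and Corollary~\ref{cor:bounded_proj} applied to the $(\delta,f_1)$--relatively hyperbolic pair $(X,\mc{Z})$ of Corollary~\ref{cor:RHP}, and set $\rho:=\Delta+1$. For each $P\in\mc{P}$ define
\[
  Y_P \ :=\ \bigcap\bigl\{\,\mathfrak{h}\ :\ \mathfrak{h}\text{ a side of a hyperplane }H\text{ of }X,\ Z_P\subseteq\mathfrak{h},\ d(H,Z_P)\ge\rho\,\bigr\}.
\]
As an intersection of sides of hyperplanes, $Y_P$ is a convex subcomplex; it is $P$--invariant since the defining family of halfspaces is (because $Z_P$ is), and $Z_P\subseteq Y_P$ since $Z_P$ lies in every $\mathfrak{h}$ in that family. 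One also checks that every point $p\in X$ with $d(p,Z_P)<\rho$ lies in $Y_P$: a hyperplane $H$ with $d(H,Z_P)\ge\rho$ cannot separate such a $p$ from $Z_P$, since $[p,\pi_{Z_P}(p)]$ would then meet $H$ at a point within $d(p,Z_P)<\rho$ of $Z_P$. Two things remain: that $\leftQ{Y_P}{P}$ is compact, and that $Y_P$ is $\eta$--super-attractive for a constant $\eta$ not depending on $P$.

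For compactness I would prove $Y_P\subseteq N_{\rho'}(Z_P)$ for a constant $\rho'$ depending only on $\delta$, $f_1$ and $n:=\dim X$; granting this, $Y_P$ is a closed $P$--invariant subset of $N_{\rho'}(Z_P)$, and $N_{\rho'}(Z_P)$ is $P$--cocompact because $Z_P$ is $P$--cocompact and $X$ is proper (the $G$--action being proper and cocompact), so $\leftQ{Y_P}{P}$ is compact. To bound $Y_P$, take a vertex $v$ with $D:=d(v,Z_P)$ large, put $w:=\pi_{Z_P}(v)$ and $\sigma:=[w,v]$; since $\sigma$ realizes the distance to $Z_P$ we have $d(\sigma(t),Z_P)=t$ for all $t\in[0,D]$. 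It suffices to produce a hyperplane separating $v$ from $Z_P$ that is at distance $\ge\rho$ from $Z_P$, for such a hyperplane certifies $v\notin Y_P$. Every hyperplane crossed by $\sigma$ separates $v$ from $Z_P$; let $H$ be the one crossed last, at $\sigma(t_0)$ with $t_0\ge D-\sqrt n$ (consecutive crossings along $\sigma$ are at distance at most $\sqrt n$), so $\sigma(t_0)\in H$ and $d(\sigma(t_0),Z_P)=t_0\ge D-\sqrt n$. I claim $d(H,Z_P)\ge\rho$ once $D$ is large enough: if some $x\in H$ had $d(x,Z_P)<\rho$, then $[\sigma(t_0),x]\subseteq H$ since hyperplanes are convex, and I would analyse the geodesic triangle $\Delta(w,\sigma(t_0),x)$. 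By Corollary~\ref{cor:RT} it is $\delta$--thin, or $\delta$--thin relative to some $Z'\in\mc{Z}$. In the first case, and in the second case when $Z'=Z_P$, the $\delta$--fat part of the side $\sigma|_{[0,t_0]}$ has length at most $\delta$ because $d(\sigma(t),Z_P)=t$; when $Z'\ne Z_P$, the $\delta$--fat part of the side $[w,x]\subseteq\overline{N_\rho(Z_P)}$ has length at most $f_1(\rho)$, by the definition of a $(\delta,f_1)$--relatively hyperbolic pair. Either way Lemma~\ref{lem:fatpart bounds thinness} bounds the thinness of the triangle by a constant, so $\sigma|_{[0,t_0]}$ runs within a constant of $[w,x]\cup[x,\sigma(t_0)]$; combined with $x$ lying near $Z_P$ and one further application of Lemma~\ref{lem:projection quad} (to bound the diameter of the projection of $H$ to $Z_P$, hence $d(\sigma(t_0),Z_P)$) this forces $t_0$ to be bounded by a constant, contradicting $t_0\ge D-\sqrt n$ for large $D$.

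For super-attractiveness, set $\eta:=\Delta+\rho'+C$, which depends only on $\delta$, $f_1$, $n$ and the constant $r_0$ of Corollary~\ref{cor:RHP}, hence not on $P$. Given $K\ge0$ and $a,b\in N_K(Y_P)\subseteq N_{K+\rho'}(Z_P)$, apply Lemma~\ref{lem:projection quad} to the geodesic quadrilateral with vertices $\pi_{Z_P}(a),a,b,\pi_{Z_P}(b)$. Either $d(\pi_{Z_P}(a),\pi_{Z_P}(b))\le C$, whence $d(a,b)\le 2(K+\rho')+C\le 2(\eta+K)$ and the initial and terminal segments of $[a,b]$ of length $\eta+K$ already cover $[a,b]$; or the quadrilateral is $\Delta$--slim, so $[a,b]$ lies in the $\Delta$--neighbourhood of $[a,\pi_{Z_P}(a)]\cup[\pi_{Z_P}(a),\pi_{Z_P}(b)]\cup[\pi_{Z_P}(b),b]$. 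In the latter case, a point $p\in[a,b]$ with $d(p,a)>\eta+K$ and $d(p,b)>\eta+K$ lies within $\Delta$ of neither corner segment $[a,\pi_{Z_P}(a)]$ nor $[\pi_{Z_P}(b),b]$ (each has length at most $K+\rho'\le\eta+K-\Delta$), so it lies within $\Delta$ of $[\pi_{Z_P}(a),\pi_{Z_P}(b)]\subseteq Z_P$, giving $d(p,Z_P)\le\Delta<\rho$ and hence $p\in Y_P$. Thus in either case every point of $[a,b]$ outside its initial and terminal segments of length $\eta+K$ lies in $Y_P$, which is the required super-attractiveness.

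The delicate point is the compactness claim $Y_P\subseteq N_{\rho'}(Z_P)$: in a general \CAT$(0)$ cube complex the combinatorial convex hull of a bounded neighbourhood of a convex subcomplex can be much larger than that neighbourhood, and it is precisely the relative hyperbolicity of $(X,\mc{Z})$---entering through Lemma~\ref{lem:projection quad}, Corollary~\ref{cor:bounded_proj} and ultimately the relatively-thin-triangle property of Corollary~\ref{cor:RT}---that prevents a hyperplane separating $v$ from $Z_P$ from bending back to within $\rho$ of $Z_P$, thereby confining $Y_P$ to a bounded neighbourhood of $Z_P$.
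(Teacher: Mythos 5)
Your definition of $Y_P$ (an intersection of halfspaces coming from hyperplanes far from $Z_P$) and your super-attractiveness argument are fine, and the latter is essentially the paper's own argument: the paper takes $Y_P=\operatorname{Hull}(N_{\Delta_0}(Z_P))$ and runs the same slim-quadrilateral dichotomy from Lemma~\ref{lem:projection quad}, using that $Y_P$ contains a $\Delta_0$--neighborhood of $Z_P$. The gap is in the step you yourself call delicate, the containment $Y_P\subseteq N_{\rho'}(Z_P)$, which is the real content of the proposition (the paper obtains it by adapting the argument at the beginning of Section 5 of \cite{SageevWise15}). First, your assertion that every hyperplane crossed by $\sigma=[\pi_{Z_P}(v),v]$ separates $v$ from $Z_P$ is unproved: this is standard for combinatorial gates and combinatorial geodesics, but for the CAT$(0)$ projection and CAT$(0)$ geodesic it needs an argument, and in any case your contradiction argument cannot assume $H\cap Z_P=\emptyset$, since that is essentially what is to be shown. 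More seriously, the contradiction itself does not close. Taking $H$ to be the \emph{last} hyperplane crossed gives no control on the angle at which $\sigma$ meets $H$: a geodesic can cross a hyperplane arbitrarily shallowly and stay in a uniform neighborhood of it for an arbitrarily long time. So all that the (relative) thinness of $\Delta(w,\sigma(t_0),x)$ yields is that a long terminal portion of $\sigma$ lies in a bounded neighborhood of $H$ --- which is not absurd: nothing you have said prevents $H$ from crossing $\sigma$ shallowly near $v$ and then fellow-travelling $\sigma$ back to within $\rho$ of $Z_P$. The proposed rescue, applying Lemma~\ref{lem:projection quad} ``to bound the diameter of the projection of $H$ to $Z_P$'', is not available: Lemma~\ref{lem:projection quad} and Corollary~\ref{cor:bounded_proj} bound projections of \emph{peripheral} complexes onto one another, using the bounded coarse intersection of distinct elements of $\mc{Z}$; a hyperplane is not an element of $\mc{Z}$, it may cross $Z_P$ (its intersection is then a hyperplane of $Z_P$) or run parallel to it, and its projection to $Z_P$ can be unbounded. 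Hence no bound on $t_0$ follows.

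The missing ingredient is exactly the angle control of Lemma~\ref{lem:sageevwise}: choose a hyperplane crossing $[w,v]$ near its \emph{midpoint} at angle at least $\theta_n$. If such an $H$ contained a point $x$ with $d(x,Z_P)<\rho$, then the geodesic $[q,x]\subseteq H$ issuing from the crossing point $q$ would, by Corollary~\ref{cor:RT} together with the bounded coarse intersection of peripherals, have to stay uniformly close to $\sigma$ for a length comparable to $d(v,Z_P)/2$; but in a CAT$(0)$ space two geodesics leaving a common point at angle at least $\theta_n$ diverge linearly (at rate $2\sin(\theta_n/2)$), so they cannot remain uniformly close for long, and this bounds $d(v,Z_P)$. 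That is the Sageev--Wise argument the paper cites for $\operatorname{Hull}(N_{\Delta_0}(Z))\subseteq N_S(Z)$; your halfspace definition of $Y_P$ would work just as well once boundedness is established this way, but as written your sketch does not establish it.
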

\begin{proof}
  Let $\mc{Z}$, $\delta$, and $f_1$ be as in the conclusions to Theorem~\ref{th:SW core} and Corollary~\ref{cor:RHP}.  That is, for each $P\in \mc{P}$, there is a convex $P$--invariant $P$--cocompact sub-complex $Z_P$ of $X$, and the family $\mc{Z}$ is the family of distinct translates of these $Z_P$ under the action of $G$.  Corollary~\ref{cor:RHP} says the pair $(X,\mc{Z})$ is a $(\delta,f_1)$--relatively hyperbolic pair.  
Let $\Delta_0 = \Delta(\delta,f_1)$ be the constant from Lemma~\ref{lem:projection quad}.
  
 If $E$ is a subset of a \CAT$(0)$ cube complex the \emph{combinatorial hull} $\operatorname{Hull}(E)$ is defined to be the intersection of the convex subcomplexes containing $E$.

The argument at the beginning of Section 5 of \cite{SageevWise15} can be adapted to show that there is an $S$ (depending only on $\dim(X)$, $\delta$, $f_1$ and $\Delta_0 = \Delta(\delta,f_1)$) so that
  \begin{equation}\label{eq:S} \operatorname{Hull}(N_{\Delta_0}(Z))\subseteq N_S(Z)
    \end{equation}
  for every $Z\in \mc{Z}$.  For $P\in \mc{P}$, we take $Y_P = \operatorname{Hull}(N_{\Delta_0}(Z_P))$.
  We set
  \begin{equation*}
    \eta = \frac{1}{2}\left(2S +f_1(\delta')+2\delta+3\delta'\right).
  \end{equation*}

  From now on we fix $P$, and set $Z=Z_P$, $Y=Y_P$.
  
  Let $K\ge 0$, and let $a,b\in N_K(Y)$.  By \eqref{eq:S}, $a,b\in N_{K+S}(Z)$.  Let $\pi_Z$ be the closest point projection to $Z$, and consider the geodesic quadrilateral $Q$ with vertices $\pi_Z(a),a,b,\pi_Z(b)$.

  Given Lemma~\ref{lem:projection quad}, we argue as follows.  In case
  \[d(\pi_Z(a),\pi_Z(b))\le f_1(\delta')+2\delta+3\delta',\]
  we have
  \begin{align*}
    d(a,b) & \le d(a,\pi_Z(a))+d(\pi_Z(a),\pi_Z(b))+d(\pi_Z(b),b)\\
           &  \le 2(K+S)+f_1(\delta')+2\delta+3\delta' \\
           & \le 2(K+\eta),
  \end{align*}
  and there is nothing to show.
  
  Otherwise, every point on $[a,b]$ is within $\Delta_0$ of some point on
  \[[a,\pi_Z(a)]\cup[\pi_Z(a),\pi_Z(b)]\cup[b,\pi_Z(b)].\]  Let $[a',b']\subseteq [a,b]$ be the subsegment beginning and ending on \[N_{\Delta_0}([a,\pi_Z(a)]\cup[b,\pi_Z(b)]).\]    The lengths of the remaining subsegments $[a,a']$  and $[b,b']$ are at most $K+S+\Delta_0$.
  Since $[\pi_Z(a),\pi_Z(b)]\subseteq Z$, the subsegment $[a',b']$ lies in $N_{\Delta_0}(Z)\subseteq Y$, as desired.
\end{proof}

\begin{definition} \label{def:fix (X,B)}
  For each $P\in \mc{P}$, fix a $P$--invariant sub-complex $Y_P$ of $X$ satisfying the conclusions of Proposition~\ref{prop:peripheral}.  Any sub-complex of $X$ of the form $gY_P$ for $g\in G, P\in \mc{P}$ is called a \emph{peripheral complex}.  We let $\mc{B}$ be the collection of all peripheral complexes in $X$.
\end{definition}

The following is an immediate consequence of Corollary~\ref{cor:RHP} and Lemma~\ref{lem:Haus RHP}.
\begin{lemma}[cf. {\cite[Proposition 5.3]{Einstein-Hierarchies}}]\label{lem:(X,B) RHP} 
Let $\delta$ be as in Corollary~\ref{cor:RT} and define the function $f \co \R_{\ge 0} \to \R_{\ge 0}$ by $f(x) = f_1(x+r_1)$, where $f_1$ is the function from Corollary~\ref{cor:RHP} and $r_1$ is the maximum diameter of the spaces $\leftQ{Y_P}{P}$ from Proposition~\ref{prop:peripheral}.  Then the pair $(X,\mc{B})$ is $(\delta,f)$--relatively hyperbolic.  
\end{lemma}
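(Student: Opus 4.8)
The plan is to deduce this directly from Corollary~\ref{cor:RHP} and Lemma~\ref{lem:Haus RHP}, taking $\mc{U} = \mc{Z}$ and $\mc{V} = \mc{B}$ there. The bijection $\rho \co \mc{Z} \to \mc{B}$ I would use sends $g Z_P$ to $g Y_P$, where $Y_P$ is the subcomplex fixed in Definition~\ref{def:fix (X,B)}; by $G$-equivariance together with the fact that both $Z_P$ and $Y_P$ are $P$-invariant, this is well defined on cosets, and it is surjective by the very definition of $\mc{B}$. (Alternatively, taking $Y_P = \operatorname{Hull}(N_{\Delta_0}(Z_P))$ exactly as in the proof of Proposition~\ref{prop:peripheral} makes $\rho$ the tautologically well-defined map $Z \mapsto \operatorname{Hull}(N_{\Delta_0}(Z))$, whose image is $\mc{B}$ by construction.)

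Next I would check the two-sided neighborhood bound $Z \subseteq \rho(Z) \subseteq N_{r_1}(Z)$ for each $Z = g Z_P \in \mc{Z}$. The inclusion $g Z_P \subseteq g Y_P$ is immediate from $Z_P \subseteq Y_P$ in Proposition~\ref{prop:peripheral}. For the reverse inclusion: $Z_P$ is a nonempty $P$-invariant subcomplex of $Y_P$ and $\leftQ{Y_P}{P}$ has diameter at most $r_1$, so given $y \in Y_P$ and any $z \in Z_P$ there is $p \in P$ with $d(y, pz) \le r_1$ and $pz \in Z_P$; hence $Y_P \subseteq N_{r_1}(Z_P)$, and applying the isometry $g$ gives $g Y_P \subseteq N_{r_1}(g Z_P)$. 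This same bound shows $\rho$ is injective: if $g Z_P$ and $g' Z_{P'}$ were distinct elements of $\mc{Z}$ with $g Y_P = g' Y_{P'}$, then $g Z_P$ and $g' Z_{P'}$ would each lie in the $r_1$-neighborhood of the other, so $N_{r_1}(g Z_P) \cap N_{r_1}(g' Z_{P'})$ would contain the infinite-diameter set $g Z_P$ (infinite diameter because $P$ is infinite and acts properly cocompactly on the connected subcomplex $Z_P$), contradicting clause (2) of the $(\delta, f_1)$-relatively hyperbolic condition for $(X,\mc{Z})$ supplied by Corollary~\ref{cor:RHP}.

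Finally, since Corollary~\ref{cor:RHP} gives that $(X,\mc{Z})$ is $(\delta, f_1)$-relatively hyperbolic, Lemma~\ref{lem:Haus RHP} applied with $\nu = \delta$, $\phi = f_1$, $r = r_1$ and the bijection $\rho$ above yields that $(X,\mc{B})$ is $(\delta, \phi')$-relatively hyperbolic with $\phi'(x) = f_1(x + r_1) = f(x)$, which is exactly the claim. The only step carrying any content is the verification that $\rho$ is a bijection — in particular that a subcomplex appearing in $\mc{Z}$ does not arise as $g Z_P$ for genuinely different data — and this is precisely where the bounded coarse intersection from Corollary~\ref{cor:RHP} enters; everything else is bookkeeping, so I do not expect a real obstacle here.
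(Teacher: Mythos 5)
Your proposal is correct and matches the paper's own treatment: the paper states this lemma as an immediate consequence of Corollary~\ref{cor:RHP} and Lemma~\ref{lem:Haus RHP}, applied via the correspondence $gZ_P \mapsto gY_P$ with $r = r_1$, which is exactly your argument. Your extra verifications (the containment $gY_P \subseteq N_{r_1}(gZ_P)$ and injectivity of $\rho$ via bounded coarse intersections) are just the bookkeeping the paper leaves implicit.
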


\medskip
\putinbox{For the remainder of this section and the next we fix the collection $\mc{B}$ of convex $\eta$--super-attractive peripheral complexes from Definition~\ref{def:fix (X,B)} so that $(X,\mc{B})$ is a $(\delta,f)$--relatively hyperbolic pair, for $\eta$ as in Proposition~\ref{prop:peripheral} and $\delta, f$ as in Lemma~\ref{lem:(X,B) RHP}.}
\medskip

\subsection{Augmented walls}
Up to the $G$--action, there are finitely many hyperplanes in $X$.  In this subsection we fix some such hyperplane $H$, and describe a partition $\mc{W}_H=\{W_H^+,W_H^-\}$ of $X^{(0)}$ associated to that hyperplane together with some extra data we now specify.

\begin{definition}
 Suppose that $H$ is a hyperplane in $X$.
Let $\Stab(H)$ be the stabilizer of $H$, and let $Y_1, \ldots , Y_k$ be representatives of the $\Stab(H)$--orbits of peripheral complexes that intersect $H$.  

A {\em broadcaster} for $H$ is a subgroup of $G$ of the form 
\[	B = \langle \Stab(H), B_1, \ldots, B_k \rangle,	\]
where for each $i$ the subgroup $B_i$ is a finite-index subgroup of $\Stab(Y_i)$.  Given a broadcaster $B$, the associated collection $B \cdot H$ of hyperplanes of $X$ is called the {\em scattering of $H$ by $B$}.  Given $\Rgen > 0$, we say that $B \cdot H$ is an {\em $\Rgen$--separated scattering} if any two carriers of distinct hyperplanes in $B \cdot H$ are distance at least $\Rgen$ from each other.
\end{definition}

We make use of the following result of Mart\'inez-Pedroza, which we have slightly rephrased.
\begin{theorem}\label{thm:eduardo}\cite[Theorem 1.1]{MP09}
  Let $A<G$ be relatively quasi-convex, and let $P \in \mc{P}$.  There is a finite set $F \subseteq P\setminus A$ so that if $\dot{P}<P$ satisfies $A\cap P\subseteq \dot{P}\subseteq P\setminus F$, then:
  \begin{enumerate}
    \item $\widehat{A}:=\langle A,\dot{P}\rangle$ is relatively quasi-convex;
    \item $\widehat{A}= A\ast_{A\cap P}\dot{P}$; and
    \item\label{paraboliccontrol} Every parabolic subgroup of $\widehat{A}$ is $\widehat{A}$--conjugate either to a subgroup of $\dot{P}$ or a subgroup of $A$.
  \end{enumerate}
\end{theorem}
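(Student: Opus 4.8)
The plan is to pass to a hyperbolic model space, establish the statement there as a combination theorem, and read off the three conclusions. Work in the Groves--Manning cusped space $X=X(G,\mc{P})$, a proper $\delta$--hyperbolic space on which $G$ acts properly, with the left cosets of the peripheral subgroups coned into combinatorial horoballs $\{\mc{H}_{gP'}\}$; fix a basepoint $x_0$ in the embedded copy of $G$. By Hruska's characterization, relative quasi-convexity of $A$ is equivalent to the orbit $A\cdot x_0$ being $\kappa$--quasi-convex in $X$ for some $\kappa$, and the maximal parabolic subgroups of $A$ are exactly the infinite stabilizers $\Stab_A(\mc{H})$ of horoballs $\mc{H}$ into which $A\cdot x_0$ penetrates arbitrarily deeply. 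Since $1\in A$, the horoball $\mc{H}_P$ over the coset $P$ has $\Stab_A(\mc{H}_P)=A\cap P$, and $A\cdot x_0$ penetrates $\mc{H}_P$ arbitrarily deeply precisely when $A\cap P$ is infinite.

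First I would fix $F$. The local-to-global criterion used below comes with a length threshold $L_0=L_0(\delta,\kappa)$; since $P$ is finitely generated and the $X$--length of the geodesic excursion $[x_0,px_0]$ into $\mc{H}_P$ grows without bound with the word-length $|p|_P$, there is $R_1$ so that $|p|_P>R_1$ forces this excursion to have length exceeding $L_0$, and we set $F=\{p\in P\setminus(A\cap P)\mid |p|_P\le R_1\}$, a finite set. Fix $\dot P$ with $A\cap P\le\dot P\subseteq P\setminus F$, so every $p\in\dot P\setminus(A\cap P)$ satisfies $|p|_P>R_1$. Now let $w=a_0p_1a_1\cdots p_na_n$ be a nontrivial reduced word in the abstract amalgam $A\ast_{A\cap P}\dot P$, so $p_i\in\dot P\setminus(A\cap P)$ and the intermediate syllables $a_i\in A\setminus(A\cap P)$. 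Build a path $\gamma_w$ in $X$ from $x_0$ to $w\cdot x_0$ by tracking the syllables of $w$: alternate geodesic segments lying in translates of a fixed $A$--invariant quasi-convex hull of $A\cdot x_0$ (one for each $a_i$) with geodesic segments each of which makes a single deep excursion into a translate of $\mc{H}_P$ (one for each $p_i$). The reducedness condition $a_i\notin A\cap P=\Stab_A(\mc{H}_P)$ is precisely the statement that consecutive deep excursions run into \emph{distinct} $G$--translates of $\mc{H}_P$ (the excursions for $p_i$ and $p_{i+1}$ land in $a_0p_1\cdots a_{i-1}\mc{H}_P$ and $a_0p_1\cdots a_i\mc{H}_P$, which differ because $a_i\notin\Stab_G(\mc{H}_P)=P$), and distinct horoballs in $X$ have bounded coarse intersection. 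Combining this with quasi-convexity of the hull and the bounded coset penetration of the horoballs, one verifies the hypotheses of the standard local-to-global criterion for quasi-geodesics (a concatenation of sufficiently long geodesic segments that turn by a definite amount is a uniform quasi-geodesic), concluding that $\gamma_w$ is a $(\lambda,c)$--quasi-geodesic for constants depending only on $\delta$ and $\kappa$. In particular $\gamma_w$ is not a loop, so $w\neq 1$ in $G$; this yields conclusion (2), $\widehat A=\langle A,\dot P\rangle\cong A\ast_{A\cap P}\dot P$, and realizes every element of $\widehat A$ at the endpoint of a special quasi-geodesic of this type.

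Conclusion (1) is then immediate: any two points of $\widehat A\cdot x_0$ are joined by such a uniform quasi-geodesic, which fellow-travels the $X$--geodesic between them, so $\widehat A\cdot x_0$ is quasi-convex in $X$ and $\widehat A$ is relatively quasi-convex by Hruska's criterion. For (3), since any parabolic subgroup of $\widehat A$ is contained in a maximal one, it suffices to treat a maximal parabolic $Q=\widehat A\cap\Stab_G(\mc{H})$, where $\mc{H}$ is a horoball into which $\widehat A\cdot x_0$ penetrates deeply. Reading off the special quasi-geodesics, any such $\mc{H}$ is either an $\widehat A$--translate of a horoball already entered deeply by $A\cdot x_0$, in which case $Q$ is $\widehat A$--conjugate into $A$; or an $\widehat A$--translate of $\mc{H}_P$, in which case $Q$ is $\widehat A$--conjugate to $\widehat A\cap P$, which equals $\dot P$ by a short normal-form argument (a reduced word of length $\ge 2$ is carried by one of the above quasi-geodesics, which after leaving $\mc{H}_P$ either returns to the hull of $A$ or enters a different horoball, hence does not terminate near $P$). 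Bass--Serre theory of the splitting $A\ast_{A\cap P}\dot P$ can be used to organize this bookkeeping; alternatively the entire argument can be carried out in a fine hyperbolic graph in the sense of Bowditch, which is closer to Mart\'inez-Pedroza's own approach.

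The main obstacle, as is typical for combination theorems, is the local-to-global quasi-geodesic estimate --- concretely, verifying the ``definite turn'' (bounded projection) condition at the concatenation points, with constants $\lambda,c$ uniform over all reduced words $w$ and over all admissible $\dot P$. This is exactly where relative quasi-convexity of $A$ and the combinatorial horoball geometry are both essential, and where the necessity of deleting a finite set $F\subseteq P$ becomes visible; some additional care is needed to ensure $F$ depends only on $A$ and $P$, and not on $\dot P$.
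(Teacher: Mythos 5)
First, note that the paper does not prove this statement at all: it is quoted verbatim from Mart\'inez-Pedroza \cite[Theorem 1.1]{MP09} (with a slight rephrasing), so there is no internal proof to compare against; your attempt has to be judged as a reconstruction of that combination theorem. As such, your overall strategy (cusped space, Hruska's characterization of relative quasi-convexity via quasi-convex orbits, paths tracking amalgam normal forms, a local-to-global argument, then reading off conclusions (1)--(3)) is the standard shape of such arguments and is compatible with Mart\'inez-Pedroza's approach in spirit, though he works with relative Cayley graph/BCP techniques rather than horoball depth in the cusped space.

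However, there is a genuine gap exactly at the point you defer to ``the standard local-to-global criterion.'' Your claim is that reducedness ($a_i\notin A\cap P$) forces consecutive deep excursions into \emph{distinct} translates of $\mc{H}_P$, and that this plus bounded coarse intersection of distinct horoballs and quasi-convexity of the hull of $A\cdot x_0$ yields a uniform ``definite turn'' at each concatenation point. That is not sufficient: when $A\cap P$ is infinite, the hull of $A\cdot x_0$ itself penetrates $\mc{H}_P$ arbitrarily deeply, so the segment for a syllable $a_i\in A\setminus P$ (for instance $a_i=b_ic_i$ with $b_i\in A\cap P$ enormous) can immediately re-enter the \emph{same} horoball that the preceding $p_i$--excursion just left, and the naive concatenated path for an arbitrary reduced word is then very far from a uniform quasi-geodesic --- its constants would depend on $w$, which destroys both the injectivity argument for (2) and the uniform quasi-convexity needed for (1). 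Repairing this requires genuinely more than your choice of $F$ as a word-length threshold for the local-to-global criterion: one must either pass to carefully chosen normal forms (shortest coset representatives of $A\cap P$ in $A$ and in $\dot{P}$, so that syllable segments do not backtrack into the peripheral coset), or run an induction on syllable length with Gromov product/BCP estimates as Mart\'inez-Pedroza does, and in either case the finite set $F$ has to be calibrated against the quasi-convexity constant of $A$ and the coset-penetration constants, not merely against the segment-length threshold $L_0$. You honestly flag the bounded-projection verification as ``the main obstacle,'' but that verification \emph{is} the content of the theorem, and the specific mechanism you propose for it (distinctness of consecutive horoballs plus bounded overlap) does not address the backtracking of $A$--syllables into $\mc{H}_P$; conclusion (3), in particular the identification $\widehat{A}\cap P=\dot{P}$, also depends on this missing control.
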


We will apply Theorem~\ref{thm:eduardo} to hyperplane stabilizers.  This requires the (well-known) fact that they are relatively quasi-convex.
\begin{lemma}\label{lem:stabH_RQC}
  Let $H$ be a hyperplane of $X$.  Then $\Stab(H)$ is relatively quasi-convex in $(G,\mc{P})$.
\end{lemma}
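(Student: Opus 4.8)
The plan is to show that $\Stab(H)$ acts properly and cocompactly on the carrier of $H$, which is a convex subcomplex of $X$, and then invoke the fact that convex-cocompact subgroups of a group acting properly cocompactly on a $\CAT(0)$ cube complex over a relatively hyperbolic pair are relatively quasi-convex. First I would recall that the carrier $N(H)$ of a hyperplane $H$ is a convex subcomplex of $X$ (it is isomorphic to $H\times[0,1]$ with $H$ itself convex), so it is a $\CAT(0)$ cube complex in its own right with a convex, hence isometric, embedding into $X$. Since $G$ acts cocompactly on $X$ and there are only finitely many $G$--orbits of hyperplanes, the stabilizer $\Stab(H)$ acts cocompactly on $N(H)$; properness of the $G$--action on $X$ restricts to properness of the $\Stab(H)$--action on $N(H)$. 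Thus $\Stab(H)$ is a convex-cocompact (equivalently, undistorted and convex) subgroup of $G$.

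Next I would translate convex-cocompactness into relative quasi-convexity. Because $(X,\mc{B})$ is a $(\delta,f)$--relatively hyperbolic pair (Lemma~\ref{lem:(X,B) RHP}), $X$ with the peripheral subspaces $\mc{B}$ models the relatively hyperbolic structure on $(G,\mc{P})$: the orbit map $G\to X$ is a quasi-isometry sending left cosets of the $P\in\mc{P}$ to within bounded Hausdorff distance of the subspaces in $\mc{B}$, and relative quasi-convexity of a subgroup $A$ can be detected by the condition that a quasigeodesic in $X$ between two points of $A\cdot x$ stays in a bounded neighborhood of $A\cdot x$ union the peripheral subspaces it comes close to (this is Hruska's metric characterization, \cite{HruskaQC}). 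Since $\Stab(H)$ acts cocompactly on the convex subcomplex $N(H)$, the orbit $\Stab(H)\cdot x$ is Hausdorff-close to $N(H)$, and geodesics in $X$ between points of $N(H)$ lie in $N(H)$ by convexity; combined with the relatively thin triangles condition this immediately yields the relative quasi-convexity estimate. Alternatively, one can cite the general fact that a convex subcomplex of a $\CAT(0)$ cube complex on which a subgroup acts cocompactly gives a quasiconvex subgroup of the ambient group, and that in the relatively hyperbolic setting quasiconvexity of the orbit relative to the peripheral subspaces is exactly relative quasi-convexity.

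The main obstacle, such as it is, lies in making precise the last translation: one must know that the geometric model $(X,\mc{B})$ for $(G,\mc{P})$ is ``good enough'' that relative quasi-convexity of a subgroup is equivalent to its orbit being quasiconvex relative to the family $\mc{B}$. This is standard but does require invoking either Hruska's equivalence of the various definitions of relative quasi-convexity together with the fact that $\mc{B}$ is a $G$--equivariant family at bounded Hausdorff distance from the peripheral cosets (which follows from $Z_P\subseteq Y_P$ being $P$--cocompact, Proposition~\ref{prop:peripheral}), or a cube-complex-specific statement to the same effect. Since the lemma is explicitly flagged as well-known, I would keep this step brief, citing \cite{HruskaQC} and the relative thinness already established (Corollary~\ref{cor:RT}, Lemma~\ref{lem:(X,B) RHP}), and spend no effort reproving the general principle.
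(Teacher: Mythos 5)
Your proposal is correct and follows essentially the same route as the paper: $\Stab(H)$ acts properly and cocompactly on the convex subcomplex $H$ (or its carrier), hence is undistorted in $G$, and relative quasi-convexity then follows from Hruska \cite{HruskaQC}. The only difference is that the ``translation'' you flag as the main obstacle is unnecessary: the paper simply invokes \cite[Theorem 1.5]{HruskaQC}, which says directly that undistorted (finitely generated) subgroups of relatively hyperbolic groups are relatively quasi-convex, so no appeal to the geometric model $(X,\mc{B})$ or to a metric characterization of relative quasi-convexity is needed.
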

\begin{proof}
  Since the hyperplane $H$ is convex in $X$, there is a quasi-isometry of pairs $(X,H) \to (G,\Stab(H))$.  This implies $\Stab(H)$ is undistorted in $G$.  Therefore by \cite[Theorem 1.5]{HruskaQC} $\Stab(H)$ is relatively quasi-convex in $(G,\mc{P})$.
\end{proof}

\begin{proposition}\label{prop:amalgam}
  Let $H$ be a hyperplane in $X$, let $\{Y_1,\ldots,Y_k\}$ be as above, and let $\Rgen>0$.

There exists a broadcaster $B$ for $H$ so that $B$ is full relatively quasi-convex in $G$ and the associated scattering $B \cdot H$ is $\Rgen$--separated.
  \end{proposition}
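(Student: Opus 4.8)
The plan is to build $B$ by an iterated amalgamation: starting from $\Stab(H)$, I adjoin one finite-index subgroup $B_i\le\Stab(Y_i)$ at a time, using Mart\'inez-Pedroza's Theorem~\ref{thm:eduardo} to control relative quasi-convexity and parabolics, using Assumption~\ref{ass:weak} to make each $B_i$ finite index, and shrinking the $B_i$ enough to force $\Rgen$--separation. The inductive statement to carry along is: $A_{i-1}:=\langle\Stab(H),B_1,\dots,B_{i-1}\rangle$ is full relatively quasi-convex and the scattering $A_{i-1}\cdot H$ is $\Rgen$--separated. For $i=1$ this holds by Lemma~\ref{lem:stabH_RQC} together with the triviality that $A_0\cdot H=\{H\}$.

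For the inductive step, write $E_i=\Stab(H)\cap\Stab(Y_i)$ for the would-be edge group, and note that for $p\in\Stab(Y_i)$ one has $pH=H$ if and only if $p\in E_i$. Since $\Stab(Y_i)$ contains a conjugate of some $P\in\mc{P}$ with finite index, Theorem~\ref{thm:eduardo} applies (after conjugating) to $A_{i-1}$ and $\Stab(Y_i)$, supplying a finite set $F_i\subseteq\Stab(Y_i)$, disjoint from $E_i$, so that any $B_i$ with $A_{i-1}\cap\Stab(Y_i)\subseteq B_i\subseteq\Stab(Y_i)\setminus F_i$ yields $A_i:=\langle A_{i-1},B_i\rangle=A_{i-1}\ast_{A_{i-1}\cap\Stab(Y_i)}B_i$ relatively quasi-convex with every parabolic of $A_i$ conjugate into $B_i$ or into $A_{i-1}$. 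The other constraint I would impose is that $B_i$ avoid the ``bad set'' $\mathrm{Bad}_i:=\{q\in\Stab(Y_i)\mid q\notin E_i,\ d(N(qH),N(H))<\Rgen\}$, where $N(\cdot)$ denotes the carrier; since $N(qH)=qN(H)$ this set is a union of $(E_i,E_i)$--double cosets, and the essential geometric input is that it is a union of only \emph{finitely many} of them. This finiteness is where the relatively hyperbolic geometry of $(X,\mc{B})$ (Lemma~\ref{lem:(X,B) RHP}), the uniformly bounded projections between distinct peripheral complexes (Corollary~\ref{cor:bounded_proj}), and the super-attractiveness of the $Y$'s enter, together with the standard facts that $N(H)$ is cocompact under $\Stab(H)$ and hence $N(H)\cap Y_i$ is cocompact under $E_i$. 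Fix double-coset representatives $q_1,\dots,q_m$ of $\mathrm{Bad}_i$.

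Now I invoke Assumption~\ref{ass:weak}: after the identification above, $E_i$ is, up to finite index and conjugacy, of the form $S\cap P$ with $S$ a hyperplane stabilizer and $P\in\mc{P}$, hence separable in $\Stab(Y_i)$. Consequently there is a finite-index $B_i\le\Stab(Y_i)$ containing $E_i$ (indeed containing $A_{i-1}\cap\Stab(Y_i)$, which by the inductive hypothesis, parabolic control, and the bounded overlap of distinct peripheral complexes is commensurable with $E_i$, and so also separable), disjoint from $F_i$, and missing each $q_j$. With this $B_i$, the group $A_i$ is full relatively quasi-convex, the fullness coming from iterated parabolic control (Theorem~\ref{thm:eduardo}). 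It remains to see $A_i\cdot H$ is $\Rgen$--separated. Given $b\in A_i$ with $bH\neq H$, write $b$ in reduced form with respect to the graph-of-groups splitting of $A_i$: either $b\in A_{i-1}$ and the inductive hypothesis applies, or $b$ has a syllable lying in some $B_j$ outside the relevant $E_j$--cosets, which therefore displaces the carrier of $H$ by more than $\Rgen$ along a translate of $Y_j$; the super-attractiveness and bounded-projection properties of the peripheral complexes then guarantee this displacement is not undone by the remaining syllables, so $d(N(bH),N(H))\ge\Rgen$. Taking $B=A_k$ completes the construction.

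The main obstacle is this last geometric estimate — that a nontrivial reduced word in the amalgam $B$ moves $H$ uniformly far from itself — together with the closely related input that each $\mathrm{Bad}_i$ consists of finitely many double cosets. Both amount to converting the normal-form structure produced by Theorem~\ref{thm:eduardo} into quantitative statements about hyperplane carriers in $X$, and this is exactly where the $(\delta,f)$--relative hyperbolicity of $(X,\mc{B})$, the super-attractiveness of the peripheral complexes, and Corollary~\ref{cor:bounded_proj} do the real work.
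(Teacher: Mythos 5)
Your overall architecture matches the paper's: build $B=A_k$ by iterated amalgamation $A_i=A_{i-1}\ast B_i$ using Theorem~\ref{thm:eduardo}, use Assumption~\ref{ass:weak} to choose each $B_i$ of finite index in $\Stab(Y_i)$ containing the edge group and avoiding the finite set $F_i$, and get fullness from the parabolic control in Theorem~\ref{thm:eduardo}\eqref{paraboliccontrol} (the paper makes your ``commensurability'' step precise via \cite[Lemma 5.4]{MP09}, which gives the equality $A_{i-1}\cap\Stab(Y_j)=\Stab(H)\cap\Stab(Y_j)$ for $j\ge i$, so the amalgamated subgroup at every stage is exactly $\Stab(H)\cap\Stab(Y_i)$, which is separable by Assumption~\ref{ass:weak}).

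However, your separation argument has a genuine gap, and it sits exactly where you say the ``real work'' happens. You rely on two quantitative claims that are asserted but not proved: (a) that $\mathrm{Bad}_i$ is a union of \emph{finitely many} $(E_i,E_i)$--double cosets, and (b) that a reduced word in the amalgam with a syllable in some $B_j\setminus E_j$ displaces the carrier of $H$ by at least $\Rgen$ and that this displacement ``is not undone by the remaining syllables.'' Claim (a), as formulated, needs $E_i=\Stab(H)\cap\Stab(Y_i)$ to act cocompactly on the coarse intersection $N_{\Rgen}(N(H))\cap Y_i$; this is a nontrivial coarse-intersection statement, not an off-the-shelf fact, and no argument is given. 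Claim (b) amounts to re-proving a quantitative combination theorem directly in $X$ (a ping-pong estimate on carriers), which is substantial and is precisely what the construction is trying to avoid. The paper's proof circumvents both: since $B\cdot H$ is a single $B$--orbit and $\Stab(H)\le B$, properness and cocompactness of $\Stab(H)\acts N(H)$ give finitely many elements $g_1,\ldots,g_l\in G\smallsetminus\Stab(H)$ such that every hyperplane within $\Rgen+1$ of $H$ is of the form $a g_i H$ with $a\in\Stab(H)$, and $g_iH\in B\cdot H$ if and only if $g_i\in B$; so $\Rgen$--separation reduces to excluding the finite set $\{g_1,\ldots,g_l\}$ from $B$. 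That exclusion is then achieved purely with the normal-form/geodesic-component machinery of Mart\'inez-Pedroza (\cite[Lemmas 3.6 and 5.2]{MP09}), by shrinking the $B_i$ so that membership of any $g_i$ in $B$ would force a large $\dot{P}$--component that a short geodesic from $1$ to $g_i$ cannot have. To complete your proof you would either need to supply proofs of (a) and (b), or replace them with this reduction to finitely many group elements plus the quantitative lemmas from \cite{MP09}.
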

\begin{proof}
  We use Theorem~\ref{thm:eduardo} to construct a sequence of relatively quasi-convex subgroups $\Stab(H)=A_0<A_1<\cdots<A_k = B$ so that $A_i$ is an amalgam of $A_{i-1}$ with a finite index subgroup of $\Stab(Y_i)$.
  
  By Lemma~\ref{lem:stabH_RQC}, $A_0$ is relatively quasi-convex in $(G,\mc{P})$.
  We inductively assume that $A_{i-1}$ is relatively quasi-convex and that
  \begin{equation}\label{eq:intersections}
    A_{i-1}\cap \Stab(Y_j) = A_0\cap \Stab(Y_j),\ \forall j\ge i.\tag{$\dagger_{i-1}$}
  \end{equation}  
  Applying Theorem~\ref{thm:eduardo} with $A=A_{i-1}$ and $P=\Stab(Y_i)$, we obtain $F_i = F \subseteq \Stab(Y_i)\setminus A_{i-1}$ as in the theorem.
  Assumption~\ref{ass:weak} guarantees the existence of a finite index $B_i<\Stab(Y_i)$ which contains $A_0\cap \Stab(Y_i)$ but misses $F_i$.  We can thus apply Theorem~\ref{thm:eduardo} with $\dot{P} = B_i$ to obtain a relatively quasi-convex $A_i=A_{i-1}\ast_{A_0\cap \Stab(Y_i)}B_i$.

  In order to continue the induction we must establish Condition $(\dagger_i)$; this follows easily from \cite[Lemma 5.4]{MP09}. It also follows from that result that the final amalgam $B = A_k$ is full.
  
  It remains to show that by expanding the finite sets $F_i$ if necessary we can ensure that the scattering $B \cdot H$ is $\Rgen$--separated.  Since $B \cdot H$ consists of the $B$--orbit of a single hyperplane, it suffices to ensure that the distance from $H$ to $B\cdot H \setminus H$ is at least $\Rgen+1$.  For fixed $\Rgen$, there exists a finite set $\{ g_1, \ldots , g_l \} \subseteq G \smallsetminus \Stab(H)$ so that any hyperplane within $\Rgen+1$ of $H$ is of the form $a \cdot g_i \cdot H$ for some $a \in \Stab(H)$.  Since $\Stab(H) \le B$, if $a \cdot g_i \cdot H \in B \cdot H$ then $g_i \cdot H \in B \cdot H$ also.  Therefore, we need to exclude the finite set $\{g_1 \cdot H, \ldots , g_l \cdot H\}$ from $B \cdot H$.  It is easy to see (again using $\Stab(H) \le B$) that this is the same as excluding the finite set $\{ g_1, \ldots , g_l \}$ from $B$.  That this can be done (by choosing $F_i$ appropriately, depending on $\Rgen$) is an easy consequence of \cite[Lemma 3.6 and Lemma 5.2]{MP09} -- we find a short geodesic between $1$ and $g_i$ which has no large $\dot{P}$--components, but on the other hand if $g_i \in B$ then there is some path in $B$ which is not contained in $\Stab(H)$ and necessarily contains a large $\dot{P}$--component.  This contradicts \cite[Lemma 3.6]{MP09}.  This argument completes the proof of the proposition.
\end{proof}

We are now ready to describe the augmented walls ${W}_H$.  There is some freedom in this description; further constraints are imposed in Assumption~\ref{ass:R} and Definition~\ref{def:wallspaceW} where we pick the particular augmented walls for our wallspace.
\begin{definition}\label{def:singlewall}
  Given a hyperplane $H$, and $\Rgen > 0$, fix a broadcaster $B$ whose associated scattering $B \cdot H$ is $\Rgen$--separated as in Proposition~\ref{prop:amalgam}.  Define an equivalence relation on $X^{(0)}$ by declaring $v\sim w$ if there is a path in the $1$--skeleton of $X$ which crosses $B \cdot  H$ an even number of times.  Let $W_{H,B}$ be the pair of equivalence classes.
\end{definition}
We observe:
\begin{lemma}\label{lem:stabhalfspace}
  Let $B$ and $W_{H,B}=\{W_{H,B}^+,W_{H,B}^-\}$ be as above.  Both $W_{H,B}^+$ and $W_{H,B}^-$ are nonempty, $\Stab(W_{H,B})=B$, and $\Stab(W_{H,B}^+)=\Stab(W_{H,B}^-)$ is a subgroup of $B$ of index at most two.

  In particular, $\Stab(W_{H,B})$ is full relatively quasi-convex in $(G,\mc{P})$.
\end{lemma}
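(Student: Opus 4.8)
The plan is to replace the ``parity of crossings'' description of $W_{H,B}$ by an explicit two-coloring of the vertices, and then read off each assertion. For an edge $e$ of $X$ let $\phi(e)\in\Z/2$ equal $1$ if the unique hyperplane dual to $e$ lies in $B\cdot H$, and $0$ otherwise; for a combinatorial path $p=e_1\cdots e_n$ in $X^{(1)}$ set $\phi(p)=\sum_i\phi(e_i)$, the parity of the number of times $p$ crosses $B\cdot H$. Since $X$ is simply connected, any combinatorial loop is a product of conjugates of boundaries of squares, and the boundary of a square crosses each of its two dual hyperplanes exactly twice, so $\phi$ vanishes on it; hence $\phi$ vanishes on all loops and $\phi(p)$ depends only on the endpoints of $p$. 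Fixing $x_0\in X^{(0)}$ and setting $c(v)=\phi(p_v)$ for any path $p_v$ from $x_0$ to $v$, we get $c\co X^{(0)}\to\Z/2$ with $v\sim w\iff c(v)=c(w)$, so $W_{H,B}=\{c^{-1}(0),\,c^{-1}(1)\}$ is a genuine partition into at most two classes. The hyperplane $H$ is dual to some edge $e=\{v,w\}$, and $\phi(e)=1$ because $e$ crosses only the hyperplane $H\in B\cdot H$; thus $v\not\sim w$ and both halfspaces are nonempty.

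The crucial observation is that the set of hyperplanes of $X$ having an edge which joins the two halfspaces of $W_{H,B}$ is exactly $B\cdot H$: an edge $e$ joins the halfspaces iff $\phi(e)=1$ iff its dual hyperplane lies in $B\cdot H$, and conversely every hyperplane in $B\cdot H$ has a dual edge. Using this I would prove $\Stab(W_{H,B})=B$. If $g\in B$, then $g(B\cdot H)=(gB)\cdot H=B\cdot H$, so $g$ permutes the hyperplanes of $B\cdot H$ and therefore carries $\sim$--classes to $\sim$--classes; hence $B\le\Stab(W_{H,B})$. Conversely, if $g\in G$ preserves the unordered partition $W_{H,B}$, then $g$ maps edges joining the halfspaces to edges joining the halfspaces, so by the observation $g(B\cdot H)=B\cdot H$; in particular $gH\in B\cdot H$, say $gH=bH$ with $b\in B$, whence $b^{-1}g\in\Stab(H)\le B$ and $g\in B$.

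Finally, since $X^{(0)}=W_{H,B}^+\sqcup W_{H,B}^-$, any bijection of $X^{(0)}$ fixing $W_{H,B}^+$ also fixes $W_{H,B}^-$, so $\Stab(W_{H,B}^+)=\Stab(W_{H,B}^-)$; this common subgroup is the kernel of the action of $\Stab(W_{H,B})=B$ on the two-element set of halfspaces, hence has index at most two in $B$. The last assertion is then immediate: $\Stab(W_{H,B})=B$ is full relatively quasi-convex in $(G,\mc{P})$ by Proposition~\ref{prop:amalgam}.

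The only step requiring real care is the well-definedness of $\sim$ — that the parity of the number of crossings of $B\cdot H$ by a path depends only on its endpoints — which is the standard disk-diagram argument sketched above; the rest is formal bookkeeping, and the $\Rgen$--separation of the scattering plays no role in this particular lemma.
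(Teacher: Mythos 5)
The paper states this lemma as an observation with no proof, so there is no argument to compare against; your writeup supplies the standard one correctly: the parity function $\phi$ descends to vertices because a square's boundary crosses each of its two dual hyperplanes exactly twice, the hyperplanes joining the two halfspaces are exactly $B\cdot H$, and this together with $\Stab(H)\le B$ and $B$--invariance of $B\cdot H$ gives both inclusions of $\Stab(W_{H,B})=B$, after which the index-two statement and the appeal to Proposition~\ref{prop:amalgam} for full relative quasi-convexity are immediate. Your closing remark is also accurate: the $\Rgen$--separation of the scattering is not used anywhere in this lemma.
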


\begin{remark}
  The hyperplane $H$ descends to an embedded hyperplane of $\leftQ{X}{B}$.  The index of $\Stab(W_{H,B}^+)$ in $B$ is one if this hyperplane is separating; otherwise the index is two.
\end{remark}

\subsection{Connected quasi-convex sub-complexes associated to walls}
Later we fix an $\Rgen$--separated scattering $B \cdot H$ of $H$ with $\Rgen$ very large.  Therefore, though $B \cdot H$ is quasi-convex, the quasi-convexity constant is very large.  
In order to associate a connected \emph{uniformly} quasi-convex subspace of $X$ to the wall $W_{H,B}$, we adjoin peripheral complexes to the hyperplane carriers.

There are three results in this subsection used extensively in the sequel.  In Proposition~\ref{prop:uniformqccarrier} we establish the uniform quasi-convexity of the \emph{thick carriers} defined below in Definition~\ref{def:thickcarrier}.  In Lemma~\ref{lem:attractivecarrier} we quantify the statement that peripheral complexes which come near to a thick carrier for a long time must be contained in the thick carrier.  In Lemma~\ref{lem:closetohyperplane}, we quantify the fact that two peripheral complexes in a thick carrier can only come near to each other if they are both near to a hyperplane of the scattering.  It is crucial for our applications that all these results are uniform for $\Rgen$--scatterings, so long as $\Rgen$ is sufficiently large.

\begin{definition}[Thick carrier]\label{def:thickcarrier}
Let $H$ be a hyperplane in $X$, $\Rgen > 0$ and $B \cdot H$ an $\Rgen$--separated scattering of $H$.

Let $\mc{A}$ be the collection of carriers of translates $bH$ where $b\in B$.  As before $\mc{B}$ is the set of peripheral complexes.  Form an abstract cube complex 
\begin{equation*}
  Y_{H,B}= \sqcup\mc{A}\bigsqcup\sqcup\mc{B}/\sim.
\end{equation*}
The equivalence relation $\sim$ is generated by the following identifications:  For each $bH$ with $b\in B$ and carrier $A\in \mc{A}$, and each $Z\in \mc{B}$ meeting $bH$,  glue $A$ to $Z$ along their intersection in $X$.  Now let $S_{H,B}$ be the component of $Y_{H,B}$ containing $H$.  (The other components are just elements of $\mc{B}$.)  There is a canonical map $\phi_{H,B}\co S_{H,B}\to X$ which restricts to the inclusion on any hyperplane carrier or peripheral complex.  We define the {\em thick carrier of $W_{H,B}$} to be $T(W_{H,B}) = \phi_{H,B}(S_{H,B})$.  The thick carrier $T(W_{H,B})$ is a union of convex sub-complexes.
\end{definition}

We observe:
\begin{lemma}
  $T(W_{H,B})$ is a connected sub-complex of $X$ on which $B$ acts properly cocompactly.
\end{lemma}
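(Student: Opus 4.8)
The plan is to verify the three asserted properties of $T(W_{H,B})$ in turn: that it is a subcomplex, that it is connected, and that $B$ acts on it properly and cocompactly.

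First I would observe that $T(W_{H,B})$ is a subcomplex simply because it is defined as the image $\phi_{H,B}(S_{H,B})$, where $\phi_{H,B}$ restricts to the inclusion on each hyperplane carrier and each peripheral complex; since carriers of hyperplanes and peripheral complexes are subcomplexes of $X$, and $S_{H,B}$ is a union of (copies of) such pieces, the image is a union of subcomplexes of $X$, hence a subcomplex. Connectedness is immediate from the definition: $S_{H,B}$ is by construction the connected component of the abstract complex $Y_{H,B}$ containing $H$, and a continuous image of a connected space is connected. (One should note $\phi_{H,B}$ is continuous, being the quotient of the obvious map $\sqcup\mc{A}\sqcup\sqcup\mc{B}\to X$, which is just inclusion on each piece.)

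The substantive point is properness and cocompactness of the $B$--action. For equivariance, note that $B$ acts on the index set of $\mc{A}$ (permuting the carriers of the $bH$) and, since $B$ is full relatively quasi-convex (Lemma~\ref{lem:stabhalfspace}), it permutes the finitely many $B$--orbits of peripheral complexes meeting the scattering $B\cdot H$; these actions are compatible with the gluing relation $\sim$, so $B$ acts on $S_{H,B}$ and hence on $T(W_{H,B})$, with $\phi_{H,B}$ equivariant. For cocompactness: up to the $B$--action there is one carrier (that of $H$, on which $\Stab(H)\le B$ acts cocompactly since $X$ is cocompact and hyperplane carriers are convex) and finitely many peripheral complexes $Y_1,\dots,Y_k$ meeting $H$, on each of which a finite-index subgroup $B_i\le B$ acts cocompactly by Proposition~\ref{prop:peripheral}; since $S_{H,B}$ is assembled from $B$--translates of these finitely many compact pieces, $\leftQ{S_{H,B}}{B}$ is compact, so $\leftQ{T(W_{H,B})}{B}$ is too. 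For properness one can argue that $\phi_{H,B}$ is a proper map — using that distinct hyperplanes in an $\Rgen$--separated scattering have carriers uniformly far apart and that peripheral complexes have uniformly bounded overlap by Lemma~\ref{lem:(X,B) RHP}, the preimage of a bounded set in $X$ meets only finitely many of the pieces $A\in\mc{A}$, $Z\in\mc{B}$ — and then combine properness of $\phi_{H,B}$ with properness of the $G$--action on $X$ to conclude properness of $B\acts S_{H,B}$; alternatively, since $S_{H,B}$ maps to $X$ and $B$ acts cocompactly, local finiteness of $S_{H,B}$ plus finiteness of vertex stabilizers gives properness directly.

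The main obstacle I expect is properness, and specifically showing the canonical map $\phi_{H,B}$ is proper: one needs to rule out that infinitely many distinct pieces (carriers or peripheral complexes) of $S_{H,B}$ get glued into a bounded region of $X$. This is exactly where the $\Rgen$--separation of the scattering and the bounded-overlap property of $\mc{B}$ (from the $(\delta,f)$--relative hyperbolicity of $(X,\mc{B})$, Lemma~\ref{lem:(X,B) RHP}) must be invoked; with those in hand the argument is a packing estimate. Everything else is essentially definitional.
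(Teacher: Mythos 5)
The paper offers no argument for this lemma at all---it is stated as a bare observation following Definition~\ref{def:thickcarrier}---so there is no official proof to match your write-up against. Your treatment of the definitional parts (subcomplex, connectedness, $B$--invariance) and of cocompactness is correct and is surely what the authors had in mind: $T(W_{H,B})$ is the union of the $B$--translates of the carrier of $H$ and of the finitely many peripheral complexes $Y_1,\dots,Y_k$ meeting $H$; the carrier is $\Stab(H)$--cocompact, each $Y_i$ is $B_i$--cocompact by Proposition~\ref{prop:peripheral}, and $\Stab(H),B_i\le B$, so the quotient is covered by finitely many compact sets.

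However, the step you single out as the main obstacle is a non-issue, and your proposed route to it aims at the wrong object. The lemma concerns $T(W_{H,B})$, a $B$--invariant subspace of $X$; since the $G$--action on $X$ is proper, the restricted $B$--action on any invariant subspace is automatically proper, with no need for properness of $\phi_{H,B}$, $\Rgen$--separation, or a packing estimate. Those considerations would only matter if you wanted properness or local finiteness for the action on the abstract complex $S_{H,B}$---and even there the relevant metric control is supplied later by Proposition~\ref{prop:qiecarrier} (for $\Rgen>R_0$), which makes $\phi_{H,B}$ a quasi-isometric embedding. Also be careful with the parenthetical claim that bounded pairwise overlap of peripheral complexes bounds how many pieces meet a bounded set: bounded coarse intersection alone does not yield local finiteness of the family $\mc{B}$; that follows instead from properness of $G\acts X$ together with cocompactness of each $Y_P$. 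None of this affects the correctness of your conclusion, but the entire properness discussion can be replaced by the one-line observation that $T(W_{H,B})\subseteq X$ is $B$--invariant and $G\acts X$ is proper.
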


The next result uses work of Einstein \cite{Einstein-Hierarchies} to show that $T(W_{H,B})$ is quasi-isometrically embedded and quasi-convex.

\begin{proposition}\label{prop:qiecarrier}
  There exist constants $R_0 \ge 0$, $\lambda \ge 1$, and $\epsilon \ge 0$ depending only on $(X,\mc{B})$ so that the following holds.  Let
    $\Rgen > R_0$,
let $H$ be a hyperplane, and let $B \cdot H$ be an $\Rgen$--separated scattering of $H$.  Let $S = S_{H,B}$ and $\phi_{H,B}$ be as in Definition~\ref{def:thickcarrier}.  Then for any $p,q \in S$ we have
   \begin{equation}\label{qie}\tag{$*$}
     d_X(\phi_{H,B}(p),\phi_{H,B}(q))\le d_S(p,q)\le \lambda\, d_X(\phi_{H,B}(p),\phi_{H,B}(q))+\epsilon.
   \end{equation}
\end{proposition}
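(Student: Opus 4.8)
The plan is to recognize $S = S_{H,B}$ as built up from convex pieces (hyperplane carriers and peripheral complexes) glued along convex subcomplexes, and to run a ``guessing the geodesic'' argument in $X$: given $p,q\in S$, take the $X$--geodesic $\gamma$ between their images and show it follows a bounded-length concatenation of geodesics in the pieces making up $S$. Concretely, I would first fix notation: a point $x\in S$ lives in some piece $A_x\in\mathcal A\cup\mathcal B$ (a carrier of some $bH$, or a peripheral complex), and two pieces $A,A'$ making up $S$ are \emph{adjacent} in the ``nerve'' if they share a nonempty intersection in $X$; a path in $S$ from $p$ to $q$ projects to a concatenation of geodesics $[p=x_0,x_1],[x_1,x_2],\dots,[x_{m-1},x_m=q]$ with each $[x_{i-1},x_i]$ lying in a single piece $A_i$ and $x_i\in A_i\cap A_{i+1}$. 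The left inequality in \eqref{qie} is immediate since $\phi_{H,B}$ is $1$--Lipschitz (it restricts to an isometric inclusion on each convex piece, and the metric on $S$ is a path metric). So the whole content is the right inequality, i.e.\ a quasi-isometric \emph{lower} bound on $d_X$.

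For the right inequality I would use the relative hyperbolicity of $(X,\mathcal B)$ from Lemma~\ref{lem:(X,B) RHP}, together with the bounded-projection estimate Corollary~\ref{cor:bounded_proj} and the $R$--separation of the scattering. The key geometric input is that the pieces of $S$ overlap in a \emph{uniformly bounded} way: two distinct peripheral complexes have $\diam(\pi_Z(Z'))\le C$ by Corollary~\ref{cor:bounded_proj}; two carriers of distinct hyperplanes $bH,b'H$ in the $R$--separated scattering are distance $\ge R$ apart, so a geodesic passing near both spends a long time between them; and the intersection of a carrier $A$ of $bH$ with a peripheral complex $Z$ is a convex subcomplex of $X$ of bounded diameter unless $Z$ is actually ``attached'' to $bH$ (which is exactly the gluing relation). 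I would then argue, in the style of \cite[Proposition~5.3]{Einstein-Hierarchies} and \cite{Einstein-Hierarchies}'s hierarchy machinery: take the $X$--geodesic $\gamma=[\phi(p),\phi(q)]$; using $\nu$--relative thinness and super-attractiveness (Proposition~\ref{prop:peripheral}), decompose $\gamma$ into a bounded number of maximal subsegments each of which is either (i) contained (up to bounded error) in a single peripheral complex $Z\in\mathcal B$ that meets $S$, or (ii) ``transverse'', fellow-traveling the carriers it crosses; each crossing of a hyperplane $bH$ forces $\gamma$ into the carrier of $bH$, hence into $S$. Because any two consecutive pieces a geodesic visits overlap in diameter $\le C$ and the scattering is $R$--separated with $R>R_0$, the number of pieces visited is bounded linearly in $d_X(\phi(p),\phi(q))$, and one builds a path in $S$ from $p$ to $q$ of length $\le \lambda\, d_X(\phi(p),\phi(q))+\epsilon$ by concatenating the induced subpaths inside each piece (plus bounded jumps across overlaps). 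The constants $\lambda,\epsilon,R_0$ come out depending only on $\delta$, $f$, $\eta$, $\dim X$ and the constant $C(\delta,f)$ — i.e.\ only on $(X,\mathcal B)$ — and crucially not on $R$ as long as $R>R_0$.

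The main obstacle, and where I expect the real work to be, is controlling how an $X$--geodesic $\gamma$ can ``leave and re-enter'' $S$: a priori $\gamma$ could oscillate between a peripheral complex $Z$ that is glued into $S$ and nearby hyperplane carriers $bH$ that are \emph{not} glued to $Z$, or wander near a carrier without actually crossing the corresponding hyperplane. Ruling out unboundedly many such excursions is exactly what the $R$--separation and the bounded-projection estimate buy us — if $\gamma$ comes $\Delta_0$--close to a carrier $A(bH)$ and to a distinct carrier $A(b'H)$, then since these are $R\gg 0$ apart the segment of $\gamma$ between them must actually cross a hyperplane of the scattering (using that $bH$ separates), forcing it into $S$, and between two consecutive such crossings $\gamma$ fellow-travels a peripheral complex or a carrier. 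Turning this ``finitely many excursions, each of bounded cost'' intuition into the clean linear bound \eqref{qie}, with constants independent of $R$, is the crux; the rest (the $1$--Lipschitz bound, the reduction to counting pieces, assembling the $S$--path) is bookkeeping built on Lemmas~\ref{lem:fatpart bounds thinness}, \ref{lem:projection quad} and Corollary~\ref{cor:bounded_proj}.
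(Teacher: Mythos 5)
There is a genuine gap here, concentrated exactly where you say you ``expect the real work to be.'' Your plan is to prove the right-hand inequality of \eqref{qie} from scratch by decomposing the $X$--geodesic and counting the pieces of $S$ it visits, but the step ``the number of pieces visited is bounded linearly in $d_X(\phi(p),\phi(q))$, and one builds a path in $S$ of length $\le \lambda d_X+\epsilon$'' is not justified by anything in your sketch, and as stated it is close to circular: to bound the number of excursions/pieces linearly in $d_X$ you need a definite amount of $X$--progress per piece (uniform in $\Rgen$), which is essentially the quasi-isometric embedding you are trying to prove. The standard way to break this circle is a local-to-global (broken geodesic) criterion for paths that alternate between peripheral complexes and carriers in a relatively hyperbolic pair --- precisely the machinery of Einstein \cite{Einstein-Hierarchies} (his Theorem~5.6 and its consequences) --- and your proposal neither supplies such a criterion nor explains how the bounded-projection constant $C$, $\eta$--super-attractiveness and $\Rgen$--separation combine to give one. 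In particular the claim that a geodesic passing near two distinct carriers of the scattering ``must actually cross a hyperplane of the scattering, forcing it into $S$'' is not correct as stated (a geodesic can pass near a carrier without entering it, and entering a carrier of $bH$ does not force the path into the \emph{same} component $S_{H,B}$ a priori); making this precise with constants independent of $\Rgen$ is the whole content of the proposition.

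For comparison, the paper's proof does none of this directly: it simply verifies the hypotheses of \cite[Proposition 5.16]{Einstein-Hierarchies}. The only wrinkle is that Einstein's Hypothesis~5.5 requires each peripheral complex to be $K$--attractive for $K(m)=3f'(5\delta)+21\delta+6m$, which is arranged by replacing $f$ with $f'(r)=\max\{\eta,f(r)\}$ (using that $\eta$--super-attractiveness gives $2(\eta+m)$--attractiveness, Remark~\ref{rem:attractiveness}), and his Hypotheses~5.15 require $\Rgen\ge 500f'(5\delta)+10000\delta$, which fixes $R_0$. Einstein's proposition then gives \eqref{qie} with explicit constants $(\lambda,\epsilon)=(2,\,110f'(5\delta)+1592\delta)$, and the left inequality is just the observation that $\phi_{H,B}$ is distance non-increasing (this part of your proposal is fine). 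So your outline is pointed in the right direction --- it is essentially a sketch of the proof of Einstein's result --- but as written it is a plan rather than a proof: either carry out the broken-geodesic local-to-global argument in detail, or cite \cite[Proposition 5.16]{Einstein-Hierarchies} after checking its hypotheses as above.
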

\begin{proof}
  We would like to apply Einstein's \cite[Proposition 5.16]{Einstein-Hierarchies}, with $\mc{A}$ equal to the carriers of hyperplanes in the $\Rgen$--scattering $B \cdot H$, and $\mc{B}_0$ equal to the collection of peripheral complexes meeting some $A\in \mc{A}$.  We have fixed a $(\delta,f)$ so that our pair $(X,\mc{B})$ is $(\delta,f)$--relatively hyperbolic, but we have to temporarily modify $f$ in order to apply Einstein's results.
  This is allowable because $(X,\mc{B})$ is also $(\delta,f')$--relatively hyperbolic for any function $f'$ which dominates $f$ (in the sense that $f'(r)\ge f(r)$ for all $r$).
   
Hypothesis 5.5 of \cite{Einstein-Hierarchies} requires that the pair $(X,\mc{B})$ is $(\delta,f')$--relatively hyperbolic, and every $B\in \mc{B}$ is $K$--attractive, where $K(m)=3 f'(5\delta) + 21\delta + 6m$.  Each $B\in \mc{B}$ is $\eta$--super-attractive.  Thus by Remark~\ref{rem:attractiveness}, each $B\in \mc{B}$ is $K_0$--attractive, where $K_0(m)=2\eta + 2m$.  As long as $K$ dominates $K_0$, each $B$ will also be $K$--attractive.  In order to ensure $K$ dominates $K_0$, we set
  \begin{equation*}
    f'(r) = \max\{\eta,f(r)\}.
  \end{equation*}
  Since $f'$ dominates $f$, the pair $(X,\mc{B})$ is still $(\delta,f')$--relatively hyperbolic.

  Now \cite[Hypotheses 5.15]{Einstein-Hierarchies} amounts to the assertion that $\mc{A}$ is an $\Rgen$--scattering for $\Rgen \ge 500f'(5\delta)+10000\delta$.  We therefore take
  \begin{equation*}
    R_0 = 500f'(5\delta)+10000\delta.
  \end{equation*}
  Then \cite[Proposition 5.16]{Einstein-Hierarchies} implies that $\phi_{H,B}$ is a $(\lambda,\epsilon)$--quasi-isometry, for $(\lambda,\epsilon) = (2,110f'(5\delta) + 1592\delta)$.  These constants only depend on $(X,\mc{B})$.  Since $\phi_{H,B}$ is obviously distance non-increasing, we have the inequalities \eqref{qie}.
\end{proof}

\smallskip
\putinbox{We fix the constants $R_0,\lambda,\epsilon$ from Proposition~\ref{prop:qiecarrier} for the remainder of this section and the next.}
\medskip

The next lemma says that peripheral complexes in a thick carrier can only be close to each other if they are close to a hyperplane associated to the carrier.

\begin{lemma}\label{lem:closetohyperplane}
  There exists a function $\Fone\co \R_{\ge 0} \to \R_{\ge 0}$, depending only on $(X,\mc{B})$, so that the following holds whenever $R>R_0$.

  Let $H$ be a hyperplane, let $B$ be a broadcaster for $H$ so that the associated scattering $B \cdot H$ is $\Rgen$--separated, and let $T(W_{H,B})$ be the corresponding thick carrier.
  For any $m \ge 0$ and any distinct peripheral complexes $Z_1,Z_2$ in $T(W_{H,B})$, there is some $g\in B$ so that
  \begin{equation*}
    N_m(Z_1)\cap N_m(Z_2) \subseteq N_{\Fone(m)}(gH).
  \end{equation*}
\end{lemma}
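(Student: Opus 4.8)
The plan is to leverage Proposition~\ref{prop:qiecarrier}, which tells us that the thick carrier $T(W_{H,B})$ — more precisely its abstract model $S_{H,B}$ — is a $(\lambda,\epsilon)$--quasi-isometrically embedded subspace of $X$, with $\lambda,\epsilon$ depending only on $(X,\mc{B})$. So it suffices to work inside $S = S_{H,B}$, where the peripheral complexes $Z_1, Z_2$ lift to convex subcomplexes $\widetilde{Z}_1, \widetilde{Z}_2$, and the hyperplane carriers in $\mc{A}$ lift to convex subcomplexes $\widetilde{A}$. First I would observe that, because $(X,\mc{B})$ is $(\delta,f)$--relatively hyperbolic, the pair $(S,\{\text{lifts of peripheral complexes and hyperplane carriers}\})$ is again relatively hyperbolic with constants depending only on $(X,\mc{B})$ and $\lambda,\epsilon$ — this should follow either from Einstein's work used in Proposition~\ref{prop:qiecarrier} or directly, since $S$ is quasi-isometric to its image and the peripheral family is carried across quasi-isometrically (cf.\ Lemma~\ref{lem:Haus RHP}). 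In particular, by Corollary~\ref{cor:bounded_proj} applied in $S$, the projection of $\widetilde{Z}_2$ onto $\widetilde{Z}_1$ has diameter bounded by a uniform constant $C' = C'(X,\mc{B})$.

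Next I would argue that a point $p \in N_m(\widetilde{Z}_1) \cap N_m(\widetilde{Z}_2)$ (in the metric $d_S$, which by \eqref{qie} controls $d_X$) must lie within a uniformly bounded distance of a hyperplane carrier $\widetilde{A}$ associated to $S$. The mechanism: in the abstract complex $S$, the only way $\widetilde{Z}_1$ and $\widetilde{Z}_2$ can be joined (they are distinct components were it not for the gluings along hyperplane carriers) is through a chain of hyperplane carriers; more quantitatively, any geodesic in $S$ from $\widetilde{Z}_1$ to $\widetilde{Z}_2$ must pass through some $\widetilde{A} \in \mc{A}$, since in $X$ the two peripheral complexes have bounded overlap (Lemma~\ref{lem:(X,B) RHP}) and the $\Rgen$--separation forces any short connection to route through a carrier. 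Combining the bounded-projection estimate with relative thinness of the triangle (or quadrilateral) formed by $p$, its projections to $\widetilde{Z}_1$ and $\widetilde{Z}_2$, and a point on a connecting carrier, one gets $d_S(p, \widetilde{A}) \le C''(m)$ for a function $C''$ depending only on $(X,\mc{B})$ and affinely (or at worst linearly) on $m$; pushing back to $X$ via \eqref{qie} and recalling $\widetilde{A}$ projects to the carrier of some $gH$ with $g \in B$, we get $N_m(Z_1)\cap N_m(Z_2) \subseteq N_{\Fone(m)}(gH)$ with $\Fone(m)$ obtained by composing $C''$ with the quasi-isometry constants and adding the (bounded) diameter of a hyperplane inside its carrier. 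The key point throughout is that every constant invoked — $R_0,\lambda,\epsilon$ from Proposition~\ref{prop:qiecarrier}, $C(\nu,\phi)$ from Corollary~\ref{cor:bounded_proj}, $\delta, f$ from Lemma~\ref{lem:(X,B) RHP} — depends only on $(X,\mc{B})$, not on $\Rgen$ or $H$ or $B$, so $\Fone$ is genuinely universal.

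The main obstacle I anticipate is making precise the claim that a geodesic in $S$ between two distinct peripheral complexes must pass uniformly close to a hyperplane carrier, and more importantly controlling \emph{how close} as a function of $m$ only (and not of the scattering constant $\Rgen$). The subtlety is that a path between $Z_1$ and $Z_2$ in $T(W_{H,B})$ might wander through several carriers and peripheral complexes, so a naive induction would accumulate error. The resolution should be to use the relative hyperbolicity of $(S, \cdot)$ directly: a geodesic of $S$ between $\widetilde{Z}_1$ and $\widetilde{Z}_2$ either fellow-travels one of them (impossible past the bounded-projection distance) or has a uniformly bounded "deep" portion away from all peripheral subspaces, and any point $p$ near both $\widetilde{Z}_1$ and $\widetilde{Z}_2$ is forced — via relative thinness of the triangle on $p$ and the two projection points — to be uniformly close to that deep portion, which lives inside a bounded neighborhood of a single carrier $\widetilde{A}$. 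I would also need to check the edge case where $Z_1$ or $Z_2$ is itself one of the complexes glued directly to the carrier of $H$, but that case is immediate since then we may take $gH = H$ and $\Fone(m) \ge m + \diam(\text{hyperplane in carrier})$.
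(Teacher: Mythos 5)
Your overall frame---pass to the abstract complex $S_{H,B}$ via Proposition~\ref{prop:qiecarrier} and use that any path in $S_{H,B}$ leaving a peripheral piece must enter a hyperplane carrier of the scattering---is the same as the paper's, but your quantitative step has a genuine gap and is far heavier than needed. The obstacle you anticipate dissolves once you make the right reduction: $p$ itself need not lie in the thick carrier, so it has no lift to $S_{H,B}$ and ``its projections in $d_S$'' are not defined; instead pick $p_1\in Z_1$, $p_2\in Z_2$ with $d_X(p_i,p)\le m$. Then $d_X(p_1,p_2)\le 2m$, so the second inequality of \eqref{qie} gives $d_S(\hat p_1,\hat p_2)\le 2\lambda m+\epsilon$; any path in $S_{H,B}$ from $\hat p_1$ to $\hat p_2$ meets the carrier of some $gH$ with $g\in B$, whence $d_S(\hat p_1,gH)\le 2\lambda m+\epsilon+\tfrac{1}{2}$, and the first inequality of \eqref{qie} pushes this back to $d_X(p,gH)\le 2\lambda m+\epsilon+m+\tfrac{1}{2}$. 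The connecting path already has length linear in $m$, independent of how many carriers or peripheral pieces it wanders through, so no induction over pieces, no fellow-traveling, and no ``deep portion'' analysis is needed. By contrast, your proposed derivation requires that $(S_{H,B},\{\text{lifted peripherals}\})$ be relatively hyperbolic with constants depending only on $(X,\mc{B})$; this is nowhere established and does not follow from Lemma~\ref{lem:Haus RHP}, which only permits replacing peripherals by subspaces at bounded Hausdorff distance inside the \emph{same} ambient space, not transferring the structure to a quasi-isometrically embedded subcomplex. As written, the step producing $d_S(p,\widetilde A)\le C''(m)$ therefore rests on an unproven claim, and the detour through Corollary~\ref{cor:bounded_proj} inside $S_{H,B}$ can simply be dropped.

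A second, smaller gap: the lemma demands a single $g\in B$ working for all of $N_m(Z_1)\cap N_m(Z_2)$, whereas your per-point argument yields a carrier that a priori depends on $p$. Close this either by choosing the carrier of $gH$ to separate the pieces $Z_1$ and $Z_2$ inside $S_{H,B}$ (so the choice is independent of $p$), or by using that $\diam\bigl(N_m(Z_1)\cap N_m(Z_2)\bigr)\le f(m)$, since $(X,\mc{B})$ is $(\delta,f)$--relatively hyperbolic, and absorbing $f(m)$ into the constant; the paper does both and sets $\Fone(m)=2\lambda m+\epsilon+m+\tfrac{1}{2}+f(m)$. You mention the bounded-overlap fact only as a heuristic for why connections route through carriers, not for this uniformization, so the final statement as you sketch it is not quite reached.
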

\begin{proof}
  Fix $p\in N_m(Z_1)\cap N_m(Z_2)$.
  For $i=1,2$, let $p_i$ be a point of $Z_i$ with $d_X(p_i,p)\le m$, and let $\hat{p}_i$ be the corresponding point of the complex $S_{H,B}$.  Using the second inequality of \eqref{qie} in Proposition~\ref{prop:qiecarrier}, we have $d_S(\hat{p}_1,\hat{p}_2)\le \lambda d_X(p_1,p_2) + \epsilon \le 2\lambda m + \epsilon$.
  Choose $gH$ so that the hyperplane carrier for $gH$ separates $Z_1$ from $Z_2$ in the complex $S_{H,B}$.
  Any path from $\hat{p}_1$ to $\hat{p}_2$ in $S_{H,B}$ must pass through the carrier of $gH$, so in particular $d_S(\hat{p}_1,gH)\le 2\lambda m+\epsilon+\frac{1}{2}$.  Thus (using the first inequality of \eqref{qie}) the distance between $p$ and $gH$ is at most $2\lambda m+\epsilon+m+\frac{1}{2}$.  Since the diameter of $N_m(Z_1)\cap N_m(Z_2)$ is at most $f(m)$, we can set $\Fone(m) = 2\lambda m+\epsilon+m+\frac{1}{2}+ f(m)$.
\end{proof}

For the next two results we prove, we need another result from Einstein \cite{Einstein-Hierarchies}.  Note that the result there uses peripheral cosets, whereas we will use the peripheral complexes $\mc{Z}$.  Since their Hausdorff distance is finite, Proposition~\ref{prop:fellow_travel} follows from \cite[Theorem 4.2]{Einstein-Hierarchies}.

\begin{definition}[Relative fellow traveling]
  Let $l\ge 0$.
  Let $\sigma_1$ and $\sigma_2$ be two paths in $X$ with the same endpoints, and suppose that these paths can be subdivided into an equal number of sub-paths, occurring in the same order.  Suppose further that if $J_{\sigma_1}\subseteq \sigma_1$ and $J_{\sigma_2}\subseteq\sigma_2$ are corresponding subpaths (called \emph{paired subpaths}), then they begin and end within $l$ of one another and that either
  \begin{enumerate}
    \item  the Hausdorff distance between $J_{\sigma_1}$ and $J_{\sigma_2}$ is at most $l$; or
    \item  there exists some $Z\in\mc{B}$ so $N_l(Z)$ contains both $J_{\sigma_1}$ and $J_{\sigma_2}$.
  \end{enumerate}
  Then $\sigma_1$ and $\sigma_2$ are said to \emph{$l$--relatively fellow travel, relative to $\mc{B}$.}  
\end{definition}

Quasi-geodesics in spaces with relatively thin triangles relatively fellow travel one another:
\begin{proposition} (cf. \cite[Theorem 4.2]{Einstein-Hierarchies}, \cite[Proposition 4.1.6]{hruskakleiner})\label{prop:fellow_travel}
  Let $\mu\ge 1$ and $\varkappa \ge 0$.   There exists $l$, depending only on $\mu$, $\varkappa$, and the pair $(X,\mc{B})$, so that $(\mu,\varkappa)$--quasi-geodesics with the same endpoints in $X$ must  $l$--relatively fellow travel, relative to $\mc{B}$.
\end{proposition}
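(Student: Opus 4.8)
The plan is to deduce this directly from Einstein's \cite[Theorem 4.2]{Einstein-Hierarchies}, whose statement concerns peripheral \emph{cosets} rather than the peripheral complexes $\mc{B}$, together with the observation recorded just before the statement that the Hausdorff distance between a peripheral coset and the corresponding peripheral complex in $\mc{B}$ is finite and (by cocompactness) uniformly bounded. Concretely, let $\mc{U} = \{gP\cdot x : P\in\mc{P},\ gP\in \rightQ{G}{P}\}$ be the peripheral subspaces from Proposition~\ref{prop:RH implies relatively thin}. By construction, each element of $\mc{U}$ is contained in (a bounded neighborhood of) a unique element of $\mc{Z}$, and by Lemma~\ref{lem:Haus RHP} and the definition of the $Y_P$ in Proposition~\ref{prop:peripheral}, each element of $\mc{Z}$ is contained in a unique peripheral complex in $\mc{B}$, with all these inclusions occurring up to a single constant $r$ (the maximum of the relevant cocompact diameters). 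So there is a bijection $\mc{U}\to\mc{B}$, $U\mapsto B(U)$, and a constant $r\ge 0$ with $U\subseteq B(U)\subseteq N_r(U)$ for every $U\in\mc{U}$.

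Given $\mu\ge 1$ and $\varkappa\ge 0$, I would first invoke \cite[Theorem 4.2]{Einstein-Hierarchies} (using Lemma~\ref{lem:RHP}, which says $(X,\mc{U})$ is a $(\delta,f_0)$--relatively hyperbolic pair) to obtain a constant $l_0$, depending only on $\mu$, $\varkappa$, and $(X,\mc{U})$, such that any two $(\mu,\varkappa)$--quasi-geodesics with common endpoints $l_0$--relatively fellow travel relative to $\mc{U}$. Now suppose $\sigma_1,\sigma_2$ are such quasi-geodesics. They admit a common subdivision into paired subpaths $(J_{\sigma_1}^{(j)}, J_{\sigma_2}^{(j)})$, where for each $j$ the two subpaths start and end within $l_0$ of each other, and either they are within Hausdorff distance $l_0$, or both lie in $N_{l_0}(U_j)$ for some $U_j\in\mc{U}$. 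In the first case nothing changes. In the second case, since $U_j\subseteq B(U_j)$, both paired subpaths lie in $N_{l_0}(B(U_j))\subseteq N_{l_0+r}(B(U_j))$, so condition (2) of relative fellow traveling relative to $\mc{B}$ holds with constant $l_0+r$. Hence $\sigma_1$ and $\sigma_2$ $l$--relatively fellow travel relative to $\mc{B}$ with $l = l_0 + r$, and $l$ depends only on $\mu$, $\varkappa$, and $(X,\mc{B})$ as required.

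The only real subtlety — and the step I would treat most carefully — is matching the precise formulation of the relative fellow traveling conclusion in Einstein's theorem with the one in the Definition stated just above the Proposition: one must check that the paired-subpath structure, the ``endpoints within $l$'' clause, and the dichotomy between bounded Hausdorff distance and containment in a single neighborhood all transfer verbatim under the bounded Hausdorff perturbation from $\mc{U}$ to $\mc{B}$. This is exactly the kind of bookkeeping that Lemma~\ref{lem:Haus RT} and Lemma~\ref{lem:Haus RHP} were set up to handle for the thin-triangles and bounded-coarse-intersection axioms; here the same principle is being applied one level up, to the fellow-traveling conclusion itself. No genuinely new geometric input is needed beyond the cited theorem of Einstein, so the argument is short once the translation between peripheral cosets and peripheral complexes is made explicit.
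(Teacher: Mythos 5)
Your proposal matches the paper's treatment: the paper gives no separate proof, simply remarking that since the peripheral complexes are at finite (uniformly bounded) Hausdorff distance from the peripheral cosets appearing in Einstein's \cite[Theorem 4.2]{Einstein-Hierarchies}, the fellow-traveling conclusion transfers to $\mc{B}$, which is exactly the translation you carry out with the constant $l = l_0 + r$. Your more explicit bookkeeping of the paired-subpath dichotomy is a correct elaboration of the same argument, not a different route.
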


The following lemma says that if a geodesic spends any time in a peripheral complex, then a quasi-geodesic with the same endpoints must spend a similar amount of time close to that peripheral complex.  (Recall that $\lambda,\epsilon$ are fixed constants coming from Proposition~\ref{prop:qiecarrier}.  If they were allowed to vary the lemma would still be true, but $l'$ would have to be allowed to depend on them.)
\begin{lemma}  \label{lem:l_prime}
  There is an $l'$ depending only on $(X,\mc{B})$ so that the following holds.  Let $\gamma$ be a geodesic in $X$, and let $\sigma$ be a $(\lambda,\epsilon)$--quasi-geodesic with the same endpoints as $\gamma$.  Let $Z$ be some peripheral complex so that $\gamma'=\gamma\cap Z$ is nonempty.  Then there is a subpath $\sigma'$ of $\sigma$ whose endpoints are within $l'$ of the endpoints of $\gamma'$, and which is contained in an $l'$--neighborhood of $Z$.
\end{lemma}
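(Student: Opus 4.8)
The plan is to invoke the relative fellow-traveling result, Proposition~\ref{prop:fellow_travel}, applied to the geodesic $\gamma$ and the $(\lambda,\epsilon)$--quasi-geodesic $\sigma$, and then track what happens to the subpath $\gamma' = \gamma\cap Z$ under the pairing of subpaths. First I would note that $\gamma$ is a $(1,0)$--quasi-geodesic, hence in particular a $(\lambda,\epsilon)$--quasi-geodesic, so both $\gamma$ and $\sigma$ are $(\lambda,\epsilon)$--quasi-geodesics with the same endpoints in $X$. By Proposition~\ref{prop:fellow_travel} applied with $(\mu,\varkappa) = (\lambda,\epsilon)$, there is a constant $l$ depending only on $(X,\mc{B})$ (and on the fixed constants $\lambda,\epsilon$, which themselves depend only on $(X,\mc{B})$) so that $\gamma$ and $\sigma$ $l$--relatively fellow travel relative to $\mc{B}$. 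In particular there is a common subdivision of $\gamma$ and $\sigma$ into paired subpaths, each pair starting and ending within $l$ of each other, and each pair either $l$--Hausdorff-close or jointly contained in $N_l(Z')$ for some $Z'\in\mc{B}$.

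Next I would analyze the interaction of $\gamma' = \gamma\cap Z$ with the subdivision of $\gamma$. Let $u,v$ be the endpoints of $\gamma'$. There are two (non-exclusive) possibilities for how $\gamma'$ meets the paired subpath structure: either $\gamma'$ lies within a single paired subpath of $\gamma$, or it spans several. In the first case, the corresponding paired subpath $J_\sigma$ of $\sigma$ either is $l$--Hausdorff-close to the subpath $J_\gamma \supseteq \gamma'$ of $\gamma$ — in which case the portion of $J_\sigma$ paired with $\gamma'$ (cut off appropriately, or just all of $J_\sigma$ if $\gamma' = J_\gamma$) lies in $N_l(Z)$ and has endpoints within $l$ of $u,v$ — or $J_\gamma$ and $J_\sigma$ both lie in $N_l(Z')$ for some $Z'\in\mc{B}$; since $\gamma'\subseteq Z$ has positive length, if this length is at least $f(l)+1$ (recall $(X,\mc{B})$ is $(\delta,f)$--relatively hyperbolic, so $\diam(N_l(Z)\cap N_l(Z'))\le f(l)$ when $Z\ne Z'$) we must have $Z'=Z$, and again $J_\sigma\subseteq N_l(Z)$ with endpoints near those of $J_\gamma$. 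When $\gamma'$ has length less than $f(l)+1$ the statement is trivial (take $\sigma'$ to be the single-point subpath at the $\sigma$-image of the midpoint of the relevant paired subpath, and enlarge $l'$ past $f(l)+1$). In the second case, where $\gamma'$ spans several consecutive paired subpaths $J_\gamma^{(1)},\dots,J_\gamma^{(N)}$, each interior one is entirely contained in $Z$; arguing as above (using that $Z$ is convex and that distinct peripheral complexes have uniformly bounded coarse intersection), each corresponding $J_\sigma^{(j)}$ with $J_\gamma^{(j)}$ of length at least $f(l)+1$ must satisfy $J_\sigma^{(j)}\subseteq N_l(Z)$; the total contribution of the short $J_\gamma^{(j)}$'s is uniformly bounded because between any two long ones the path $\gamma'$ stays in $Z$, so only boundedly many short subpaths can occur before a long one forces $Z'=Z$ again (and in fact one can just concatenate and absorb the bounded leftover into $l'$). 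Taking $\sigma'$ to be the concatenation of the $J_\sigma^{(j)}$ over the span, its endpoints are within $l$ of $u,v$ and it lies in $N_{l}(Z)$ possibly after adjoining the short leftover pieces, which are uniformly bounded in diameter, so $\sigma'\subseteq N_{l'}(Z)$ for a suitable $l' = l'(X,\mc{B})$ absorbing $f(l)$, the short-piece bounds, and a constant-sized additive error; say $l' = l + f(l) + $ (a uniform bound coming from the subdivision), which depends only on $(X,\mc{B})$.

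The main obstacle I anticipate is the bookkeeping in the second case: controlling the uniformly many ``short'' paired subpaths through which $\gamma'$ passes and ensuring that the resulting $\sigma'$, which a priori could stray out of $N_l(Z)$ on those short stretches, is still contained in a uniform neighborhood of $Z$. The key point making this work is that on each short stretch the path $\gamma$ itself remains in $Z$ (since $\gamma'\subseteq Z$), so the endpoints of each short $J_\gamma^{(j)}$ are in $Z$ and within $l$ of the endpoints of the corresponding $J_\sigma^{(j)}$; combined with the fact that a short subpath of the quasi-geodesic $\sigma$ between two points near $Z$ cannot travel far (its length is at most $\lambda\cdot(\text{short length}+2l)+\epsilon$, hence uniformly bounded), each such $J_\sigma^{(j)}$ stays in a uniform neighborhood of $Z$. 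So the constant $l'$ can be taken to depend only on $l$, $\lambda$, $\epsilon$, and $f$ — all of which depend only on $(X,\mc{B})$ — as required. One should double-check the degenerate cases (where $\gamma'$ is a single point, or where $\sigma'$ would be empty) and, in those, simply take $\sigma'$ to be the appropriate single point on $\sigma$, which is within $l'$ of both endpoints of $\gamma'$ once $l'$ is large enough.
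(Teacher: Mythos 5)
Your overall strategy (apply Proposition~\ref{prop:fellow_travel} to $\gamma$ and $\sigma$ and read off $\sigma'$ from the paired subpaths) is close in spirit to the paper's, but as written it has a genuine gap at the endpoints of $\gamma'$, i.e.\ exactly where the paired subpath $J_\gamma$ of $\gamma$ sticks out of $Z$. In the Hausdorff-close case you assert that ``the portion of $J_\sigma$ paired with $\gamma'$ \ldots lies in $N_l(Z)$,'' but Hausdorff-closeness gives only $J_\sigma\subseteq N_l(J_\gamma)$, and $J_\gamma$ is not contained in $Z$: a point of $\sigma$ between $p_u$ and $p_v$ may be $l$--close to a point of $\gamma$ lying beyond $v$ (or before $u$), and such points of $\gamma$ move away from $Z$ at a definite rate by $\eta$--super-attractiveness. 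Ruling this out requires an extra argument bounding how deep an excursion of $\sigma$ past $v$ can be (e.g.\ using continuity of projection to $J_\gamma$ to find two visits of $\sigma$ near $v$ and then the two-sided quasi-geodesic inequalities), which your proposal does not supply; with the constant $l$ you name, the claimed containment is simply false. A second, related omission: in the case where the pair containing an endpoint of $\gamma'$ lies in $N_l(Z)$, you conclude ``endpoints near those of $J_\gamma$,'' but the endpoints of $J_\gamma$ need not be near the endpoints of $\gamma'$ unless you bound the components of $J_\gamma\setminus Z$ — this is precisely where the paper invokes $\eta$--super-attractiveness, which never appears in your argument.

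The paper sidesteps both problems with a two-step structure you may want to adopt: first it only locates points of $\sigma$ within a uniform $l_1$ of the endpoints of $\gamma'$ (three cases for the pairs containing those endpoints, using super-attractiveness when the pair is close to $Z$ and the bound $f(l)$ when it is close to some $W\neq Z$); then it truncates $\sigma$ there, patches with two geodesics of length at most $l_1$ to obtain a $(\lambda,\epsilon+4l_1)$--quasi-geodesic $\beta$ with the same endpoints as $\gamma'$, and applies Proposition~\ref{prop:fellow_travel} a second time to $\beta$ versus $\gamma'$. Because the comparison geodesic $\gamma'$ lies entirely inside $Z$, every fellow-traveling alternative now yields uniform closeness to $Z$ (the only delicate case, closeness to some $W\neq Z$, is handled by the parameter bound for $\beta$, essentially the computation you sketch for your ``short stretches''). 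Your single-pass bookkeeping can likely be repaired along the lines above, but the missing excursion-depth and super-attractiveness arguments must be made explicit before the proof is complete.
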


  \begin{proof}
   Let $l$ be the constant of relative fellow travelling from Proposition~\ref{prop:fellow_travel} applied to $(\lambda,\epsilon)$--quasi-geodesics.

   We claim first that there are points on $\sigma$ within
   \begin{equation*}
     l_1= \max\{2l+\eta,f(l)+l\}
   \end{equation*}
   of the endpoints of $\gamma'$.  Indeed, let $a$ be an endpoint of $\gamma'$, and let $J_\gamma$, $J_\sigma$ be paired subpaths of $\gamma$ and $\sigma$ so that $J_\gamma$ contains $a$.
  There are three possibilities:
   \begin{enumerate}
     \item\label{fellowtravel} $J_\gamma$ and $J_\sigma$ are Hausdorff distance at most $l$ from each other;
     \item\label{WisZ} $J_\gamma$ and $J_\sigma$ are both contained in the $l$--neighborhood of $Z$, or
     \item\label{WnotZ} $J_\gamma$ and $J_\sigma$ are both contained in the $l$--neighborhood of some peripheral complex $W\ne Z$.
   \end{enumerate}
   In case \eqref{fellowtravel} we find a point of $\sigma$ at most $l$ away from $a$.
   In case \eqref{WisZ}, we note that any component of $J_\gamma\setminus Z$ has diameter at most $l+\eta$, by $\eta$--super-attractiveness.  Thus an endpoint of $J_\gamma$ is at most $l+\eta$ from $a$.  The corresponding endpoint of $J_\sigma$ is at most $2l + \eta$ from $a$.
   In case \eqref{WnotZ}, we note that $J_\gamma\cap Z$ has diameter at most $f(l)$, so there is an endpoint of $J_\gamma$ at most $f(l)$ from $a$.  The corresponding endpoint of $J_\sigma$ is at most $f(l) + l$ from $a$.
   In any case we have found a point of $\sigma$ at most $l_1$ away from $a$.

   We let $\sigma'$ be a subsegment of $\sigma$ whose endpoints are at most $l_1$ from the endpoints of $\gamma'$.  Let $\beta$ be the path obtained from $\sigma'$ by adjoining geodesics between the endpoints of $\gamma'$ and those of $\sigma'$.  Then $\beta$ is a $(\lambda, \epsilon + 4l_1)$--quasi-geodesic with the same endpoints as $\gamma$.  Let $l_2$ be the constant of relative fellow travelling from Proposition~\ref{prop:fellow_travel} applied to $(\lambda, \epsilon + 4l_1)$--quasi-geodesics.
  We claim that $\beta$ (and hence $\sigma'$) lies in an $l_3$--neighborhood of $Z$, where 
  \begin{equation*}
    l_3 = \frac{\lambda^2}{2}\left(f(l_2)+2l_2+\epsilon + 4l_1\right) + \epsilon + 4l_1 + l_2.
  \end{equation*}
  Indeed, let $b$ be a point of $\beta$, and let $J_{\gamma'}$ and $J_\beta$ be paired subpaths of $\gamma'$ and $\beta$ so that $b\in J_\beta$.  As before there are three possibilities:
  \begin{enumerate}
    \item\label{betafellowtravel} $J_{\gamma'}$ and $J_\beta$ are Hausdorff distance at most $l_2$ from each other;
    \item\label{betaWisZ} $J_{\gamma'}$ and $J_\beta$ are both contained in the $l_2$--neighborhood of $Z$, or
   \item\label{betaWnotZ} $J_{\gamma'}$ and $J_\beta$ are both contained in the $l_2$--neighborhood of some peripheral complex $W\ne Z$.
  \end{enumerate}
  In cases \eqref{betafellowtravel} and \eqref{betaWisZ} there is clearly a point of $Z$ within $l_2$ of $b$.
  In case \eqref{betaWnotZ}, 
  the diameter of $N_{l_2}(W)\cap N_{l_2}(Z)$ is at most $f(l_2)$.  Thus the segment $J_{\gamma'}$ has length at most $f(l_2)$.  The endpoints of $J_\beta$ are thus at most $f(l_2)+2l_2$ apart from one another.  A computation shows that $b$ is at most
  \begin{equation*}
    \frac{\lambda^2}{2}\left(f(l_2)+2l_2+\epsilon + 4l_1\right) + \epsilon + 4l_1
  \end{equation*}
  from one of these endpoints, and thus at most $l_3$ from some point on $\gamma'\subseteq Z$.

  Setting $l' = l_3$, we have established the lemma.
  \end{proof}

The next lemma shows that a thick carrier which has large coarse intersection with a peripheral complex must contain that peripheral complex.
\begin{lemma}\label{lem:attractivecarrier}
There exists a function $\Ftwo\co \R_{\ge 0} \to \R_{\ge 0}$ and a constant $R_1\ge R_0$ depending only on $(X,\mc{B})$ so that the following holds.
Let $B \cdot H$ be an $\Rgen$--separated scattering of some hyperplane $H$, where $\Rgen>R_1$.  Let $T = T(W_{H,B})$ be the associated thick carrier, and let $Z$ be a peripheral complex.

For any $d \ge 0$, if
\[ \diam(Z\cap N_d(T)) \ge \Ftwo(d), \]
        then $Z \subseteq T$.
\end{lemma}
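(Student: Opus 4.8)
\textbf{Proof proposal for Lemma~\ref{lem:attractivecarrier}.}

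The plan is to argue by contradiction: suppose $Z \not\subseteq T$, and show that then $\diam(Z \cap N_d(T))$ is bounded by a constant depending only on $d$ and $(X,\mc{B})$, which we take as $\Ftwo(d)$. The starting point is to recall that $T = T(W_{H,B}) = \phi_{H,B}(S_{H,B})$, where $S = S_{H,B}$ is the abstract complex built from hyperplane carriers of $B\cdot H$ and the peripheral complexes meeting them, and $\phi_{H,B}\co S \to X$ is a $(\lambda,\epsilon)$--quasi-isometric embedding by Proposition~\ref{prop:qiecarrier} (valid once $\Rgen > R_0$). If $Z \not\subseteq T$, then $Z$ is not one of the peripheral complexes used to build $S$; in particular $Z$ is distinct from every peripheral complex $Z' \subseteq T$.

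The key step is to take two points $a, b \in Z \cap N_d(T)$ that realize (nearly) the diameter, and to control the geodesic $[a,b]$. Since $Z$ is convex, $[a,b] \subseteq Z$. On the other hand, $a,b$ are within $d$ of points $a', b' \in T$, and since $T$ is connected with $B$ acting cocompactly, we can join $a'$ to $b'$ by a path $\sigma$ in $T$ which, after pushing into $S$ and using the quasi-isometry of Proposition~\ref{prop:qiecarrier}, we may take to be a $(\lambda,\epsilon)$--quasi-geodesic in $X$ (possibly after adjusting constants by the bounded amount $2d$, so really a $(\lambda, \epsilon + 4d)$--quasi-geodesic $\tilde\sigma$ with the same endpoints as $[a,b]$). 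Now apply Lemma~\ref{lem:l_prime} (with its constant $l'$, applied at the relevant quasi-geodesic constants): since the geodesic $[a,b]$ lies entirely in $Z$, the quasi-geodesic $\tilde\sigma$ must contain a subpath $\tilde\sigma'$ whose endpoints are within a bounded distance of the endpoints of $[a,b]$ and which lies in a bounded neighborhood $N_{l''}(Z)$ of $Z$. The length of $\tilde\sigma'$ is therefore comparable (up to the quasi-geodesic constants) to $d(a,b) = \diam$.

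The final step extracts the contradiction from the fact that $\tilde\sigma'$ lives inside $S_{H,B}$ (it is the image of a path in $S$) \emph{and} inside $N_{l''}(Z)$. Decompose $\tilde\sigma'$ into the portions lying in hyperplane carriers of $B\cdot H$ and the portions lying in peripheral complexes $Z' \subseteq T$ with $Z' \ne Z$. For a peripheral-complex portion inside some $Z' \ne Z$: it lies in $N_{l''}(Z) \cap N_{l''}(Z')$, which by the $(\delta,f)$--relative hyperbolicity of $(X,\mc{B})$ (Lemma~\ref{lem:(X,B) RHP}) has diameter at most $f(l'')$, hence is uniformly bounded. For a hyperplane-carrier portion: it lies in $N_{l''}(Z) \cap N_{l''}(gH)$ for some $g \in B$; since $Z \subseteq T$ is not forced (we are assuming otherwise), one must argue such an intersection is bounded — this is where $\eta$--super-attractiveness of $Z$ and the fact that $Z$ is \emph{not} one of the complexes glued to $gH$ enter, together with the bounded-projection estimate of Corollary~\ref{cor:bounded_proj} (a carrier $N_{\Delta}(gH)$ coarsely contains finitely many peripheral complexes, and $Z$ is not among them, so a long stay near both forces $Z$ to coincide with one of them). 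Then count: a quasi-geodesic $\tilde\sigma'$ in $S_{H,B}$ of length $\asymp \diam$ passes through at most $\diam/(\Rgen - \text{const})$ carriers (by $\Rgen$--separation) and an equal order of peripheral complexes, each contributing a bounded amount to the part near $Z$ — but the \emph{whole} of $\tilde\sigma'$ is near $Z$, forcing $\diam$ to be bounded. Making the bookkeeping uniform in $\Rgen$ (for $\Rgen > R_1$ suitably large) and tracking the dependencies yields the function $\Ftwo$.

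\textbf{Main obstacle.} The delicate point is the hyperplane-carrier step: ruling out the possibility that $\tilde\sigma'$ spends a long, \emph{uninterrupted} time inside a single carrier $N_{l''}(gH)$ while also staying near $Z$, without thereby forcing $Z$ to be one of the peripheral complexes glued to $gH$ (in which case $Z \subseteq T$ after all). This requires combining super-attractiveness of $Z$ with a quantitative ``either $Z$ is near $gH$ throughout or it diverges'' statement, essentially an application of Corollary~\ref{cor:bounded_proj} to the projections of $Z$ and $gH$ onto each other, and is the step where one must be most careful that the bound produced depends only on $(X,\mc{B})$ and $d$, not on $\Rgen$.
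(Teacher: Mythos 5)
Your overall strategy is the right one and is close to the paper's: run the $S$--geodesic through Proposition~\ref{prop:qiecarrier}, use Lemma~\ref{lem:l_prime} to extract a subpath of it tracking the part of the geodesic inside $Z$, decompose that subpath into peripheral and carrier segments, kill the peripheral segments ($Z'\ne Z$) with the bounded-intersection function $f$, and use $\Rgen$--separation plus $\eta$--super-attractiveness to finish. However, there is a genuine quantifier gap in how you absorb the offset $d$. By moving the quasi-geodesic's endpoints to $a,b\in Z$ and working with a $(\lambda,\epsilon+4d)$--quasi-geodesic, you force the relative fellow-traveling constant of Lemma~\ref{lem:l_prime} to depend on $d$, say $l''=l''(d)$. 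Your contradiction then needs $\Rgen$ to dominate quantities like $f(l''(d))$ (an interior peripheral segment between two distinct carriers has length $\ge \Rgen$ but is trapped in $N_{l''(d)}(Z)\cap N_{l''(d)}(Z')$, so you need $\Rgen> f(l''(d))$ to rule it out, and the same issue infects the ``counting'' inequality). Thus the threshold $R_1$ your argument produces depends on $d$, whereas the statement requires a single $R_1$ depending only on $(X,\mc{B})$ that works for every $d\ge 0$. This uniformity is not cosmetic: the lemma is later invoked with $d$ equal to quantities (e.g.\ $Q+\Theta_{m-1}$ in Lemma~\ref{lem:wallsintersect}, or $K=\Theta_k$ in Proposition~\ref{prop:unboundedNU}) that are only controlled after $\Rfin$ has been fixed, so a $d$--dependent $R_1$ would make the later arguments circular. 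The paper avoids this by first replacing $x,y\in Z\cap N_d(T)$ with their closest points $x',y'\in T$ and using $\eta$--super-attractiveness of $Z$ to see that $[x',y']$ still contains a $Z$--segment of length $>L_0$; the core argument is then run only for geodesics with endpoints on $T$, with the fixed constants $(\lambda,\epsilon)$, so $l'$, $L_0$ and $R_1=\max\{R_0,f(l')\}$ are independent of $d$, and $d$ enters only additively via $\Ftwo(d)=L_0+4d+2\eta$.

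Two smaller points. First, your ``main obstacle'' (a long carrier segment staying near $Z$) does not need Corollary~\ref{cor:bounded_proj}, which concerns two peripheral complexes and says nothing about a hyperplane versus a peripheral complex; the paper's mechanism is simply that if a carrier segment of length $>2l'+2\eta$ lies in $N_{l'}(Z)$, then super-attractiveness puts part of it inside $Z$, so $Z$ meets a hyperplane of the scattering and is therefore one of the complexes glued into $S_{H,B}$, i.e.\ $Z\subseteq T$ --- exactly the conclusion you are assuming fails, so the segment is short. Second, once interior peripheral segments are excluded (length $\ge\Rgen>f(l')$ versus length $\le f(l')$), the broken path has at most three segments, so no genuine ``counting'' over $\diam/\Rgen$ pieces is needed; the paper's version gives the bound $2f(l')+2l'+2\eta$ directly.
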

\begin{proof}
We take $R_1 = \max\{R_0, f(l')\}$, where $l'$ is the constant from Lemma~\ref{lem:l_prime}, and suppose $\Rgen\ge R_1$.
  Our proof is based on the following claim.
  \begin{claim*}
    There is an $L_0$ depending only on $(X,\mc{B})$ so that if $a,b\in T$ and $[a,b]$ has a subsegment of length $> L_0$ in some peripheral complex $Z$, then $Z\subseteq T$.
  \end{claim*}
  Given the claim, we argue as follows.  
  Fix $d>0$.  We define $\Ftwo(d) = L_0+4d+2\eta$.
  Suppose that $x$ and $y$ satisfy the following three properties:
  \begin{enumerate}
  \item $x,y\in Z$;
  \item $d(x,y) > \Ftwo(d)$; and
  \item $d(x,T)$ and $d(y,T)$ are at most $d$.
  \end{enumerate}
  Let $x'$, $y'$ be closest points on $T$ to $x$, $y$ respectively.  We have $d(x',y') > L_0+2d+2\eta$.  By $\eta$--super-attractiveness, a subsegment of $[x',y']$ of length greater than $L_0$ lies in $Z$.  Applying the Claim with $a=x',b=y'$, we conclude $Z\subseteq T$.

  It remains to prove the claim.
  \begin{proof}[Proof of Claim]
    We prove the claim with $L_0 = 2 f(l') + 4 l' + 2\eta$.
    Let $T$ be the thick carrier as in the statement of the lemma and $S = S(W_{H,B})$ the associated abstract complex.  Let $\sigma$ be the $(\lambda,\epsilon)$--quasi-geodesic joining $a$ to $b$ which is the image of a geodesic in $S$.  Suppose that $\gamma = [a,b]$ is the $X$--geodesic with the same endpoints and that $\gamma$ intersects the peripheral complex $Z$ in a segment $\gamma'$ of length greater than $L_0$.
    Lemma~\ref{lem:l_prime} gives a subpath $\sigma'$ of $\sigma$ starting and ending within $l'$ of the endpoints of $L_0$, so that $\sigma'$ is contained in the $l'$--neighborhood of $Z$.
    
  This subpath $\sigma'$ must have length bigger than $L_0 - 2l' = 2 f(l') + 2l' + 2\eta$; it is a broken geodesic made alternately of segments inside peripheral complexes and carriers of hyperplanes.

    If there are four or more such subsegments, one is a peripheral segment between two different hyperplane carriers in the $\Rgen$--scattering; in particular it has length at least $\Rgen> f(l')$.  But since the endpoints of this peripheral segment are within $l'$ of $Z$, the peripheral segment must actually be a $Z$--segment, and so $Z$ was in $T$ to begin with.  A similar argument shows $Z\subseteq T$ if $\sigma'$ begins or ends with a peripheral segment of length bigger than $f(l')$.

    If there is a hyperplane segment of length greater than $2l' + 2\eta$, then the $\eta$--super-attractiveness of $Z$ implies that this hyperplane segment meets $Z$.  In particular a hyperplane of the scattering meets $Z$, and again we conclude that $Z\subseteq T$.

    The only remaining possibility is that $\sigma'$ consists of three or fewer segments, at most one of which is a (short) hyperplane segment.  The total length of $\sigma'$ is thus at most $2f(l') + 2l' + 2\eta$, contrary to our choice of $L_0$.  This completes the proof of the claim.
  \end{proof}
Once the claim is proved, the lemma is proved.\end{proof}

\begin{proposition}\label{prop:uniformqccarrier}
  There is a $Q$ depending only on $(X,\mc{B})$ so that the following holds.
  Let $\Rgen > R_1$, where $R_1$ is the constant from Lemma~\ref{lem:attractivecarrier}.  Let $B \cdot H$ be an $\Rgen$--separated scattering of some hyperplane $H$ of $X$.  The thick carrier $T(W_{H,B})$ is $Q$--quasi-convex in $X$.
\end{proposition}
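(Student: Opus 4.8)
The plan is to bound a geodesic of $X$ between two points of the thick carrier by comparing it with a quasi-geodesic that lies inside the thick carrier. Write $T := T(W_{H,B})$ and $S := S_{H,B}$. Given $p,q\in T$, since $T=\phi_{H,B}(S)$ I would pick preimages $\hat p,\hat q\in S$, take an $S$--geodesic $\hat\gamma$ from $\hat p$ to $\hat q$, and set $\sigma := \phi_{H,B}(\hat\gamma)$. By Proposition~\ref{prop:qiecarrier}, $\phi_{H,B}$ is a $(\lambda,\epsilon)$--quasi-isometric embedding that is distance non-increasing, so $\sigma$ is a continuous $(\lambda,\epsilon)$--quasi-geodesic from $p$ to $q$ whose image is contained in $T$. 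Since $[p,q]$ is also a $(\lambda,\epsilon)$--quasi-geodesic with the same endpoints, Proposition~\ref{prop:fellow_travel} supplies a constant $l$, depending only on $\lambda$, $\epsilon$ and $(X,\mc{B})$ --- hence only on $(X,\mc{B})$ --- so that $[p,q]$ and $\sigma$ $l$--relatively fellow travel relative to $\mc{B}$.

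Next I would process the paired subpaths $(J_\gamma,J_\sigma)$ of $[p,q]$ and $\sigma$ coming from the relative fellow traveling. If $J_\gamma$ and $J_\sigma$ have Hausdorff distance at most $l$, then $J_\gamma\subseteq N_l(\sigma)\subseteq N_l(T)$. Otherwise $J_\gamma,J_\sigma\subseteq N_l(Z)$ for a single peripheral complex $Z$, and here I would split on the length of the geodesic segment $J_\gamma$. If $\operatorname{length}(J_\gamma)\le \Ftwo(l)+4l$, then every point of $J_\gamma$ is within $\Ftwo(l)+4l$ of an endpoint of $J_\gamma$, which is within $l$ of an endpoint of $J_\sigma\subseteq\sigma\subseteq T$, so $J_\gamma$ lies in a uniformly bounded neighborhood of $T$. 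If $\operatorname{length}(J_\gamma)>\Ftwo(l)+4l$, then since the endpoints of $J_\gamma$ lie within $l$ of those of $J_\sigma$ we get $\diam(J_\sigma)>\Ftwo(l)+2l$; as $J_\sigma\subseteq T$ and $J_\sigma\subseteq N_l(Z)$, this forces $\diam(Z\cap N_l(T))>\Ftwo(l)$, so Lemma~\ref{lem:attractivecarrier} (applicable because $\Rgen>R_1$) gives $Z\subseteq T$, and hence $J_\gamma\subseteq N_l(Z)\subseteq N_l(T)$. In every case $J_\gamma\subseteq N_Q(T)$ with $Q := \Ftwo(l)+5l$, so $[p,q]\subseteq N_Q(T)$ and $Q$ depends only on $(X,\mc{B})$ via $\lambda$, $\epsilon$, $l$ and $\Ftwo$.

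The crux of the argument, and the only place the hypothesis $\Rgen>R_1$ is used, is the peripheral case: a long sojourn of the in-carrier quasi-geodesic $\sigma$ in an $l$--neighborhood of a peripheral complex $Z$ is automatically a long coarse intersection of $T$ with $Z$, which the super-attractiveness packaged into Lemma~\ref{lem:attractivecarrier} promotes to the inclusion $Z\subseteq T$. Everything else is routine: the short-sojourn estimate is elementary because $J_\gamma$ is an honest geodesic segment, and the remaining work is just checking that all the constants involved ($\lambda$, $\epsilon$, the relative-fellow-traveling constant $l$, and $\Ftwo$) depend only on $(X,\mc{B})$, which makes $Q$ uniform over all $\Rgen$--separated scatterings with $\Rgen>R_1$.
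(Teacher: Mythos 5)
Your proof is correct and follows essentially the same route as the paper: compare the geodesic with the image of an $S_{H,B}$--geodesic via Proposition~\ref{prop:qiecarrier}, invoke relative fellow traveling (Proposition~\ref{prop:fellow_travel}), and in the long peripheral case apply Lemma~\ref{lem:attractivecarrier} to conclude $Z\subseteq T$. The only cosmetic difference is that you feed that lemma the points of $J_\sigma\subseteq T\cap N_l(Z)$ (so it is applied with $d=l$), whereas the paper uses $\eta$--super-attractiveness to place a long subsegment of $J_\gamma$ inside $Z$ and applies the lemma with $d=2l+\eta$, yielding a slightly different but equally valid constant $Q$.
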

\begin{proof}
  We prove the Proposition with $Q = 2l + \eta + \frac{1}{2}\Ftwo(2l+\eta)$, where $l$ is the constant of relative fellow-traveling of $(\lambda,\epsilon)$--quasi-geodesics in $(X,\mc{B})$, $\eta$ is the super-attractiveness constant of elements of $\mc{B}$, and $\Ftwo$ is the function from Lemma~\ref{lem:attractivecarrier}.

  Let $T = T(W_{H,B})$, and let $S = S(W_{H,B})$.  Recall that $T$ is the image of $S$ under a $(\lambda,\epsilon)$--quasi-isometric embedding (Proposition~\ref{prop:qiecarrier}).
  Let $\gamma$ be a geodesic with endpoints in $T$, and let $\sigma$ be the image of an $S$--geodesic with the same endpoints.  Since $S$ is $(\lambda,\epsilon)$--quasi-isometrically embedded, $\sigma$ is a $(\lambda,\epsilon)$--quasi-geodesic in $X$.  In particular it $l$--relatively fellow travels $\gamma$ relative to $\mc{B}$.
  
  Let $J_\gamma$ and $J_\sigma$ be paired subpaths of $\gamma$ and $\sigma$ respectively.  In case they are Hausdorff distance $l$ from each other, we clearly have $J_\gamma\subseteq N_Q(T)$.  Otherwise, they are both in the $l$--neighborhood of some peripheral complex $Z$.   If the length of $J_\gamma$ is at most $2(l+\eta)+\Ftwo(2l+\eta)$, it is also contained in the $Q$--neighborhood of $T$, since its endpoints are distance at most $l$ from the endpoints of $J_\sigma\subseteq T$.  Otherwise, the $\eta$--super-attractiveness of $Z$ implies that there is a subsegment of $J_\gamma$ of length at least $\Ftwo(2l+\eta)$, whose endpoints are distance at most $l + \eta$ from the endpoints of $J_\gamma$, hence at most $2l + \eta$ from the endpoints of $J_\sigma$.  Lemma~\ref{lem:attractivecarrier} then shows that $Z\subseteq T$, so $J_\gamma\subseteq N_Q(T)$ in this case as well.
\end{proof}

\subsection{The wallspace}
We now construct the wallspace which we use to build the cubulation for Theorem~\ref{t:recubulate}.  In order to do this, we first fix some constants, and our assumptions on the constant of separatedness of the scatterings we use.

\begin{notation}\label{not:understandable}
Let $\eta$ be as in Proposition~\ref{prop:peripheral} and suppose that $\mc{B}$ is a collection of convex $\eta$--super-attractive complexes as in Definition~\ref{def:fix (X,B)}.  Let $\delta$ and $f$ be so that $(X,\mc{B})$ is $(\delta,f)$--relatively hyperbolic, as in the conclusion of Lemma~\ref{lem:(X,B) RHP}.  Let $n = \dim(X)$.  

We now recall and fix some more constants, and note that like $\eta,\delta,f$ and $n$, all of them depend only on the pair $(X,\mc{B})$.

The constant $R_1$ previously appeared in Lemma~\ref{lem:attractivecarrier} and Proposition~\ref{prop:uniformqccarrier}.
We will assume that all scatterings are $\Rgen$--scatterings for some $\Rgen>R_1$.

\begin{enumerate}
\item $C = C(\delta,f)$ is the bound on the diameter of projections of peripheral complexes to one another (see Corollary~\ref{cor:bounded_proj}).
\item $Q$ is the constant of quasi-convexity of thick carriers of scatterings (see Proposition~\ref{prop:uniformqccarrier});
\item  $\theta_n = \arcsin(\frac{1}{\sqrt{n}})$ (a geodesic in $X$ which is not too short must meet some hyperplane at this angle or greater, see Lemma~\ref{lem:sageevwise} below).
\end{enumerate}
\end{notation}
\begin{notation}\label{not:mysterious}
  The following constants also depend only on the pair $(X,\mc{B})$.  They may seem somewhat mysterious but we need to choose them carefully now in order to fix $\Rfin$ below.
  \begin{enumerate}
  \item (first used in Lemma~\ref{lem:weaklyprincipal});
    \begin{equation*}
      \kappa = \defkappa
    \end{equation*}
  \item  (first used in Lemma~\ref{lem:NJU})
    \begin{equation*}
    \omega = \defomega;
  \end{equation*}
\item (first used in Lemma~\ref{lem:NJU})
  \begin{equation*}
    \tau = \deftau;
    \end{equation*}
\item (first used in Definition~\ref{def:reln})    \begin{equation*}
    D = \defD
    \end{equation*}
\end{enumerate}
\end{notation}
\begin{assumption}\label{ass:R}
  Fix once and for all some number $\Rfin$ so that  $\Rfin$ is strictly bigger than all of the following:
\begin{enumerate}
\item\label{req:qiec and uniformqcc} (Required so Propositions~\ref{prop:qiecarrier} and~\ref{prop:uniformqccarrier} may be applied)
\begin{equation*}
R_1
\end{equation*}
\item\label{req:for unboundedNU} (required in Proposition~\ref{prop:unboundedNU})
\begin{equation*}
    \Ftwo(0)+4(\eta+\delta)+2f(\delta)+\tau
\end{equation*}
\item\label{req:for prop:eq rel} (required in Proposition~\ref{prop:eq rel})
\begin{equation*}
8(Q+\Fone(Q)+2\eta+\delta+\frac{1}{2}f(\delta))
\end{equation*}
\item\label{req:for prop:eq rel.2} (required in Proposition~\ref{prop:eq rel})
\begin{equation*}
\frac{8}{3} \left( D + 2 \eta + 2 \delta + f(\delta) \right)
\end{equation*}
\item\label{req:for equivclassboundedset} (required in Lemma \ref{lem:equivclassboundedset})
\begin{equation*}
D + \tau
\end{equation*}
\item\label{req:for Lambda tree} (required in Lemma~\ref{lem:Lambda_tree})
\begin{equation*}
\frac{8}{3} \left( 26 \cdot \max\left\{ f(5\delta),\eta,\frac{1}{3}\max \left\{ \omega, 2(D+\tau) \right\} \right\} + 250\delta \right)
\end{equation*}
\end{enumerate}  
\end{assumption}
\medskip\noindent\putinbox{The constant $\Rfin$ is the same for the remainder of this section and the next; we only consider $\Rfin$--scatterings from now on.}\medskip

\begin{definition}[The wallspace $(X^{(0)},\mc{W})$]\label{def:wallspaceW}  Choose representatives $H_i$ of the finitely many $G$--orbits of hyperplanes of $X$, and for each $i$  choose a broadcaster $B_i$ so that $B_i \cdot H_i$ is an $\Rfin$--separated scattering.  Such a broadcaster exists by Proposition~\ref{prop:amalgam}.  Let $W_i = W_{H_i,B_i}$ be the wall from Definition~\ref{def:singlewall} and let $\phi_i = \phi_{H_i,B_i}$ be the map defined in Definition~\ref{def:thickcarrier}.  It follows that $T(W_i)$, $\phi_i$ satisfy the conclusion of Propositions~\ref{prop:qiecarrier} and~\ref{prop:uniformqccarrier}.  Let \[\mc{W}=\{gA\mid g\in G, A\in W_i\mbox{ for some }i\}.\]
\end{definition}
Since $\mc{W}$ is defined $G$--equivariantly, there is an action of $G$ on the wallspace $(X^{(0)},\mc{W})$.

The following is immediate from the fact that walls are unions of hyperplanes in $X$ and from the equivalence relation defined in Definition~\ref{def:singlewall}.
\begin{lemma}\label{lem:edgedetermines}
  Let $v,w$ be adjacent vertices of $X$.  Then there is exactly one wall of $\mc{W}$ separating $v$ from $w$.
\end{lemma}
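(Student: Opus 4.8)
The plan is to unwind the definitions and check the two conclusions of Lemma~\ref{lem:edgedetermines} — existence and uniqueness of a separating wall — separately. The key observation is that every wall of $\mc{W}$ is, by construction, an equivalence class partition of $X^{(0)}$ coming from a scattering $B\cdot H$ of a single hyperplane $H$ (Definition~\ref{def:singlewall}), i.e. two vertices lie on the same side of the wall $g W_{H_i,B_i}$ exactly when some edge-path between them crosses the hyperplane collection $g(B_i\cdot H_i)$ an even number of times.

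\emph{Existence.} Let $v,w$ be adjacent, joined by an edge $e$ of $X$. This edge is dual to a unique hyperplane $H$ of $X$. By $G$--cocompactness on hyperplanes, $H=g H_i$ for some $i$ and some $g\in G$, and $H$ belongs to exactly one of the scatterings appearing in $\mc{W}$, namely $g(B_i\cdot H_i)$ (note that $H\in g(B_i\cdot H_i)$ since $B_i$ contains $\mathrm{Stab}(H_i)$, so $H_i\in B_i\cdot H_i$). The edge $e$ crosses $g(B_i\cdot H_i)$ exactly once (it crosses $H$ once and no other hyperplane at all), hence $v\not\sim w$ in the equivalence relation defining $g W_i$, so the wall $gW_i$ separates $v$ from $w$. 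This gives at least one separating wall.

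\emph{Uniqueness.} Suppose $g'W_j$ is any wall of $\mc{W}$ separating $v$ from $w$. By definition of the equivalence relation, the path $e$ itself must cross the hyperplane collection $g'(B_j\cdot H_j)$ an odd — hence nonzero — number of times. But $e$ is a single edge, so it crosses exactly one hyperplane of $X$, namely $H$; therefore $H\in g'(B_j\cdot H_j)$, and $e$ crosses $g'(B_j\cdot H_j)$ exactly once. Now I would argue that the scattering containing $H$ is unique among all the scatterings used in $\mc{W}$: each hyperplane of $X$ lies in exactly one set of the form $g'(B_j\cdot H_j)$, because these sets are precisely the orbits of the $G$--action on hyperplanes refined by the broadcaster subgroups — more concretely, $H=g'b\,H_j$ for some $b\in B_j$ forces $j$ to be the index of the $G$--orbit of $H$ (so $j$ is determined), and then the scattering $g'(B_j\cdot H_j)=g'b B_j H_j$ depends only on the coset $g'b B_j$, which is determined by $H$ once $j$ is fixed. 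Hence $g'(B_j\cdot H_j)=g(B_i\cdot H_i)$ as collections of hyperplanes, and consequently $g'W_j=gW_i$ as partitions of $X^{(0)}$. This shows the separating wall is unique.

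I expect the only real content — and the main thing to get right — is the uniqueness bookkeeping: verifying that a hyperplane of $X$ determines, and is contained in, a \emph{unique} scattering among those chosen in Definition~\ref{def:wallspaceW}, which comes down to the fact that distinct cosets $g'b B_j$ give scatterings that, while possibly overlapping as sets of hyperplanes in pathological situations, in fact coincide exactly when they share a hyperplane (since a scattering is a full $B_j$--orbit, two such orbits meeting must be equal). Once that is pinned down, everything else is a direct consequence of a single edge being dual to a single hyperplane and of the parity definition of the walls; no metric estimates or the constants fixed in Assumption~\ref{ass:R} are needed here.
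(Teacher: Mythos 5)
Your proof is correct and is essentially the paper's intended argument: the paper states the lemma as immediate from the parity definition of the walls in Definition~\ref{def:singlewall} and the fact that walls are unions of hyperplanes, and your unwinding (an edge is dual to a unique hyperplane, plus the bookkeeping that $\Stab(H_i)\le B_i$ forces a hyperplane to lie in a unique chosen scattering, hence determine a unique wall) is exactly the verification being elided. Nothing further is needed.
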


\begin{definition}\label{def:WHyp and WPC}
  Lemma~\ref{lem:edgedetermines} says in particular that each hyperplane of $X$ is associated to a single wall $W$ of $\mc{W}$.  Such a hyperplane is called a \emph{$W$--hyperplane}.  Let $W = gW_{H_i,B_i}$ for some $g\in G$ and one of the hyperplanes $H_i$ discussed in Definition~\ref{def:wallspaceW}.   We write $T(W)$ for the complex $gT(W_{H_i,B_i})$ and note that it is $Q$--quasi-convex and a $(\lambda,\epsilon)$--quasi-isometric image of $S_{H_i}$ (see Proposition~\ref{prop:qiecarrier}).  We refer to $T(W)$ as the \emph{thick carrier for $W$}.
  If $Z$ is one of the peripheral complexes used to build $S_{H_i}$, then $gZ$ is called a \emph{$W$--peripheral complex}.
\end{definition}
\begin{lemma} \label{lem:W separates}
  Let $Z$ be a peripheral complex and let $W$ be a wall.  Then $Z$ is a $W$--peripheral complex if and only if $W$ separates two vertices of $Z$.
\end{lemma}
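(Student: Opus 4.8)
The plan is to prove the two directions separately. For the forward direction, suppose $Z$ is a $W$--peripheral complex, so $W = gW_{H_i,B_i}$ and $Z = gZ_0$ where $Z_0$ is one of the peripheral complexes used to build $S_{H_i}$. By $G$--equivariance it suffices to treat the case $g = 1$, so $W = W_{H_i,B_i}$ and $Z = Z_0$ is glued into $S_{H_i}$ along its intersection with the carrier of some hyperplane $bH_i$ in the scattering $B_i \cdot H_i$. The key point is that $Z$ genuinely meets the carrier of $bH_i$ in $X$ (since $Z_0$ was used in the gluing), so $Z$ contains a vertex $v$ in the carrier of $bH_i$ on one side of $bH_i$ and a vertex $w$ in the carrier on the other side — or more precisely, since $Z$ is a convex subcomplex meeting the hyperplane $bH_i$, there are two vertices of $Z$ joined by an edge dual to $bH_i$. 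By Lemma~\ref{lem:edgedetermines}, the wall of $\mc{W}$ separating these two adjacent vertices is exactly the wall associated to the hyperplane $bH_i$, which is $W$ (since $bH_i$ is a $W$--hyperplane: $b \in B_i$ and $bH_i$ lies in the scattering defining $W$). Hence $W$ separates two vertices of $Z$.

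For the converse, suppose $W$ separates two vertices $v, w$ of $Z$, where $Z$ is a peripheral complex (a priori arbitrary). Since $Z$ is connected, choose a path in $Z^{(1)}$ from $v$ to $w$; since $W$ separates $v$ from $w$, by the even/odd crossing description in Definition~\ref{def:singlewall} the path must cross some $W$--hyperplane, i.e.\ there is an edge $e$ of this path, with endpoints $v', w'$ in $Z$, dual to a hyperplane $H'$ which is a $W$--hyperplane. Writing $W = gW_{H_i,B_i}$, the $W$--hyperplanes are precisely the translates $gb H_i$ for $b \in B_i$, i.e.\ the hyperplanes of the scattering $g(B_i \cdot H_i)$. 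So $H' = gbH_i$ for some $b \in B_i$, and $Z$ meets $H'$. Applying $g^{-1}$, we may assume $g = 1$: then $Z$ is a peripheral complex meeting the hyperplane $bH_i$ of the scattering $B_i \cdot H_i$. By the construction of $S_{H_i}$ in Definition~\ref{def:thickcarrier}, every peripheral complex of $\mc{B}$ meeting $bH_i$ is glued into $S_{H_i}$, hence is one of the peripheral complexes used to build $S_{H_i}$; therefore $Z$ is a $W$--peripheral complex. Translating back by $g$ gives the general case.

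The main obstacle, such as it is, lies in the forward direction: one must be careful that ``$Z$ is used in the gluing to build $S_{H_i}$'' really does entail that $Z$ has two vertices separated by $W$, and not merely that $Z$ shares some lower-dimensional cell with a carrier. This is handled by recalling that $Z$ is a convex subcomplex of $X$ and the gluing in Definition~\ref{def:thickcarrier} is along the intersection $Z \cap (\text{carrier of } bH_i)$ in $X$; a peripheral complex which is glued to the carrier of $bH_i$ must actually meet the hyperplane $bH_i$ (otherwise it lies entirely in the carrier on one side and this is excluded, or — if only a boundary face is shared — one notes that in a \CAT$(0)$ cube complex a convex subcomplex meeting a hyperplane carrier but not the hyperplane still has the relevant structure; in any case the gluing datum records exactly the peripheral complexes meeting $bH_i$). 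Once that is pinned down, the rest is a direct appeal to Lemma~\ref{lem:edgedetermines} and the definition of the scattering, together with $G$--equivariance of all the constructions.
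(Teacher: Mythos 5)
Correct, and this is essentially the argument the paper intends: the lemma is stated there without proof, being immediate from Definition~\ref{def:thickcarrier} (a peripheral complex is glued into $S_{H_i}$ exactly when it meets a hyperplane of the scattering $B_i\cdot H_i$), the parity description of walls in Definition~\ref{def:singlewall}, and Lemma~\ref{lem:edgedetermines}. Your closing hedge is unnecessary: the gluing in Definition~\ref{def:thickcarrier} is indexed by peripheral complexes meeting the hyperplane $bH_i$ itself (not merely its carrier), so a $W$--peripheral complex meets a $W$--hyperplane and therefore contains an edge dual to it, which is exactly what your forward direction needs.
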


\begin{lemma}
  The collection $\mc{W}$ defined in Definition~\ref{def:wallspaceW} makes $X^{(0)}$ into a space with walls.
\end{lemma}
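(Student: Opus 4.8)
The plan is to verify the two defining properties of a space with walls from Definition~\ref{def:wall space}: first that $\mc{W}$ is closed under complementation, and second that the finiteness condition~\eqref{wallspacefiniteness} holds, namely that any two vertices $v,w\in X^{(0)}$ are separated by only finitely many walls. Closure under complementation is essentially immediate: each generating wall $W_i=\{W_i^+,W_i^-\}$ is an (unordered) partition of $X^{(0)}$, so passing from one halfspace to the other is already built into the set $W_i$, and the $G$--translate of a partition is a partition; thus $\mc{W}$ is closed under complementation by construction.

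The substance is the finiteness condition. First I would reduce to the case where $v$ and $w$ are adjacent vertices of $X$: for a general pair, fix a combinatorial geodesic $\gamma$ in $X^{(1)}$ from $v$ to $w$, and observe that any wall separating $v$ from $w$ must separate some consecutive pair of vertices along $\gamma$ (a wall is a partition of $X^{(0)}$, so the two endpoints of $\gamma$ lying in different halfspaces forces a ``sign change'' along some edge). So it suffices to bound, for a single edge $e$ of $X$, the number of walls of $\mc{W}$ separating its endpoints, and then sum over the finitely many edges of $\gamma$. But by Lemma~\ref{lem:edgedetermines} there is \emph{exactly one} wall of $\mc{W}$ separating the endpoints of any given edge. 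Hence at most $|\gamma|$ walls of $\mc{W}$ separate $v$ from $w$, which is finite. This immediately gives~\eqref{wallspacefiniteness}.

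I would therefore organize the proof as: (i) note $\mc{W}$ is nonempty and each element is a nonempty proper subset of $X^{(0)}$ (each halfspace $W_i^\pm$ is nonempty by Lemma~\ref{lem:stabhalfspace}, and its complement is the other halfspace, also nonempty), so $\mc{W}$ is a collection of nonempty subsets closed under complementation; (ii) verify~\eqref{wallspacefiniteness} via the geodesic-and-Lemma~\ref{lem:edgedetermines} argument above. The only mild subtlety to spell out is the claim that a wall separating $v$ from $w$ must separate the endpoints of some edge of $\gamma$: if $v\in W^+$ and $w\in W^-$, then walking along the vertices $v=v_0,v_1,\dots,v_m=w$ of $\gamma$, there is a first index $j$ with $v_j\in W^-$, and then the edge $\{v_{j-1},v_j\}$ has its two endpoints in opposite halfspaces of $W$.

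I do not expect any real obstacle here; the lemma is a bookkeeping consequence of Lemma~\ref{lem:edgedetermines} (which in turn encodes that walls are disjoint unions of $X$--hyperplanes together with the even-crossing equivalence relation of Definition~\ref{def:singlewall}) and of the fact that $X^{(1)}$ is connected with finite-length edge paths between vertices. The closest thing to a point requiring care is simply making sure that ``separates the endpoints of an edge'' is correctly matched with the ``exactly one wall'' statement of Lemma~\ref{lem:edgedetermines}, so that the count $\#\{W\in\mc{W}\mid W\text{ separates }v\text{ from }w\}\le d_{X^{(1)}}(v,w)<\infty$ goes through cleanly.
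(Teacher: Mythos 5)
Your proposal is correct and follows the same route as the paper: the paper's proof likewise notes that, by Lemma~\ref{lem:edgedetermines}, the number of walls separating two vertices is at most their distance in $X^{(1)}$, which gives the finiteness condition~\eqref{wallspacefiniteness}. Your write-up merely spells out the edge-by-edge ``sign change'' argument that the paper leaves implicit.
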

\begin{proof}
  We must show the finiteness condition \eqref{wallspacefiniteness} from Definition~\ref{def:wall space}.  But Lemma~\ref{lem:edgedetermines} implies that the number of walls in $\mc{W}$ separating two vertices is at most their distance in $X^{(1)}$.
\end{proof}

\section{Ultrafilters and Stabilizers for the action}\label{sec:ultrafilters}
The goal of this section is to prove Theorem~\ref{t:recubulate}.
To that end, we fix a wallspace structure $(X^{(0)},\mc{W})$ as described in Definition~\ref{def:wallspaceW}.  By Sageev's construction this gives rise to a new \CAT$(0)$ cube complex $\hatx=C(\mc{W})$ with a $G$--action.  Many properties of this cube complex (for instance dimension and the precise vertex stabilizers) depend strongly on the choices made in the last section.  However, as long our cube complex is built from $\Rfin$--scatterings where $\Rfin$ satisfies Assumption~\ref{ass:R}, this cube complex will satisfy the conclusions of Theorem~\ref{t:recubulate}.

We briefly recall the Sageev construction (see \cite{Sageev,SageevPCMI,hruskawise:finiteness} for more details).  Given a wallspace $(S,\mc{V})$, an \emph{ultrafilter} on $(S,\mc{V})$ is a subset $\U$ of $\mc{V}$ satisfying the following two properties:
\begin{enumerate}
  \item (Consistency) If $A\subseteq B$ are two halfspaces and $A\in \U$, then $B\in \U$; and
  \item (Completeness) If $A$ is a halfspace, then either $A$ or $A^c$ is in $\U$.
\end{enumerate}
A \emph{DCC ultrafilter} is one which contains no infinite descending chain
\[ A_1\supsetneq A_2\supsetneq\cdots.\]
\begin{convention}
  All ultrafilters considered below are assumed to be DCC.
\end{convention}
An ultrafilter is \emph{principal} if, for some $x\in S$, it is of the form:
\[ \U[x] \defeq \{A\in \mc{V}\mid x\in A\}.\]

\begin{notation}
  Completeness means we can also think of an ultrafilter $\U$ as a function from the set of walls to the set of halfspaces.  Abusing notation slightly, if $W=\{A,A^c\}$ is a wall and $\U$ is an ultrafilter so $A\in \U$, we write $\U(W) = A$.
\end{notation}

We build a cube complex $C'(\mc{V})$ as follows.  The $0$--cells are the DCC ultrafilters on $\mc{V}$.  Two ultrafilters are connected by an edge if they differ on a single wall.  (In other words $\U_1 = (\U_2\setminus\{A\})\cup\{A^c\}$.)  Higher dimensional cubes are added wherever their $1$--skeleta appear.  The principal ultrafilters all lie in the same component of $C'(\mc{V})$, and it is this component which we name $C(\mc{V})$.  We say $C(\mc{V})$ is \emph{the cube complex obtained by applying the Sageev construction to $\mc{V}$}.

The following result is essentially due to Sageev, though he did not phrase things in terms of wallspaces.  See \cite[Section 3]{hruskawise:finiteness}, and the references therein.  See also \cite{SageevPCMI,Roller} for a slightly more general discussion of the cube complex dual to a ``pocset'' (poset with complementation).
\begin{theorem}\cite{Sageev}
  For any wallspace $(S,\mc{V})$, the cube complex $C(\mc{V})$ is \CAT$(0)$.
\end{theorem}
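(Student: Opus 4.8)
The plan is to invoke Gromov's criterion: a simply connected cube complex all of whose vertex links are flag simplicial complexes is non-positively curved, hence (using Bridson's completeness theorem for cube complexes and the Cartan--Hadamard theorem) is \CAT$(0)$. Connectedness of $C(\mc{V})$ is tautological, since by construction it is a single connected component of $C'(\mc{V})$. So everything reduces to two things: simple connectivity, and the flag (no-$\triangle$) condition on links. It is convenient throughout to identify walls with pairs of halfspaces and to write $\U(W)$ for the halfspace of $W$ selected by an ultrafilter $\U$.

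First I would establish a bookkeeping lemma on the combinatorial metric: for DCC ultrafilters $\U_1,\U_2$ in the same component, the set $\{W\in\mc{W}\mid \U_1(W)\neq \U_2(W)\}$ is finite, the combinatorial distance $d(\U_1,\U_2)$ in $C(\mc{V})^{(1)}$ equals its cardinality, any combinatorial geodesic flips each of these walls exactly once, and every wall on which $\U_1,\U_2$ differ is flipped by every edge-path between them. Finiteness in the principal case is precisely axiom~\eqref{wallspacefiniteness}, and it passes to arbitrary DCC ultrafilters in the principal component via a path to a principal ultrafilter. The rest is a cancellation argument: if an edge-path flips some wall $W$ an even (in particular nonzero) number of times, consistency of ultrafilters forces two of these flips to be ``consecutive'' in a suitable sense, with all intervening flips commuting past $W$, so the path can be shortened; DCC is what guarantees this process terminates.

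Granting this, simple connectivity follows: given an edge-loop based at $\U$, pick a wall $W$ it flips; by the same cancellation analysis two of the $W$-flips can be brought together through a sequence of elementary square moves in the $2$--skeleton and then deleted, reducing the total number of flips, so by induction the loop bounds a disc. For the link condition, note that an edge of $C(\mc{V})$ out of $\U$ corresponds to a halfspace $A\in\U$ that is \emph{minimal} in $\U$ (no halfspace of $\U$ is properly contained in $A$), and that minimality together with DCC is exactly what makes the flipped function $(\U\setminus\{A\})\cup\{A^c\}$ a valid DCC ultrafilter. Two such edges, given by minimal $A_1,A_2\in\U$, span a square iff the two flips commute, iff $A_1$ and $A_2$ are \emph{transverse}, i.e. all four intersections $A_1^{\pm}\cap A_2^{\pm}$ are nonempty. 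The flag condition then amounts to: if $A_1,A_2,A_3\in\U$ are pairwise transverse and each minimal in $\U$, then the simultaneous flip on $\{A_1,A_2,A_3\}$ is again a valid DCC ultrafilter, so the three edges span a $3$--cube. One checks this by showing that all eight ``octants'' $A_1^{\pm}\cap A_2^{\pm}\cap A_3^{\pm}$ are nonempty --- if one octant were empty, consistency would force two of the $A_i$ to be nested, contradicting pairwise transversality --- which is precisely the consistency condition for the simultaneous flip; DCC of the result is inherited from DCC of $\U$ together with minimality of the $A_i$.

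I expect the flag/link step to be the main obstacle: the careful point is showing that pairwise transversality genuinely yields a \emph{simultaneously} valid orientation and that no new infinite descending chain is created by the triple flip. An alternative route, which avoids Gromov's criterion entirely, is to equip the DCC-ultrafilter vertex set with the ``majority vote'' operation $m(\U_1,\U_2,\U_3)(W) = $ the halfspace of $W$ chosen by at least two of the $\U_i$, verify that $m(\U_1,\U_2,\U_3)$ is again a consistent DCC ultrafilter in the principal component (a pointset argument on halfspaces closely parallel to the octant computation above), deduce that $C(\mc{V})^{(1)}$ is a median graph, and then invoke the Chepoi--Roller characterization of median graphs as exactly the $1$--skeleta of \CAT$(0)$ cube complexes.
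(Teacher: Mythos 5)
The paper gives no argument for this statement at all---it is quoted from Sageev, with pointers to Hruska--Wise and Roller for expositions---so your proposal has to stand on its own; its architecture (connectivity by definition of $C(\mc{V})$, simple connectivity by cancelling repeated wall-crossings through square moves, then Gromov's link criterion) is exactly the standard one, and your bookkeeping lemma, the identification of edges at $\U$ with halfspaces $A\in\U$ that are minimal in $\U$, and the criterion for squares are all correct.

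However, the step you yourself single out as the crux---the flag condition---is justified by a false claim. It is not true that three pairwise transverse walls have all eight octants $A_1^{\pm}\cap A_2^{\pm}\cap A_3^{\pm}$ nonempty: take $S$ to be the seven regions cut out of the plane by three lines in general position, with the three associated halfplane walls. These walls pairwise cross and none is nested in another (so, taking $\U$ principal at the central triangular region, each $A_i$ is minimal in $\U$), yet the octant $A_1^c\cap A_2^c\cap A_3^c$ is empty. The emptiness of an octant does \emph{not} force two of the walls to be nested, and, more importantly, nonemptiness of the octants is \emph{not} the consistency condition for the simultaneous flip: consistency as defined is a purely pairwise condition (given completeness it says exactly that any two halfspaces in the ultrafilter intersect). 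Hence the triple flip $(\U\smallsetminus\{A_1,A_2,A_3\})\cup\{A_1^c,A_2^c,A_3^c\}$ is consistent as soon as (i) each $A_i$ is minimal in $\U$, which rules out a disjoint pair of the form $(A_i^c,B)$ with $B\in\U$, and (ii) $A_i^c\cap A_j^c\ne\emptyset$ for $i\ne j$, which is exactly pairwise transversality; DCC is preserved since the new ultrafilter differs from $\U$ on only finitely many walls. So the $3$--cube (and, by the same pairwise check, every higher cube over pairwise-crossing minimal halfspaces) does exist, but its ``missing'' corners are in general non-principal ultrafilters represented by no point of $S$---in the example above, the vertex choosing $A_1^c,A_2^c,A_3^c$ is such a corner. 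This is precisely why the construction must admit non-principal vertices. Replacing your octant argument by this pairwise verification repairs the Gromov-criterion route; your alternative median-graph route (majority vote, then the Chepoi--Roller characterization) is likewise viable, with the same caveat that consistency of the median is checked pairwise, not octant-wise.
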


\subsection{Parabolic sub-complexes of $\hatx$ are finite}
In \cite[Section 3.4]{hruskawise:finiteness}, Hruska and Wise introduce the notion of a \emph{hemiwallspace}, a subcollection of the halfspaces of a wallspace containing at least one from every wall.  When $(S,\mc{V})$ is a wallspace and $T\subseteq S$, one obtains the \emph{hemiwallspace induced by $T$} (see \cite[Example 3.20]{hruskawise:finiteness}):
\begin{equation*}
  \mc{V}_T = \{U\in \mc{V}\mid U\cap T\ne \emptyset\}.
\end{equation*}
This hemiwallspace also gives rise to a cube complex $C(\mc{V}_T)$ defined to be the cube complex obtained by applying the Sageev construction to the collection of halfspaces in $\mc{V}_T$ whose complements are also in $\mc{V}_T$.
We have the following result of Hruska--Wise.
\begin{lemma}\cite[Lemmas 3.23, 3.24]{hruskawise:finiteness}\label{lem:hemi}
  The cube complex $C(\mc{V}_T)$ embeds naturally into $C(\mc{V})$ as a convex sub-complex.
\end{lemma}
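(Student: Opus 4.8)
The plan is to realize $C(\mc{V}_T)$ as the Sageev complex of the wallspace $(S,\mc{V}_T^\circ)$, where $\mc{V}_T^\circ\defeq\{U\in\mc{V}_T\mid U^c\in\mc{V}_T\}$ is the collection of halfspaces belonging to walls \emph{both} of whose sides meet $T$, and then to write down an explicit extension map on ultrafilters. Given a DCC ultrafilter $\omega$ on $\mc{V}_T^\circ$, define an orientation $\hat\omega$ of each wall $W$ of $\mc{V}$ by $\hat\omega(W)=\omega(W)$ if $W\in\mc{V}_T^\circ$, and otherwise by letting $\hat\omega(W)$ be the unique side of $W$ that meets $T$ (fixing $T\neq\emptyset$, every wall has at least one such side). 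First I would check that $\hat\omega$ is a consistent, complete, DCC ultrafilter on $\mc{V}$: completeness is automatic; consistency is a short case analysis whose only point is that every member of $\hat\omega$ meets $T$, so if $A\subseteq B$ with $A\in\hat\omega$ then $\emptyset\neq A\cap T\subseteq B\cap T$, whence $B\in\mc{V}_T$, and if moreover $B^c$ meets $T$ then $A^c$ meets $T$ as well and consistency of $\omega$ applies; the DCC property is inherited from $\omega$ using the finiteness axiom \eqref{wallspacefiniteness} (an infinite descending chain in $\hat\omega$ either lies eventually in $\mc{V}_T^\circ$, contradicting DCC of $\omega$, or determines an infinite descending chain in a principal ultrafilter $\U[t]$, which \eqref{wallspacefiniteness} forbids). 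This yields the clean characterization: $\hat\omega$ is \emph{the} ultrafilter on $\mc{V}$ with $\hat\omega\subseteq\mc{V}_T$ restricting to $\omega$, and conversely every DCC ultrafilter $\mc{U}$ on $\mc{V}$ with $\mc{U}\subseteq\mc{V}_T$ arises this way.

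Next I would promote $\omega\mapsto\hat\omega$ to a combinatorial embedding $e\colon C(\mc{V}_T)\into C(\mc{V})$. It is injective (restriction to $\mc{V}_T^\circ$ recovers $\omega$) and carries principal ultrafilters to principal ultrafilters (if $t\in T$ then $\U[t]\subseteq\mc{V}_T$), hence maps the principal component $C(\mc{V}_T)$ into the principal component $C(\mc{V})$. Two vertices $\omega_1,\omega_2$ of $C(\mc{V}_T)$ differ on exactly one wall if and only if $\hat\omega_1,\hat\omega_2$ do, since outside $\mc{V}_T^\circ$ both extensions make the same forced choice and so must agree; thus $e$ is an isometry of $1$--skeleta for the combinatorial metric, and, since cubes are filled in exactly where their $1$--skeleta appear, $e$ extends to an embedding onto a (full) subcomplex. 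The construction is canonical, so $e$ is equivariant for the stabilizer of $T$.

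Finally, for convexity: writing $W_T$ for the $T$--meeting side of a wall $W\notin\mc{V}_T^\circ$, I claim the image of $e$ equals
\[
  C(\mc{V})\cap\bigcap_{W\notin\mc{V}_T^\circ}\{\mc{U}\mid W_T\in\mc{U}\}.
\]
The inclusion ``$\subseteq$'' is the characterization above. For ``$\supseteq$'': if $\mc{U}\in C(\mc{V})$ lies in the intersection then $\mc{U}\subseteq\mc{V}_T$, so $\mc{U}$ and $\U[t]$ (for $t\in T$) agree on every wall outside $\mc{V}_T^\circ$; a combinatorial geodesic in $C(\mc{V})$ between them therefore crosses only walls of $\mc{V}_T^\circ$, every vertex along it still lies in $\mc{V}_T$, and restricting it gives an edge-path in $C(\mc{V}_T)$ from $\mc{U}|_{\mc{V}_T^\circ}$ to the principal ultrafilter $\U[t]|_{\mc{V}_T^\circ}$; hence $\mc{U}|_{\mc{V}_T^\circ}\in C(\mc{V}_T)$ and $\mc{U}=e(\mc{U}|_{\mc{V}_T^\circ})$. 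Each $\{\mc{U}\mid W_T\in\mc{U}\}$ is a combinatorial halfspace of $C(\mc{V})$, hence convex, and the image of $e$ is their intersection, which is nonempty (it contains $\U[t]$ for $t\in T$); an intersection of convex subcomplexes is convex, so we are done. I expect this last step — pinning down the image as an intersection of halfspaces and running the ``restrict a geodesic'' argument — to be the main obstacle; checking that $\hat\omega$ is a well-defined DCC ultrafilter is fiddly but routine.
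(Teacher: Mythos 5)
Your proof is correct, but note that the paper itself does not prove this lemma at all: it is quoted directly from Hruska--Wise \cite[Lemmas 3.23, 3.24]{hruskawise:finiteness}, so there is no in-paper argument to compare against, and what you have written is essentially a self-contained reconstruction of the standard argument behind the cited result. The key mechanisms — extending an ultrafilter $\omega$ on $\mc{V}_T^\circ$ by the forced $T$--side of every other wall, observing that this is the unique ultrafilter contained in $\mc{V}_T$ restricting to $\omega$, and identifying the image with the intersection of the combinatorial halfspaces $\{\mc{U}\mid W_T\in\mc{U}\}$ — are exactly the right ones, and your case analyses (consistency, the DCC dichotomy using the finiteness axiom for the principal-ultrafilter case, adjacency being preserved and reflected, and the restricted-geodesic argument for the reverse inclusion) all go through. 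One small point worth stating explicitly, since several steps silently rely on it: when a wall $W$ has only one side meeting $T$, that side in fact \emph{contains} $T$ (its complement misses $T$), which is what guarantees both that $e$ sends the principal ultrafilter at $t\in T$ to $\U[t]$ and that any $\mc{U}$ in your intersection of halfspaces agrees with $\U[t]$ on every wall outside $\mc{V}_T^\circ$. With that made explicit, the argument is complete, and it yields the convexity statement in the combinatorial metric, with CAT$(0)$ convexity following because halfspaces are convex full subcomplexes and intersections of convex subcomplexes are convex.
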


We apply this result with $S=X^{(0)}$, $\mc{V} = \mc{W}$, and $T = Y^{(0)}$ for $Y$ equal to some peripheral complex.  In this case we have:  
\begin{proposition}\label{prop:finiteperipheralsubcomplex}
  $C(\mc{W}_{Y^{(0)}})$ is a finite cube complex.
\end{proposition}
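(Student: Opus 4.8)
The plan is to show that $C(\mc{W}_{Y^{(0)}})$ has finitely many vertices; by the bound on cube dimension ($\dim\hatx \le n$, or more crudely since any $1$--skeleton is locally finite) this will force finiteness. Fix $P\in\mc{P}$ with $Y=Y_P$ a peripheral complex, so that $P$ acts cocompactly on $Y$. Recall (Lemma~\ref{lem:W separates}) that a wall $W\in\mc{W}$ separates two vertices of $Y$ exactly when $Y$ is a $W$--peripheral complex; call these the \emph{relevant} walls, i.e.\ those in $\mc{W}_{Y^{(0)}}$ with complement also in $\mc{W}_{Y^{(0)}}$. The vertices of $C(\mc{W}_{Y^{(0)}})$ are the DCC ultrafilters on the pocset of these relevant halfspaces, and two are joined by an edge of $C(\mc{W}_{Y^{(0)}})$ precisely when they differ on one relevant wall. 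Since the image of $Y^{(0)}$ in $C(\mc{W}_{Y^{(0)}})$ is coarsely dense (each principal ultrafilter $\U[v]$, $v\in Y^{(0)}$, restricts to a vertex, and every vertex of $C(\mc{W}_{Y^{(0)}})$ lies in the same component as these), it suffices to bound the diameter of $C(\mc{W}_{Y^{(0)}})$, which in turn amounts to bounding, for $v,w\in Y^{(0)}$, the number of relevant walls separating $v$ from $w$ inside the wallspace $(Y^{(0)},\mc{W}_{Y^{(0)}})$.

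The key point, then, is: \emph{there is a uniform bound on the number of walls of $\mc{W}$ that separate two given vertices of a single peripheral complex $Y$.} First I would observe that if $W$ separates $v,w\in Y^{(0)}$, then $W$ is a $W$--peripheral-complex wall for $Y$, meaning $Y$ occurs (up to the $G$--action) among the peripheral complexes glued into the thick carrier $T(W)$; equivalently $Y$ has large coarse intersection with the scattered hyperplanes making up $W$. A $W$--hyperplane that separates $v$ from $w$ within $Y$ meets $Y$, hence crosses a geodesic segment $[v,w]\subseteq Y$. Since $Y$ is convex and locally finite and $v,w$ lie in it, only finitely many hyperplanes of $X$ cross $[v,w]$ at all — at most $d_{X^{(1)}}(v,w)$ of them, by the standard fact that each edge of a combinatorial geodesic is dual to a distinct hyperplane. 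By Lemma~\ref{lem:edgedetermines}, each such hyperplane belongs to a unique wall of $\mc{W}$, so at most $d_{X^{(1)}}(v,w)$ walls of $\mc{W}$ separate $v$ from $w$. This is already the finiteness statement \eqref{wallspacefiniteness} restricted to $Y$, and it directly bounds the combinatorial distance in $C(\mc{W}_{Y^{(0)}})$ between $\U[v]$ and $\U[w]$.

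To finish, I would combine this with cocompactness of the $P$--action on $Y$: there are finitely many $P$--orbits of vertices in $Y^{(0)}$, and the number of relevant walls separating two vertices is $P$--invariant in the appropriate sense, but more to the point, pick a fundamental domain $K\subseteq Y^{(0)}$ for the $P$--action with diameter $r_1$ (the bound from Proposition~\ref{prop:peripheral}); then any vertex of $C(\mc{W}_{Y^{(0)}})$ lies within combinatorial distance $\le r_1$ of the image of some $P$--translate of $K$. The subgroup $\Stab(W)$ of a relevant wall $W$ acts on $C(\mc{W}_{Y^{(0)}})$; since $\mc{W}_{Y^{(0)}}$ has only finitely many $\Stab(Y)$--orbits of walls (again by local finiteness of $Y$ and cocompactness, each orbit of walls being determined by which bounded-length geodesic segments it crosses), and $C(\mc{W}_{Y^{(0)}})$ is $\Stab(Y)$--cocompact with finite cube-stabilizers only if it is finite — actually the cleanest route is: $C(\mc{W}_{Y^{(0)}})$ is a CAT$(0)$ cube complex of dimension $\le n$ whose $1$--skeleton has a vertex set of bounded diameter (bounded by $r_1$ plus the separation bound above over the finitely many orbit representatives), hence finitely many vertices, hence finite.

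The main obstacle I expect is making precise that the wallspace $\mc{W}_{Y^{(0)}}$ has only \emph{finitely many} $\Stab(Y)$--orbits of walls, and hence bounded diameter rather than merely finitely many walls separating any fixed pair; the separation count $d_{X^{(1)}}(v,w)$ grows with $v,w$, so the real content is upgrading ``finitely many walls cross each bounded segment'' to ``the dual complex $C(\mc{W}_{Y^{(0)}})$ is finite,'' which requires knowing that walls of $\mc{W}$ restricted to $Y$ fall into finitely many $\Stab(Y)$--orbits — this follows because such a wall must be $\Stab(Y)$--conjugate to one crossing a segment in the fixed fundamental domain $K$, and there are only finitely many hyperplanes of $X$ crossing such segments. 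Once that orbit-finiteness is in hand, the diameter bound and thus finiteness of $C(\mc{W}_{Y^{(0)}})$ is immediate.
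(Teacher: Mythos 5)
There is a genuine gap at the final step, and it is exactly at the point you flag as ``the main obstacle.'' Finitely many $\Stab(Y)$--orbits of relevant walls does \emph{not} imply a bounded diameter for $C(\mc{W}_{Y^{(0)}})$, nor finitely many relevant walls: each orbit may be infinite, and an infinite wallspace with finitely many orbits of walls typically has an infinite dual complex. The original hyperplane wallspace restricted to a peripheral complex $Y_P$ is precisely such an example --- finitely many $\Stab(Y_P)$--orbits of hyperplanes by cocompactness, each geodesic crossed by only finitely many of them, yet the dual complex is an unbounded copy of (essentially) $Y_P$ itself; avoiding this is the whole point of replacing hyperplanes by augmented walls. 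So your counting of walls crossing segments in a fundamental domain bounds orbits, not walls, and the claimed ``immediate'' diameter bound does not follow. (A secondary unproved claim is that every vertex of $C(\mc{W}_{Y^{(0)}})$ is uniformly close to a principal ultrafilter $\U[v]$ with $v\in Y^{(0)}$; but even granting that, the argument still fails for the reason above.)

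The missing ingredient is the defining feature of the augmented walls, namely the broadcaster structure. If a wall $W$ separates two vertices of $Y$, then $Y$ is a $W$--peripheral complex (Lemma~\ref{lem:W separates}), so the halfspace stabilizer $\Stab(W^{\pm})$ contains a $G$--conjugate of one of the finitely many finite--index subgroups $B_j\le \Stab(Y_j)$ used to build the broadcasters in Definition~\ref{def:wallspaceW}, viewed inside $\Stab(Y)$. Intersecting these finitely many finite--index subgroups gives a single finite--index $N_Y\le \Stab(Y)$ which preserves the partition $\{W^+\cap Y^{(0)},\, W^-\cap Y^{(0)}\}$ for \emph{every} relevant wall $W$ simultaneously. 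Your observation that a relevant wall is determined by its trace on $Y^{(0)}$ (via connectedness of $Y$ and Lemma~\ref{lem:edgedetermines}) is correct and is used in the paper; combined with $N_Y$--invariance, each trace descends to a partition of the finite set $\leftQ{Y^{(0)}}{N_Y}$, of which there are only finitely many. Hence $\mc{W}_{Y^{(0)}}$ contains only finitely many walls, and the dual complex is finite. Without invoking this uniform finite--index invariance (i.e.\ Assumption~\ref{ass:weak} as packaged into the broadcasters), the statement is simply not true of a general wallspace with your listed properties, so no amount of cocompactness bookkeeping can close the gap.
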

\begin{proof}
  We show that there are only finitely many walls $W\subseteq \mc{W}$ so that $W\subseteq \mc{W}_{Y^{(0)}}$.  The Sageev construction applied to a wallspace with finitely many walls yields a finite cube complex, so this establishes the proposition.

  Suppose that $W=\{W^+,W^-\}\subseteq \mc{W}_{Y^{(0)}}$.  This means both $W^+\cap {Y^{(0)}}$ and $W^-\cap {Y^{(0)}}$ are nonempty.  We first show that $W$ is determined by the partition $$\{W^+\cap {Y^{(0)}},W^-\cap {Y^{(0)}}\}.$$  Indeed, since $Y$ is connected, there is a pair of adjacent vertices $v,w\in {Y^{(0)}}$ lying in different halfspaces.  But Lemma~\ref{lem:edgedetermines} shows that $W$ is the only wall in $\mc{W}$ separating $v$ from $w$.  

  We next claim that there is a fixed finite index subgroup of $\Stab(Y)$ which preserves the partition $\{W^+\cap {Y^{(0)}},W^-\cap {Y^{(0)}}\}$ for every $W\subseteq \mc{W}_{Y^{(0)}}$.  Each such partition is is preserved by some finite index subgroup of $\Stab(Y)$ conjugate in $G$ to one of the subgroups of the $\Stab(Y_j)$ in the definition of the broadcasters for the $H_i$.
 There are only finitely many such subgroups of $\Stab(Y)$, so some finite index subgroup $N_Y$ of $\Stab(Y)$ is contained in all of them.  In particular, $W$ descends to a partition of $\leftQ{{Y^{(0)}}}{N_Y}$, and is determined by that partition.

  Since there are only finitely many partitions of a finite set, the set of walls in $\mc{W}_{Y^{(0)}}$ is finite.
\end{proof}
Work of Hruska--Wise \cite{hruskawise:finiteness} gives the following.
\begin{corollary}\label{cor:cocompact}
  The action of $G$ on $C(\mc{W})$ is cocompact.  In particular $C(\mc{W})$ is finite dimensional.  Moreover each $P\in \mc{P}$ fixes a point of $C(\mc{W})$.
\end{corollary}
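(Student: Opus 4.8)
The plan is to invoke the machinery of Hruska--Wise~\cite{hruskawise:finiteness} relating cocompactness of the dual cube complex to a finiteness condition on walls, together with the peripheral finiteness already established in Proposition~\ref{prop:finiteperipheralsubcomplex}. The key notion is that of a \emph{$\mc{B}$--wallspace} (or more precisely the relevant finiteness property from \cite[Section 7]{hruskawise:finiteness}): the action of $G$ on $C(\mc{W})$ is cocompact provided there are only finitely many $G$--orbits of walls, the action on $X^{(0)}$ has finitely many orbits of vertices, and one controls how many walls ``cross'' a given wall or bounded region. Since $\mc{W}$ was defined $G$--equivariantly from the finitely many orbit representatives $H_i$ in Definition~\ref{def:wallspaceW}, there are only finitely many $G$--orbits of walls; and $G$ acts cocompactly on $X$ by hypothesis. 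So first I would check the precise hypotheses of the relevant cocompactness criterion in \cite{hruskawise:finiteness} and confirm that they reduce, in our setting, to a statement about boundedness of certain collections of walls.

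Next I would address the main technical point, which is bounding the number of walls ``near'' a given vertex or crossing a given wall. For each peripheral complex $Y$, Proposition~\ref{prop:finiteperipheralsubcomplex} tells us $C(\mc{W}_{Y^{(0)}})$ is finite, i.e.\ only finitely many walls of $\mc{W}$ have both halfspaces meeting $Y^{(0)}$. Combined with Lemma~\ref{lem:edgedetermines} (each edge of $X$ crosses exactly one wall), this gives strong local control: away from peripheral complexes the walls behave like hyperplanes of $X$, which are locally finite, while inside a peripheral complex the finiteness of $C(\mc{W}_{Y^{(0)}})$ handles the potential accumulation. One then feeds these two facts into the Hruska--Wise framework to conclude the action on $C(\mc{W})$ is cocompact. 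Finite dimensionality of $C(\mc{W})$ is then automatic, since a cocompact \CAT$(0)$ cube complex has dimension bounded by the number of orbits of cubes, or more directly because cocompactness forces a uniform bound on cube dimension.

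For the last assertion, that each $P\in \mc{P}$ fixes a point of $C(\mc{W})$: the hemiwallspace $\mc{W}_{Y_P^{(0)}}$ is $\Stab(Y_P)=P$--invariant, and by Proposition~\ref{prop:finiteperipheralsubcomplex} the convex subcomplex $C(\mc{W}_{Y_P^{(0)}})\subseteq C(\mc{W})$ (via Lemma~\ref{lem:hemi}) is finite and $P$--invariant. A group acting on a nonempty finite (hence bounded) \CAT$(0)$ cube complex fixes a point — for instance the circumcenter of the \CAT$(0)$ metric, or one can use the standalone fact that a finite-order-free group action cannot be the issue here since the fixed point comes from \CAT$(0)$ geometry of a bounded complex. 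So $P$ fixes a point of $C(\mc{W}_{Y_P^{(0)}})$, which is a point of $C(\mc{W})$.

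I expect the main obstacle to be matching our setup precisely against the hypotheses of the cocompactness criterion in \cite{hruskawise:finiteness}: one must verify that the wallspace $(X^{(0)},\mc{W})$ satisfies whatever local finiteness / bounded-packing condition that criterion requires, and the only nontrivial input to that is Proposition~\ref{prop:finiteperipheralsubcomplex} together with the relative hyperbolicity of $(X,\mc{B})$ (Lemma~\ref{lem:(X,B) RHP}), which controls how peripheral complexes interact. Everything else is bookkeeping.
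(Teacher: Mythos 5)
The cocompactness step of your plan has a genuine gap: you describe the relevant Hruska--Wise criterion as ``finitely many orbits of walls plus control on how many walls cross a given wall or bounded region,'' and you propose to verify such a local-finiteness/bounded-packing condition using Proposition~\ref{prop:finiteperipheralsubcomplex} and Lemma~\ref{lem:edgedetermines}. That is not a cocompactness criterion. Bounded packing of walls controls the \emph{dimension} of the dual complex, but cocompactness can fail even with finitely many orbits of walls and uniform local finiteness, because $C(\mc{W})$ may contain infinitely many orbits of vertices coming from non-principal DCC ultrafilters; controlling these is exactly the hard part, and it is done via quasiconvexity of the wall stabilizers, not via local finiteness. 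The paper applies \cite[Theorem 7.12]{hruskawise:finiteness}, whose key hypothesis is that the wall stabilizers are relatively quasi-convex in $(G,\mc{P})$; this is supplied by Lemma~\ref{lem:stabhalfspace} (the stabilizer of $W_{H,B}$ is the broadcaster $B$, full relatively quasi-convex by Proposition~\ref{prop:amalgam}), an ingredient your plan never invokes. Even then the conclusion is only \emph{relative} cocompactness: $C(\mc{W})$ is a $G$--cocompact part together with finitely many orbits of subcomplexes associated to the peripheral subgroups. It is at this second stage that Proposition~\ref{prop:finiteperipheralsubcomplex} enters, as you anticipated: since each $C(\mc{W}_{Y^{(0)}})$ is finite ($P$--cocompact would suffice), relative cocompactness upgrades to cocompactness. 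So your second ingredient is the right one, but the reduction you propose for the first step would not go through as stated.

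The remainder of your proposal is fine and agrees with the paper: finite dimensionality follows at once from cocompactness, and for the last assertion $P$ preserves $Y_P$, hence the convex subcomplex $C(\mc{W}_{Y_P^{(0)}})\subseteq C(\mc{W})$ of Lemma~\ref{lem:hemi}, which is finite by Proposition~\ref{prop:finiteperipheralsubcomplex}, so $P$ fixes a point by the circumcenter argument for bounded complete \CAT$(0)$ spaces (\cite[II.2.8]{bridhaef:book}), exactly as in the paper.
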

\begin{proof}
  By Lemma~\ref{lem:stabhalfspace}, the wall stabilizers for $G\acts \mc{W}$ are full relatively quasi-convex.
  
  Theorem 7.12 of \cite{hruskawise:finiteness} (with $\heartsuit = \frac{1}{4}$) shows that for $(G,\mc{P})$ relatively hyperbolic acting on a wallspace with relatively quasi-convex wall stabilizers, the $G$--action on the associated cube complex is ``relatively cocompact.''  Since the sub-complexes associated to the peripheral subgroups are compact (cocompact under the action of the peripheral subgroups is enough), the $G$--action is  cocompact.

  Let $P\in \mc{P}$.  Then $P$ preserves a peripheral complex $Y\subseteq X$, so it preserves the convex sub-complex $C(\mc{W}_{Y^{(0)}})\subseteq \hatx$.  As this is a complete bounded diameter \CAT$(0)$ space (Proposition~\ref{prop:finiteperipheralsubcomplex}), there is a fixed point for the action of $P$ (see \cite[II.2.8]{bridhaef:book}).
\end{proof}

\subsection{Associating a characteristic subspace of  $X$ to an ultrafilter}
In general not every ultrafilter $\U$ on $\mc{W}$ is principal.  In other words the intersection of all the halfspaces in $\U$ may be empty.  We show in this subsection that by appropriately augmenting and thickening the halfspaces of $\U$ we do obtain a nonempty intersection (see Proposition~\ref{prop:NofU}).

\begin{definition}[Halfspace carriers]
  In Definition~\ref{def:WHyp and WPC} we defined thick carriers for walls.
  If $W=\{A,A^c\}$, then define the \emph{halfspace carrier} $\overline{A}$ to be the full sub-complex of $X$ on $A\cup \left(T(W)\cap X^{(0)}\right)$.
\end{definition}
The full sub-complex on a halfspace $A$ is not connected, but the halfspace carrier is connected, as the following lemma shows.  We will use the observations in this lemma repeatedly in the rest of the section.
\begin{lemma}
  Let $A\in \mc{W}$ be a halfspace.  Then $\overline{A}$ is a connected sub-complex of $X$.  Moreover,  $\overline A\cap \overline{A^c}=T(W)$ and $\overline A\cup \overline{A^c} = X$.
\end{lemma}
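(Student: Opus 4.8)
The plan is to reduce all three assertions to one consequence of the $\Rfin$--separation of the scattering defining the wall $W=\{A,A^{c}\}$ (here $A\in\mc{W}$, so $W=gW_{H_i,B_i}$ for suitable $g,i$). Recall that $T(W)$ is the union of the carriers $N(K)$ of the $W$--hyperplanes $K$ (the hyperplanes of the scattering) together with the $W$--peripheral complexes, and that each of these pieces is a convex, hence \emph{full}, subcomplex of $X$. The observation I would isolate, call it $(\clubsuit)$, is that every cube $\sigma$ of $X$ meets at most one $W$--hyperplane; that if it meets one, say $K$, then $\sigma\subseteq N(K)\subseteq T(W)$; and that if it meets none, then all vertices of $\sigma$ lie in the same halfspace of $W$. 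The first point holds because two distinct $W$--hyperplanes have disjoint carriers (by $\Rfin$--separatedness, since $\Rfin>0$), whereas $\sigma$ meeting both $K\ne K'$ would force $\sigma\subseteq N(K)\cap N(K')$; the inclusion $\sigma\subseteq N(K)$ is the definition of the carrier, and $N(K)\subseteq T(W)$ because by Definition~\ref{def:thickcarrier} the only components of $Y_{H_i,B_i}$ other than $S_{H_i,B_i}$ are single peripheral complexes, so every scattering carrier lies in $S_{H_i,B_i}$; finally, if no edge of $\sigma$ is dual to a $W$--hyperplane then any edge--path inside $\sigma$ crosses $B\cdot H$ an even (indeed zero) number of times, so by Definition~\ref{def:singlewall} its endpoints lie in the same halfspace.

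Given $(\clubsuit)$, the vertex bookkeeping is routine. By definition $\overline A$ and $\overline{A^{c}}$ are the full subcomplexes of $X$ on $A\cup T(W)^{(0)}$ and $A^{c}\cup T(W)^{(0)}$, so $\overline A\cup\overline{A^{c}}$ has vertex set $X^{(0)}$ while $\overline A\cap\overline{A^{c}}$ is the full subcomplex on $(A\cup T(W)^{(0)})\cap(A^{c}\cup T(W)^{(0)})=T(W)^{(0)}$. To get $\overline A\cup\overline{A^{c}}=X$ I apply $(\clubsuit)$ to a cube $\sigma$: either $\sigma\subseteq T(W)\subseteq\overline A$, or all vertices of $\sigma$ lie in $A$ (resp.\ $A^{c}$), whence $\sigma$ lies in the full subcomplex on $A$, which is contained in $\overline A$ (resp.\ $\overline{A^{c}}$). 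For connectedness of $\overline A$, note it contains the connected subcomplex $T(W)$ and the full subcomplex $X_A$ of $X$ on $A$; every component $C$ of $X_A$ must meet some carrier $N(K)\subseteq T(W)$, since by Lemma~\ref{lem:edgedetermines} an edge of $X$ leaving $C$ either crosses $W$ --- and then lies, with both endpoints, in a scattering carrier meeting $C$ --- or crosses another wall, and then by $(\clubsuit)$ its far endpoint is again in $A$, hence in $C$; were there no such carrier, $C$ would be a union of components of $X$, forcing $A=X^{(0)}$, contradicting $A^{c}\ne\emptyset$ (Lemma~\ref{lem:stabhalfspace}). Thus in $\overline A$ every vertex is joined to the connected set $T(W)$, so $\overline A$ is connected.

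It remains to identify the full subcomplex $\overline A\cap\overline{A^{c}}$ on $T(W)^{(0)}$ with $T(W)$ itself; as $T(W)$ is a subcomplex with that vertex set, the only content is that $T(W)$ is \emph{full}, and this is the step I expect to require the most care. Given a cube $\sigma$ with $\sigma^{(0)}\subseteq T(W)^{(0)}$: if $\sigma$ meets a $W$--hyperplane we are done by $(\clubsuit)$; otherwise I would show that all vertices of $\sigma$ lie in one piece of $T(W)$, which, being full, then contains $\sigma$. No two vertices of $\sigma$ can lie in distinct carriers $N(K)\ne N(K')$, since $\diam(\sigma)\le\sqrt n<\Rfin$ (as $\Rfin>D>\sqrt n$ by Assumption~\ref{ass:R}) while $N(K),N(K')$ are at distance at least $\Rfin$. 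And if vertices of $\sigma$ lay in two distinct $W$--peripheral complexes $Z_1\ne Z_2$, or in a carrier together with a peripheral complex not contained in it, then $\sigma\subseteq N_{\sqrt n}(Z_1)\cap N_{\sqrt n}(Z_2)$, which by Lemma~\ref{lem:closetohyperplane} lies within $\Fone(\sqrt n)$ of a single $W$--hyperplane; combining this with the $\eta$--super-attractiveness of the $Z_i$ from Definition~\ref{def:fix (X,B)} and the bounded-overlap estimate built into Lemma~\ref{lem:(X,B) RHP} confines $\sigma$ to one peripheral complex. Assembling these cases gives $\sigma\subseteq T(W)$, so $T(W)$ is full and $\overline A\cap\overline{A^{c}}=T(W)$.
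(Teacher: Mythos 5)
Your overall route is the same as the paper's, and most of it is fine: the union statement and connectedness are handled exactly as in the paper (which asserts, and you verify in more detail via Lemma~\ref{lem:edgedetermines}, that a cube with vertices in both halfspaces lies in the carrier of a scattering hyperplane, and that every component of the full subcomplex on $A$ meets $T(W)$). Your observation $(\clubsuit)$ and the bookkeeping reducing $\overline A\cap\overline{A^c}$ to the full subcomplex on $T(W)^{(0)}$ are also correct, and you rightly isolate that the remaining content of the intersection statement is that $T(W)$ is a \emph{full} subcomplex (the paper's proof simply asserts this in the phrase ``This is $T(W)$'').

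The gap is in your final paragraph, where you try to prove that fullness. In the case where the cube $\sigma$ meets no $W$--hyperplane and no single piece of $T(W)$ contains all of its vertices --- say some vertices lie in the carrier of a scattering hyperplane and others in a $W$--peripheral complex not containing them, or the vertices are split between two $W$--peripheral complexes --- the tools you cite do not give $\sigma\subseteq T(W)$. Lemma~\ref{lem:closetohyperplane} is a statement about two \emph{distinct peripheral complexes}, so it does not even apply to the mixed carrier/peripheral case; and where it does apply it only places $\sigma$ within $\Fone(\sqrt n)$ of some $W$--hyperplane, a coarse conclusion that puts $\sigma$ neither in that hyperplane's carrier nor in any peripheral complex. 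Likewise, $\eta$--super-attractiveness says nothing at the scale of a single cube (it only controls a geodesic outside initial and terminal segments of length $\eta+K$, which already exceeds $\sqrt n$), and the function $f$ from Lemma~\ref{lem:(X,B) RHP} only bounds the diameter of coarse overlaps; none of this can ``confine $\sigma$ to one peripheral complex.'' So the assertion that $T(W)$ is full --- equivalently, that there is no cube all of whose vertices lie in $T(W)^{(0)}$ while no carrier or $W$--peripheral complex contains them all --- is exactly the point your argument needed to establish and does not; as written, this step fails, and it is the one place where your proof attempts more than the paper's one-line assertion but does not actually deliver it.
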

\begin{proof}
  Every component of the full subcomplex on $A$ meets $T(W)$, which is connected.  Thus $\overline{A}$ is connected.

  The intersection of full subcomplexes is a full subcomplex, so $\overline{A}\cap \overline{A^c}$ is the full subcomplex on the zero-skeleton of $T(W)$.  This is $T(W)$.
  
  Any cube of $X$ with all its vertices in $A$ is in the full subcomplex on $A$, so it is contained in $\overline{A}$.  Similarly any cube of $X$ with all its vertices in $A^c$ is contained in $\overline{A^c}$.  A cube with vertices in both $A$ and $A^c$ must be in the carrier of some hyperplane of the scattering associated to $W$, hence in $T(W)$.  So $\overline{A}\cup\overline{A^c} = X$.
\end{proof}

Next we show that halfspace carriers have the same ``attractiveness'' property as we showed for thick carriers in Lemma~\ref{lem:attractivecarrier}.
\begin{lemma}\label{lem:attractive halfspace carriers}
  Let $m > 0$ and let $\Ftwo(m)$ be as in Lemma~\ref{lem:attractivecarrier}.  Let $\overline{A}$ be a halfspace carrier and let $Z$ be a peripheral complex.  If
\[ \diam(Z\cap N_m(\overline{A}))\ge \Ftwo(m), \]
then $Z\subseteq \overline{A}$.
\end{lemma}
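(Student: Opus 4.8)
The plan is to reduce this to Lemma~\ref{lem:attractivecarrier}, the attractiveness of the \emph{thick} carrier $T(W)$, by showing that the coarse intersection $Z\cap N_m(\overline{A})$ is actually contained in $Z\cap N_m(T(W))$ unless $Z$ is already inside $\overline{A}$. So I would argue by contradiction, assuming $Z\not\subseteq\overline{A}$, and from the hypothesis extract $\diam(Z\cap N_m(T(W)))\ge\Ftwo(m)$, which is exactly what Lemma~\ref{lem:attractivecarrier} needs.

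First I would rule out the possibility that $Z$ has vertices in both halfspaces of $W=\{A,A^c\}$: joining two such vertices by a path in $Z^{(1)}$ gives a path crossing $W$, so $W$ separates two vertices of $Z$ and Lemma~\ref{lem:W separates} forces $Z$ to be a $W$--peripheral complex. By the construction of the thick carrier (Definition~\ref{def:thickcarrier}, where $\phi_{H,B}$ restricts to the inclusion on each peripheral complex used to build $S_{H,B}$), every $W$--peripheral complex lies in $T(W)$, and $T(W)=\overline{A}\cap\overline{A^c}\subseteq\overline{A}$ by the preceding lemma, contradicting $Z\not\subseteq\overline{A}$. Hence all vertices of $Z$ lie in one halfspace. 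If they all lie in $A$, then $Z$ --- a subcomplex with all vertices in $A\subseteq A\cup(T(W)\cap X^{(0)})$ --- is contained in the full subcomplex $\overline{A}$, again a contradiction. So all vertices of $Z$ lie in $A^c$, that is, $Z\subseteq\overline{A^c}$.

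The one geometric point will be that, since $\overline{A}$ and $\overline{A^c}$ are closed subcomplexes with $\overline{A}\cup\overline{A^c}=X$ and $\overline{A}\cap\overline{A^c}=T(W)$ (preceding lemma), any geodesic of $X$ from a point of $\overline{A^c}$ to a point of $\overline{A}$ must meet $T(W)$ --- a connectedness argument on the preimages of the two closed pieces. Applying this to an arbitrary $z\in Z\cap N_m(\overline{A})\subseteq\overline{A^c}$ together with a nearest point $a\in\overline{A}$ to $z$, I get $d(z,T(W))\le d(z,a)\le m$, so $Z\cap N_m(\overline{A})\subseteq Z\cap N_m(T(W))$ and therefore $\diam(Z\cap N_m(T(W)))\ge\Ftwo(m)$. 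The scattering defining $W$ is an $\Rfin$--separated scattering with $\Rfin>R_1$ (Assumption~\ref{ass:R}, using $G$--equivariance of the construction), so Lemma~\ref{lem:attractivecarrier} applies and yields $Z\subseteq T(W)\subseteq\overline{A}$, the desired contradiction (in fact then $Z\subseteq T(W)=\overline{A}\cap\overline{A^c}$).

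I do not expect a genuine obstacle here: all of the analytic work is already packaged in Lemma~\ref{lem:attractivecarrier}, and what remains is bookkeeping about full subcomplexes and which side of $W$ the vertices of $Z$ lie on, plus the elementary fact that a geodesic joining the two closed halves of $X$ passes through $T(W)$. The only subtlety is to dispose of the case where $Z$ straddles $W$ (via Lemma~\ref{lem:W separates}) before attempting to use the diameter hypothesis.
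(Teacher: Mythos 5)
Your proposal is correct and follows essentially the same route as the paper: split into cases according to whether the vertices of $Z$ lie in both halfspaces, all in $A$, or all in $A^c$, and in the last case observe that points of $Z\cap N_m(\overline{A})$ must be within $m$ of $T(W)$ (since a shortest path from $\overline{A^c}$ to $\overline{A}$ passes through $T(W)=\overline{A}\cap\overline{A^c}$), then invoke Lemma~\ref{lem:attractivecarrier}. The only cosmetic difference is that you phrase it as a contradiction and cite Lemma~\ref{lem:W separates} to handle the straddling case, where the paper directly notes such a $Z$ meets a $W$--hyperplane; the substance is identical.
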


\begin{proof}
  Let $W=\{A,A^c\}$ be the associated wall and let $T = T(W)$.  If $Z$ contains vertices of both $A$ and $A^c$, it must meet some $W$--hyperplane, so $Z\subseteq T\subseteq \overline{A}$.  If $Z$ contains only vertices of $A$ then $Z\subseteq \overline{A}$.

  We are left with the case that every vertex of $Z$ is in $A^c$.
  Let $x,y\in Z\cap N_m(\overline{A})$.
  Since $x,y$ are not in $A$, a shortest path to $\overline{A}$ must pass through $T$, so they are in $N_m(T)$.  Using the case of wall carriers, if $d(x,y)\ge \Ftwo(m)$, then $Z\subseteq T\subseteq \overline{A}$, and we are done.
\end{proof}

\begin{lemma}\label{lem:nestingcarriers}
  Suppose $W,V$ are  walls of $\mc{W}$ so that $V^+\subseteq W^+$.  Then $T(V)\subseteq \overline{W^+}$.
\end{lemma}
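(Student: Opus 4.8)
The plan is to show the containment $T(V)\subseteq\overline{W^+}$ by examining where an arbitrary vertex of $T(V)$ and an arbitrary peripheral complex used to build $T(V)$ can sit relative to the wall $W$. First I would recall that $T(V)$ is a union of convex subcomplexes: the carriers of the hyperplanes in the scattering defining $V$, together with the $V$--peripheral complexes. It suffices to show each such piece lies in $\overline{W^+}$, since $T(V)$ is their union. Since $\overline{W^+}$ is the full subcomplex on $W^+\cup(T(W)\cap X^{(0)})$, and $\overline{W^+}\cup\overline{(W^+)^c}=X$ with intersection $T(W)$, a convex subcomplex fails to lie in $\overline{W^+}$ only if it contains a vertex of $(W^+)^c$ that does not lie in $T(W)$; so I would aim to rule that out.

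The key observation is the hypothesis $V^+\subseteq W^+$. I would argue first about the hyperplane carriers. Let $A$ be the carrier of a hyperplane $bH_V$ in the scattering $B\cdot H_V$ defining $V$. Such a carrier is convex and connected, and its vertex set has vertices on both sides of the wall $V$ (the hyperplane $bH_V$ is a $V$--hyperplane). In particular $A$ contains vertices of $V^+\subseteq W^+$. If $A$ also contained a vertex of $(W^+)^c$, then $A$, being connected, would have two adjacent vertices separated by $W$, so $A$ would meet a $W$--hyperplane, hence $A\subseteq T(W)\subseteq\overline{W^+}$; and if $A$ contains no vertex of $(W^+)^c$ then $A\subseteq W^+\subseteq\overline{W^+}$ directly. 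Either way $A\subseteq\overline{W^+}$. The same dichotomy handles every $V$--peripheral complex $Z$: either $Z$ has a vertex of $(W^+)^c$, and since (by the construction of $S_{H_V}$) $Z$ meets some carrier $A$ as above, which contains a vertex of $V^+\subseteq W^+$, connectedness of $Z\cup A$ forces a $W$--hyperplane to be crossed inside $T(V)$, so $Z\subseteq T(W)\subseteq\overline{W^+}$ by Lemma~\ref{lem:attractivecarrier} or directly by Lemma~\ref{lem:W separates}; or else $Z\subseteq W^+\subseteq\overline{W^+}$.

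Actually the cleanest route uses Lemma~\ref{lem:W separates}: a peripheral complex $Z$ is a $W$--peripheral complex exactly when $W$ separates two vertices of $Z$, and in that case $Z\subseteq T(W)$ by definition of the thick carrier. So for each piece $C$ of $T(V)$ (a hyperplane carrier or a $V$--peripheral complex), I would note $C$ is connected and contains a vertex of $V^+\subseteq W^+$; if $C$ is entirely inside $W^+$ we are done, and otherwise $C$ contains adjacent vertices separated by $W$, which forces $C$ to meet a $W$--hyperplane (when $C$ is a carrier) or to be a $W$--peripheral complex (when $C$ is peripheral), and in both cases $C\subseteq T(W)\subseteq\overline{W^+}$. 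Taking the union over all pieces gives $T(V)\subseteq\overline{W^+}$. I do not anticipate a real obstacle here; the only point requiring slight care is confirming that every hyperplane carrier in the $V$--scattering genuinely contains a vertex of $V^+$, which follows because the hyperplane $bH_V$ is a $V$--hyperplane and its carrier contains vertices on both sides of $V$, so in particular in $V^+$.
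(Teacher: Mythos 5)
Your treatment of the $V$--peripheral complexes is essentially the paper's argument and is fine: such a complex $Z$ has vertices on both sides of $V$ (Lemma~\ref{lem:W separates} applied to $V$), hence a vertex in $V^+\subseteq W^+$; if $Z$ also has a vertex in $W^-$ then it is a $W$--peripheral complex and so lies in $T(W)\subseteq\overline{W^+}$, and otherwise $Z^{(0)}\subseteq W^+$. The gap is in the hyperplane-carrier case. From ``$A$ contains adjacent vertices separated by $W$, so $A$ meets a $W$--hyperplane'' you conclude $A\subseteq T(W)$. That inference is false: $T(W)$ is built only from the carriers of the $W$--hyperplanes and the $W$--peripheral complexes, and the carrier $A$ of a $V$--hyperplane that happens to cross a $W$--hyperplane (which can occur when the walls $V$ and $W$ cross) need not be contained in $T(W)$; it can contain vertices far from every $W$--hyperplane carrier and from every $W$--peripheral complex. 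So in the very case you are trying to handle, your argument does not exclude a vertex of $A$ lying in $W^-$ but outside $T(W)$, which is exactly what must be ruled out for $A\subseteq\overline{W^+}$.

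What is true---and is the paper's key observation---is that this case is vacuous: every vertex of the carrier of a $V$--hyperplane already lies in $W^+$. Indeed, a vertex $v$ of a cube dual to the $V$--hyperplane $bH$ is joined by an edge dual to $bH$ to a vertex $v'$ on the other side of $bH$, so one of $v,v'$ lies in $V^+\subseteq W^+$; by Lemma~\ref{lem:edgedetermines} the unique wall separating the adjacent vertices $v,v'$ is the wall of the dual hyperplane $bH$, namely $V$, and since $V\ne W$ (the case $V=W$ is trivial) and the $V$--hyperplanes and $W$--hyperplanes are disjoint collections, $W$ does not separate $v$ from $v'$; hence $v\in W^+$. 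With that substitution your outline closes up. Note also that your first-paragraph appeal to Lemma~\ref{lem:attractivecarrier} and to connectedness of $Z\cup A$ is both unnecessary and insufficient, since a $W$--hyperplane crossed inside $Z\cup A$ need not separate two vertices of $Z$ itself.
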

\begin{proof}
  It is enough to show that if $v\in T(V)^{(0)}$, then $v\in \overline{W^+}$.  If $v$ is in the carrier of a $V$--hyperplane, it is distance $\le 1$ from a vertex in $V^+$.  Since the $W$--hyperplanes and the $V$--hyperplanes are disjoint, this means $v$ must actually be in $W^+$.

  Suppose then that $v$ is contained in a $V$--peripheral complex $Z$, and that $v\in W^-$.  Since $Z$ is a $V$--peripheral complex, by Lemma~\ref{lem:W separates} it must contain vertices in both $V^+$ and $V^-$.  In particular it contains some $w\in V^+\subseteq W^+$.  Since $Z$ contains vertices of both $W^+$ and $W^-$, it must also be a $W$--peripheral complex, and so $Z\subseteq \overline{W^+}$.
\end{proof}

We next prove a lemma about coarse intersections of thick carriers of crossing walls.
\begin{lemma}\label{lem:wallsintersect}
  For each positive integer $m$ there exists a $\Theta_m\ge 0$ so that for any collection of pairwise crossing walls $\{W_0,\ldots,W_m\}$ of $\mc{W}$,
  \[ \bigcap_{i=0}^m N_{\Theta_m}(T(W_i))\ne \emptyset.\]
\end{lemma}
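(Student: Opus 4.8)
The plan is to induct on $m$, using the fact that in a space with walls, any finite collection of pairwise-crossing walls forces the existence of a ``deep'' region on one specified side of all of them. Concretely, if $W_0,\dots,W_m$ pairwise cross, then for any choice of halfspaces $A_i$ from $W_i$ the intersection $A_0\cap\cdots\cap A_m$ is nonempty (this is the standard consequence of the pairwise-crossing condition together with Helly-type reasoning for walls, as used throughout the Sageev-construction literature; see \cite{Sageev,hruskawise:finiteness}). So the real content is not emptiness of the intersection of halfspaces, but that a single point can be found \emph{uniformly close} to all the thick carriers $T(W_i)$, with the bound $\Theta_m$ depending only on $m$ and the pair $(X,\mc{B})$.

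First I would handle $m=1$: two crossing walls $W_0,W_1$. Pick a vertex $v$ in $W_0^+\cap W_1^+$ and a vertex $w$ in $W_0^+\cap W_1^-$ (both nonempty since the walls cross); an $X^{(1)}$-geodesic from $v$ to $w$ must cross $W_1$, hence crosses some $W_1$-hyperplane, so it meets $T(W_1)$; and it stays on the $W_0^+$ side only if\dots\ actually more carefully, I would choose the four vertices $v_{\pm\pm}\in W_0^{\pm}\cap W_1^{\pm}$ and run geodesics between appropriately chosen pairs, forcing a crossing of $W_1$ inside a region bounded by $W_0$; combining with Lemma~\ref{lem:edgedetermines} (each edge crosses exactly one wall) one locates a vertex within bounded distance of both $T(W_0)$ and $T(W_1)$. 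For the inductive step, given that $\bigcap_{i=0}^{m-1}N_{\Theta_{m-1}}(T(W_i))\neq\emptyset$, I would want to add $W_m$: one produces two points, each within $\Theta_{m-1}$ of all of $T(W_0),\dots,T(W_{m-1})$, lying on opposite sides of $W_m$, and then a geodesic between them both stays coarsely close to all the earlier carriers (using quasi-convexity of $T(W_i)$, Proposition~\ref{prop:uniformqccarrier}, so that a path with endpoints near $T(W_i)$ stays in $N_{Q+\Theta_{m-1}}(T(W_i))$ after relative fellow-traveling with a geodesic, modulo excursions into peripheral complexes) and must cross $W_m$, hence meets $T(W_m)$. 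Setting $\Theta_m$ in terms of $\Theta_{m-1}$, $Q$, $\eta$, $f$, $\delta$, and the relative-fellow-traveling constant $l$ then closes the induction.

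The main obstacle I anticipate is the excursion problem: a geodesic in $X$ between two points near $T(W_i)$ need not stay in a bounded neighborhood of $T(W_i)$ — it can dive deep into a peripheral complex. This is exactly what Lemma~\ref{lem:attractivecarrier} and the $\eta$-super-attractiveness of the peripheral complexes are designed to control: a long geodesic excursion into a peripheral complex $Z$ that starts and ends near $T(W_i)$ forces $Z\subseteq T(W_i)$ (once the excursion exceeds $\Ftwo$ of the relevant constant), so the excursion is actually inside $T(W_i)$ after all. I would therefore phrase the inductive step so that whenever a geodesic leaves $N_{Q+\Theta_{m-1}}(T(W_i))$, it does so only via a peripheral complex that is in fact contained in $T(W_i)$, keeping the path inside $N_{\Theta_m}(T(W_i))$ for a suitably enlarged $\Theta_m$. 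The bookkeeping of how $\Theta_m$ grows with $m$ is routine once this mechanism is in place, so I would not grind through the explicit recursion, merely note that $\Theta_m$ depends only on $m$ and $(X,\mc{B})$ as claimed.
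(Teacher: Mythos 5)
Your proposal has two genuine gaps, one in the inductive step and one in the base case. The opening claim that pairwise crossing walls have all intersections of chosen halfspaces nonempty is false in a general wallspace (pairwise crossing walls span a cube in the dual complex, but its vertices are ultrafilters, which need not be principal; a six-point set with three ``diameter'' walls already gives three pairwise crossing walls with an empty orthant). You rightly say this is not the real content, but the same issue resurfaces exactly where your induction needs it: you require two points, each within $\Theta_{m-1}$ of all of $T(W_0),\dots,T(W_{m-1})$, lying on opposite sides of $W_m$. Nothing in the inductive hypothesis produces such points --- the coarse intersection of the first $m$ carriers could a priori lie entirely inside one halfspace of $W_m$, with all the pairwise crossings occurring far away from it. This is precisely where the paper's argument differs: it applies the inductive hypothesis to the two overlapping subfamilies $\{W_0,\dots,W_{m-1}\}$ and $\{W_1,\dots,W_m\}$, takes a third point in $T(W_0)\cap T(W_m)$ supplied by the base case, and runs the relatively-thin-triangle dichotomy on the resulting triangle: either some side has $\delta$--fat part of bounded length, so the triangle is uniformly thin (Lemma~\ref{lem:fatpart bounds thinness}) and the internal point of $[a,b]$ is close to all the carriers, or the triangle is fat relative to a single peripheral complex $Z$, and Lemma~\ref{lem:attractivecarrier} applied to each of the three sides forces $Z\subseteq T(W_i)$ for every $i$. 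Your plan has no mechanism replacing this step.

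The base case is also incomplete as sketched: a geodesic from a vertex of $W_0^+\cap W_1^+$ to one of $W_0^+\cap W_1^-$ does meet $T(W_1)$, but you have no control on its distance to $T(W_0)$, and the ``more careful'' four-vertex variant is only gestured at. The paper's base case is a separation argument rather than a geodesic argument: if $T(W_0)\cap T(W_1)=\emptyset$, then $T(W_0)$, being connected and disjoint from $\overline{W_1^+}\cap\overline{W_1^-}=T(W_1)$, has all its vertices in a single halfspace of $W_1$, and symmetrically for $T(W_1)$, contradicting crossing; so in fact $\Theta_1=0$ and the carriers genuinely intersect --- a fact the inductive step then uses for the pair $\{W_0,W_m\}$. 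Finally, one point in your favor: the ``excursion problem'' you flag is not actually an obstacle. Once a geodesic has endpoints within $r$ of a thick carrier, convexity of the metric in the \CAT$(0)$ space $X$ together with the $Q$--quasi-convexity of Proposition~\ref{prop:uniformqccarrier} already confines the whole geodesic to the $(Q+r)$--neighborhood of that carrier, with no need for relative fellow-traveling or attractiveness there.
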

\begin{proof}
  The proof is by induction, so we start by establishing the base case $m=1$.  We prove the contrapositive with $\Theta_1 = 0$.
  Suppose that there are walls $W_0,W_1$, so that $T(W_0)\cap T(W_1)=\emptyset$.  Since $X = \overline{W_1^+}\cup \overline{W_1^-}$, we have $T(W_0) \subseteq \overline{W_1^+}\cup \overline{W_1^-}$.  On the other hand $T(W_0)$ does not meet $T(W_1) = \overline{W_1^+}\cap \overline{W_1^-}$.  It is therefore contained in a connected component of $X \setminus T(W_1)$.  Each such connected component is contained either in the full subcomplex on $W_1^+$ or the full subcomplex on $W_1^-$.  In particular we have $T(W_0)^{(0)}\subseteq W_1^+$ or $T(W_0)^{(0)}\subseteq W_1^-$.  

  Arguing similarly for $T(W_1)$, there are  $\epsilon_0,\epsilon_1\in \{\pm\}$ so that
  \[ T(W_0)^{(0)}\subseteq W_1^{\epsilon_1},\ T(W_1)^{(0)}\subseteq W_0^{\epsilon_0},\]
  but this implies $W_1^{-\epsilon_1}\cap W_0^{-\epsilon_0}=\emptyset$.  In other words $W_0$ and $W_1$ do not cross.

  For the inductive step suppose that $W_0,\ldots,W_m$ pairwise cross, and let $T_i=T(W_i)$ for each $i$.  The base case implies that $T_0\cap T_m\ne \emptyset$, whereas the inductive hypothesis implies that
  \begin{align*} I_1 & := N_{\Theta_{m-1}}(T_0)\cap\cdots\cap N_{\Theta_{m-1}}(T_{m-1})\mbox{ and } \\
    I_2 & :=N_{\Theta_{m-1}}(T_1)\cap\cdots\cap N_{\Theta_{m-1}}(T_{m})
  \end{align*}
  are nonempty.  Take $a\in I_1$, $b\in I_2$, and $c\in T_0\cap T_m$, and consider the geodesic triangle $\Delta$ with corners $a,b,c$.  Let $L_m = \Ftwo(Q+\Theta_{m-1})$.  We may assume without loss of generality that $L_m\ge \delta$, where $\delta$ is the relatively thin triangles constant for $(X,\mc{B})$.
  We argue that we can take $\Theta_m = Q+ \Theta_{m-1}+ 2L_m+ 2\delta$.

  Suppose first that the $\delta$--fat part of $\Delta$ contains a segment of length at least $L_m$ in each side.  Then in particular there is some peripheral complex $Z$ so that $\Delta$ is $\delta$--thin relative to $Z$.   Lemma~\ref{lem:attractivecarrier} applied to the side $[a,c]$ implies that $Z$ is contained in $T_0$; the same lemma applied to $[b,c]$ implies that $Z$ is contained in $T_m$; and the same lemma applied to $[a,b]$ implies $Z$ is contained in each of $T_1,\ldots,T_{m-1}$.  In particular $Z\subseteq \bigcap_{i=0}^mT_i$.

  If on the other hand some $\delta$--fat part of a side of $\Delta$ has length bounded above by $L_m$, then Lemma~\ref{lem:fatpart bounds thinness} shows that $\Delta$ is  $(2L_m+2\delta)$--thin.  In particular if $x$ is the point on $[a,b]$ which is in the preimage of the central point of the comparison tripod for $\Delta$, then $x$ is contained in the intersection of the $\Theta_m$--neighborhoods of the thick carriers $T_0,\ldots,T_m$.
\end{proof}

The main result of this subsection is the following.
\begin{proposition}\label{prop:NofU}
  Let $\U$ be a DCC ultrafilter.  There is some $K\ge 0$ so that $\bigcap\limits_{A\in \U}N_K(\overline{A})\ne \emptyset$.
\end{proposition}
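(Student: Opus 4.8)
The plan is to reduce to the finitely many ``types'' of wall (since there are only finitely many $G$--orbits of hyperplanes, hence finitely many orbits of walls $W_i$) and then exhibit a single uniform constant $K$ that works. First I would partition the halfspaces in $\U$ into those whose associated thick carrier $T(W)$ meets a fixed ball around a basepoint $x_0$, and those whose carrier is far from $x_0$. For the ``far'' halfspaces, the key point is that a halfspace carrier $\overline{A}$ either contains $x_0$ or is separated from $x_0$ by its own thick carrier; and since $\U$ is a DCC ultrafilter, the halfspaces it selects which are ``far from $x_0$'' should be deeply nested. So the real content is: vertices deep inside the intersection of a long nested chain of halfspace carriers stay in a bounded neighborhood of a geodesic ray (or of a single peripheral complex), using relative hyperbolicity of $(X,\mc{B})$ from Lemma~\ref{lem:(X,B) RHP}, the attractiveness of halfspace carriers from Lemma~\ref{lem:attractive halfspace carriers}, and the nesting of carriers from Lemma~\ref{lem:nestingcarriers}.

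Concretely I would proceed as follows. Fix a base vertex $x_0 \in X^{(0)}$ and consider $\U[x_0]$, the principal ultrafilter at $x_0$. Since $\U$ is a DCC ultrafilter on a wallspace satisfying the finiteness condition \eqref{wallspacefiniteness}, the set of walls on which $\U$ and $\U[x_0]$ disagree is a set $\mc{D}$ of walls ordered (via the halfspaces $\U$ selects) into a forest of descending chains, and by DCC each chain is finite. Every such descending chain $A_1 \supsetneq A_2 \supsetneq \cdots \supsetneq A_n$ has $x_0 \notin A_n$, so by Lemma~\ref{lem:nestingcarriers} the thick carriers are nested, $T(W_j) \subseteq \overline{A_{j-1}^{}}$ in the appropriate sense, and the whole chain of carriers separates $x_0$ from $\bigcap_j A_j$. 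I would then show that any point in $\bigcap_{A \in \U} A$, if nonempty, or more usefully any point far along a geodesic from $x_0$ crossing all these walls, lies within bounded distance of all the carriers $\overline{A}$ simultaneously. The combinatorial heart is: pick a vertex $v$ maximizing (a suitable notion of) how many walls of $\mc{D}$ separate it from $x_0$; extend a geodesic $\gamma$ from $x_0$ toward $v$; each wall of $\mc{D}$ is crossed by $\gamma$ or else $\gamma$ is ``trapped'' running alongside a peripheral complex, and in the trapped case Lemma~\ref{lem:attractive halfspace carriers} forces that peripheral complex into the halfspace carrier, keeping $\gamma$ within $\max\{Q, \Ftwo(0)+\text{const}\}$ of $\overline{A}$.

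I would set $K$ to be a constant built from $Q$ (quasi-convexity of thick carriers), $\Ftwo(0)$ or $\Ftwo$ of a small argument, $\eta$, $\delta$, $f(\delta)$, and $\epsilon$ — exactly the kind of constant appearing in Notation~\ref{not:mysterious} — and then verify: for each halfspace $A \in \U$, either $A$ itself contains the chosen point $v$ (so $v \in \overline{A}$), or $\overline{A}$ is among the ``far'' carriers, in which case the geodesic from $x_0$ to $v$ must cross $T(W)$, and the portion of $\gamma$ on the $v$--side of $T(W)$ together with $v$ lands in $N_K(\overline{A})$ by the relative-thin-triangles/relative-fellow-traveling estimates of Proposition~\ref{prop:fellow_travel}, applied to $\gamma$ versus the broken path through the relevant carriers. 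The main obstacle I anticipate is controlling the interaction of \emph{many} walls at once: a naive induction on chain length would let the constant grow, so the crux is to argue all the relevant carriers can be arranged (after passing to their thick versions and using Lemma~\ref{lem:nestingcarriers}) into a totally ordered family hugging a single geodesic, so that a \emph{uniform} slimness/fellow-traveling bound applies — rather than iterating a bound once per wall. Establishing this uniformity, probably via the relatively hyperbolic structure $(X,\mc{B})$ and a single application of relative fellow-traveling to the geodesic $\gamma$ and the image of an $S$--geodesic through the chain of carriers, is where the work lies.
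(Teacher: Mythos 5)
There is a genuine gap. Your plan hinges on organizing the walls where $\U$ disagrees with the principal ultrafilter $\U[x_0]$ into descending chains and then arranging "all the relevant carriers \ldots into a totally ordered family hugging a single geodesic," so that one application of relative fellow-traveling along a single geodesic $\gamma$ does the work. But the halfspaces of $\U$ need not be even coarsely totally ordered: walls in the disagreement set can \emph{cross} one another, and crossing walls have carriers pointing in genuinely different ``directions,'' so no single geodesic from $x_0$ runs alongside all of them. You flag this as ``where the work lies'' but offer no mechanism, and fellow-traveling of $\gamma$ with one broken path cannot supply one. There are also unaddressed foundational issues in the setup: the disagreement set $\mc{D}$ with a principal ultrafilter can be infinite (the proposition is stated for arbitrary DCC ultrafilters, not just vertices of the principal component), so the ``vertex $v$ maximizing how many walls of $\mc{D}$ separate it from $x_0$'' need not exist, and in the interesting case $\bigcap_{A\in\U}A=\emptyset$ no vertex crosses all of $\mc{D}$; you give no argument bounding the distance from your endpoint to the carriers $\overline{A}$ of the walls that $\gamma$ fails to cross. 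Finally, the finiteness you invoke (finitely many $G$--orbits of hyperplanes) is not the finiteness the statement needs.

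The missing ideas are exactly the ones the paper's proof is built on. First, use DCC not to produce chains but to make successive \emph{minimal} choices: pick $A_0$ minimal in $\U$, then, among walls crossing all previously chosen ones, pick a wall whose $\U$--halfspace is minimal, and so on; this terminates after at most $\dim C(\mc{W})$ steps because the dual complex is finite dimensional (Corollary~\ref{cor:cocompact}) --- that is the relevant source of finiteness, not the number of orbits. Second, minimality forces every $A\in\U$ to contain $\U(W_i)$ for some chosen wall $W_i$, whence $T(W_i)\subseteq \overline{A}$ by Lemma~\ref{lem:nestingcarriers}; so it suffices to find a point uniformly close to the finitely many thick carriers $T(W_0),\dots,T(W_k)$ of a \emph{pairwise crossing} family. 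Third, that is supplied by the coarse Helly statement Lemma~\ref{lem:wallsintersect}, whose constant $\Theta_k$ depends only on $k\le\dim C(\mc{W})$, giving the uniform $K$. Your proposal never invokes Lemma~\ref{lem:wallsintersect} or any substitute for it, and without some such device for handling crossing walls simultaneously, the argument does not go through.
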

\begin{proof}
  We inductively define a certain collection of pairwise crossing walls $W_0,\ldots,W_k$, and show that the coarse intersection of their carriers is coarsely contained in every halfspace of $\U$.
  
  To begin, choose $A_0$ minimal in $\U$ (with respect to inclusion) and let $W_0 = \{A_0,{A_0}^c\}$.  Note that such an $A_0$ exists because $\U$ is DCC.  Suppose now we have chosen $W_0,\ldots,W_{s-1}$, and let $I_s$ be the set of walls which cross $W_i$ for all $i<s$.  If $I_s$ is nonempty, let $A_s$ be a minimal element of $\{\U(W)\mid W\in I_s\}$, and let $W_s = \{A_s,{A_s}^c\}$.  By Corollary~\ref{cor:cocompact}, the dimension of $C(\mc{W})$ is finite.
  For $s$ bigger than this dimension, the set $I_s$ is empty, so the process of choosing the $W_i$ eventually terminates.

  Let $W_0,\ldots,W_k$ be the resulting collection, and let $T_i = T(W_i)$.
  Now we decompose $\U$ as $\U_0\sqcup\cdots\sqcup \U_k$, where 
\begin{equation*}
  \U_0 = \{A\in \U\mid \{A,A^c\}\mbox{ does not cross }W_0\},
\end{equation*}
and for $i>1$,
\begin{equation*}
  \U_i = \{A\in \U\mid \{A,A^c\}\mbox{ crosses }W_0,\ldots,W_{i-1}\mbox{ but does not cross }W_i\}.
\end{equation*}
The minimality assumptions on the $W_i$ imply that if $A\in \U_i$, then $\U(W_i)\subseteq A$.  By Lemma~\ref{lem:nestingcarriers}, this implies that $T_i\subseteq \overline{A}$.  In particular, we have
\begin{equation*}
  \bigcap_{i=1}^k N_{\Theta_k}(T_i)\subseteq \bigcap_{A\in \U}N_{\Theta_k}(\overline{A}).
\end{equation*}
Lemma~\ref{lem:wallsintersect} shows that the left hand side is nonempty, and the Proposition follows, with $K = \Theta_k$.
\end{proof}

\subsection{Classification of stabilizers}

Given a DCC ultrafilter $\U$, fix $K$ so that
\begin{equation*}
  N_K(\U) \defeq \bigcap_{A\in \U} N_K(\overline{A})
  \end{equation*}
  is nonempty.  The existence of such a $K$ is guaranteed by Proposition~\ref{prop:NofU}.  We observe:
  \begin{lemma}
    The stabilizer of $N_K(\U)$ contains the stabilizer of $\U$.
  \end{lemma}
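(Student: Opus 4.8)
The plan is to observe that every ingredient of the set $N_K(\U)$ is built $G$--equivariantly, so that an element stabilizing $\U$ automatically stabilizes $N_K(\U)$.

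First I would record that $G$ acts on $X$ by cellular isometries, so that $g\,N_K(Y)=N_K(gY)$ for every subset $Y\subseteq X$ and every $g\in G$. Next I would check the equivariance of halfspace carriers. By construction $\mc{W}$ is $G$--equivariant (Definition~\ref{def:wallspaceW}), and the thick carriers satisfy $g\,T(W)=T(gW)$ for every wall $W$ and every $g\in G$ (this is precisely how $T(W)$ is defined in Definition~\ref{def:WHyp and WPC}). Since the halfspace carrier $\overline{A}$ of a halfspace $A$ of a wall $W$ is the full subcomplex of $X$ on $A\cup\bigl(T(W)\cap X^{(0)}\bigr)$, applying $g$ carries it to the full subcomplex on $gA\cup\bigl(T(gW)\cap X^{(0)}\bigr)$; that is, $g\,\overline{A}=\overline{gA}$, where $gA$ is a halfspace of the wall $gW$.

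Now suppose $g$ stabilizes $\U$, i.e.\ $g\U=\U$ as subsets of $\mc{W}$. Then
\[
  g\,N_K(\U)=g\bigcap_{A\in\U}N_K(\overline{A})=\bigcap_{A\in\U}N_K\bigl(g\,\overline{A}\bigr)=\bigcap_{A\in\U}N_K\bigl(\overline{gA}\bigr)=\bigcap_{A'\in g\U}N_K(\overline{A'})=\bigcap_{A'\in\U}N_K(\overline{A'})=N_K(\U),
\]
so $g$ stabilizes $N_K(\U)$, which is exactly the assertion.

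There is essentially no obstacle to this argument; the only point requiring a moment's care is that $\overline{A}$ depends on the thick carrier of the wall containing $A$ as well as on $A$ itself, so one must invoke the $G$--equivariance of both, which is built into Definitions~\ref{def:wallspaceW} and~\ref{def:WHyp and WPC}.
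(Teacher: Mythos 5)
Your proof is correct: the paper states this lemma as an unproved observation (``We observe:''), and the intended justification is exactly the equivariance argument you give, namely that $g\,\overline{A}=\overline{gA}$ and $g\,N_K(Y)=N_K(gY)$ because walls, thick carriers, and halfspace carriers are all defined $G$--equivariantly, so $g\U=\U$ forces $g\,N_K(\U)=N_K(\U)$. Nothing further is needed.
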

  Since the action of $G$ on $X$ is proper, we immediately obtain:
  \begin{corollary}\label{cor:boundedNU}
    If $N_K(\U)$ is bounded, then $\Stab(\U)$ is finite.
  \end{corollary}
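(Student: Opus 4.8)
The statement is essentially immediate once the preceding Lemma and Proposition~\ref{prop:NofU} are in hand, so the plan is just to spell out the two-line argument. First I would invoke the Lemma directly above: since $\Stab(\U)\le \Stab(N_K(\U))$, it suffices to show that $\Stab(N_K(\U))$ is finite whenever $N_K(\U)$ is bounded. (Recall $N_K(\U)$ is nonempty by our choice of $K$, which exists by Proposition~\ref{prop:NofU}.)

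Next I would turn boundedness into a diameter bound on an orbit. Fix a point $x_0\in N_K(\U)$ and let $D=\diam(N_K(\U))<\infty$. Every $g\in \Stab(N_K(\U))$ preserves the set $N_K(\U)$ setwise, so $g x_0\in N_K(\U)$ and hence $d_X(x_0,g x_0)\le D$. Thus $\Stab(N_K(\U))$ is contained in the set $\{g\in G\mid d_X(x_0,g x_0)\le D\}$, which is finite because the action of $G$ on the \CAT$(0)$ cube complex $X$ is proper. Therefore $\Stab(N_K(\U))$ is finite, and so is its subgroup $\Stab(\U)$.

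\textbf{Main obstacle.} There is no real obstacle here: the only thing being used beyond the cited Lemma is the definition of a proper action (equivalently, that the orbit map $g\mapsto g x_0$ is proper, so preimages of balls are finite), which is part of our standing hypothesis that $G$ acts properly cocompactly on $X$. The one point worth stating carefully in the write-up is that $\Stab(N_K(\U))$ genuinely preserves $N_K(\U)$ \emph{setwise} (immediate from the definition of the stabilizer of a subset), which is what lets us bound $d_X(x_0,g x_0)$.
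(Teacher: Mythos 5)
Your proposal is correct and matches the paper's argument: the paper also derives the corollary immediately from the observation that $\Stab(\U)\le\Stab(N_K(\U))$ together with properness of the $G$--action on $X$ applied to the nonempty bounded set $N_K(\U)$. You have merely spelled out the (standard) step that a proper action has finite stabilizers of nonempty bounded sets, which the paper leaves implicit.
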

  We thus focus on the case that $N_K(\U)$ is unbounded.  In this case, we will see in Proposition~\ref{prop:unboundedNU} below that there is always at least one peripheral complex contained in $N_K(\U)$, in fact in the intersection $N_0(\U)$ of the carriers themselves.  Although the constant $K$ depends on $\dim(C(\mc{W}))$, and hence is \emph{not} independent of the choice of $\Rfin$ made in Assumption~\ref{ass:R}, this constant will disappear from the discussion once we have found this peripheral complex in $N_0(\U)$.  

  We need a lemma about the geometry of \CAT$(0)$ cube complexes, which is very similar to Remark 3.2 of Sageev--Wise \cite{SageevWise15}.
  \begin{lemma}\label{lem:sageevwise}
    Let $\Xi$ be a $k$--dimensional \CAT$(0)$ cube complex, and let $\theta_k = \arcsin(\frac{1}{\sqrt{k}})$.  Let $d_\Xi$ be the \CAT$(0)$ metric on $\Xi$.
    Suppose that $d_\Xi(x,y)>2 \sqrt{k}$, and let $m$ be the midpoint of $[x,y]$.  Then there is a hyperplane $H$ meeting $[x,y]$ within $\frac{\sqrt{k}}{2}$ of $m$, and making an angle of at least $\theta_k$ with $[x,y]$.  This hyperplane satisfies $d_\Xi(H,\{x,y\})>\frac{d_\Xi(x,y)-\sqrt{k}}{2\sqrt{k}}$.
  \end{lemma}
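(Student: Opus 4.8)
The plan is to follow a single coordinate direction as it is carried across the cubes met by the geodesic $[x,y]$, in the spirit of \cite[Remark~3.2]{SageevWise15}. Write $L=d_\Xi(x,y)>2\sqrt k$, parametrize $\gamma=[x,y]$ by arclength with $\gamma(0)=x$, and set $m=\gamma(L/2)$. For small $\varepsilon>0$ the segment $\gamma\big((L/2,\,L/2+\varepsilon)\big)$ lies in the interior of a single cube $c$; let $v\in\R^{\dim c}$ be the (unit) direction of $\gamma$ there, and let $(m_1,\dots,m_{\dim c})$ be the coordinates of $m$ in $\overline c$. Since $\sum_i v_i^2=1$ has at most $k$ nonzero terms, there is an index $i_0$ with $|v_{i_0}|\ge 1/\sqrt{\dim c}\ge 1/\sqrt k$. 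Let $H$ be the hyperplane of $\Xi$ dual to the $i_0$--direction edges of $c$. Up to running $\gamma$ backwards and relabelling, we may assume $m_{i_0}\le 1/2$ and $v_{i_0}>0$ (if $m_{i_0}=1/2$ then $\gamma$ crosses $H$ at $m$ and the first two claims are immediate).

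First I would show $\gamma$ crosses $H$ at a point $p=\gamma(t_0)$ with $0\le t_0-L/2\le \sqrt k/2$. The key observation is that while $x_{i_0}(\gamma(t))$ is still below $1/2$ it is bounded away from $0$ and $1$, so every face of the cube structure through which $\gamma$ passes before reaching $\{x_{i_0}=1/2\}$ is parallel to the $i_0$--direction. Across such a face the $i_0$--direction of the old cube is identified with that of the new one, the midcubes dual to them patch together into $H$, and the $i_0$--component of the (possibly bending) velocity of $\gamma$ is preserved; hence $x_{i_0}(\gamma(t))=m_{i_0}+v_{i_0}\,(t-L/2)$ as long as this stays $<1/2$. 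It reaches $1/2$ at $t_0=L/2+(1/2-m_{i_0})/v_{i_0}$, and $t_0-L/2\le (1/2)/(1/\sqrt k)=\sqrt k/2$. Since $L>2\sqrt k$ we get $0<t_0<L$, so $p$ is an interior point of $[x,y]$ within $\sqrt k/2$ of $m$; as a geodesic meets each hyperplane at most once, this is the only crossing.

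Next, the angle and the distance bound. Near $p$, $\gamma$ passes through the interior of a cube $c^\dagger$ in which $H$ appears as the midcube dual to the $i_0$--direction, and the $i_0$--component of the direction of $\gamma$ there is still $v_{i_0}$ by the preservation above; hence $\gamma$ meets $H$ at angle $\arcsin|v_{i_0}|\ge\arcsin(1/\sqrt k)=\theta_k$. For the distance bound, let $q=\pi_H(x)$, so $d_\Xi(x,H)=d_\Xi(x,q)$ and, $H$ being convex, $[p,q]\subseteq H$. In the geodesic triangle $x,p,q$ the Alexandrov angle at $p$ is at least the angle $\gamma$ makes with $H$, hence $\ge\theta_k$; by the \CAT$(0)$ comparison inequality the comparison angle at $\bar p$ is $\ge\theta_k$, and Euclidean trigonometry in the comparison triangle gives $d_\Xi(x,q)\ge d_\Xi(x,p)\,\sin\theta_k=t_0/\sqrt k$. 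Running the same argument from $y$ gives $d_\Xi(y,H)\ge (L-t_0)/\sqrt k$. Since $|t_0-L/2|\le \sqrt k/2$, both $t_0$ and $L-t_0$ are at least $(L-\sqrt k)/2$, so $d_\Xi(H,\{x,y\})\ge (L-\sqrt k)/(2\sqrt k)$, which is the asserted bound.

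The main obstacle is the coordinate-tracking step: making rigorous that, before crossing $H$, the geodesic traverses only faces parallel to the $i_0$--direction, that the relevant midcubes really do glue up into the single hyperplane $H$, and that the $i_0$--component of the velocity stays constant even at the low-dimensional faces where $\gamma$ may bend. One also has to deal cleanly with the degenerate positions where $m$ or $p$ lies on the skeleton rather than in a top-dimensional cell. Everything else is routine \CAT$(0)$ geometry.
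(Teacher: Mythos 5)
Your outline is essentially the paper's argument: pick the cube containing the germ of $[x,y]$ just past $m$, use the Sageev--Wise observation that some coordinate of the unit direction has absolute value at least $1/\sqrt{k}$, take $H$ dual to that coordinate direction, track the transverse coordinate until it hits $1/2$, and then get the distance bound by a comparison estimate. The one step you flag as the main obstacle -- that the $i_0$--component of the (possibly bending) velocity is preserved across faces, that the relevant midcubes assemble into $H$, and the degenerate positions of $m$ and $p$ on the skeleton -- is exactly what the paper's proof handles, not by cube-by-cube bookkeeping, but by using that the carrier $N(H)$ is a convex subcomplex isometric to the metric product $H\times[0,1]$: the intersection $[x,y]\cap N(H)$ is then a single geodesic of the product, so its $[0,1]$--coordinate is affine with slope $v_{i_0}$ and its angle with $H$ (in the sense $\arctan(\dot\lambda_I/\dot\lambda_H)$) is constant and at least $\theta_k$; this also disposes of the skeleton degeneracies, the "run $\gamma$ backwards'' case (where the germ before $m$ may lie in a different cube), and the worry that $\gamma$ might exit before reaching the midcube, since a point of the carrier with transverse coordinate in $(0,1)$ is interior to the carrier. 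I would therefore replace your coordinate-tracking paragraph by an appeal to convexity of $N(H)$ and the product decomposition. Your comparison-triangle argument for the last assertion is fine (the paper leaves it as an easy calculation); note only that it yields the non-strict inequality $d_\Xi(H,\{x,y\})\ge\frac{d_\Xi(x,y)-\sqrt{k}}{2\sqrt{k}}$, which is all that is used in the applications of the lemma.
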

  \begin{proof}
    The idea here is from Section 3 of \cite{SageevWise15}.  In \cite[Lemma 3.1]{SageevWise15} it is shown that if $\mathbf{v}$ is a nonzero vector in $\R^k$, and $\mc{E}$ is the standard basis of $\R^k$, then there is a subset $\mc{E}'\subseteq \mc{E}$ of cardinality $k-1$ so that every vector in the hyperplane spanned by $\mc{E}'$ makes an angle of at least $\theta_k$ with $\mathbf{v}$.  It is observed in \cite[Remark 3.2]{SageevWise15} that if $\gamma$ is a ray starting at a corner of a cube $\sigma$ and with a nondegenerate initial segment in $\sigma$, then $\gamma$ meets some midcube of $\sigma$ at an angle of at least $\theta_k$.

    The difference in our situation is that $m$ need not be a vertex.  There is however some nondegenerate segment $\sigma = [m,m']$ contained in a cube $\sigma$ of $\Xi$, which we suppose is of minimal dimension to contain such a segment.  Let $\sigma'$ be a maximal length parallel segment beginning at a vertex of $\sigma$.  By \cite[Remark 3.2]{SageevWise15}, there is a midcube of $\sigma$ meeting $\sigma'$ at an angle of at least $\theta_k$.  Let $H$ be the hyperplane determined by this midcube.  We show that $[x,y]$ meets $H$ as in the conclusion of the lemma.

    The carrier $C$ of $H$ can be identified with $H\times [0,1]$, where $H$ is identified with $H\times\{\frac{1}{2}\}$.  Let $\lambda\co [0,T]\to H\times[0,1]$ be a unit speed geodesic with image the intersection of $[x,y]$ with $C$.   The geodesic $\lambda$ projects to constant speed geodesics $\lambda_H\co [0,T]\to H$ and $\lambda_I\co [0,T]\to [0,1]$.  Any geodesic in $C$ makes a well-defined constant angle $\arctan(\dot\lambda_I/\dot\lambda_H)$ 
    with $H$, and this angle is invariant under translation inside a cube.  Since the angle is at least $\theta_k=\arcsin(\frac{1}{\sqrt{k}})$, the length $T$ of $\lambda$ is at most $\sqrt{k}$.  In particular neither $x$ nor $y$ lies in $C$, and $\lambda_I$ is surjective.  Let $t\in [0,T]$ satisfy $\lambda_I(t)=\frac{1}{2}$, so $\lambda(t)$ lies on the hyperplane $H$.  The distance from $\lambda(t)$ to $m$ is at most $\frac{T}{2}\le \frac{\sqrt{k}}{2}$.

    The last assertion is an easy calculation.
  \end{proof}
In the next lemma we return to the setting of our $(\delta,f)$--relatively hyperbolic pair $(X,\mc{B})$.
  \begin{lemma}\label{lem:pushacrosstriangle}(Pushing across triangles)
        Suppose that $a,b,c\in X$, $Z\in \mc{B}$, and that there is a subsegment $[x,y]\subseteq [a,b]\cap Z$ satisfying:
        \begin{enumerate}
        \item The length of $[x,y]$ is at least $2\delta+2\eta+f(\delta)$; and
        \item $[x,y]$ does not cross the internal point of $[a,b]$, in other words $d(a,x)<d(a,y)\le (b,c)_a$.
        \end{enumerate}
    Then there is a subsegment $[x',y']\subseteq [a,c]\cap Z$ with $|d(a,x')-d(a,x)|\le \delta+\eta$ and $|d(a,y')-d(a,y)|\le \delta + \eta+f(\delta)$.
  \end{lemma}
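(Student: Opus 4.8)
The plan is to transfer $[x,y]$ across the triangle $\Delta(a,b,c)$ using the comparison tripod: relative thinness will place the image of $[x,y]$ near $Z$, and $\eta$--super-attractiveness of $Z$ will then push it fully into $Z$. Write $\pi\colon\Delta(a,b,c)\to Y_{abc}$ for the comparison map. Since $d(a,y)\le(b,c)_a$, the segment $[x,y]$ projects into the leg of $Y_{abc}$ adjacent to $a$; hence for each $p\in[x,y]$ there is a unique point $p^{*}$ on $[a,c]$ with $d(a,p^{*})=d(a,p)$, and $\{p,p^{*}\}\subseteq\pi^{-1}(\pi(p))$. Also, for $p,q\in[x,y]$ one has $d(p,q)=|d(a,p)-d(a,q)|=d(p^{*},q^{*})$, since $[a,b]$ and $[a,c]$ are geodesics. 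These facts let me translate distance estimates on $[x,y]$ to distance estimates on $[a,c]$.

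Because $(X,\mc{B})$ has $\delta$--relatively thin triangles, $\Delta(a,b,c)$ is either $\delta$--thin or $\delta$--thin relative to some $U\in\mc{B}$. The first case to handle is the favorable one, where the triangle is $\delta$--thin, or $\delta$--thin relative to $Z$ itself. Then for every $p\in[x,y]$ we get $p^{*}\in N_\delta(Z)$: either the fiber $\pi^{-1}(\pi(p))$ has diameter at most $\delta$, so $d(p,p^{*})\le\delta$ while $p\in Z$, or the fiber is contained in $N_\delta(Z)$ and $p^{*}$ lies there directly. Applying this to $x$ and $y$ gives $x^{*},y^{*}\in N_\delta(Z)$ with $d(x^{*},y^{*})=d(x,y)\ge 2\delta+2\eta+f(\delta)\ge 2(\delta+\eta)$. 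By $\eta$--super-attractiveness of $Z$ (with $K=\delta$), the subsegment of $[x^{*},y^{*}]$ obtained by deleting the initial and terminal pieces of length $\delta+\eta$ lies in $Z$; this is the desired $[x',y']\subseteq[a,c]\cap Z$, and the endpoint distances to $a$ have changed by at most $\delta+\eta$, which is even stronger than required.

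The remaining case is that $\Delta(a,b,c)$ is $\delta$--thin relative to some $U\in\mc{B}$ with $U\ne Z$. Here every $p\in[x,y]$ whose fiber $\pi^{-1}(\pi(p))$ has diameter exceeding $\delta$ lies in $N_\delta(U)$; since it also lies in $Z$, such $p$ lie in $N_\delta(U)\cap N_\delta(Z)$, which has diameter at most $f(\delta)$ by the $(\delta,f)$--relative hyperbolicity of $(X,\mc{B})$. Using that $t\mapsto d_X(\gamma_1(t),\gamma_2(t))$ is convex and vanishes at $t=0$, where $\gamma_1,\gamma_2$ parametrize $[a,b],[a,c]$ by arclength from $a$, this ``fat'' subset of $[x,y]$ is a \emph{terminal} subsegment $[z,y]$ with $d(z,y)\le f(\delta)$. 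Every point of $[x,z]$ then has a fiber of diameter at most $\delta$, so $x^{*},z^{*}\in N_\delta(Z)$ and $d(x^{*},z^{*})=d(x,z)\ge 2\delta+2\eta$. As before, $\eta$--super-attractiveness yields $[x',z']\subseteq[a,c]\cap Z$ with endpoints moved by at most $\delta+\eta$, and I would set $[x',y']:=[x',z']$: the $x$--estimate is unchanged, while $|d(a,y')-d(a,y)|\le|d(a,z')-d(a,z)|+|d(a,z)-d(a,y)|\le(\delta+\eta)+f(\delta)$, exactly the stated bound.

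I expect the crux to be this last case, specifically the structural fact that the only obstruction to a perfect transfer is the ``fat'' part of the triangle, which is (i) confined to the bounded coarse intersection $N_\delta(U)\cap N_\delta(Z)$ of two distinct peripheral complexes and hence short, and (ii) by convexity located at the $y$--end of $[x,y]$. Point (ii) is what breaks the symmetry between $x$ and $y$ and forces the asymmetric constants $\delta+\eta$ versus $\delta+\eta+f(\delta)$. Apart from that, the only thing requiring care is the bookkeeping of fibers and arclength parameters along the comparison tripod.
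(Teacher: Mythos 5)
Your proof is correct and follows essentially the same route as the paper's: transfer $[x,y]$ to $[a,c]$ via the comparison tripod, split into the cases thin / thin relative to $Z$ / thin relative to some $W\ne Z$, bound the fat part meeting $[x,y]$ by $f(\delta)$ using the bounded coarse intersection of distinct peripheral complexes, and finish with $\eta$--super-attractiveness. Your write-up is merely a bit more explicit than the paper's about why that fat part is a terminal subsegment (convexity of the CAT$(0)$ metric) and about the final endpoint bookkeeping, but the argument is the same.
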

      \begin{figure}[htbp]
        \centering
        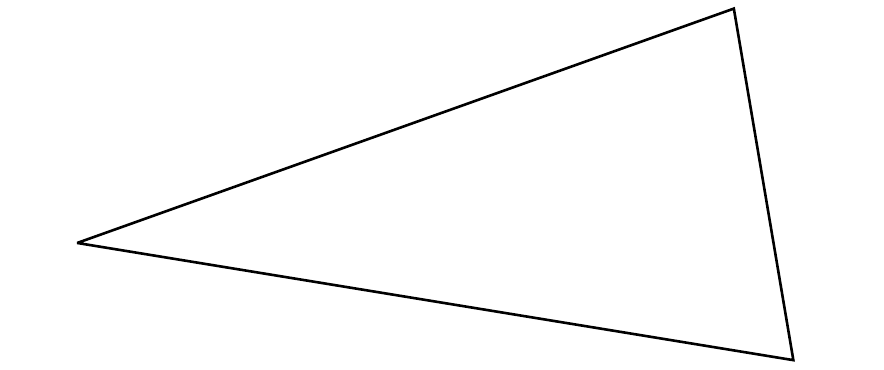
        \caption{Lemma~\ref{lem:pushacrosstriangle}.  The small points are the points in the preimage of the central point of the comparison tripod.}
        \label{fig:pushacrosstriangle}
      \end{figure}
  \begin{proof}
    Let $\Delta$ be the triangle with corners $a,b,c$. Let $\pi\co \Delta\to T_\Delta$ be the canonical map to a comparison tripod $T_\Delta$.  
    Let $I=[x,y]$.  The assumptions on $d(a,x)$ and $d(a,y)$ imply that $\pi(I)$ lies entirely in one leg of $T_\Delta$.  Let $I'$ be the subsegment of $[a,c]$ with the same image in $T_\Delta$.  There are three cases.

    In case $\Delta$ is $\delta$--thin, the segment $I'$ has endpoints $\delta$--close to those of $I$.  In particular it begins and ends within $\delta$ of $Z$, so it intersects $Z$ in a subsegment whose endpoints are at most $\eta+\delta$ from those of $I'$, and we are finished.

    In case $\Delta$ is $\delta$--thin relative to $Z$, then every point of $I'$ is within $\delta$ of some point of $I$ or some point of $Z$.  Since $I\subseteq Z$, the segment $I'$ begins and ends within $\delta$ of $Z$ as before.

    In case $\Delta$ is $\delta$--thin relative to some $W\ne Z$, we note that the $\delta$--fat part of $[a,b]$ can intersect $I$ in a segment of length at most $f(\delta)$.  Removing the $\delta$--fat part thus leaves a subsegment $I_0$.  The corresponding subsegment $I_0'\subseteq I'$ begins and ends within $\delta$ of $Z$, and so we can find a subsegment of $I_0'$ with the desired properties.
  \end{proof}

The main use of the next two lemmas is in the proof of Proposition~\ref{prop:unboundedNU}, which says that whenever $N_K(\U)$ is unbounded $N_0(\U)$ contains a peripheral complex.  Both will be used again later in slightly simpler circumstances.

In the following statement $\omega$ and $\tau$ are the constants which were fixed in Notation~\ref{not:mysterious}; their definitions are recalled in the proof.
\begin{lemma}\label{lem:NJU}
    For any $J\ge 0$, if $x,y\in N_J(\U)$ satisfy $d(x,y)> \left(\frac{2\pi}{\theta_n}+2\right)J + \omega$, then there is a peripheral complex $Z$ which intersects $[x,y]$ in a subsegment $I$ so that
  \begin{enumerate}
  \item the length of $I$ is at least $\min\{\Rfin-\tau, \frac{1}{2}d(x,y)-\tau-J\}$, and
  \item the distance from $I$ to the midpoint of $[x,y]$ is at most $\tau$.
  \end{enumerate}
\end{lemma}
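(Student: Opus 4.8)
The plan is to locate a hyperplane of $X$ crossing $[x,y]$ near its midpoint, pass to the wall of $\mc{W}$ it belongs to, and then use the hypothesis $x,y\in N_J(\overline{W^+})$ (for the appropriate halfspace $W^+\in\U$) to force $[x,y]$ deep into a peripheral complex near that midpoint.

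Write $L=d(x,y)$ and let $m$ be the midpoint of $[x,y]$. The hypothesis $L>(\frac{2\pi}{\theta_n}+2)J+\omega$ forces $L>2\sqrt n$, so Lemma~\ref{lem:sageevwise} with $k=n$ gives a hyperplane $H$ of $X$ meeting $[x,y]$ at a point $p$ with $d(p,m)\le\frac{\sqrt n}{2}$ and making angle at least $\theta_n$ with $[x,y]$; in particular $d(p,x),d(p,y)\ge\frac12 L-\frac{\sqrt n}{2}$, and by the hypothesis on $L$ this quantity is $>J$. By Lemma~\ref{lem:edgedetermines} there is a unique wall $W=\{W^+,W^-\}$ of which $H$ is a $W$-hyperplane; relabelling if necessary we may assume $W^+\in\U$, so $x,y\in N_J(\overline{W^+})$, and since $d(x,H),d(y,H)>J$ the closest points of $\overline{W^+}$ to $x$ and $y$ lie on the same sides of $H$ as $x$ and $y$. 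Inspecting the two kinds of such closest points (a point near a vertex of $W^+$, or a point near the thick carrier $T(W)$) shows that, after crossing $H$, the geodesic $[x,y]$ can re-approach $\overline{W^+}$ only by running close to $T(W)$ or by crossing a further $W$-hyperplane. A further $W$-hyperplane is at distance $\ge\Rfin$ from $H$ (the scattering is $\Rfin$-separated), so this cannot happen near $m$; hence, using the $Q$-quasiconvexity of $T(W)$ (Proposition~\ref{prop:uniformqccarrier}) and convexity of the distance-to-$H$ function along $[x,y]$ together with the angle bound (which is where the $\frac{Q}{\sin(\frac{\theta_n}{2})}$ terms of $\tau$ and $\omega$ come from), a central subsegment of $[x,y]$ is confined to a bounded neighbourhood of $T(W)$.

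\emph{The heart of the argument}---and the step I expect to be the main obstacle---is to upgrade this to the assertion that a subsegment $I$ of $[x,y]$, within $\tau$ of $m$, actually lies in a single peripheral complex $Z$. The difficulty is that $W^+$ is a union of ``sides'' of the many $\Rfin$-scattering hyperplanes and hence is not convex, so one cannot argue via convexity of $\overline{W^+}$; the entire quantitative content lies in making the estimates independent of $\Rfin$. The mechanism: on the central subsegment $[x,y]$ shadows $T(W)$, which is the union of $\Rfin$-separated hyperplane carriers and $W$-peripheral complexes; by Lemma~\ref{lem:closetohyperplane} (applied with parameter $Q$, producing $\Fone(Q)$ in $\tau$) distinct $W$-peripheral complexes come close only near a scattering hyperplane, so $[x,y]$ must follow one of them for most of this subsegment, and $\eta$-super-attractiveness of that complex $Z$ turns ``shadows $Z$'' into ``meets $Z$'' at the cost of the $4\eta+f(\delta)+2\delta$ summands in $\tau$; Lemma~\ref{lem:attractivecarrier} (applied with $d=0$, producing $\Ftwo(0)$ in $\omega$) is what certifies that the complex being shadowed really is a $W$-peripheral complex, i.e.\ is contained in $T(W)$, before Lemma~\ref{lem:closetohyperplane} can be applied. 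One must also handle, as part of this step, the case in which $[x,y]$ crosses more than one $W$-hyperplane, in which one either finds $Z$ as above or derives a contradiction with $x,y\in N_J(\overline{W^+})$ from the $\Rfin$-separation. Together with $d(p,m)\le\frac{\sqrt n}{2}$ this produces $I\subseteq[x,y]\cap Z$ with $d(I,m)\le\tau$, which is (2).

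Finally one bounds $|I|$. Since $Z$ is convex, $I$ is a single subsegment, and following $[x,y]$ outwards from $I$ in each direction it stays in $Z$ until either it reaches the next hyperplane of the $\Rfin$-separated scattering---whence $|I|\ge\Rfin-\tau$---or it is forced out, which a Gromov-product comparison with the projection of $x$, respectively $y$, to $T(W)$ shows cannot happen within distance roughly $\frac12 d(p,x)$, respectively $\frac12 d(p,y)$, of $p$. As $d(p,x)+d(p,y)=L$ and $d(p,x),d(p,y)\ge\frac12 L-\frac{\sqrt n}{2}$, combining the two contributions gives $|I|\ge\min\{\Rfin-\tau,\ \frac12 d(x,y)-\tau-J\}$, which is (1).
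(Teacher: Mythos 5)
Your opening matches the paper's: Lemma~\ref{lem:sageevwise} produces a hyperplane $H$ crossing $[x,y]$ at a point $z$ near the midpoint, at angle at least $\theta_n$ and missing the $J$--balls about $x$ and $y$, and one then passes to the wall $W$ with $\U(W)=W^+$. But the central step of your sketch --- that ``a central subsegment of $[x,y]$ is confined to a bounded neighbourhood of $T(W)$,'' deduced from $Q$--quasi-convexity of $T(W)$, convexity of $d(\cdot,H)$ along $[x,y]$, and the angle bound --- is not justified, and you yourself flag the upgrade from this to a single peripheral complex as ``the main obstacle'' without supplying it. Quasi-convexity of $T(W)$ only controls geodesics whose endpoints lie on or near $T(W)$; your endpoints $x,y$ are merely within $J$ of the halfspace carrier $\overline{W^+}$, which is built from the sides of the many scattering hyperplanes and need not be anywhere near $T(W)$, and convexity of the distance to $H$ only says that $[x,y]$ leaves $H$ at a definite rate, which says nothing about staying near $T(W)$ (whose peripheral complexes extend arbitrarily far from $H$). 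So the confinement assertion is essentially equivalent to the conclusion of the lemma and is exactly what remains to be proved; likewise your final claim that $[x,y]$ ``stays in $Z$ until it reaches the next scattering hyperplane'' presupposes the same unproved shadowing.

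The paper's device for bridging this gap is absent from your sketch. One introduces points $x_A,y_A\in\overline{\U(W)}$ within $J$ of $x,y$; since $H$ misses the $J$--balls it separates $x_A$ from $y_A$, and either one of them lies in $T(W)$ or a second scattering hyperplane meets $[x_A,z]\cup[z,y_A]$. This supplies an auxiliary geodesic ($[z,y_A]$, or $[z,z']$ with $z'$ on that second hyperplane) whose endpoints genuinely lie in $T(W)$, so that $Q$--quasi-convexity, Lemma~\ref{lem:closetohyperplane}, the $\Rfin$--separation and $\eta$--super-attractiveness produce a long segment $I_0$ of the auxiliary geodesic inside a single $W$--peripheral complex $Z$, close to $z$; the angle between $[z,y_A]$ and $[z,y]$ is at most $\theta_n/2$ because $d(y,y_A)\le J$ is small compared with $d(z,y)$ --- this is where the $\sin(\frac{\theta_n}{2})$ in $\tau$ actually enters, whereas in your setup, where $[x,y]$ itself meets $H$ at angle $\ge\theta_n$, the halving has no role. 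Finally Lemma~\ref{lem:pushacrosstriangle}, applied to the triangle with corners $z,y,y_A$ using the Gromov product bound $\gprod{y}{y_A}{z}\ge\frac{d(x,y)-\sqrt{n}}{2}-J$, transfers $I_0$ onto $[z,y]\subseteq[x,y]$ at a cost of $2\delta+2\eta+f(\delta)$ in length and $\delta+\eta$ in position. This transfer from an auxiliary geodesic with controlled endpoints to $[x,y]$ itself, or some substitute for it, is the missing idea in your proposal; without it the middle and final paragraphs of your argument do not go through.
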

\begin{proof}
 We first remark that it is easy to check that $\left(\frac{2\pi}{\theta_n}+2\right) \ge 2\sqrt{n}+2$.
Recall from Notation~\ref{not:mysterious} the definition of the following two constants:
  \begin{equation*}
    \omega = \defomega,
  \end{equation*}
  and 
  \begin{equation*}
  \tau = \deftau;
    \end{equation*}
  Since $d(x,y)\ge \omega \ge 2\sqrt{n}$, we can apply Lemma~\ref{lem:sageevwise} to find a hyperplane $H$ meeting $[x,y]$ at an angle of at least $\theta_n$, at a point $z\in [x,y]$ satisfying
  \begin{equation}\label{ineq:z}
    d(z,\{x,y\})>\frac{d(x,y)-\sqrt{n}}{2}.    
  \end{equation}
  Since $\left(\frac{2\pi}{\theta_n}+2\right) > 2\sqrt{n}$ and $\omega>\sqrt n$, the last assertion of Lemma~\ref{lem:sageevwise} implies that $H$ misses the $J$--balls around $x$ and $y$:
  \[d(H,\{x,y\})\ge \frac{\left(\frac{2\pi}{\theta_n}+2\right)J + \omega-\sqrt{n}}{2\sqrt{n}} =  \frac{\left(\frac{2\pi}{\theta_n}+2\right)}{2\sqrt{n}} J + \frac{\omega}{2\sqrt{n}}-\frac{1}{2} > J.\]

  Let $W$ be the wall corresponding to $H$, and let $A=\U(W)$.  There are points $x_A,y_A$ in $\overline{A}$ within $J$ of $x$, $y$ respectively.
  \begin{figure}[htbp]
    \centering
    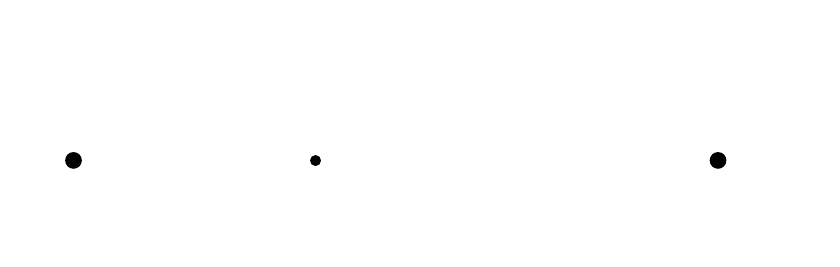
    \caption{The points $x_A$ and $y_A$ are within $J$ of $x$, $y$ respectively, so they are separated by the hyperplane $H$.  The unlabeled point is the midpoint of $[x,y]$, which is distance at most $\sqrt{n}/2$ from $z$.}
    \label{fig:NJU1}
  \end{figure}

  Since $H$ misses the $J$--balls around $x$ and $y$, it must separate $x_A$ from $y_A$.  Observe that $z$ lies in the thick wall carrier $T = T(W)$.
  \begin{claim}\label{claim:zprime}
    Either one of $x_A,y_A$ lies in $T$, or there is a point $z'\in T\cap ([x_A,z]\cup[z,y_A])$ lying in a $W$--hyperplane other than $H$.
   \end{claim}
   \begin{proof}
     If neither $x_A$ nor $y_A$ is in $T$, they both lie in the full sub-complex on $A$.  It follows that $W$ does not separate $x_A$ from $y_A$, and hence there is a second $W$--hyperplane $H'$ separating $x_A$ from $y_A$.  The hyperplane $H'$ meets either $[x,x_A]$ or $[y,y_A]$ at a some point $z'$.
   \end{proof}
   In case one of $x_A,y_A$ lies in $T$, we relabel so that $y_A$ lies in $T$.  If not, we similarly relabel so that the point $z'$ found in Claim~\ref{claim:zprime} lies in the segment $[z,y_A]$.
   \begin{claim}\label{claim:I}
     There is a $W$--peripheral complex $Z$ intersecting $[z,y_A]$ in a segment $I_0$ of length at least
     \begin{multline*}
       \min\left\{\Rfin-\left(\Fone(Q) + 4 Q + 2 \eta\right),\right. \\
        \left. \frac{d(x,y)-\sqrt{n}}{2} - \left(\frac{Q}{\sin(\frac{\theta_n}{2})}+2Q + 2\eta\right) - J \right\},
     \end{multline*}
 and so that $d(I_0,z)\le \frac{Q}{\sin\left(\frac{\theta_n}{2}\right)}+ Q+\eta$.
   \end{claim}
   \begin{proof}
     Note that the angle $\psi$ between $[z,y]$ and $[z,y_A]$ is less than $\frac{\theta_n}{2}$.  Indeed, the hypothesis on $d(x,y)$, and the inequalities \eqref{ineq:z},  $\omega >\sqrt{n}$ and $d(y,y_A)\le J$ together imply (since at any rate $\psi<\frac{\pi}{2}$):
     \begin{equation}
       \psi \le \frac{\pi}{2}\sin(\psi)\le \frac{\pi}{2}\left(\frac{d(y,y_A)}{\min\{d(z,y),d(z,y_A)\}}\right) < \frac{\pi}{2}\left(\frac{J}{\pi J/\theta_n}\right) = \frac{\theta_n}{2}.
     \end{equation}
     (Here we are using the fact that angles in a \CAT$(0)$ triangle are dominated by the angles in the comparison triangle.)
     Since the angle $[z,y]$ makes with $H$ is at least $\theta_n$, the angle $[z,y_A]$ makes with $H$ is at least $\theta_n-\psi >\frac{\theta_n}{2}$.  It follows that the subsegment $[z,p]$ of $[z,y_A]$ lying in $N_Q(H)$ has length at most
     $\frac{Q}{\sin\left(\frac{\theta_n}{2}\right)}$.  
       
     The point $p$ just defined must lie in the $Q$--neighborhood of some $W$--peripheral complex $Z$.  This is the peripheral complex in which we will find the segment $I_0$.

     In case all of $[p,y_A]$ lies in $N_Q(Z)$, we use $\eta$--super-attractiveness to obtain a segment $I_0\subseteq [p,y_A]\cap Z$ so that
     \[ |I_0| \geq d(p,y_A) - (2 Q + 2 \eta) \geq \frac{d(x,y)-\sqrt{n}}{2} - \left(\frac{Q}{\sin(\frac{\theta_n}{2})}+2Q + 2\eta\right) - J,\]
     as desired.

     Next we suppose that not all of $[p,y_A]$ lies in $N_Q(Z)$.  It follows that $y_A$ does not lie in $Z$, since the $Q$--neighborhood of $Z$ is convex.  
      Claim \ref{claim:zprime} implies that either $y_A$ or some point $z' \in [p,y_A]$ lies on a $W$--hyperplane or $W$--peripheral complex not equal to $Z$, so either $[z,y_A]$ or $[z,z']$ is contained in $N_Q(T)$ but not entirely contained in $N_Q(Z)$.  Let $q$ be the first point on $[p,y_A]$ within $Q$ of some $W$--hyperplane $H''\neq H$ or some $W$--peripheral complex $Z'\neq Z$.  Note the point $q$ must also lie in $N_Q(Z)$.   In the case $q$ is within $Q$ of a $W$--peripheral complex $Z'\neq Z$, there exists a hyperplane $H''$ so that $d(q,H'') \leq \Fone(Q)$.  In either case there is a hyperplane $H'' \neq H$ so that $d(q,H'') \leq \max\{Q,\Fone(Q)\}$, and so we have
     \[ d(p,q) \geq \Rfin - (Q + \max\{Q,\Fone(Q)\}) \geq \Rfin  - (\Fone(Q) + 2 Q).\]
     
     The $\eta$--super-attractiveness gives us a subsegment $I_0$ of $[p,q]\cap Z$ of length at least
     \[ d(p,q) - (2Q + 2\eta) \geq \Rfin  - ( \Fone(Q) + 4 Q + 2 \eta ).\]

     In either case $d(I_0,z)\le d(p,z) + Q + \eta \le \frac{Q}{\sin\left(\frac{\theta_n}{2}\right)}+ Q+\eta$, as desired.
    \end{proof}
    We now wish to apply Lemma~\ref{lem:pushacrosstriangle} (Pushing across triangles) to the segment $I_0$ from the last claim, but it may extend past the internal point of the side $[z,y_A]$ of the triangle with corners $\{z,y,y_A\}$.  We note however that $\gprod{y}{y_A}{z}\ge \frac{d(x,y)-\sqrt{n}}{2}-J$ and $d(z,I_0)\le \frac{Q}{\sin\left(\frac{\theta_n}{2}\right)}+ Q+\eta$ by Claim~\ref{claim:I}, so $I_0$ contains a subsegment $I_1$ as in the hypothesis of that lemma with
  \begin{equation*}
    |I_1|
    \ge\min\left\{|I_0|,\frac{d(x,y)-\sqrt{n}}{2}- \left(\frac{Q}{\sin\left(\frac{\theta_n}{2}\right)}+ Q+\eta\right)-J\right\}.
  \end{equation*}
  This is a better lower bound than that already given for $I_0$ in the statement of Claim~\ref{claim:I}, so we may assume that $I_0$ already satisfies the hypotheses of Lemma~\ref{lem:pushacrosstriangle}.  Pushing across the triangle we obtain a subsegment $I$ in $[z,y]\subseteq [x,y]$ which is contained in $Z$ and has length at least $|I_0| - (2\delta + 2\eta + f(\delta))$.  Moreover the distance from $I$ to $z$ is no more than $d(I_0,z) + \eta + \delta$.

Adding up the constants we obtain the inequalities asserted in the lemma.
\setcounter{claim}{0}    
\end{proof}

  \begin{lemma}\label{lem:pcinN}
    Let $J\ge 0$, and $x,y\in N_J(\U)$.  Suppose that $Z$ is a peripheral complex so that $Z\cap[x,y]$ contains a segment $\sigma$ so that
      $\mathrm{length}(\sigma) > \Ftwo(0)+4(\eta+\delta)+2f(\delta)$, and 
      $d(\sigma,\{x,y\}) > J+\delta+\eta$. 
    Then $Z\subseteq N_0(\U)$.
  \end{lemma}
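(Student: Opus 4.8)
\medskip

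\noindent\emph{Proof strategy.} Since $N_0(\U)=\bigcap_{A\in\U}N_0(\overline A)=\bigcap_{A\in\U}\overline A$, it suffices to fix a halfspace $A\in\U$, with associated wall $W=\{A,A^c\}$, and prove $Z\subseteq\overline A$. Two cases are immediate. If $Z$ crosses one of the hyperplanes of the scattering associated to $W$ then $Z$ is a $W$--peripheral complex, so $Z\subseteq T(W)\subseteq\overline A$ (cf. Lemma~\ref{lem:W separates}); and if every vertex of $Z$ lies in the halfspace $A$, then $Z\subseteq\overline A$ directly from the definition of the halfspace carrier. So the remaining (and only substantive) case is that $Z$ crosses no $W$--hyperplane and $Z^{(0)}\subseteq A^c$; we work in that case.

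Because $x,y\in N_J(\U)\subseteq N_J(\overline A)$, choose $x_A,y_A\in\overline A$ with $d(x,x_A)\le J$ and $d(y,y_A)\le J$. The first step is to transport $\sigma$ onto the geodesic $[x_A,y_A]$ by two applications of Lemma~\ref{lem:pushacrosstriangle}. In the triangle with vertices $y,x,x_A$, the bound $d(x,x_A)\le J$ forces the Gromov product $(x\,|\,x_A)_y\le J$, so the internal point of this triangle on the side $[x,y]$ lies within $J$ of $y$; hence $d(\sigma,\{x,y\})>J+\delta+\eta$ places $\sigma$ strictly on the $x$--side of it, and pushing across the triangle yields $\sigma_1\subseteq[x_A,y]\cap Z$ with $\mathrm{length}(\sigma_1)\ge\mathrm{length}(\sigma)-(2\delta+2\eta+f(\delta))$ and whose endpoint nearest $y$ is still more than $J$ from $y$. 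A second push, now in the triangle with vertices $x_A,y,y_A$ (whose internal point on $[x_A,y]$ is again within $J$ of $y$, so that $\sigma_1$ lies on the $x_A$--side of it), produces $\sigma'\subseteq[x_A,y_A]\cap Z$ with
\[
  \mathrm{length}(\sigma')\ \ge\ \mathrm{length}(\sigma)-2(2\delta+2\eta+f(\delta))\ >\ \Ftwo(0),
\]
the last inequality being exactly the hypothesized lower bound on $\mathrm{length}(\sigma)$. Thus the terms $J+\delta+\eta$ and $4(\eta+\delta)+2f(\delta)$ in the hypotheses are precisely what is needed to perform, and to survive, the two triangle-pushes.

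It remains to show $\sigma'\subseteq\overline A$; granting that, Lemma~\ref{lem:attractive halfspace carriers} applied with $m=0$ gives $\diam(Z\cap\overline A)\ge\mathrm{length}(\sigma')>\Ftwo(0)$ and hence $Z\subseteq\overline A$, completing the proof. This last step is where I expect the real work to lie, and it is where the standing case hypotheses enter. Since $Z$ crosses no $W$--hyperplane, neither does $\sigma'\subseteq Z$, so $\sigma'$ remains in a single complementary region of the $W$--hyperplane carriers; combined with $Z^{(0)}\subseteq A^c$ and the definition of $\overline A$ as the full subcomplex on $A\cup(T(W)\cap X^{(0)})$, any point of $\sigma'$ failing to lie in $\overline A$ must sit in a cube of $Z$ all of whose vertices lie in $T(W)$. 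The plan is to argue, using the $Q$--quasi-convexity of $T(W)$ (Proposition~\ref{prop:uniformqccarrier}) and that $x_A,y_A\in\overline A$, that every maximal subsegment of $[x_A,y_A]$ which leaves $\overline A$ has both endpoints on $W$--hyperplane carriers and therefore lies in $N_Q(T(W))$ — so that if $\sigma'$ were not contained in $\overline A$ a long piece of it would coarsely lie in $N_Q(T(W))$, and Lemma~\ref{lem:attractivecarrier} would force $Z\subseteq T(W)$, making $Z$ a $W$--peripheral complex and contradicting $Z^{(0)}\subseteq A^c$. The main obstacle is the bookkeeping here: one must keep track of the (few) places where the convex subcomplex $Z$ brushes against $T(W)$, and verify that the segment produced by the two pushes is long enough for whichever of Lemmas~\ref{lem:attractive halfspace carriers} and~\ref{lem:attractivecarrier} is invoked, perhaps choosing $x_A,y_A$ more carefully (e.g.\ so that they lie in the halfspace $A$, where the open star is automatically contained in $\overline A$) to avoid losing ground to the quasi-convexity constant.
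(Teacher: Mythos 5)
Your overall route is the same as the paper's: fix a halfspace $A\in\U$, choose $x_A,y_A\in\overline{A}$ within $J$ of $x,y$, and apply Lemma~\ref{lem:pushacrosstriangle} twice to transport $\sigma$ onto $[x_A,y_A]$, with the hypothesis $d(\sigma,\{x,y\})>J+\delta+\eta$ giving the Gromov-product condition for both pushes and the budget $4(\eta+\delta)+2f(\delta)$ absorbing the two losses, leaving a subsegment $\sigma'\subseteq[x_A,y_A]\cap Z$ of length $>\Ftwo(0)$; this is exactly the paper's argument (the paper pushes across the triangles $x,y,y_B$ and then $x,x_B,y_B$, which is the same computation). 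One small slip: in your triangle $y,x,x_A$ the small Gromov product is $(y\mid x_A)_x\le J$, not $(x\mid x_A)_y\le J$, so the internal point of $[x,y]$ is within $J$ of $x$ and $\sigma$ ends up in the corner at $y$; since your hypothesis bounds the distance from $\sigma$ to both endpoints, this mislabeling is harmless.

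The genuine gap is your final step, which you leave as a plan. The paper closes here simply by citing Lemma~\ref{lem:attractive halfspace carriers}, whose statement and proof are designed to absorb the case analysis you redo by hand; your instinct that the $m=0$ application requires the long segment to interact with $\overline{A}$ is reasonable, but the repair you sketch does not work as stated. Routing through $N_Q(T(W))$ and Lemma~\ref{lem:attractivecarrier} with $d=Q$ needs diameter at least $\Ftwo(Q)=\Ftwo(0)+4Q$, while the two pushes only leave $\Ftwo(0)$, so the constants fail by $4Q$ (you acknowledge not having checked this); the endpoints of a maximal excursion of $[x_A,y_A]$ outside $\overline{A}$ lie on $T(W)=\overline{A}\cap\overline{A^c}$, not necessarily on hyperplane carriers (they may lie on $W$--peripheral complexes); and the conclusion $Z\subseteq T(W)$ you aim to contradict is not a contradiction at all (it does not force $Z$ to be a $W$--peripheral complex) — it is precisely the desired conclusion, since $T(W)\subseteq\overline{A}$. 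A clean way to finish with no loss of constants: let $u,v$ be the endpoints of $\sigma'$, let $p_u$ be the last point of $[x_A,u]$ lying in $\overline{A}$ and $p_v$ the first point of $[v,y_A]$ lying in $\overline{A}$; since $\overline{A}$ and $\overline{A^c}$ are closed, cover $X$, and meet in $T(W)$, both $p_u$ and $p_v$ lie in $T(W)$, and $[p_u,p_v]$ contains $\sigma'$. The Claim inside the proof of Lemma~\ref{lem:attractivecarrier} (two points of $T(W)$ whose geodesic contains a $Z$--subsegment of length greater than $L_0$, and $L_0<\Ftwo(0)$) then gives $Z\subseteq T(W)\subseteq\overline{A}$ directly, which is the content the citation of Lemma~\ref{lem:attractive halfspace carriers} is standing in for. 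As written, your proof stops short of this, so it is incomplete at the one place where something beyond the two pushes had to be said.
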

  \begin{proof}
    Let $B\in \U$.  There are points $x_B,y_B$ of $\overline{B}$ within $J$ of $x,y$ respectively.  We claim that there is a subsegment $\sigma'$ of $[x_B,y_B]\cap Z$ of length at least $\Ftwo(0)$.  Lemma~\ref{lem:attractive halfspace carriers} then implies that $Z\subseteq \overline{B}$.  Since $B\in \U$ was arbitrary, we obtain $Z\subseteq \bigcap_{A\in \U}\overline{A} = N_0(\U)$.

    Finding the subsegment $\sigma'$ is a matter of applying Lemma~\ref{lem:pushacrosstriangle} (Pushing across triangles) twice; first the Lemma is applied to a triangle whose corners are $x,y,y_B$, and then to a triangle whose corners are $x,x_B,y_B$.  The lower bound on $d(\sigma,\{x,y\})$ implies the hypothesis about Gromov products in Lemma~\ref{lem:pushacrosstriangle}.  Each time we push across a triangle, we can lose only up to $2(\eta+\delta) + f(\delta)$ in length, so we are left with at least $\Ftwo(0)$ in the end.
  \end{proof}

  \begin{proposition}\label{prop:unboundedNU}
    If $N_K(\U)$ is unbounded then there is a peripheral complex contained in
    \begin{equation*}
      N_0(\U)\defeq \bigcap_{A\in\U}\overline{A}.
    \end{equation*}
  \end{proposition}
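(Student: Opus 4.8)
The plan is to combine Lemmas~\ref{lem:NJU} and~\ref{lem:pcinN}: the unboundedness of $N_K(\U)$ will supply a pair of points of $N_K(\U)$ far enough apart to feed into Lemma~\ref{lem:NJU}, which produces a peripheral complex meeting the connecting geodesic in a long segment near the midpoint, and then Lemma~\ref{lem:pcinN}, applied to a suitably trimmed sub-segment, will promote that peripheral complex into $N_0(\U)$.

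Write $L_0 = \Ftwo(0)+4(\eta+\delta)+2f(\delta)$ for the length threshold appearing in Lemma~\ref{lem:pcinN}. Since $N_K(\U)$ is unbounded, I would first choose $x,y\in N_K(\U)$ with $d(x,y)$ larger than each of the finitely many fixed constants
\begin{equation*}
  \left(\tfrac{2\pi}{\theta_n}+2\right)K+\omega,\qquad 2(L_0+\tau+K),\qquad 2(\tau+L_0+1+K+\delta+\eta).
\end{equation*}
Because $d(x,y)>\left(\tfrac{2\pi}{\theta_n}+2\right)K+\omega$, Lemma~\ref{lem:NJU} (with $J=K$) applies and produces a peripheral complex $Z$ meeting $[x,y]$ in a sub-segment $I$ with $d(I,m)\le\tau$, where $m$ is the midpoint of $[x,y]$, and with $\mathrm{length}(I)\ge\min\{\Rfin-\tau,\ \tfrac{1}{2}d(x,y)-\tau-K\}$. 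Now $\Rfin-\tau>L_0$ by Assumption~\ref{ass:R}\eqref{req:for unboundedNU}, while $\tfrac{1}{2}d(x,y)-\tau-K>L_0$ by the choice of $d(x,y)$, so $\mathrm{length}(I)>L_0$.

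Next I would trim $I$. Since $\mathrm{length}(I)>L_0$, pick a sub-segment $\sigma\subseteq I$ with $L_0<\mathrm{length}(\sigma)\le L_0+1$ which still contains a point of $I$ lying within $\tau$ of $m$. Then $\sigma\subseteq Z\cap[x,y]$, $\mathrm{length}(\sigma)>L_0$, and every point of $\sigma$ lies within $\tau+\mathrm{length}(\sigma)\le\tau+L_0+1$ of $m$; since $d(m,x)=d(m,y)=\tfrac{1}{2}d(x,y)$, the triangle inequality gives
\begin{equation*}
  d(\sigma,\{x,y\})\ \ge\ \tfrac{1}{2}d(x,y)-\tau-L_0-1\ >\ K+\delta+\eta
\end{equation*}
by the choice of $d(x,y)$. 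Thus $\sigma$ satisfies the hypotheses of Lemma~\ref{lem:pcinN} with $J=K$, and that lemma yields $Z\subseteq N_0(\U)$, which is exactly the desired conclusion.

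The only delicate point I expect is the passage from $I$ to $\sigma$: Lemma~\ref{lem:NJU} only guarantees that $I$ comes within $\tau$ of the midpoint, and $I$ may be long and reach close to one of $x$, $y$, so one cannot take $\sigma=I$ outright; trimming $I$ to bounded length around its midpoint-close point is what secures the second hypothesis of Lemma~\ref{lem:pcinN}. Everything else is routine bookkeeping with the constants fixed in Notation~\ref{not:mysterious} and Assumption~\ref{ass:R}.
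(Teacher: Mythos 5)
Your proof is correct and follows essentially the same route as the paper: choose $x,y\in N_K(\U)$ far apart, apply Lemma~\ref{lem:NJU} with $J=K$, and feed a suitable sub-segment into Lemma~\ref{lem:pcinN}. The explicit trimming of $I$ to a bounded-length $\sigma$ near the midpoint is exactly the step the paper handles (more tersely) by saying one can choose $I$ with $d(I,\{x,y\})>K+\delta+\eta$, so the only difference is bookkeeping of constants.
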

  \begin{proof}
    Assuming $N_K(\U)$ is unbounded, we may choose two vertices $x,y\in N_K(\U)$ so $d(x,y)$ is as large as we like.  In particular we make the following assumption:
    \begin{equation}
      \label{eq:dxylowerbound}
      d(x,y)> \max\left\{\left(\frac{2\pi}{\theta_n}+2\right)K+\omega,2\Rfin+2K+2\delta+2\eta \right\}.
    \end{equation}
    In particular, we have $\frac{1}{2}d(x,y)>\Rfin+K$, so Lemma~\ref{lem:NJU} with $J=K$ gives us a peripheral complex $Z$ and a segment $I\subseteq [x,y]$ so that
    \begin{enumerate}
    \item the length of $I$ is at least $\Rfin-\tau$, and
    \item the distance from $I$ to the midpoint of $[x,y]$ is at most $\tau$.
    \end{enumerate}
    Note also that we have the following inequalities. (The first line follows from Assumption~\ref{ass:R}.\eqref{req:for unboundedNU}; the second from~\eqref{eq:dxylowerbound}.)
    \begin{align*}
      |I| \ge \Rfin - \tau & > \Ftwo(0) + 4(\eta + \delta) + 2f(\delta)\\
      \frac{1}{2}d(x,y) & >     \Rfin + (K+\delta + \eta).
    \end{align*}
    Since $d(x,y)>2(\Rfin+K+\delta+\eta)$ we can choose such an $I$ to satisfy $d(I,\{x,y\})>K+\delta+\eta$.  Applying Lemma~\ref{lem:pcinN}, we see $Z\subseteq N_0(\U)$.
  \end{proof}

  \begin{proposition}\label{prop:parabolicsinstabU}
    Let $Z$ be a peripheral complex.  The following are equivalent:
    \begin{enumerate}
      \item\label{inN0} $Z\subseteq N_0(\U)$.
      \item\label{fi} $\Stab(\U)$ contains a finite index subgroup of $\Stab(Z)$.
      \item\label{inf} $\Stab(\U)$ contains an infinite subgroup of $\Stab(Z)$.  
    \end{enumerate}
  \end{proposition}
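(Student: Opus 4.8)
The plan is to prove the cyclic chain \eqref{inN0}$\Rightarrow$\eqref{fi}$\Rightarrow$\eqref{inf}$\Rightarrow$\eqref{inN0}. The implication \eqref{fi}$\Rightarrow$\eqref{inf} is immediate: writing $Z=gY_P$, the subgroup $\Stab(Z)=g\Stab(Y_P)g^{-1}$ contains the conjugate $gPg^{-1}$, which is infinite by Convention~\ref{conv:P_infinite}; hence any finite-index subgroup of $\Stab(Z)$ is an infinite subgroup of $\Stab(\U)$.

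For \eqref{inN0}$\Rightarrow$\eqref{fi}, suppose $Z\subseteq N_0(\U)$. The plan is to show that $\U$, as a vertex of $\hatx=C(\mc{W})$, lies in the convex subcomplex $C(\mc{W}_{Z^{(0)}})$ of Lemma~\ref{lem:hemi}, which is \emph{finite} by Proposition~\ref{prop:finiteperipheralsubcomplex}. Granting this, $\Stab(Z)$ acts on $Z^{(0)}$, hence preserves the hemiwallspace $\mc{W}_{Z^{(0)}}$ and the finite complex $C(\mc{W}_{Z^{(0)}})$, so the $\Stab(Z)$--orbit of $\U$ is finite; therefore $\Stab(Z)\cap\Stab(\U)$ has finite index in $\Stab(Z)$ and is contained in $\Stab(\U)$, which is \eqref{fi}. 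To prove $\U\in C(\mc{W}_{Z^{(0)}})$, recall that, under the embedding of Lemma~\ref{lem:hemi}, the vertices of $C(\mc{W}_{Z^{(0)}})$ are exactly those DCC ultrafilters $\mc{V}$ on $\mc{W}$ with $\mc{V}(W)\supseteq Z^{(0)}$ for every wall $W$ that does not separate two vertices of $Z$. If $\U$ violated this there would be a wall $W=\{A,A^c\}$ with $Z^{(0)}\subseteq A$ but $\U(W)=A^c$; then $Z\subseteq N_0(\U)\subseteq\overline{A^c}$, and since $\overline{A^c}$ is the full subcomplex on $A^c\cup(T(W)\cap X^{(0)})$ while $Z^{(0)}\cap A^c=\emptyset$, this would force $Z^{(0)}\subseteq T(W)^{(0)}$, i.e.\ $Z\subseteq T(W)$.

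It remains to rule out $Z\subseteq T(W)$ for a wall $W$ not separating two vertices of $Z$, and this is the step I expect to need the most care: I must show that a peripheral complex contained in a thick carrier $T(W)$ is necessarily a $W$--peripheral complex (so $W$ separates two vertices of $Z$ by Lemma~\ref{lem:W separates}, a contradiction). Since $Z$ is unbounded, pick $a,b\in Z$ with $d(a,b)>L_0$, where $L_0$ is the constant produced in the proof of the Claim inside Lemma~\ref{lem:attractivecarrier}; then $[a,b]\subseteq Z$ is a subsegment of length $>L_0$ lying in the peripheral complex $Z$, and the case analysis of that proof exhibits, in every case that can occur, either a hyperplane of the scattering meeting $Z$ or one of the peripheral-complex pieces of $S_{H,B}$ coinciding with $Z$ --- in either case $Z$ is a $W$--peripheral complex. (If one prefers a self-contained argument: decompose an $S_{H,B}$--geodesic having the same endpoints as a long geodesic of $Z$ into carrier pieces and peripheral-complex pieces, using the quasi-isometric embedding of Proposition~\ref{prop:qiecarrier} and the relative fellow-traveling of Proposition~\ref{prop:fellow_travel}; a long peripheral piece must equal $Z$ by the bounded coarse intersections of the $(\delta,f)$--relatively hyperbolic pair $(X,\mc{B})$, a long carrier piece forces a scattering hyperplane to meet $Z$ via $\eta$--super-attractiveness, and the case that all pieces are short is excluded because the scattering is $\Rfin$--separated with $\Rfin$ large.)

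Finally, for \eqref{inf}$\Rightarrow$\eqref{inN0}, let $L\le\Stab(\U)\cap\Stab(Z)$ be infinite and fix $K\ge 0$ with $N_K(\U)\ne\emptyset$ (Proposition~\ref{prop:NofU}); then $L$ stabilizes $N_K(\U)$. Choose $w\in N_K(\U)$ and set $D_0=d(w,Z)$. Since $L\le\Stab(Z)\cap\Stab(N_K(\U))$, the orbit $Lw$ lies in $N_K(\U)\cap N_{D_0}(Z)$, and since $L$ is infinite and $G\acts X$ is proper, $Lw$ is unbounded. Pick $x,y\in Lw$ with $d(x,y)$ as large as we wish. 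By $\eta$--super-attractiveness of $Z$, every point of $[x,y]$ not in $Z$ lies within $\eta+D_0$ of $\{x,y\}$; hence deleting the initial and terminal sub-segments of length $\max\{\eta+D_0,\ K+\delta+\eta+1\}$ from $[x,y]$ leaves a segment $\sigma\subseteq Z\cap[x,y]$ with $d(\sigma,\{x,y\})>K+\delta+\eta$ and, provided $d(x,y)$ was chosen large enough, with $\mathrm{length}(\sigma)>\Ftwo(0)+4(\eta+\delta)+2f(\delta)$. Lemma~\ref{lem:pcinN}, applied with $J=K$, then gives $Z\subseteq N_0(\U)$, closing the cycle.
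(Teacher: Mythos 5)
Your proposal is correct, and it follows the same overall skeleton as the paper (the cycle $(1)\Rightarrow(2)\Rightarrow(3)\Rightarrow(1)$, resting on the finiteness of walls cutting $Z$, super-attractiveness, and the attractiveness of halfspace carriers), but it differs in the execution of two implications. For $(1)\Rightarrow(2)$ the paper argues directly: it asserts that for every wall either $Z^{(0)}\subseteq\U(W)$ or $Z$ is a $W$--peripheral complex, notes there are finitely many exceptional walls, and takes the kernel of the $\Stab(Z)$--action on that finite set of halfspaces; you instead place $\U$ in the finite convex subcomplex $C(\mc{W}_{Z^{(0)}})$ of Lemma~\ref{lem:hemi}/Proposition~\ref{prop:finiteperipheralsubcomplex} and use finiteness of the $\Stab(Z)$--orbit. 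These rest on the same finiteness, but your version supplies a justification the paper elides: ruling out a peripheral complex $Z$ that lies in $T(W)$ with all vertices on one side of $W$ without being a $W$--peripheral complex. Your strengthening of the Claim in Lemma~\ref{lem:attractivecarrier} (every surviving case produces either a scattering hyperplane meeting $Z$ or a $W$--peripheral piece equal to $Z$) is a correct reading of that proof, though note you are invoking the internals of that proof rather than its stated conclusion, so in a final write-up this should be recorded as an explicit addendum to the Claim (or proved via your self-contained sketch). For $(3)\Rightarrow(1)$ the paper takes a point of $N_0(\U)$, uses $Q$--quasi-convexity of halfspace carriers and Lemma~\ref{lem:attractive halfspace carriers}; you work with $N_K(\U)$ from Proposition~\ref{prop:NofU}, an unbounded $L$--orbit, super-attractiveness, and Lemma~\ref{lem:pcinN} with $J=K$. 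Your route has the small advantage of not presupposing that $N_0(\U)$ is nonempty, at the cost of routing through the heavier Lemma~\ref{lem:pcinN}; both are sound.
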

  \begin{proof}
   We first prove \eqref{inN0}$\implies$\eqref{fi}.    The hypothesis that $Z\subseteq N_0(\U)$ implies that for every wall $W$, either $Z^{(0)}\subseteq \U(W)$ or $Z$ is a $W$--peripheral complex.  Let $\U'\subseteq\U$ consist of those halfspaces $A\in \U$ so that $Z^{(0)}\subseteq A$.  Note that every element of $\Stab(Z)$ preserves $\U'$.  If $\U(W)\notin \U'$, then $Z$ is a $W$--peripheral complex.  There are only finitely many such $W$, so $\U\setminus \U'$ is finite.  Let $S=(\U\setminus\U')\cup\{A^c\mid A\in \U\setminus\U'\}$.  The finite set $S$ is preserved by $\Stab(Z)$ and
the kernel of $\Stab(Z)\to \operatorname{Sym}(S)$ lies in $\Stab(\U)$.

  The implication \eqref{fi}$\implies$\eqref{inf} is obvious, since the peripheral subgroups of $(G,\mc{P})$ are assumed to be infinite (recall Convention~\ref{conv:P_infinite}).

  Finally we show \eqref{inf}$\implies$\eqref{inN0}.  Let $Z$ be a peripheral complex so that $\Stab(Z)\cap\Stab(\U)$ is infinite.  Let $x$ be any point of $N_0(\U)$.  Choose $g\in \Stab(Z)\cap\Stab(\U)$ so that $d(x,gx)= E \ge \Ftwo(Q)+2(d(x,Z)+\eta)$.

  Let $A = \U(W)$ where $W$ is a wall.  Both $x$ and $gx$ lie in $\overline{A}$, which is $Q$--quasi-convex, so $[x,gx]$ lies in $N_Q(\overline{A})$.  The super-attractiveness of $Z$ ensures that a subsegment of $[x,gx]$ of length at least $E-2(d(x,Z)+\eta)\ge \Ftwo(Q)$ lies in $Z$.  The endpoints are distance at most $Q$ from $\overline{A}$, so Lemma~\ref{lem:attractive halfspace carriers} implies that $Z\subseteq \overline{A}$.  Since $A\in \U$ was arbitrary, $Z\subseteq N_0(\U)$.
  \end{proof}
  
\subsection{A $\Stab(\U)$--tree}
For the rest of this section we focus on the case that $\Stab(\U)$ is infinite and not parabolic.  By Corollary~\ref{cor:boundedNU} and Proposition~\ref{prop:unboundedNU}, there must be at least one peripheral complex contained in $N_0(\U)$.  If there is only one, then its stabilizer obviously contains $\Stab(\U)$, so we can assume there is more than one peripheral complex in $N_0(\U)$.  We consider projections of these peripheral complexes to each other.  These have diameter bounded by $C$ (see Notation~\ref{not:understandable} and Corollary~\ref{cor:bounded_proj}).  For any pair $Z,Z'$ of peripheral complexes in $N_0(\U)$, the next lemma shows that the ultrafilter $\U$ is ``weakly principal'' for any point in $\pi_Z(Z')$.
\begin{lemma}\label{lem:weaklyprincipal}
  For $\kappa$ as in Notation~\ref{not:mysterious}, the following holds:  If $Z,Z'$ are peripheral complexes contained in $N_0(\U)$, then 
    \begin{equation*}
    \pi_Z(Z')\subseteq \bigcap_{A\in\U}N_{\kappa}(A).
  \end{equation*}
\end{lemma}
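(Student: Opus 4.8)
The plan is to fix a single halfspace $A\in\U$, let $W=\{A,A^{c}\}$ be the corresponding wall and $T=T(W)$ its thick carrier, and show that every $p\in\pi_{Z}(Z')$ satisfies $d(p,A)\le\kappa$; write $p=\pi_{Z}(q)$ with $q\in Z'$, and assume $p\notin A$ (otherwise there is nothing to prove). First I would locate $p$ in the thick carrier. Since $Z\subseteq N_{0}(\U)=\bigcap_{A'\in\U}\overline{A'}\subseteq\overline{A}$, and $\overline{A}$ is the full subcomplex of $X$ on $A\cup(T\cap X^{(0)})$, the vertex $v$ of $Z$ nearest $p$ lies in $A\cup(T\cap X^{(0)})$ and satisfies $d(p,v)\le\frac{\sqrt{n}}{2}$. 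If $v\in A$ we are done; so assume $v\in(T\cap X^{(0)})\setminus A$. Then $v\in Z\cap T$, and as a vertex of $T$ it lies either in the carrier of a hyperplane $gH$ ($g\in B$) of the scattering $B\cdot H$, or in a $W$--peripheral complex.

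If $v$ lies in the carrier of a scattering hyperplane $gH$, then $d(v,gH)\le\frac12$, and since $gH$ is a $W$--hyperplane, Lemma~\ref{lem:edgedetermines} shows the carrier edge through $v$ joins $v\in A^{c}$ to a vertex of $A$, so $d(v,A)\le1$ and $d(p,A)\le\frac{\sqrt{n}}{2}+1\le\kappa$. Suppose instead $v$ lies in a $W$--peripheral complex $Z_{1}$. If $Z\ne Z_{1}$ then $v\in Z\cap Z_{1}\subseteq\pi_{Z}(Z_{1})$, so $\diam(Z\cap Z_{1})\le C$ by Corollary~\ref{cor:bounded_proj}; combining this with Lemma~\ref{lem:W separates} (if $Z$ had vertices on both sides of $W$ it would itself be a $W$--peripheral complex) a short argument lets us assume $Z$ is a $W$--peripheral complex, at a cost of at most $C+\frac{\sqrt{n}}{2}$ in the final estimate. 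Running the same analysis at the endpoint $q\in Z'\subseteq\overline{A}$, we may likewise assume either that $q$ is within $C+\frac{\sqrt{n}}{2}+1$ of a $W$--hyperplane (and finish as before) or that $Z'$ is a distinct $W$--peripheral complex.

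In the remaining situation $Z$ and $Z'$ are distinct peripheral complexes both appearing in the construction of $T$, so Lemma~\ref{lem:closetohyperplane} applies with $m=Q+1$: there is $g\in B$ with
\[ N_{Q+1}(Z)\cap N_{Q+1}(Z')\subseteq N_{\Fone(Q+1)}(gH). \]
Now $p\in Z\subseteq T\subseteq\overline{A}$ and $q\in Z'\subseteq T\subseteq\overline{A}$, and $\overline{A}$ is $Q$--quasi-convex (Proposition~\ref{prop:uniformqccarrier}), so $[p,q]\subseteq N_{Q}(\overline{A})$. Passing to the abstract complex $S_{H,B}$ via the quasi-isometry of Proposition~\ref{prop:qiecarrier} (as in Definition~\ref{def:thickcarrier}), one sees that $[p,q]$ must leave the $Q$--neighborhood of $Z$ and enter that of $Z'$, so some point of $[p,q]$ lies in $N_{Q+1}(Z)\cap N_{Q+1}(Z')$; $Q$--quasi-convexity places that point within $2Q$ of $p$. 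Hence $d(p,gH)\le 2Q+\Fone(Q+1)$, and since $gH$ is a $W$--hyperplane, $d(p,A)\le 2Q+\Fone(Q+1)+1$. Collecting all the cases, $d(p,A)\le\Fone(Q+1)+2Q+C+\frac{\sqrt{n}}{2}+2=\kappa$.

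The hard part will be the step producing a point of $N_{Q+1}(Z)\cap N_{Q+1}(Z')$ near $p$: this requires carefully tracking the $X$--geodesic $[p,q]$ against the decomposition of $T$ into hyperplane carriers and $W$--peripheral complexes, translating back and forth to $S_{H,B}$ with the quasi-isometry of Proposition~\ref{prop:qiecarrier}, and verifying that the accumulated errors — the two uses of $Q$--quasi-convexity, the $C$ from collapsing a coincident peripheral complex, and the $\frac{\sqrt{n}}{2}$ and $+2$ from passing between points and vertices and across hyperplane carriers — all fit inside the calibrated constant $\kappa$. The bookkeeping across the sub-cases (each of $Z$ and $Z'$ straddling $W$, lying inside $A$, or being a $W$--peripheral complex) is where the care is needed; each individual estimate is elementary.
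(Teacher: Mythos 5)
Your overall scaffolding --- fix $A=\U(W)$, dispose of the case that $Z$ is not a $W$--peripheral complex, then find a $W$--hyperplane within roughly $\Fone(Q+1)+2Q$ of $p$ using Lemma~\ref{lem:closetohyperplane} and $Q$--quasi-convexity --- is the same as the paper's, but the key step fails as stated. You apply Lemma~\ref{lem:closetohyperplane} to the specific pair $(Z,Z')$ and then claim that $[p,q]$ must contain a point of $N_{Q+1}(Z)\cap N_{Q+1}(Z')$. This is not true: $Z$ and $Z'$ may be far apart inside the thick carrier $T(W)$, joined only through a long chain of scattering hyperplanes and other $W$--peripheral complexes, in which case $N_{Q+1}(Z)\cap N_{Q+1}(Z')$ can be empty; ``the geodesic leaves the $Q$--neighborhood of $Z$ and later enters that of $Z'$'' does not produce a point lying in both. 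The correct move (and the paper's) works only near $p=\pi_Z(q)$: for $x\in[q,p]$ one has $\pi_Z(x)=p$, so the point $x$ at distance $Q+1$ from $p$ satisfies $d(x,Z)=Q+1>Q$, while (when quasi-convexity keeps $[q,p]$ in $N_Q(T(W))$ near $p$) $x$ lies within $Q$ of $T(W)$; hence $x$ is within $Q$ of a scattering hyperplane (done) or of some $W$--peripheral complex $Z_1$ --- not necessarily $Z'$ --- and Lemma~\ref{lem:closetohyperplane} applied to the pair $(Z,Z_1)$ with $m=Q+1$ gives the hyperplane near $p$. (Incidentally, the bound ``within $2Q$ of $p$'' needs no quasi-convexity: any point of $[q,p]$ within $Q+1$ of $Z$ is automatically within $Q+1$ of $p$ by the projection property just quoted.)

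There are two further gaps. First, when your analysis at $q$ shows $Z'$ is not a $W$--peripheral complex you say you ``finish as before,'' but a bound on the distance from $q$ to $A$ or to a $W$--hyperplane says nothing about $d(p,A)$, since $p=\pi_Z(q)$ may be far from $q$; moreover in this mixed case $q\notin T(W)$, so you cannot keep $[p,q]$ uniformly close to $T(W)$ by quasi-convexity, and your main-case argument does not apply. The paper handles this by a dichotomy on whether $[q,p]$ meets $T(W)$ other than at $p$: if it does, the argument of the previous paragraph runs; if it does not, the geodesic approaches $p$ through the full subcomplex on $A$, which bounds $d(p,A)$ directly. Second, the reduction ``we may assume $Z$ is a $W$--peripheral complex at a cost of at most $C+\frac{\sqrt{n}}{2}$'' is not an argument: knowing $v\in Z\cap Z_1$ with $\diam(Z\cap Z_1)\le C$ gives no bound on $d(v,A)$, and replacing $Z$ by $Z_1$ destroys the identity $p=\pi_Z(q)$ that your later steps use. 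The relevant dichotomy is Lemma~\ref{lem:W separates}: either $Z$ is a $W$--peripheral complex, or all vertices of $Z$ lie in a single halfspace, and in the latter case one argues (as the paper does, using $Z\subseteq\overline{A}$) that $Z^{(0)}\subseteq A$, which disposes of the case outright rather than at a bounded cost.
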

\begin{proof}
Recall $\kappa = \defkappa$, where $\Fone$ is the function from Lemma~\ref{lem:closetohyperplane}, and $C$ is the constant from Corollary~\ref{cor:bounded_proj}.
Fix $A \in \U$ and suppose that $A = \U(W)$.   Or goal is to show that $\pi_Z(Z') \subseteq N_\kappa(A)$.

First suppose that $Z$ is not a $W$--peripheral complex.  In this case, since $Z \subseteq N_0(\U) \subseteq N_0(\overline{A})$ we see that $Z^{(0)} \subseteq A$, so $Z \subseteq N_\kappa(A)$.  Since $\pi_Z(Z') \subseteq Z$ we are done in this case.

Therefore, we suppose henceforth that $Z$ is a $W$--peripheral complex, so in particular $Z \subseteq T(W)$.  Let $b \in Z'$ be arbitrary, and consider the geodesic $[b,\pi_Z(b)]$.

We remark that peripheral complexes intersect, so it is possible that $b = \pi_Z(b)$.
\begin{case}
  $b = \pi_Z(b)$.
\end{case}
If $Z'$ is also a $W$--peripheral complex, Lemma~\ref{lem:closetohyperplane} implies that $\pi_Z(b)$ is within $\Fone(0)\le \Fone(Q+1)<\kappa$ of some $T(W)$--hyperplane.  It follows that in this case $\pi_Z(Z') \subseteq N_\kappa(A)$, as required.

If $Z'$ is not a $W$--peripheral complex, then we have $(Z')^{(0)}\subseteq A$, and hence $b \in N_1(A)$.

We suppose for the remainder of the proof that $b \ne \pi_Z(b)$, and so $[b,\pi_Z(b)]$ is a non-degenerate geodesic which only intersects $Z$ at $\pi_Z(b)$.
\begin{case}
  $b\neq \pi_Z(b)$ and  $[b,\pi_Z(b)] \cap T(W) \ne \{ \pi_Z(b) \}$.
\end{case}

In this case, let $x \in [b,\pi_Z(b)] \cap (T(W) \smallsetminus Z)$.  Since $Z \subseteq T(W)$ and $T(W)$ is $Q$--quasi-convex (Notation~\ref{not:understandable}), the geodesic $[x,\pi_Z(b)]$ lies in the $Q$--neighborhood of $T(W)$.  There are two subcases:
\begin{subcase}
  $d(x,\pi_Z(b))\ge Q+1$.
\end{subcase}

In this subcase let $y$ be the point on $[x,\pi_Z(b)]$ at distance $Q+1$ from $\pi_Z(b)$.  Since $\pi_Z(b)$ is the closest point on $Z$ to $b$, the point $y$ is not within $Q$ of $Z$, so it is within $Q$ of some $W$--hyperplane or some other $W$--peripheral complex $Z_1$.  If it is within $Q$ of a $W$--hyperplane, we are done, since $Q<\kappa$.  Otherwise it is within distance $Q+1$ of both $Z$ and $Z_1$, and so it is distance at most $\Fone(Q+1)$ from some $W$--hyperplane (Lemma~\ref{lem:closetohyperplane}).  Since $\Fone(Q+1)<\kappa$ we are finished in this case as well.

\begin{subcase}
  $d(x,\pi_Z(b))< Q+1$
\end{subcase}
In this case the point $x$ itself is either on a hyperplane or on a $W$--peripheral complex $Z_1 \neq Z$, and can argue as in the previous subcase, substituting $x$ for $y$.

\begin{case}
 $b\neq \pi_Z(b)$ and  $[b,\pi_Z(b)] \cap T(W) = \pi_Z(b)$.
\end{case}

In this case, since $(Z')^{(0)} \subseteq A$, it must be that the geodesic $[b,\pi_Z(b)]$ lies entirely in the full sub-complex on $A$, meaning that $\pi_Z(Z')$ lies in $N_\kappa(A)$, as required.
\end{proof}

We want to build a bipartite $\Stab(\U)$--tree in order to analyze the structure of $\Stab(\U)$ and prove Property~\eqref{wrg:stabs:gog} of Definition~\ref{def:wrg}.  The equivalence classes under the relation defined in Definition~\ref{def:reln} will form one color of vertex of this tree.  The other color will be the set of peripheral complexes in $N_0(\U)$.

Recall from Notation~\ref{not:mysterious} that
  \begin{equation*}
    D = \defD.
  \end{equation*}

\begin{definition}\label{def:reln}
Define a relation on pairs $(Z,Z')$ where $Z\neq Z'$ are contained in $N_0(\U)$ by saying $(Z_1,Z_1')\sim (Z_2,Z_2')$ if for every geodesic $\gamma$ joining $\pi_{Z_1}(Z_1')$ to $\pi_{Z_2}(Z_2')$, and every peripheral complex $Z$, we have $\diam(Z\cap\gamma)< D$.
\end{definition}

\begin{proposition} \label{prop:eq rel}
  The relation defined in Definition~\ref{def:reln} is an equivalence relation.
\end{proposition}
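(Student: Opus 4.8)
The plan is to verify reflexivity, symmetry and transitivity in turn; the first two are immediate, and all of the content is in transitivity.

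\emph{Reflexivity.} A geodesic $\gamma$ joining $\pi_{Z_1}(Z_1')$ to itself has both endpoints in $\pi_{Z_1}(Z_1')$, hence $\mathrm{length}(\gamma)\le \diam(\pi_{Z_1}(Z_1'))\le C$ by Corollary~\ref{cor:bounded_proj} and Notation~\ref{not:understandable}. Since $D\ge 2C+2>C$ (by the definition of $D$ in Notation~\ref{not:mysterious} and Assumption~\ref{ass:R}), for every peripheral complex $Z$ we get $\diam(Z\cap\gamma)\le \mathrm{length}(\gamma)\le C<D$, so $(Z_1,Z_1')\sim(Z_1,Z_1')$. \emph{Symmetry} is clear: reversing orientation identifies the geodesics joining $\pi_{Z_1}(Z_1')$ to $\pi_{Z_2}(Z_2')$ with those joining $\pi_{Z_2}(Z_2')$ to $\pi_{Z_1}(Z_1')$, and $\diam(Z\cap\gamma)$ is invariant under reversal, so the defining condition is symmetric.

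\emph{Transitivity.} Suppose $(Z_1,Z_1')\sim(Z_2,Z_2')$ and $(Z_2,Z_2')\sim(Z_3,Z_3')$. Fix $q_i\in\pi_{Z_i}(Z_i')$, let $\gamma$ be an arbitrary geodesic joining $q_1$ to $q_3$, and argue by contradiction: assume some peripheral complex $Z$ has $\diam(Z\cap\gamma)\ge D$. Since $Z$ and $\gamma$ are convex, $Z\cap\gamma$ is a subsegment $[a,b]$ with $d_X(a,b)\ge D$. As $D\ge 2\sqrt n\,\kappa+\sqrt n+1>2\sqrt n$, Lemma~\ref{lem:sageevwise} applied to $[a,b]$ produces a hyperplane $H$ crossing $[a,b]$ within $\sqrt n/2$ of its midpoint and with $d_X(H,\{a,b\})>\kappa$; in particular $H$ crosses $\gamma$ strictly between $a$ and $b$, so $H$ separates $q_1$ from $q_3$. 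Since $H$ crosses the subcomplex $Z$, the peripheral complex $Z$ is a $W$--peripheral complex by Lemma~\ref{lem:W separates}, where $W$ is the wall carrying $H$, and $A:=\U(W)\in\U$, so $Z\subseteq T(W)\subseteq\overline A$. By Lemma~\ref{lem:weaklyprincipal}, each $\pi_{Z_i}(Z_i')$ lies in $\bigcap_{A'\in\U}N_\kappa(A')$; in particular $q_1,q_2,q_3\in N_\kappa(A)$, and all three also lie in $N_0(\U)\subseteq\overline A$. Now form the geodesic triangle $\Delta(q_1,q_2,q_3)$ with sides $\gamma=[q_1,q_3]$, $\gamma_{12}=[q_1,q_2]$, $\gamma_{23}=[q_2,q_3]$. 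Since $(X,\mc{B})$ is $(\delta,f)$--relatively hyperbolic (Lemma~\ref{lem:(X,B) RHP}), $\Delta$ is $\delta$--thin or $\delta$--thin relative to some peripheral complex. As $H$ separates $q_1$ from $q_3$, the vertex $q_2$ lies on the $q_1$--side or the $q_3$--side of $H$; relabelling, say the $q_3$--side, so that $\gamma_{12}$ crosses $H$. The heart of the argument is then to combine relative thinness of $\Delta$ with ``pushing across triangles'' (Lemma~\ref{lem:pushacrosstriangle}) and the attractiveness of $Z$ and of the halfspace carrier $\overline A$ (Lemmas~\ref{lem:attractivecarrier} and~\ref{lem:attractive halfspace carriers}, and the argument of Lemma~\ref{lem:pcinN}), transferring the long overlap $[a,b]\subseteq Z\cap\gamma$ onto $\gamma_{12}$ or onto $\gamma_{23}$ and producing there an overlap of length $\ge D$ of that side with a peripheral complex; this contradicts $(Z_1,Z_1')\sim(Z_2,Z_2')$ or $(Z_2,Z_2')\sim(Z_3,Z_3')$. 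Hence $\diam(Z\cap\gamma)<D$ for every $Z$ and every $\gamma$, i.e.\ $(Z_1,Z_1')\sim(Z_3,Z_3')$.

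I expect the transfer step to be the main obstacle. Moving a subsegment across a relatively thin triangle loses a bounded amount of length at each end, and one must also treat separately the degenerate configurations in which $Z$ meets one of the $Z_i$ or $Z_i'$, the overlap $[a,b]$ abuts an endpoint of $\gamma$, or $q_2$ is positioned so that $\gamma_{12}$ crosses $W$ more than once. This is precisely why the constants of Notation~\ref{not:mysterious} and the threshold $\Rfin$ of Assumption~\ref{ass:R} were defined with built-in slack: the bound $D\ge 2\sqrt n\,\kappa+\sqrt n+1$ forces $H$ to sit deep enough inside $[a,b]$ that it stays $\kappa$--far from $a$ and $b$ (so Lemma~\ref{lem:weaklyprincipal} can be invoked for all three projection sets), while the bound $D\ge \Ftwo(0)+6\eta+2f(\delta)+6\delta+1$ leaves enough room to absorb the length lost when pushing across the triangle and when applying $\eta$--super-attractiveness.
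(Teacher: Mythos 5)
Your reflexivity and symmetry arguments are fine and match the paper, and your opening moves for transitivity (find a hyperplane $H$ deep inside the long overlap $[a,b]\subseteq Z\cap\gamma$ via Lemma~\ref{lem:sageevwise}, note $d(H,\{a,b\})>\kappa$, and invoke Lemma~\ref{lem:weaklyprincipal} to place the projection sets in $N_\kappa(\U(W))$) are exactly how the paper's proof begins. But the step you yourself flag as ``the main obstacle'' is a genuine gap, and the slack you appeal to is the wrong slack. If you only know $\diam(Z\cap\gamma)\ge D$, then after discarding the part of $[a,b]$ that crosses the internal point of the side $[q_1,q_3]$ you retain at best a segment of length about $D/2$, and Lemma~\ref{lem:pushacrosstriangle} then yields on $\gamma_{12}$ or $\gamma_{23}$ an overlap of length only about $D/2-(2\delta+2\eta+f(\delta))$, which is strictly less than $D$ no matter how $D$ is chosen. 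So the transfer as you describe it cannot contradict $(Z_1,Z_1')\sim(Z_2,Z_2')$ or $(Z_2,Z_2')\sim(Z_3,Z_3')$; the terms $\Ftwo(0)+6\eta+2f(\delta)+6\delta$ and $2\sqrt n\,\kappa+\sqrt n$ in the definition of $D$ do not fix this.

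What is missing is the amplification step that constitutes the Claim in the paper's proof: before pushing anything onto the other two sides, one upgrades the overlap from length $\ge D$ to length $>\tfrac34\Rfin$. Concretely, since $H$ separates $q_1$ from $q_3$ but both endpoints lie in $N_\kappa(\U(W))$ by Lemma~\ref{lem:weaklyprincipal}, some second $W$--hyperplane $H'\ne H$ comes within $\kappa$ of $\gamma$; the geodesic from $H'$ to the point $m_0\in H\cap\gamma$ lies in $N_Q(T(W))$ by Proposition~\ref{prop:uniformqccarrier}, and because the scattering is $\Rfin$--separated, Lemma~\ref{lem:closetohyperplane} plus $\eta$--super-attractiveness force a subsegment of length roughly $\Rfin$ inside a single $W$--peripheral complex, which is then pushed onto $\gamma$ by Lemma~\ref{lem:pushacrosstriangle}. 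Only after this boost does the halving-plus-bounded-loss arithmetic work: Assumption~\ref{ass:R}.\eqref{req:for prop:eq rel} and \eqref{req:for prop:eq rel.2} guarantee $\Rfin-2\Fone(Q)-2Q-4\eta-2\delta-f(\delta)>\tfrac34\Rfin$ and $\tfrac38\Rfin-(2\eta+2\delta+f(\delta))>D$, i.e.\ the relevant slack is in $\Rfin$ relative to $D$, not in $D$ itself. Without this amplification your contradiction does not materialize, so the proposal as written does not prove transitivity.
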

\begin{proof}
Symmetry is obvious, and reflexivity follows from Corollary~\ref{cor:bounded_proj} because $D > C$ (the upper bound on the diameter of $\pi_Z(Z')$).

  To show transitivity, we argue by contradiction.  Suppose $(Z_1,Z_1')\sim(Z_2,Z_2')\sim(Z_3,Z_3')$, but $(Z_1,Z_1')\not\sim(Z_3,Z_3')$.  For each $i$, let $S_i=\pi_{Z_i}(Z_i')$.  Since $(Z_1,Z_1')\not\sim(Z_3,Z_3')$, there is a geodesic $\gamma$ joining $S_1$ to $S_3$ and a peripheral complex $Z_0$ so that $\diam(\gamma\cap Z_0)\ge D$.  Denote the endpoints of $\gamma$ by $x_1,x_3$, so $x_i \in S_i$.  We first show that for either this or some other peripheral complex, the diameter must be much larger.
  \begin{claim*}
    There exists a peripheral complex $Z$ so that $\diam(\gamma\cap Z)>\frac{3}{4}\Rfin$.    
  \end{claim*}
  \begin{proof}[Proof of Claim]
    Let $m$ be the midpoint of $\gamma \cap Z_0$, and let $x_1',x_3'$ be the endpoints of $\gamma\cap Z_0$.  By Lemma~\ref{lem:sageevwise} there exists a hyperplane $H$ cutting $\gamma$ at a point $m_0$ within $\sqrt{n}$ of $m$ so that \[ d(H,\{ x_1, x_3 \}) >d(H,\{x_1',x_3'\}) > \frac{d(x_1',x_3') - \sqrt{n}}{2\sqrt{n}} > \kappa.\]  (The last inequality holds because $d(x_1',x_3')\ge D > 2\sqrt{n}\kappa + \sqrt{n}$.)
Let $W$ be the wall determined by $H$.  Since $m_0 \in \gamma \cap Z_0$ it is clear that $Z_0$ is a $W$--peripheral complex.

We claim some $W$--hyperplane $H'\ne H$ comes within $\kappa$ of $\gamma$.  Indeed, if no such hyperplane crosses $\gamma$, then one endpoint of $\gamma$ is not in the full subcomplex on $\U(W)$.  From Lemma~\ref{lem:weaklyprincipal} we see that the endpoint must nonetheless lie in $N_{\kappa}(\U(W))$, so it must lie within $\kappa$ of some $W$--hyperplane $H'$.

Let $y \in H'$ and $y_0 \in \gamma$ satisfy $d(y,y_0) \le \kappa$.  Consider the geodesic $\gamma' = [y,m_0]$.  Since $y$ and $m_0$ lie in distinct $W$--hyperplanes and $T(W)$ is $Q$--quasi-convex, $[y,m_0] \subseteq N_Q(T(W))$.  Moreover, distinct hyperplanes are distance at least $\Rfin$ apart, so there must be a segment of $[a,b]$ of length at least $\Rfin-2\Fone(Q)$ which lies outside the $\Fone(Q)$--neighborhood of all the $W$--hyperplanes.  
By Lemma~\ref{lem:closetohyperplane}, this subsegment must lie in the $Q$--neighborhood of a single $W$--peripheral complex $Z$.  By $\eta$--super-attractiveness of peripheral complexes, there is a subsegment $I$ of $[y,m_0]$ of length at least $\Rfin - 2\Fone(Q) - 2(Q+\eta)$ contained in $Z$.  Now consider the geodesic triangle with vertices $y,y_0,m_0$.  Since $d(y,y_0) \le \kappa$ we have $(y,y_0)_{m_0} \ge |\gamma| -  \kappa$.  By adjusting an endpoint of $I$ by at most $\kappa$ and labeling the endpoints of $I$ appropriately by $u,v$, we can ensure the inequalities $d(m_0,u) < d(m_0,v) \le (y,y_0)_{m_0}$.  We still have a lower bound on the length $|I| \ge \Rfin - 2\Fone(Q) - 2(Q+\eta) -\kappa$.  Applying Lemma~\ref{lem:pushacrosstriangle} (Pushing across triangles) to the adjusted $I$, there is a subsegment $J$ of $[y_0,m_0]$ contained in $Z$ of length at least $|I| - ( 2 \delta + 2 \eta + f(\delta)) \ge \Rfin-2\Fone(Q)-2Q - 4\eta - 2\delta-f(\delta)$.
Because of Assumption~\ref{ass:R}.\eqref{req:for prop:eq rel}, this implies $|J| > \frac{3}{4}\Rfin$, as required.
  \end{proof}
  Now consider a point $x_2 \in S_2$ and consider the geodesic triangle with vertices $[x_1,x_2,x_3]$.  Since the side $[x_1,x_3]$ has a segment contained in $Z$ of length at least $\frac{3}{4}\Rfin$ there is a subsegment of length at least $\frac{3}{8}\Rfin$ not crossing the internal point.  Lemma~\ref{lem:pushacrosstriangle} can be applied to this segment to obtain a subsegment of either $[x_1,x_2]$ or $[x_2,x_3]$ which lies in $Z$ and has length at least $\frac{3}{8}\Rfin - \left(2\eta + 2\delta + f(\delta) \right)$.  By Assumption~\ref{ass:R}.\eqref{req:for prop:eq rel.2} the segment has length greater than $D$.  This contradicts $(Z_1,Z_1')\sim(Z_2,Z_2')\sim(Z_3,Z_3')$, completing the proof of Proposition~\ref{prop:eq rel}.
\end{proof}

\begin{lemma}\label{lem:equivclassboundedset}
If $Z,Z'\subseteq N_0(\U)$, then
  \[ \diam\left( \bigcup\left\{\pi_{Z_1}(Z_1')\, \left|\,  (Z_1,Z_1')\sim(Z,Z')\right.\right\}\right)<\max \left\{ \omega, 2(D+\tau) \right\}.\]
\end{lemma}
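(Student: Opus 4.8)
The plan is to argue by contradiction, combining the fact that projection sets sit inside peripheral complexes with Lemma~\ref{lem:NJU}, which produces long peripheral segments in geodesics between points of $N_0(\U)$.

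First I would record the (trivial but essential) observation that for any peripheral complexes $Z_1,Z_1'$ with $Z_1\subseteq N_0(\U)$ one has $\pi_{Z_1}(Z_1')\subseteq Z_1\subseteq N_0(\U)=\bigcap_{A\in\U}\overline{A}$. Writing $U$ for the union appearing in the statement, this gives $U\subseteq N_0(\U)$; in particular every point of $U$ lies in $N_J(\U)$ with $J=0$. I would also invoke Proposition~\ref{prop:eq rel}, so that any two pairs contributing to $U$ are $\sim$--equivalent to one another (in particular the defining condition of Definition~\ref{def:reln} may be applied between them).

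Now suppose toward a contradiction that $\diam(U)\ge\max\{\omega,2(D+\tau)\}$. Then I can pick $a\in\pi_{Z_1}(Z_1')$ and $b\in\pi_{Z_2}(Z_2')$ with $(Z_1,Z_1')\sim(Z,Z')$, $(Z_2,Z_2')\sim(Z,Z')$, and $d(a,b)>\omega$ together with $d(a,b)\ge 2(D+\tau)$ (such $a,b$ exist by our assumption on $\diam(U)$). Since $\diam(\pi_{Z_1}(Z_1'))\le C<D\le d(a,b)$ (using Corollary~\ref{cor:bounded_proj} and $D>2C+1$), the two pairs are distinct and $[a,b]$ is a geodesic joining $\pi_{Z_1}(Z_1')$ to $\pi_{Z_2}(Z_2')$; by transitivity $(Z_1,Z_1')\sim(Z_2,Z_2')$, so Definition~\ref{def:reln} yields
\[
  \diam\bigl(Z\cap[a,b]\bigr)<D\qquad\text{for every peripheral complex }Z .
\]
On the other hand $a,b\in N_0(\U)$ and $d(a,b)>\omega$, so Lemma~\ref{lem:NJU} applied with $J=0$ gives a peripheral complex $Z$ and a subsegment $I\subseteq Z\cap[a,b]$ with
\[
  |I|\ \ge\ \min\Bigl\{\Rfin-\tau,\ \tfrac12 d(a,b)-\tau\Bigr\}.
\]
By Assumption~\ref{ass:R}.\eqref{req:for equivclassboundedset} we have $\Rfin>D+\tau$, and $d(a,b)\ge 2(D+\tau)$ forces $\tfrac12 d(a,b)-\tau\ge D$; hence $|I|\ge D$, so $\diam(Z\cap[a,b])\ge D$, contradicting the displayed inequality. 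This contradiction establishes the lemma.

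This argument is really just bookkeeping on top of machinery already assembled, and it runs closely parallel to the proof of Proposition~\ref{prop:eq rel}. The one choice that matters is feeding $J=0$ (rather than a positive fattening constant) into Lemma~\ref{lem:NJU}: this is exactly what lets the bound be controlled by $\omega$ rather than by a larger constant involving $\kappa$, and it is legitimate precisely because projections of peripheral complexes lying in $N_0(\U)$ land in $N_0(\U)$ itself, not merely in a neighborhood of it. I do not expect a serious obstacle here beyond getting the constants to line up.
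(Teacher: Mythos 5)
Your proposal is correct and follows essentially the same route as the paper: both apply Lemma~\ref{lem:NJU} with $J=0$ to two far-apart points of the projection sets (using that these lie in $N_0(\U)$), then use $\Rfin>D+\tau$ from Assumption~\ref{ass:R} and the bound $\tfrac12 d(a,b)-\tau\ge D$ to produce a peripheral segment of length at least $D$, contradicting the definition of the relation. The only cosmetic difference is that the paper states the argument as a direct contrapositive while you phrase it as a contradiction with an explicit appeal to transitivity (Proposition~\ref{prop:eq rel}), which is a fine and if anything slightly more explicit way to pass from two arbitrary pairs equivalent to $(Z,Z')$ to the non-equivalence conclusion.
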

\begin{proof}
  Recall $\omega$ and $\tau$ are the constants defined in Notation~\ref{not:mysterious} and used in Lemma~\ref{lem:NJU}.

Suppose that $Z,Z',Z_1$ and $Z_1'$ are peripheral complexes in $N_0(\U)$, and that $x \in \pi_Z(Z')$ and $y \in \pi_{Z_1}(Z_1')$ satisfy $d(x,y) \ge \max \left\{ \omega, 2(D+\tau) \right\}$.  Our goal is to show $(Z,Z') \not\sim (Z_1,Z_1')$.  

By Lemma~\ref{lem:NJU} (with $J=0$), since $d(x,y) \ge \omega$ and $x,y \in N_0(\U)$, there is a peripheral complex $Z_0$ which intersects $[x,y]$ in a segment $I$ of length at least $\min \{ \Rfin-\tau, \frac{1}{2}d(x,y)-\tau \}$.  Since $\Rfin > D + \tau$ because of Assumption~\ref{ass:R}.\eqref{req:for equivclassboundedset},
we see that $|I| \ge D$.  It follows from the definition of the relation that $(Z,Z') \not\sim (Z_1,Z_1')$, as required.
\end{proof}

\begin{definition}[The graph $\Lambda$]
  Let $V_0$ be the set of peripheral complexes contained in $N_0(\U)$, and let $V_1$ be the set of equivalence classes $[Z,Z']$ where both $Z,Z'$ are contained in $N_0(\U)$.  To form the graph $\Lambda$, connect a vertex $Z$ of $V_0$ to every vertex of the form $[Z,Z']$ by an edge.
\end{definition}
\begin{lemma}
  The graph $\Lambda$ is connected.
\end{lemma}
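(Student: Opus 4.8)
The plan is to observe that connectivity of $\Lambda$ is purely a combinatorial consequence of the definition of its edge set, and requires none of the metric estimates developed above. First I would record the standing reduction made just before the definition of $\Lambda$: since we are in the case that $\Stab(\U)$ is infinite and non-parabolic, Corollary~\ref{cor:boundedNU} and Proposition~\ref{prop:unboundedNU} force at least one peripheral complex into $N_0(\U)$, and if there were exactly one its stabilizer would contain $\Stab(\U)$, so we may assume $N_0(\U)$ contains at least two distinct peripheral complexes. Hence $V_0$ has at least two elements and $V_1\neq\emptyset$.

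The key point is that every vertex of $V_1$ is adjacent to (at least) two vertices of $V_0$. Indeed, a vertex of $V_1$ has the form $v=[Z,Z']$ with $Z\neq Z'$ both in $N_0(\U)$; by construction $Z$ is joined by an edge to every vertex of the form $[Z,Z'']$, in particular to $v$, and since the relation of Definition~\ref{def:reln} is symmetric we have $[Z',Z]=[Z,Z']=v$, so $Z'$ is likewise joined to $v$. Thus $Z - v - Z'$ is a path of length two in $\Lambda$ realizing the edges incident to $v$, and in particular every $V_1$--vertex lies in the same component as some $V_0$--vertex.

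It therefore remains only to see that any two vertices of $V_0$ lie in the same component. For distinct $Z_1,Z_2\in V_0$ the pair $(Z_1,Z_2)$ has a well-defined equivalence class $[Z_1,Z_2]\in V_1$ by Proposition~\ref{prop:eq rel}, and by the previous paragraph $Z_1 - [Z_1,Z_2] - Z_2$ is a path in $\Lambda$. Combining these two observations gives connectivity: given arbitrary vertices $\Gamma,\Gamma'$ of $\Lambda$, if either lies in $V_1$ replace it by one of its $V_0$--neighbours, and then join the two resulting $V_0$--vertices by a path of length at most two as above.

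I do not expect any obstacle here; the only thing to take care of is the degenerate possibility $|V_0|=1$, which is exactly the case excluded by the reduction recalled in the first step. All of the more delicate work — the estimates involving $D$, $\omega$, $\tau$, and the coarse-intersection lemmas — is needed only later, to upgrade "connected" to "tree" (cf. Lemma~\ref{lem:Lambda_tree}) and to control the action of $\Stab(\U)$ on $\Lambda$, not for this statement.
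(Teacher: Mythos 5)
There is a genuine gap, and it is fatal to the whole approach. Your key step asserts that ``since the relation of Definition~\ref{def:reln} is symmetric we have $[Z',Z]=[Z,Z']$.'' Symmetry of $\sim$ only says that $(Z_1,Z_1')\sim(Z_2,Z_2')$ implies $(Z_2,Z_2')\sim(Z_1,Z_1')$; it says nothing about the \emph{ordered} pair $(Z,Z')$ being equivalent to the swapped pair $(Z',Z)$. These two pairs correspond to the projection sets $\pi_Z(Z')\subseteq Z$ and $\pi_{Z'}(Z)\subseteq Z'$, which sit on opposite ends of a geodesic from $Z$ to $Z'$. By Definition~\ref{def:reln}, $(Z,Z')\sim(Z',Z)$ holds exactly when no geodesic joining these two projection sets spends distance $\ge D$ in any peripheral complex --- i.e.\ exactly when there is no long ``intervening'' peripheral complex between $Z$ and $Z'$. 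When such an intervening complex exists, $[Z,Z']$ and $[Z',Z]$ are distinct vertices of $V_1$, your path $Z-[Z,Z']-Z'$ does not exist (there is no guarantee $Z'$ is adjacent to $[Z,Z']$ at all), and indeed if your claim were true then $\Lambda$ would always have diameter at most $2$, which would collapse the tree of Lemma~\ref{lem:Lambda_tree} and make the graph-of-groups description in Theorem~\ref{thm:stabilizersaregraphs} vacuous. So connectivity is not a formal consequence of the definitions; it is a genuinely metric statement.

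The paper's proof addresses precisely the configuration you dismissed: it argues by contradiction, choosing $Z_1,Z_2$ in different components with $d(Z_1,Z_2)$ minimal, notes that then $(Z_1,Z_2)\not\sim(Z_2,Z_1)$, and extracts a peripheral complex $Z$ meeting a geodesic from $\pi_{Z_1}(Z_2)$ to $\pi_{Z_2}(Z_1)$ in a segment of length $\ge D$. It then uses Lemma~\ref{lem:pcinN} (hence the choice of $D$ relative to $\Ftwo(0),\eta,\delta,f(\delta)$) to show $Z\subseteq N_0(\U)$, and Corollary~\ref{cor:bounded_proj} together with $D>2C$ to show $d(Z,Z_1)$ and $d(Z,Z_2)$ are both strictly smaller than $d(Z_1,Z_2)$, contradicting minimality. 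So the ``delicate work'' you set aside is exactly what is needed here; to repair your argument you would have to prove that any two complexes in $V_0$ are joined by a chain of complexes in $N_0(\U)$ in which consecutive ones have no long intervening peripheral segment, which is the content of the paper's minimality argument.
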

\begin{proof}
  We argue by contradiction.  If $\Lambda$ is disconnected, then there are vertices corresponding to peripheral complexes $Z_1,Z_2$ which are in separate components.  Choose such a pair so that $d(Z_1,Z_2)$ is minimal among such pairs.

  Since $Z_1$ and $Z_2$ are in different components we must have $(Z_1,Z_2)\not\sim(Z_2,Z_1)$.  Let $\gamma$ be a geodesic joining $\pi_{Z_1}(Z_2)$ to $\pi_{Z_2}(Z_1)$ and containing a subsegment $\sigma_0$ of length at least $D$ in some peripheral complex $Z$.
  We claim that $Z$ cannot be equal to either $Z_1$ or $Z_2$.  Indeed, let $\rho$ be a shortest geodesic from $Z_1$ to $Z_2$, so the endpoints of $\rho$ lie in $\pi_{Z_1}(Z_2)$ and $\pi_{Z_2}(Z_1)$, respectively, and note that $\rho$ intersects $Z_1$ only at its initial point, and $Z_2$ only at its terminal point.  Since $\diam\left(\pi_{Z_1}(Z_2)\right), \diam\left( \pi_{Z_2}(Z_1) \right) \le C$ (see Notation \ref{not:understandable}), it follows that 
  $$ | \rho | \le | \gamma | \le | \rho | + 2C ,		$$
  and that $\gamma$ spends no more than $C$ in either $Z_1$ or $Z_2$.  
  Since $D>C$ (see Notation \ref{not:understandable}), $Z$ cannot be equal to either $Z_1$ or $Z_2$.
Since $D>\Ftwo(0)+4(\eta+\delta)+2f(\delta)+2(\eta+\delta)$, there is a subsegment $\sigma$ of $\sigma_0$ of length at least $\Ftwo(0)+4(\eta+\delta)+2f(\delta)$ at distance at least $\eta+\delta$ from $\{x,y\}$.  We can therefore apply Lemma~\ref{lem:pcinN} with $K=0$ to conclude that $Z\subseteq N_0(\U)$.

We claim that $d(Z,Z_1)$ is smaller than $d(Z_1,Z_2)$.  Indeed, if $\xi$ is a geodesic between $Z_1$ and $Z_2$ realizing $d(Z_1,Z_2)$ then $\xi$ starts in $\pi_{Z_1}(Z_2)$ and ends in $\pi_{Z_2}(Z_1)$.  It follows from Corollary~\ref{cor:bounded_proj} that $|\gamma| \le |\xi| + 2C$.  On the other hand, the distance from $Z_1$ to $Z$ is at most $|\gamma| - D \le |\xi| + 2C-D < |\xi| = d(Z,Z_1)$ (recall from Notation~\ref{not:mysterious} that $D \le 2C + 1$).  Therefore, by the minimality of $d(Z_1,Z_2)$, $Z$ and $Z_1$ lie in the same component of $\Lambda$.  Similarly, $d(Z,Z_2) < d(Z_1,Z_2)$, and so $Z$ and $Z_2$ lie in the same component of $\Lambda$, a contradiction.
\end{proof}
\begin{lemma}\label{lem:Lambda_tree}
  The graph $\Lambda$ is a bipartite tree.
\end{lemma}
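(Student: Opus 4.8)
The plan is to observe that $\Lambda$ is bipartite essentially for free and that connectedness is exactly the preceding lemma, so that the whole content is that $\Lambda$ is acyclic. Bipartiteness is immediate from the definition of $\Lambda$: every edge runs from a vertex of $V_0$ (a peripheral complex contained in $N_0(\U)$) to a vertex of $V_1$ (an equivalence class of pairs), and $V_0\cap V_1=\emptyset$, so $(V_0,V_1)$ is a bipartition. Connectedness was just proved. Thus it remains to rule out embedded cycles.

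So I would argue by contradiction: suppose $\Lambda$ contains an embedded cycle. By bipartiteness it has the form $Z_1,v_1,Z_2,v_2,\dots,Z_n,v_n,Z_1$ with $n\ge 2$, where the $Z_i\in V_0$ are pairwise distinct peripheral complexes in $N_0(\U)$, the $v_i\in V_1$ are pairwise distinct equivalence classes, and $v_i$ is adjacent to $Z_i$ and $Z_{i+1}$ (indices mod $n$). The idea is to promote this combinatorial cycle to a closed path $L$ in $X$ which is a uniform quasi-geodesic, contradicting the fact that a quasi-geodesic loop has length bounded in terms of its constants. To build $L$, fix $i$. Since $v_i$ is adjacent to $Z_i$ it contains a pair $(Z_i,W_i^+)$, and since $v_{i-1}$ is adjacent to $Z_i$ it contains a pair $(Z_i,W_i^-)$; as $v_{i-1}\ne v_i$ we get $(Z_i,W_i^-)\not\sim(Z_i,W_i^+)$, so some geodesic $\rho_i$ between a point of $\pi_{Z_i}(W_i^-)$ and a point of $\pi_{Z_i}(W_i^+)$ meets some peripheral complex in a segment of length $\ge D$. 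Because $Z_i$ is convex and contains both endpoints, $\rho_i\subseteq Z_i$, and because distinct peripheral complexes coarsely intersect in diameter at most $C<D$ (Corollary~\ref{cor:bounded_proj}), that peripheral complex must be $Z_i$ itself; hence $|\rho_i|=\diam(\rho_i\cap Z_i)\ge D$. Since the endpoints of $\rho_i$ lie in $Z_i\subseteq N_0(\U)$, the bootstrapping Claim inside the proof of Proposition~\ref{prop:eq rel} applies to $\rho_i$ and upgrades this to $\diam(\rho_i\cap Z)>\tfrac34\Rfin$ for some peripheral complex $Z$, which again must be $Z_i$, so in fact $|\rho_i|>\tfrac34\Rfin$. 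On the other hand, $v_i$ also contains a pair $(Z_{i+1},W_i')$ with $(Z_i,W_i^+)\sim(Z_i,W_i')$ wait—$(Z_i,W_i^+)\sim(Z_{i+1},W_i')$; Lemma~\ref{lem:equivclassboundedset} bounds the diameter of the union of all projection sets attached to the single class $v_i$ by $\max\{\omega,2(D+\tau)\}$, so a point of $\pi_{Z_i}(W_i^+)$ and a point of $\pi_{Z_{i+1}}(W_i')$ are joined by a geodesic $\gamma_i$ of length at most $\max\{\omega,2(D+\tau)\}$; and by the very definition of the relation every geodesic joining $\pi_{Z_i}(W_i^+)$ to $\pi_{Z_{i+1}}(W_i')$, in particular $\gamma_i$, meets every peripheral complex in a segment of length $<D$. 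Inserting hops of length at most $C$ to reconcile the (coarsely identical) endpoints lying in each $Z_i$, the concatenation $L=\rho_1\,\gamma_1\,\rho_2\,\gamma_2\cdots\rho_n\,\gamma_n$ is a closed path in $X$.

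Now $L$ alternates between geodesic segments of length $>\tfrac34\Rfin$ lying inside the peripheral complexes $Z_i$, with consecutive such segments lying in \emph{distinct} peripheral complexes, and bridging pieces of length at most $\max\{\omega,2(D+\tau)\}+2C$ each of which meets every peripheral complex in a segment of length $<D$. By Assumption~\ref{ass:R}.\eqref{req:for Lambda tree}, $\tfrac34\Rfin$ exceeds the threshold $26\max\{f(5\delta),\eta,\tfrac13\max\{\omega,2(D+\tau)\}\}+250\delta$ of the standard local-to-global criterion guaranteeing that such a concatenation is a $(\mu_0,\varkappa_0)$--quasi-geodesic for constants $\mu_0,\varkappa_0$ depending only on $(X,\mc{B})$; as in Proposition~\ref{prop:eq rel}, this criterion is obtained by iterating Lemma~\ref{lem:pushacrosstriangle} together with Lemma~\ref{lem:sageevwise} and the relative fellow-travelling of Proposition~\ref{prop:fellow_travel}. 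But $L$ is a loop, so a $(\mu_0,\varkappa_0)$--quasi-geodesic parametrization of it forces $|L|\le \mu_0\varkappa_0$, whereas $|L|\ge|\rho_1|>\tfrac34\Rfin$; choosing $\Rfin$ large (again Assumption~\ref{ass:R}.\eqref{req:for Lambda tree}) makes $\tfrac34\Rfin>\mu_0\varkappa_0$, a contradiction. Hence $\Lambda$ has no embedded cycle, and being connected it is a tree; being bipartite, it is a bipartite tree.

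The main obstacle I expect is twofold: first, the bookkeeping needed to verify that the segments $\rho_i$ and the bridges $\gamma_i$ really assemble into an honest closed path with the stated length bounds; and second, pinning down the precise local-to-global statement for quasi-geodesics in $(X,\mc{B})$ whose threshold is the constant recorded in Assumption~\ref{ass:R}.\eqref{req:for Lambda tree}. Once the long segments are forced to have length $>\tfrac34\Rfin$ via the bootstrap from Proposition~\ref{prop:eq rel}, the remainder is the routine observation that quasi-geodesic loops are short.
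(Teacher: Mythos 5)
Your overall strategy is the same as the paper's: turn an embedded cycle in $\Lambda$ into a broken path in $X$ alternating long geodesic segments inside the peripheral complexes $Z_i$ (joining the two inequivalent projection sets attached to $Z_i$, with length forced above $\tfrac34\Rfin$ by the Claim inside the proof of Proposition~\ref{prop:eq rel}) with short connectors bounded by Lemma~\ref{lem:equivclassboundedset}, and then conclude that such a concatenation cannot close up. Your bipartiteness and connectedness remarks, the construction of the $\rho_i$ and $\gamma_i$, and the length estimates all match the paper's $b_i$ and $a_i$.

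The genuine gap is the step you yourself flag as the ``main obstacle'': the assertion that the concatenation is a uniform quasi-geodesic. This is the entire content of the lemma, and you neither prove it nor cite a precise statement. The paper obtains it from an external result, Theorem 5.6 of Einstein \cite{Einstein-Hierarchies}, after replacing $f$ by $f'(m)=\max\{f(m),\eta,\tfrac13\max\{\omega,2(D+\tau)\}\}$ so that the connector lengths satisfy $|a_i|\le 3M+63\delta$ with $M=f'(5\delta)$, and with Assumption~\ref{ass:R}.\eqref{req:for Lambda tree} chosen exactly so that $\tfrac34\Rfin\ge 26M+250\delta$; the conclusion of that theorem (the endpoints of the broken path are far apart) immediately rules out a loop, with no further largeness condition needed. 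Your proposed route to the criterion --- iterating Lemma~\ref{lem:pushacrosstriangle} together with Lemma~\ref{lem:sageevwise} and Proposition~\ref{prop:fellow_travel} --- would not work as stated: Lemma~\ref{lem:sageevwise} concerns angles between geodesics and hyperplanes in a cube complex and plays no role in a local-to-global argument, and Proposition~\ref{prop:fellow_travel} compares paths that are \emph{already known} to be quasi-geodesics, which is precisely what is to be established. Relatedly, your final contradiction needs $\tfrac34\Rfin>\mu_0\varkappa_0$ for constants of an unspecified criterion, which Assumption~\ref{ass:R} does not literally guarantee; with the correct citation this extra condition is unnecessary. So the skeleton is right, but the decisive quantitative input must be stated and sourced (or proved) precisely, as the paper does via \cite[Theorem 5.6]{Einstein-Hierarchies} and the verification of its six hypotheses.
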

\begin{proof}
That $\Lambda$ is bipartite follows immediately from the definition.  We now prove that $\Lambda$ is a tree.

In order to obtain a contradiction, suppose that $\sigma$ is an oriented cycle of length $2n$ in $\Lambda$.  The vertices of $\sigma$ are labeled (with subscripts to be understood modulo $n$:
\begin{equation*}
  Z_0,\enskip [Z_0,Z_0'']=[Z_1,Z_1'],\enskip Z_1, \enskip [Z_1,Z_1'']=[Z_2,Z_2'],\ldots,[Z_n,Z_n'']=[Z_0,Z_0'].
\end{equation*}
For each $i$ let $b_i$ be a geodesic segment in $Z_i$ from $p_i\in \pi_{Z_i}(Z_i')$ to $q_i\in \pi_{Z_i}(Z_i'')$, and let $a_i$ be the geodesic from $q_i$ to $p_{i+1}$.
Let $\gamma$ be the broken geodesic loop $b_0a_0\cdots a_{n-1}$.
By Lemma~\ref{lem:equivclassboundedset} we have $|a_i| < \max \left\{ \omega, 2(D+\tau) \right\}$.

We claim that the path $\gamma$ satisfies the hypothesis of \cite[Theorem 5.6]{Einstein-Hierarchies}, and is therefore a good quality quasi-geodesic (and hence not a loop).    In order to make our situation line up with the constants in \cite{Einstein-Hierarchies}, we apply a similar trick as in the proof of Proposition~\ref{prop:qiecarrier}.  Namely we temporarily modify the function $f$.  To that end,
define the function $f'(m) = \max\{ f(m), \eta,\frac{1}{3}\max \left\{ \omega, 2(D+\tau) \right\} \}$, and note that $(X,\mc{B})$ is $(\delta,f')$--relatively hyperbolic.  Let $M = f'(5\delta)$ as in \cite{Einstein-Hierarchies}.  Also note that since the peripheral complexes are $\eta$--super-attractive, they are $2(\eta+m)$--attractive in the sense of Einstein (see Remark~\ref{rem:attractiveness}), and thus satisfy the attractiveness hypothesis of \cite[Hypotheses 5.5]{Einstein-Hierarchies}.  We now check that the six conditions from \cite[Theorem 5.6]{Einstein-Hierarchies} hold.

Certainly the first is satisfied, and the second is satisfied because $\sigma$ is embedded.  The third is satisfied because $(X,\mc{B})$ is $(\delta,f')$--relatively hyperbolic and $M = f'(5\delta)$.  The fourth is satisfied because by the proof of Proposition~\ref{prop:eq rel} between inequivalent projections there is a peripheral path of length at least $\frac{3}{4}\Rfin$, and Assumption~\ref{ass:R}.\eqref{req:for Lambda tree} forces $\frac{3}{4}\Rfin \ge 26M + 250\delta$.  Since $|a_i| < \max \left\{ \omega, 2(D+\tau) \right\}$, and by the choice of $f'$, we have $|a_i| \le 3M + 63\delta$, which trivially imply that the fifth and sixth conditions from \cite[Theorem 5.6]{Einstein-Hierarchies} hold.

It follows from the conclusion of \cite[Theorem 5.6]{Einstein-Hierarchies} that a geodesic joining the endpoints of $\gamma$ must be long, and in particular $\gamma$ is not a loop, contrary to hypothesis.  It follows that $\Lambda$ is a tree.
\end{proof}
\subsection{Infinite non-parabolic cell stabilizers}
The goal of this subsection is the following:
\begin{theorem}\label{thm:stabilizersaregraphs}
  Suppose that $\Stab(\U)$ is infinite and non-parabolic.  Then $\Stab(\U)$ is a finite graph of groups where the vertex groups are either full parabolic or finite, and the edge groups are finite.
\end{theorem}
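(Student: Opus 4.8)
The plan is to use the bipartite tree $\Lambda$ constructed in Lemma~\ref{lem:Lambda_tree} together with the Bass--Serre machinery. First I would observe that $\Stab(\U)$ acts on $\Lambda$: every element of $\Stab(\U)$ permutes the peripheral complexes contained in $N_0(\U)$ (since it preserves $N_0(\U)$, and the property of being a peripheral complex and being inside $N_0(\U)$ is preserved), hence permutes the vertices $V_0$; and since the equivalence relation of Definition~\ref{def:reln} is defined purely in terms of the metric on $X$ (via closest-point projections and diameters of intersections with peripheral complexes), it is $\Stab(\U)$--invariant, so $\Stab(\U)$ also permutes the equivalence classes in $V_1$, and clearly respects incidence. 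Since $\Lambda$ is a tree by Lemma~\ref{lem:Lambda_tree}, $\Stab(\U)$ acts on a tree, and Bass--Serre theory presents $\Stab(\U)$ as the fundamental group of a graph of groups with vertex groups the stabilizers of vertices of $\Lambda$ and edge groups the stabilizers of edges.

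Next I would identify the vertex and edge stabilizers. A vertex of $V_0$ is a peripheral complex $Z\subseteq N_0(\U)$; its stabilizer in $\Stab(\U)$ is $\Stab(\U)\cap\Stab(Z)$, which by Proposition~\ref{prop:parabolicsinstabU} is a finite-index subgroup of $\Stab(Z)$, hence is full parabolic (a finite-index subgroup of a peripheral subgroup). A vertex of $V_1$ is an equivalence class $[Z,Z']$; its stabilizer preserves the bounded set $\bigcup\{\pi_{Z_1}(Z_1')\mid (Z_1,Z_1')\sim(Z,Z')\}$, which has finite diameter by Lemma~\ref{lem:equivclassboundedset}, so by properness of the $G$--action on $X$ this stabilizer is finite. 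An edge stabilizer is contained in a $V_1$-vertex stabilizer, hence is also finite. The graph of groups is finite because the $G$-action on $X$ is proper and cocompact, so the action of $\Stab(\U)$ on $\Lambda$ has finitely many orbits of vertices and edges — indeed, both $V_0$ and $V_1$ have bounded-diameter orbit-constituents, and the whole of $N_0(\U)$ is coarsely a union of the peripheral complexes it contains plus their projections, giving cocompactness of $\Stab(\U)\acts\Lambda$.

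The main obstacle I anticipate is the cocompactness of the $\Stab(\U)$-action on $\Lambda$ (needed for the graph of groups to be finite): one must check that $\Stab(\U)$ has only finitely many orbits of vertices and edges of $\Lambda$. For $V_0$ this follows from properness and the fact that the peripheral complexes in $N_0(\U)$ are all $G$-translates of the finitely many $Y_P$, with their stabilizers meeting $\Stab(\U)$; for $V_1$, one shows each equivalence class determines a bounded subset of $X$ (Lemma~\ref{lem:equivclassboundedset}) whose $\Stab(\U)$-orbit, by properness, has finitely many types up to translation. One should also handle the degenerate case already flagged in the text: if there is only one peripheral complex in $N_0(\U)$ then $\Lambda$ is a single edgeless vertex and $\Stab(\U)$ is (a finite-index subgroup of) that peripheral stabilizer, hence trivially a graph of groups of the required form; and the case $N_0(\U)$ empty cannot occur since $\Stab(\U)$ is infinite non-parabolic, by Corollary~\ref{cor:boundedNU} and Proposition~\ref{prop:unboundedNU}. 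Finally I would note that $\Stab(\U)$ is finitely generated — it is relatively quasi-convex, being a cell stabilizer of a cocompact action (Corollary~\ref{cor:cocompact}) — so the graph of groups can be taken finite.
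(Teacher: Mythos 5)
Your proposal follows the paper's proof almost exactly: act with $\Stab(\U)$ on the bipartite tree $\Lambda$ of Lemma~\ref{lem:Lambda_tree}, get full parabolic stabilizers at $V_0$--vertices from Proposition~\ref{prop:parabolicsinstabU}, get finite stabilizers at $V_1$--vertices (hence at edges) from Lemma~\ref{lem:equivclassboundedset} together with properness of $G\acts X$, and invoke Bass--Serre theory. The one place you diverge is the finiteness of the quotient graph: your primary route asserts cocompactness of $\Stab(\U)\acts\Lambda$, and the justification offered (``$N_0(\U)$ is coarsely a union of the peripheral complexes it contains,'' ``bounded-diameter orbit-constituents'') is not an argument --- in particular finiteness of the number of $\Stab(\U)$--orbits of $V_1$--vertices is not addressed by anything you cite, and the paper never establishes cocompactness of this action. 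Fortunately your closing remark is exactly the paper's argument: $\Stab(\U)$ is finitely generated, so it preserves a subtree of $\Lambda$ with finite quotient, which yields the finite graph of groups. Be careful with the source of finite generation, though: it is Corollary~\ref{cor:FRQ}, not Corollary~\ref{cor:cocompact}, and relative quasi-convexity alone does not imply finite generation --- one needs fullness (Proposition~\ref{prop:parabolicsinstabU}) and the strong quasi-convexity of cube stabilizers via Lemma~\ref{lem:QCinRH}, which is what Corollary~\ref{cor:FRQ} supplies. With that substitution your proof is the paper's proof.
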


For the proofs of Theorems~\ref{thm:stabilizersaregraphs} and~\ref{t:recubulate} we need some facts about various kinds of quasi-convexity in relatively hyperbolic groups.
\begin{definition}\cite{Tran}
  Let $G$ be finitely generated, and let $\Gamma$ be a Cayley graph for $G$.  The subgroup $G_0$ is \emph{strongly quasi-convex} if for every $\lambda\ge 1$, $\epsilon\ge 0$ there is an $r$ so that any $(\lambda,\epsilon)$--quasi-geodesic in $\Gamma$ with endpoints in $G_0$ lies in an $r$--neighborhood of $G_0$.
\end{definition}
Notice that strongly quasi-convex subgroups are always finitely generated and undistorted (see for example \cite[III.$\Gamma$.3.5]{bridhaef:book}).

We deduce the next lemma from theorems of Tran and Hruska.
\begin{lemma}\label{lem:QCinRH}
  Let $(G,\mc{P})$ be relatively hyperbolic, and let $G_0<G$ be full.  The following are equivalent:
  \begin{enumerate}
  \item\label{undist} $G_0$ is finitely generated and undistorted in $G$.
  \item\label{RQC} $G_0$ is relatively quasi-convex in $(G,\mc{P})$.
  \item\label{SQC} $G_0$ is strongly quasi-convex in $G$.
  \end{enumerate}
\end{lemma}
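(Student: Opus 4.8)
The plan is to establish that the three conditions are equivalent by proving the cycle of implications \eqref{undist}$\implies$\eqref{RQC}$\implies$\eqref{SQC}$\implies$\eqref{undist}; of these, only the middle implication uses the hypothesis that $G_0$ is full. The implication \eqref{SQC}$\implies$\eqref{undist} requires nothing beyond the observation already recorded just after the definition of strong quasi-convexity: a strongly quasi-convex subgroup of a finitely generated group is itself finitely generated and undistorted, by (for example) \cite[III.$\Gamma$.3.5]{bridhaef:book}. For the implication \eqref{undist}$\implies$\eqref{RQC} I would argue exactly as in the proof of Lemma~\ref{lem:stabH_RQC}: a finitely generated undistorted subgroup of a finitely generated relatively hyperbolic group is relatively quasi-convex, which is \cite[Theorem 1.5]{HruskaQC}. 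Neither of these two implications uses fullness of $G_0$.

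The content of the lemma is therefore concentrated in the implication \eqref{RQC}$\implies$\eqref{SQC}, and here I would invoke the work of Tran \cite{Tran}. Tran characterizes the strongly quasi-convex subgroups of a finitely generated relatively hyperbolic group $(G,\mc{P})$ as precisely those subgroups $H$ which are relatively quasi-convex and have the property that, for every $P\in\mc{P}$ and every $g\in G$, the intersection $H\cap P^g$ is either finite or of finite index in $P^g$ --- in other words, as exactly the full relatively quasi-convex subgroups. Since $G_0$ is assumed to be full and, by \eqref{RQC}, is relatively quasi-convex, this criterion applies directly and gives that $G_0$ is strongly quasi-convex.

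I do not expect a genuine mathematical obstacle here; the only point needing care is bookkeeping. One should confirm that $G$ is finitely generated --- so that strong quasi-convexity is even defined and Tran's theorem applies --- which is automatic in all of the intended applications and is implicit in condition \eqref{SQC}; and one should check that ``fullness'' in the sense used in this paper lines up with the hypothesis appearing in Tran's characterization, i.e.\ with the description of the induced peripheral structure on a relatively quasi-convex subgroup (cf.\ \cite{HruskaQC}). Apart from correctly quoting these two external results, the argument is purely formal.
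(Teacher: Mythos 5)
Your proposal is correct in substance and rests on the same external inputs as the paper: Hruska's results relating undistortion and relative quasi-convexity, Tran's theorem on strongly quasi-convex subgroups, and the observation that strong quasi-convexity implies finite generation and undistortion. The arrangement differs slightly: the paper proves \eqref{undist}$\iff$\eqref{RQC} directly (\cite[Theorem 1.5]{HruskaQC} for one direction and \cite[Theorem 1.4]{HruskaQC}, using fullness, for the other) and then obtains \eqref{undist}$\implies$\eqref{SQC} from \cite[Theorem 1.9]{Tran}, whereas you run the cycle \eqref{undist}$\implies$\eqref{RQC}$\implies$\eqref{SQC}$\implies$\eqref{undist}, which lets you skip Hruska's Theorem 1.4 altogether; that is a perfectly good, slightly leaner arrangement. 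One correction, however: Tran does not characterize the strongly quasi-convex subgroups as \emph{exactly} the full relatively quasi-convex ones. His Theorem 1.9 says a subgroup is strongly quasi-convex if and only if it is relatively quasi-convex and its intersections with conjugates of peripheral subgroups are themselves strongly quasi-convex. Fullness is sufficient for this intersection condition but not necessary: if some $P\in\mc{P}$ is, say, a free group, then an infinite, infinite-index quasi-convex subgroup of $P$ is strongly quasi-convex in $G$ (peripheral subgroups being strongly quasi-convex) without being full, so the ``only if'' half of your quoted characterization fails. Your argument only uses the direction ``full and relatively quasi-convex implies strongly quasi-convex,'' which is a genuine consequence of Tran's theorem once one notes---as the paper does---that finite subgroups and finite-index subgroups of the (strongly quasi-convex) peripheral subgroups are strongly quasi-convex; so the proof goes through once the citation is stated correctly.
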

\begin{proof}
  Theorem 1.5 of \cite{HruskaQC} gives \eqref{undist}$\implies$\eqref{RQC} (even without the assumption that $G_0$ is full).  Theorem 1.4 of \cite{HruskaQC} gives \eqref{RQC}$\implies$\eqref{undist} when $G_0$ is full.

  Theorem 1.9 of \cite{Tran} gives \eqref{undist}$\implies$\eqref{SQC} when $G_0$ is full, since finite index and finite subgroups are strongly quasi-convex.  We have already noted that strongly quasi-convex subgroups are finitely generated and undistorted, so \eqref{SQC}$\implies$\eqref{undist}.
\end{proof}

\begin{corollary}\label{cor:FRQ}
  The subgroup $\Stab(\U)$ is full relatively quasi-convex in $(G,\mc{P})$.  In particular it is finitely generated.
\end{corollary}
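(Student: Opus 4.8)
The plan is to invoke Lemma~\ref{lem:QCinRH}: since a \emph{full} subgroup of $(G,\mc{P})$ is relatively quasi-convex if and only if it is finitely generated and undistorted, it suffices to verify that $\Stab(\U)$ has these three properties. Fullness is immediate from Proposition~\ref{prop:parabolicsinstabU}. Indeed, every maximal parabolic subgroup of $(G,\mc{P})$ has the form $\Stab(Z)=\Stab(gY_P)=gPg^{-1}$ for some peripheral complex $Z$ (here $\Stab(Y_P)=P$ because $Y_P$ is convex and $P$--cocompact, and $P$ is almost malnormal), and if $\Stab(\U)\cap\Stab(Z)$ is infinite then Proposition~\ref{prop:parabolicsinstabU} puts a finite-index subgroup of $\Stab(Z)$ inside $\Stab(\U)$, whence $\Stab(\U)\cap\Stab(Z)$ is itself finite index in $\Stab(Z)$; otherwise it is finite. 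So $\Stab(\U)$ is full.

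For finite generation and undistortion I would exploit the action of $\Stab(\U)$ on the bipartite tree $\Lambda$ from Lemma~\ref{lem:Lambda_tree}. First one identifies the cell stabilizers. An edge of $\Lambda$, and likewise a vertex of $V_1$, stabilizes the bounded set $\bigcup\{\pi_{Z_1}(Z_1')\mid(Z_1,Z_1')\sim(Z,Z')\}$ provided by Lemma~\ref{lem:equivclassboundedset}, so by properness of $G\acts X$ it is finite. A vertex $Z\in V_0$ has $\Stab(\U)$--stabilizer $\Stab(\U)\cap\Stab(Z)$, which by the first paragraph is finite or finite index in the parabolic $\Stab(Z)$; in the latter case it is finitely generated (each $P\in\mc{P}$ is finitely generated, acting properly cocompactly on the convex complex $Z_P$ of Theorem~\ref{th:SW core}) and undistorted in $G$ (peripheral subgroups, hence their finite-index subgroups, are undistorted in a relatively hyperbolic group). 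The main obstacle is then to prove that $\Stab(\U)\acts\Lambda$ is \emph{cocompact}. For this I would use that $N_K(\U)$ is unbounded (otherwise $\Stab(\U)$ would be finite by Corollary~\ref{cor:boundedNU}) and run a minimality/diameter argument in the spirit of the proof that $\Lambda$ is connected, together with properness of $G\acts X$, to show that there are only finitely many $\Stab(\U)$--orbits of peripheral complexes in $N_0(\U)$ and that each $V_0$--vertex meets only finitely many $V_1$--vertices modulo its stabilizer.

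Granting cocompactness, Bass--Serre theory presents $\Stab(\U)$ as the fundamental group of a finite graph of groups with finite edge groups and vertex groups that are finite or finite index in a parabolic; in particular $\Stab(\U)$ is finitely generated. For undistortion in $G$ I would realize a tree geodesic from a base vertex $v_0$ to $h\cdot v_0$, for $h\in\Stab(\U)$, as a broken geodesic in $X$ running alternately through the relevant peripheral complexes and hyperplane carriers, exactly as the loop $\gamma$ was treated in the proof of Lemma~\ref{lem:Lambda_tree}; the same relative-fellow-traveling estimate shows this broken geodesic is a uniform quasi-geodesic in $X$, which bounds the ``tree part'' of $|h|_{\Stab(\U)}$ linearly in terms of $d_X(v_0,h\cdot v_0)$, while the undistortion of the parabolic vertex groups controls the ``parabolic part.'' With $\Stab(\U)$ full, finitely generated and undistorted, Lemma~\ref{lem:QCinRH} yields that it is relatively quasi-convex. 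Note that this argument precedes Theorem~\ref{thm:stabilizersaregraphs}, so it cannot rely on that theorem's conclusion; the essential new input beyond the earlier lemmas is the cocompactness of the $\Lambda$--action.
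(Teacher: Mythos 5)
Your fullness argument is fine and matches the paper (it is exactly Proposition~\ref{prop:parabolicsinstabU} plus the observation that maximal parabolics sit inside stabilizers of peripheral complexes). The gap is in the second step: everything you do afterwards hinges on the claim that $\Stab(\U)$ acts cocompactly on $\Lambda$ --- in particular that there are only finitely many $\Stab(\U)$--orbits of peripheral complexes in $N_0(\U)$ and that each $V_0$--vertex has finite link modulo its stabilizer --- and you offer only a vague plan (``a minimality/diameter argument in the spirit of the connectivity proof, together with properness'') for it. No such argument is in the paper, and none is readily available: this finiteness statement is essentially equivalent to what you are trying to prove, since for a full subgroup the finiteness of conjugacy classes of infinite parabolic intersections is a standard \emph{consequence} of relative quasi-convexity, not an input one can extract from properness of $G\acts X$ alone. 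Notice that the paper itself never claims $\leftQ{\Lambda}{\Stab(\U)}$ is finite; in Theorem~\ref{thm:stabilizersaregraphs} it only produces a cocompact \emph{invariant subtree}, and it does so by first knowing $\Stab(\U)$ is finitely generated --- i.e.\ by using Corollary~\ref{cor:FRQ}. Your proposal inverts this logic at precisely the point where the needed cocompactness has no independent proof, so both the Bass--Serre finite generation step and the ``tree part vs.\ parabolic part'' undistortion estimate (which presupposes the finite graph-of-groups decomposition) are left hanging.

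For comparison, the paper's proof avoids the tree entirely at this stage: wall stabilizers are full relatively quasi-convex (Lemma~\ref{lem:stabhalfspace}), hence strongly quasi-convex by Lemma~\ref{lem:QCinRH}; then the black-box result \cite[Theorem 3.26]{Omnibus} upgrades this to strong quasi-convexity of all cube stabilizers of $G\acts\hatx$, in particular of $\Stab(\U)$; fullness comes from Proposition~\ref{prop:parabolicsinstabU}; and Lemma~\ref{lem:QCinRH} then gives relative quasi-convexity and finite generation. If you want a tree-based proof you would need to either reprove something like that Omnibus theorem or find a genuinely new argument for the orbit-finiteness on $\Lambda$; as written, that is the missing idea.
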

\begin{proof}
  The hyperplane stabilizers for $G\acts \hatx$ are full relatively quasi-convex (Lemma~\ref{lem:stabhalfspace}) so by Lemma~\ref{lem:QCinRH}, they are strongly quasi-convex.
  By \cite[Theorem 3.26]{Omnibus} the cube stabilizers of $G\acts\hatx$ are also strongly quasi-convex.  In particular $\Stab(\U)$ is strongly quasi-convex in $G$.  It is full by Proposition~\ref{prop:parabolicsinstabU}.  Now apply Lemma~\ref{lem:QCinRH} to deduce that $\Stab(\U)$ is relatively quasi-convex.
\end{proof}

\begin{proof}[Proof of Theorem~\ref{thm:stabilizersaregraphs}]
  The subgroup $\Stab(\U)$ acts on the tree $\Lambda$ described in the last subsection.  A vertex corresponding to a peripheral complex in $N_0(\U)$ has full parabolic stabilizer by Proposition~\ref{prop:parabolicsinstabU}.  The stabilizer of a vertex corresponding to an equivalence class $[(Z,Z')]$ must also stabilize the bounded set from Lemma~\ref{lem:equivclassboundedset}, so it must be finite.  Since every edge is connected to some vertex of this type, edge stabilizers are also finite.

Since $\Stab(\U)$ is finitely generated there exists a $\Stab(\U)$--invariant sub-tree $\Lambda_0$ of $\Lambda$ so that $\leftQ{\Lambda_0}{\Stab(\U)}$ is finite.
\end{proof}

\subsection{Proof of Theorem~\ref{t:recubulate}}
See Definition~\ref{def:wrg} for the definition of a \emph{weakly relatively geometric} action.

\recubulate*

\begin{proof}
We take the cube complex $\hatx$ to be the one dual to the wallspace fixed in Definition~\ref{def:wallspaceW}.

  The cocompactness of $G\acts \hatx$ and ellipticity of peripheral subgroups is Corollary~\ref{cor:cocompact}.

  It suffices to prove the statement about cube stabilizers for vertex stabilizers, since each cube stabilizer is an intersection of vertex stabilizers, and the description passes to intersections.

  So, let $\U$ be an ultrafilter so that $\Stab(\U)$ is infinite.
  Corollary~\ref{cor:FRQ} implies that $\Stab(\U)$ is full relatively quasi-convex.

  Theorem~\ref{thm:stabilizersaregraphs} implies that $\Stab(\U)$ is a graph of finite and full parabolic subgroups as in the conclusion.  
\end{proof}

\section{Proof of Theorem~\ref{t:RH Agol}} \label{sec:Dehn fill}

In this section we prove Theorem~\ref{t:RH Agol}.  The strategy is to use the recubulation given by Theorem~\ref{t:recubulate} and relatively hyperbolic Dehn filling results.   See \cite{osin:peripheral,rhds, agm,VH, Hwide,Omnibus} for more information on relatively hyperbolic Dehn filling.  We first recall the definition.

\begin{definition}[Dehn filling] \label{def:dehnfill}
Suppose that $(G,\mc{P})$ is a group pair, and that $\mc{N} = \{ N_P \unlhd P \mid P \in \mc{P} \}$ is a collection of normal subgroups of the elements of $P$.  The {\em Dehn filling} induced by $\mc{N}$ is
\[	G(\mc{N}) := \rightQ{G}{\left\llangle \bigcup N_P \right\rrangle_G}	.	\]
\end{definition}

\begin{definition}[Wide subgroups and fillings]\label{def:Hwide}
  If $P$ is a group, $B<P$, and $F\subseteq P$ a finite set, then $N\unlhd P$ is \emph{$(B,F)$--wide} if, for all $b\in B$ and $f\in F\setminus B$, the product $bf$ does not lie in $N$.

  Let $H$ be a relatively quasi-convex subgroup of $(G,\mc{P})$, so there is a collection of infinite maximal parabolic subgroups $\mc{D}$ of $H$ so that $(H,\mc{D})$ is relatively hyperbolic.  For each $D\in\mc{D}$ there is some $c_D\in G$ and some $P_D\in \mc{P}$ so that $D^{c_D^{-1}} \le P_D$.

  Let $F\subseteq G\setminus\{1\}$ be finite.  Let $G(\mc{N})$ be a Dehn filling, with $K = \llangle \bigcup\mc{N}\rrangle_G$.  We say the filling $G(\mc{N})$ is \emph{$(H,F)$--wide} if for every $D\in \mc{D}$, the intersection $K\cap P_D$ is $( D^{c_D^{-1}}, F\cap P_D)$--wide.
\end{definition}

\begin{definition}[Sufficiently long and wide Dehn fillings]
  We say that a statement $\mathsf{S}$ holds \emph{for all sufficiently long Dehn fillings} if there is a finite set $F\subseteq G\setminus \{1\}$ so that $\mathsf{S}$ holds for all $G(\mc{N})$ so that $\bigcup\mc{N}$ contains no element of $F$.

  Let $H<G$ be a quasi-convex subgroup.  The statement $\mathsf{S}$ holds \emph{for all sufficiently $H$--wide Dehn fillings} if there is a finite set $F\subseteq G\setminus \{1\}$ so that $\mathsf{S}$ holds for all $G(\mc{N})$ which are $(H,F)$--wide.
\end{definition}

 For the Dehn filling results, we make the following assumption (\emph{weakly relatively geometric} was defined in Definition~\ref{def:wrg}).

\begin{assumption} \label{ass:G is recubulated}
The pair $(G,\mc{P})$ is relatively hyperbolic, and admits a weakly relatively geometric action on a \CAT$(0)$ cube complex $\widehat{X}$.  Further, each element of $\mc{P}$ is residually finite.
\end{assumption}

A key reason for our interest in weakly relatively geometric actions is the following result.

\begin{theorem} \label{t:quotient VS}
    Under Assumption~\ref{ass:G is recubulated}, there exist finite-index subgroups $\left\{ L_P\unlhd P \mid P \in \mc{P} \right\}$ so that the following holds:

  Let $\mc{N} = \{N_P \unlhd P \mid P\in \mc{P}\}$ be chosen so that for each $P\in \mc{P}$,
  \begin{enumerate}
  \item $N_P\le L_P$, and
  \item $P/N_P$ is hyperbolic and virtually special.
  \end{enumerate}
  Then the Dehn filling $G(\mc{N})$ is hyperbolic and virtually special.
\end{theorem}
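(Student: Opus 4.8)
The plan is to combine the recubulation provided by the weakly relatively geometric action with relatively hyperbolic Dehn filling machinery, particularly the results of \cite{Omnibus} and the \emph{wide} filling results of \cite{Hwide} (and its generalization in Appendix~\ref{app:DCS}). First I would recall that under Assumption~\ref{ass:G is recubulated} the cell stabilizers of $G\acts\hatx$ are either finite or built, via a finite graph of groups with finite edge groups, out of finite groups and full parabolic subgroups. The key point is that cubulatedness is a property that survives Dehn filling along peripheral subgroups, \emph{provided} the filling is long enough and wide enough relative to the (finitely many conjugacy classes of) cell stabilizers: the image $\bar{X}$ of $\hatx$ in the filling $G(\mc{N})$ should again be a \CAT$(0)$ cube complex on which $G(\mc{N})$ acts properly and cocompactly, with cell stabilizers the images of the old cell stabilizers. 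For this one needs the filling to be \emph{$\Stab(\sigma)$--wide} for a set of orbit representatives $\sigma$ of cells with infinite stabilizer; since each such $\Stab(\sigma)$ is full relatively quasi-convex and a graph of finite and full parabolic subgroups, the wideness condition on the filling kernel $K\cap P$ can be arranged using residual finiteness of $P$ (which is hypothesized) exactly as in \cite{Hwide,Omnibus}. This produces the finite-index subgroups $L_P\unlhd P$: $L_P$ is chosen small enough that (a) $G(\mc{N})$ is hyperbolic for all $N_P\le L_P$ (Osin/Groves--Manning/Agol--Groves--Manning), (b) the cube complex descends, (c) all the relevant wideness conditions hold, and (d) the separability/malnormality hypotheses needed to invoke \cite[Theorem D]{Omnibus} hold.

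Next I would identify the cell stabilizers of $G(\mc{N})\acts\bar{X}$. Under the wideness hypothesis, the image of a finite stabilizer is finite, and the image of a stabilizer of the form (graph of finite and full parabolic groups) is a graph of finite groups and \emph{quotients of full parabolics by $N_P$}; but $P/N_P$ was assumed hyperbolic and virtually special, hence a graph of finite groups with finite edge groups amalgamated with these quotients is again hyperbolic, and moreover virtually special by the combination theorems for virtually special groups (Haglund--Wise, Wise; see also \cite{RAAGs}): a finite graph of virtually special hyperbolic groups along finite edge groups is virtually special hyperbolic. Thus every cell stabilizer of $G(\mc{N})\acts\bar X$ is hyperbolic and virtually special.

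Then I would invoke the main combination result of \cite{Omnibus} — this is [Theorem D] of \cite{Omnibus}, which itself uses Agol's theorem \cite{VH} — which says, roughly, that if a hyperbolic group acts properly cocompactly on a \CAT$(0)$ cube complex with all cell stabilizers hyperbolic and virtually special (and suitably separable), then the group itself is virtually special. Applying this to $G(\mc{N})\acts\bar X$ yields that $G(\mc{N})$ is virtually special, completing the proof. (That $G(\mc{N})$ is hyperbolic follows from relatively hyperbolic Dehn filling together with the fact that each filled peripheral $P/N_P$ is hyperbolic: the filled group is hyperbolic relative to $\{P/N_P\}$, and relative hyperbolicity with hyperbolic peripherals gives hyperbolicity.)

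The main obstacle, and where the bulk of the work goes, is verifying that the cube complex $\hatx$ really does descend to a \CAT$(0)$ cube complex under the filling with the expected (images of) cell stabilizers, and that all the separability and malnormality side-conditions needed to feed into \cite[Theorem D]{Omnibus} are met for \emph{sufficiently long and wide} fillings. This is precisely the content that requires the wide-filling technology and the technical strengthening in Appendix~\ref{app:DCS} of a result from \cite{Hwide}; the delicate part is that the cell stabilizers are not merely finite or parabolic (as in a relatively geometric action) but graphs of such groups, so one must check that wideness of the filling with respect to these graph-of-groups stabilizers — not just with respect to the parabolics — can be guaranteed, and that it forces the induced action on $\bar X$ to remain proper (no accidental identifications of cells, no new infinite stabilizers). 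Once that bookkeeping is in place, the rest is an application of known combination and virtual-specialness theorems.
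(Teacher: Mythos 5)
Your proposal follows essentially the same route as the paper: choose the $L_P$ so that sufficiently long/wide fillings make $G(\mc{N})$ hyperbolic, make $\hatx$ descend to a \CAT$(0)$ cube complex, and turn the cell stabilizers into quasi-convex graphs of finite groups and the (hyperbolic, virtually special) quotients $P/N_P$, then conclude via \cite[Theorem D]{Omnibus}; the paper simply outsources the descent and stabilizer control to \cite[Theorem 1.1]{osin:peripheral}, \cite[Propositions 4.3, 4.4]{agm} and \cite[Corollary 6.5]{Omnibus} rather than rerunning the wide-filling bookkeeping by hand (and Appendix~\ref{app:DCS} is not needed here --- it is used for double coset separability later). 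One small slip: the action of $G(\mc{N})$ on $\overline{X}$ is cocompact but \emph{not} proper (the stabilizers are infinite), so drop the word ``properly'' when quoting \cite[Theorem D]{Omnibus} --- its whole point is improper actions with quasi-convex virtually special cell stabilizers.
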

\begin{proof}
Since the $G$--action on $\widehat{X}$ is cocompact, there are cells $\sigma_1, \ldots, \sigma_k$ in $\widehat{X}$ forming a collection of representatives of the $G$--orbits of cells.  For $1 \le i \le k$ let $Q_i$ be the finite-index subgroup of $\Stab(\sigma_i)$ consisting of elements which fix $\sigma_i$ pointwise, and let $\mc{Q} = \{ Q_1, \ldots , Q_k \}$.  Note that each infinite $Q_i$ is a full relatively quasi-convex subgroup which 
admits a graph of groups decomposition with finite edge groups and finite or full parabolic vertex groups.  By combining \cite[Theorem 1.1]{osin:peripheral}, \cite[Propositions 4.3, 4.4]{agm} and \cite[Corollary 6.5]{Omnibus}, and the assumption that elements of $\mc{P}$ are residually finite, we see that there are finite-index subgroups $\left\{ L_P \unlhd P \mid P \in \mc{P} \right\}$ so that for any $N_P \le L_P$ so $P/N_P$ is hyperbolic and virtually special in $P$, the Dehn filling
\[	\overline{G} = \rightQ{G}{K} := G \left( \{ N_P \mid P \in \mc{P} \} \right)		\]
satisfies
\begin{enumerate}
\item\label{eq:barG hyp} $\overline{G}$ is hyperbolic;
\item\label{eq:cell stab vf} The image of each $Q_i$ is quasi-convex in $\overline{G}$, and splits as a graph of virtually special hyperbolic groups with finite edge groups (and in particular the image of each $Q_i$ is hyperbolic and virtually special); and
\item\label{eq:quot cube} The space $\overline{X} := \leftQ{\widehat{X}}{K}$ is a \CAT$(0)$ cube complex.
\end{enumerate}
Let $\overline{G}$ be any such Dehn filling.  Then $\overline{G}$ acts cocompactly on the \CAT$(0)$ cube complex $\overline{X}$.  By \eqref{eq:barG hyp} $\overline{G}$ is hyperbolic.  By \eqref{eq:cell stab vf}, the cell stabilizers for the $\overline{G}$--action on $\overline{X}$ are quasi-convex and virtually special.  It now follows from \cite[Theorem D]{Omnibus} that $\overline{G}$ is virtually special, as required.
\end{proof}

The following is an immediate consequence of results of Haglund--Wise \cite[Corollary 7.4]{HW08} and Minasyan \cite[Theorem 1.1]{minasyan:subsetgferf}.

\begin{theorem} \label{t:vs qcerf}
Suppose that $\Gamma$ is a hyperbolic virtually special group.  Then every quasi-convex subgroup of $\Gamma$ is separable, and every finite product of quasi-convex subgroups of $\Gamma$ is separable.
\end{theorem}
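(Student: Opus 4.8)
The statement asserts two facts about a hyperbolic virtually special group $\Gamma$: that quasi-convex subgroups are separable, and that finite products of quasi-convex subgroups are separable. My plan is to deduce both from the cited results of Haglund--Wise and Minasyan, via the standard passage between ``virtually special'' and ``special'' by a finite-index subgroup.

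First I would fix a finite-index subgroup $\Gamma_0 \le \Gamma$ which is special (with compact special quotient), which exists by hypothesis. A key general fact I would invoke is that a subgroup of $\Gamma$ is separable in $\Gamma$ if and only if its intersection with $\Gamma_0$ is separable in $\Gamma_0$: in one direction, separability is inherited by finite-index overgroups since $\Gamma_0$ is itself separable in $\Gamma$ (being finite index) and a finite intersection/union argument handles the cosets; in the other direction separability descends to finite-index subgroups. I would also use that quasi-convexity of a subgroup of a hyperbolic group is unaffected by intersecting with a finite-index subgroup, and that quasi-convex subgroups of a hyperbolic group are themselves finitely generated and undistorted. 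So it suffices to prove both separability statements inside the special group $\Gamma_0$ for the subgroups $H \cap \Gamma_0$ where $H$ is quasi-convex in $\Gamma$.

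For the first statement, $H \cap \Gamma_0$ is quasi-convex in $\Gamma_0$, and by Haglund--Wise \cite[Corollary 7.4]{HW08} quasi-convex subgroups of a (compact) special group are separable; hence $H$ is separable in $\Gamma$. For the second statement, suppose $H_1, \dots, H_k$ are quasi-convex in $\Gamma$ and consider the product set $H_1 H_2 \cdots H_k$. I would first reduce (again using the finite-index trick, noting $H_1 H_2 \cdots H_k$ is a finite union of translates of products of the $H_i \cap \Gamma_0$ together with coset representatives, so its separability in $\Gamma$ follows from separability of finitely many product sets in $\Gamma_0$) to proving that a finite product of quasi-convex subgroups of the special group $\Gamma_0$ is separable in $\Gamma_0$. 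Here I would apply Minasyan \cite[Theorem 1.1]{minasyan:subsetgferf}, which states that in a hyperbolic group which is QCERF (every quasi-convex subgroup is separable) any finite product of quasi-convex subgroups is separable; since $\Gamma_0$ is special it is QCERF by the previous paragraph, so the hypothesis of Minasyan's theorem applies. This yields separability of $H_1 \cdots H_k$ in $\Gamma_0$, hence in $\Gamma$.

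The main thing to be careful about — the only real ``obstacle'' — is the bookkeeping in the passage between $\Gamma$ and its finite-index special subgroup $\Gamma_0$: one must check that separability of a subset is genuinely equivalent, up to finitely many translates, to separability of its trace in $\Gamma_0$, and that the product structure and quasi-convexity are preserved under this operation. None of this is deep, but it should be stated cleanly. Once that reduction is in place, the two assertions are immediate from the cited theorems of Haglund--Wise and Minasyan respectively.
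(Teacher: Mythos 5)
Your proposal follows the same route as the paper, which gives no argument at all beyond declaring the theorem an immediate consequence of \cite[Corollary 7.4]{HW08} and \cite[Theorem 1.1]{minasyan:subsetgferf}; your write-up simply supplies the standard finite-index bookkeeping implicit in that citation, and the first assertion is handled correctly. One point deserves care in your treatment of the product statement: applying Minasyan inside the special subgroup $\Gamma_0$ forces you to write $H_1\cdots H_k$ as a finite union of sets of the form $a_1(H_1\cap\Gamma_0)a_2(H_2\cap\Gamma_0)\cdots a_k(H_k\cap\Gamma_0)$, and these are \emph{not} literally translates of products of the $H_i\cap\Gamma_0$: pushing the coset representatives to the front replaces each subgroup by a conjugate, and such a conjugate lies in $\Gamma_0$ only if $\Gamma_0$ is chosen normal in $\Gamma$ (which is harmless --- finite covers of special cube complexes are special, so the normal core works --- but must be said). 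The cleaner route, and surely the one the paper intends, is to skip the descent entirely for the second assertion: once the first assertion shows that $\Gamma$ itself is QCERF, Minasyan's theorem applies directly to $\Gamma$ and the quasi-convex subgroups $H_1,\dots,H_k$, with no conjugation bookkeeping at all (and, since \cite[Corollary 7.4]{HW08} is already phrased for virtually special hyperbolic groups, even the first descent to $\Gamma_0$ is arguably dispensable). So your argument is essentially correct, but either state the normality of $\Gamma_0$ explicitly in the product reduction or, better, apply Minasyan at the level of $\Gamma$.
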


Given Theorem~\ref{t:recubulate}, the next two results are the technical core of the proof of Theorem~\ref{t:RH Agol}. 

\begin{theorem} \label{t:big fill separable}
Under Assumption~\ref{ass:G is recubulated}, suppose further that $S$ is a relatively quasi-convex subgroup of $(G,\mc{P})$ so that
for all $P \in \mc{P}$ and $g \in G$ the subgroup $P \cap S^g$ is separable in $P$.

Then $S$ is separable in $G$.
\end{theorem}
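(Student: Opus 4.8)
The plan is to use relatively hyperbolic Dehn filling to push the separability question from $G$ down to the hyperbolic virtually special quotients produced by Theorem~\ref{t:quotient VS}. So let $g\in G\setminus S$ be an element we wish to separate from $S$ by a finite quotient. First I would invoke the full strength of the Dehn filling machinery: we want a filling $G(\mc{N})$ which (a) is long enough that the natural map $\eta\co G\to \overline G:=G(\mc{N})$ is injective on the finite set $\{1,g\}\cup(\text{a ball around }S)$ and, crucially, on the relevant coset data, so that $\eta(g)\notin \eta(S)$; (b) restricts to an isomorphism (or at least an embedding with controlled image) on $S$, i.e.\ $S\cap \ker\eta = 1$ and $\eta(S)$ is quasi-convex in $\overline G$ — this is where one needs the ``$(S,F)$--wide'' fillings together with the hypothesis that $P\cap S^g$ is separable in each $P$, which is exactly what lets one choose the filling kernels $N_P$ to avoid the finitely many relevant double-coset obstructions inside each peripheral $P$; and (c) falls under Theorem~\ref{t:quotient VS}, so that $\overline G$ is hyperbolic and virtually special.

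The key inputs I would assemble, in order, are: a Dehn filling control theorem (from \cite{osin:peripheral, agm, Hwide, Omnibus}) guaranteeing that for all sufficiently long and sufficiently $S$--wide fillings, $\eta|_S$ is injective, $\eta(S)$ is relatively quasi-convex in $(\overline G,\overline{\mc{P}})$, and the induced peripheral structure on $\eta(S)$ is the image of that on $S$; the separability hypothesis on $P\cap S^g$ in $P$, which (combined with residual finiteness of $P$, available since the $P/N_P$ are to be hyperbolic virtually special, hence residually finite) allows the $N_P$ to be chosen both inside the $L_P$ of Theorem~\ref{t:quotient VS} and wide enough that $\eta(S)\cap \overline P = \eta(S\cap P)$-type statements hold and $\eta(g)\notin\eta(S)$; and finally Theorem~\ref{t:quotient VS} to conclude $\overline G$ is hyperbolic virtually special, whence Theorem~\ref{t:vs qcerf} tells us the quasi-convex subgroup $\eta(S)$ is separable in $\overline G$. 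Then there is a further finite quotient $\overline G \to F$ with $\eta(g)\notin$ image of $\eta(S)$; composing $G\to\overline G\to F$ separates $g$ from $S$.

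Concretely the steps are: (1) fix $g\notin S$; (2) list the finitely many ``bad'' elements that a filling must avoid — these come from a relative quasi-convexity/height argument for $S$ (so that $\eta(S)$ stays quasi-convex and $S\cap\ker\eta=1$) and from the requirement $\eta(g)\notin\eta(S)$, which reduces via standard coset combinatorics to separating finitely many elements of the various $P$ from finitely many cosets of $P\cap S^{g_i}$ — and here use the separability hypothesis to produce the needed finite-index $N_P\le L_P$; (3) check the chosen $\mc{N}$ simultaneously satisfies the hypotheses of Theorem~\ref{t:quotient VS} (intersecting the two finite-index constraints on $N_P$, and arranging $P/N_P$ hyperbolic virtually special using residual finiteness of $P$ plus e.g.\ the fact that finite groups are hyperbolic virtually special, or a small-cancellation/cubical filling of $P$ itself); (4) apply Theorem~\ref{t:quotient VS} to get $\overline G$ hyperbolic virtually special; (5) apply Theorem~\ref{t:vs qcerf} to separate $\eta(g)$ from the quasi-convex $\eta(S)$ in $\overline G$; (6) pull back.

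The main obstacle, and the part requiring genuine care, is step (2)–(3): making a \emph{single} choice of filling kernels $\{N_P\}$ that simultaneously (i) lies in the $L_P$ from Theorem~\ref{t:quotient VS}, (ii) gives $P/N_P$ hyperbolic and virtually special, (iii) is $(S,F)$--wide so that $\eta|_S$ is injective and $\eta(S)$ is quasi-convex with the expected peripheral structure, and (iv) is wide enough to keep $\eta(g)$ out of $\eta(S)$. Each of these is a finite-index or ``avoid a finite set'' condition on $N_P$ inside $P$, so they can be intersected — but verifying (iii) and (iv) genuinely uses the hypothesis that $P\cap S^h$ is separable in $P$ for all relevant $h$, and this is where the technical generalization of \cite{Hwide} proved in the appendix (and the wide-filling definitions recalled just above) does the work. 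I expect essentially all the content of the proof to be the careful bookkeeping translating ``$\eta(g)\notin\eta(S)$'' into a statement about finitely many double cosets in the $P$'s and then feeding that into separability; once that is set up, steps (4)–(6) are immediate applications of the cited theorems.
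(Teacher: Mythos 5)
Your proposal is correct and takes essentially the same approach as the paper: fix $g_0\notin S$, use the separability of the subgroups $S^g\cap P$ in $P$ (via \cite[Lemma 5.2]{Hwide}) to obtain sufficiently long, $S$--wide fillings with finite-index kernels contained in the $L_P$ of Theorem~\ref{t:quotient VS} (so the peripheral quotients are finite, hence hyperbolic and virtually special), invoke \cite[Propositions 4.5, 4.7]{Hwide} so that the image of $S$ is quasi-convex and misses $\pi(g_0)$, and finish with Theorem~\ref{t:vs qcerf}. The only inessential deviations are your requirement that $\pi$ be injective on $S$, which is generally unattainable (any finite-index $N_P$ meets an infinite $S\cap P$) but also unnecessary since only quasi-convexity of $\pi(S)$ and $\pi(g_0)\notin\pi(S)$ are used, and your appeal to the appendix's Proposition~\ref{prop:DCS}, which is needed only for the double-coset statement (Theorem~\ref{t:big fill DC sep}), not here.
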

\begin{proof}
Let $g_0 \in G \smallsetminus S$, and let $\{ L_P \mid P \in \mc{P} \}$ be as in Theorem~\ref{t:quotient VS}.  By \cite[Proposition 4.5, 4.7]{Hwide}, for sufficiently long and $S$--wide fillings of $(G,\mc{P})$, the image of $S$ is quasi-convex and does not contain $g_0$.  However, by \cite[Lemma 5.2]{Hwide}, and the assumption on separability of the subgroups $S^g \cap P$ in $P$, there are such long and $S$--wide fillings.

Taking these fillings to also have filling kernels contained in the $L_P$ as above, and we obtain a virtually special hyperbolic quotient $\pi \co G \to \overline{G}$ so that the image $\overline{S}$ of $S$ is quasi-convex and $\pi(g_0) \not\in \overline{S}$.  By applying Theorem~\ref{t:vs qcerf}, there is a finite quotient
$\lambda \co \overline{G} \to Q$ so that $\lambda(\pi(g_0)) \not\in \lambda(\pi(S))$.  Since $g_0 \in G \smallsetminus S$ was arbitrary, this shows that $S$ is separable in $G$, as required.
\end{proof}

The following result relies on Proposition~\ref{prop:DCS} which is proved in Appendix~\ref{app:DCS}, using techniques from \cite{Hwide}.

\begin{theorem} \label{t:big fill DC sep}
Under Assumption~\ref{ass:G is recubulated}, suppose further that $S_1, S_2$ are relatively quasi-convex subgroups of $(G,\mc{P})$ so that
for all $P \in \mc{P}$ and $g_1, g_2 \in G$ the double coset
$\left( P \cap S_1^{g_1} \right) \left( P \cap S_2^{g_2} \right)$ is separable in $P$.

Then the product $S_1S_2$ is separable in $G$.
\end{theorem}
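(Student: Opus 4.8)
The plan is to follow the exact same pattern as the proof of Theorem~\ref{t:big fill separable}, but replacing the single relatively quasi-convex subgroup $S$ with the double coset $S_1 S_2$, and replacing the separability input \cite[Proposition 4.5, 4.7 and Lemma 5.2]{Hwide} with their double-coset analogues. First I would fix $g_0 \in G \smallsetminus S_1 S_2$ and let $\{L_P \mid P \in \mc{P}\}$ be the finite-index subgroups supplied by Theorem~\ref{t:quotient VS}. The goal is to produce a Dehn filling $\pi \colon G \to \overline{G}$ onto a hyperbolic virtually special group, with filling kernels contained in the $L_P$, such that the images $\overline{S_1} = \pi(S_1)$ and $\overline{S_2} = \pi(S_2)$ are quasi-convex in $\overline{G}$ and $\pi(g_0) \notin \overline{S_1}\,\overline{S_2}$. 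Given such a filling, Theorem~\ref{t:vs qcerf} says $\overline{S_1}\,\overline{S_2}$ is separable in $\overline{G}$ (it is a product of two quasi-convex subgroups of a hyperbolic virtually special group), so there is a finite quotient $\lambda \colon \overline{G} \to Q$ with $\lambda(\pi(g_0)) \notin \lambda(\pi(S_1 S_2)) = \lambda(\pi(S_1))\lambda(\pi(S_2))$; since $g_0$ was arbitrary this shows $S_1 S_2$ is separable in $G$.

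The heart of the matter is therefore the Dehn-filling step: I need that for sufficiently long and sufficiently $(S_1,S_2)$-wide fillings, the images of $S_1$ and $S_2$ remain relatively quasi-convex (hence quasi-convex, since the quotient is hyperbolic) and that $g_0$ stays outside the product $\overline{S_1}\,\overline{S_2}$. The quasi-convexity of $\pi(S_i)$ under long enough fillings is \cite[Proposition 4.5, 4.7]{Hwide} applied to each $S_i$ separately. For the statement that $\pi(g_0) \notin \overline{S_1}\,\overline{S_2}$ I would invoke Proposition~\ref{prop:DCS} from Appendix~\ref{app:DCS} — which, as the excerpt advertises, is the double-coset generalization of \cite[Lemma 5.2]{Hwide} — to guarantee that, given the hypothesis that each $\left(P \cap S_1^{g_1}\right)\left(P \cap S_2^{g_2}\right)$ is separable in $P$, there exist fillings which are simultaneously long, $(S_1,S_2)$-wide (in the appropriate product sense controlling which elements of a fixed finite set are killed), and have kernels inside the $L_P$. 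Combining these, Proposition~\ref{prop:DCS} produces the filling $\pi$ with $\pi(g_0) \notin \pi(S_1)\pi(S_2)$.

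The main obstacle I anticipate is precisely the bookkeeping in the Dehn-filling step: one must ensure a \emph{single} finite set $F \subseteq G \smallsetminus \{1\}$ works to control all three requirements at once — quasi-convexity of both $\pi(S_1)$ and $\pi(S_2)$ (from \cite{Hwide}), the product-wideness needed for $g_0 \notin \pi(S_1)\pi(S_2)$ (from Proposition~\ref{prop:DCS}), and hyperbolicity/virtual specialness of the quotient (from Theorem~\ref{t:quotient VS}) — and then check that the hypothesis on separability of the double cosets $\left(P \cap S_1^{g_1}\right)\left(P \cap S_2^{g_2}\right)$ in $P$ is exactly what lets one choose $\mc{N}$ avoiding $F$ while still meeting the $(S_1,S_2)$-wideness condition. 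Each piece is available off the shelf (or from the appendix), so once the constants are lined up the proof is short; assembling them correctly is the only real work.
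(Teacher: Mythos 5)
Your proposal matches the paper's proof essentially verbatim: the paper also fixes $g \in G \smallsetminus S_1S_2$, invokes Proposition~\ref{prop:DCS} together with the argument of Theorem~\ref{t:big fill separable} to obtain a hyperbolic virtually special filling $\pi\co G\to\overline{G}$ with $\pi(S_1),\pi(S_2)$ quasi-convex and $\pi(g)\notin\pi(S_1)\pi(S_2)$, and then applies Theorem~\ref{t:vs qcerf} to separate in a further finite quotient. The bookkeeping you flag (a single finite set controlling length, wideness, and kernels inside the $L_P$) is exactly what Proposition~\ref{prop:DCS} is set up to deliver, so no additional work is needed.
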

\begin{proof}
Let $g \in G \smallsetminus S_1S_2$.  Applying Proposition~\ref{prop:DCS} and proceeding similarly to the proof of Theorem~\ref{t:big fill separable}, there exists a Dehn filing $\pi \co G \to \overline{G}$ so that
\begin{enumerate}
\item  $\overline{G}$ is a virtually special hyperbolic group; 
\item $\pi(S_1)$ and $\pi(S_2)$ are quasi-convex in $\overline{G}$; and
\item $\pi(g) \not\in \pi(S_1)\pi(S_2)$.
\end{enumerate}
Since double cosets of quasi-convex subgroups are separable in virtually special hyperbolic groups, as noted in Theorem~\ref{t:vs qcerf}, there is a further finite quotient of $\overline{G}$ which can be used to separate $g$ from $S_1S_2$ in $G$.
\end{proof}

Finally, we restate and prove Theorem~\ref{t:RH Agol}.
\RHAgol*

\begin{proof}[Proof of Theorem~\ref{t:RH Agol}]
That each element of $\mc{P}$ is residually finite is part of the hypothesis of Theorem~\ref{t:RH Agol}, and the rest of Assumption~\ref{ass:G is recubulated} follows from Theorem~\ref{t:recubulate}.

Let $S_1, S_2$ be hyperplane stabilizers for the $G$--action on $X$.  By Lemma~\ref{lem:stabH_RQC} these are relatively quasi-convex.  The separability and double coset separability assumptions in Theorems~\ref{t:big fill separable} and~\ref{t:big fill DC sep} are part of the hypotheses of Theorem~\ref{t:RH Agol}.  Therefore, it follows from Theorem~\ref{t:big fill separable} that $S_1$ is separable in $G$, and from Theorem~\ref{t:big fill DC sep} that $S_1S_2$ is separable in $G$.  It now follows from \cite[Corollary 4.3]{HaglundWise10} that the $G$--action on $X$ is virtually special, as required.
\end{proof}

\section{An example} \label{sec:examples}
In this section we give an example of a group to which Theorem~\ref{t:RH Agol} applies, and for which it is at least not obvious how to apply previous results to deduce virtual specialness.  Note that this example is merely indicative of many examples that could be built using this technique, or others.  That the example is virtually special follows from the following application of our main result.  (By a \emph{standard cubical torus} we mean a unit cube of some dimension with opposite faces identified via translation.)
\begin{proposition}
  Let $Y$ be a $1$--vertex non-positively curved cell complex built from a finite wedge of standard cubical tori
  by attaching finitely many regular right-angled hyperbolic polygons rescaled to have side-length $1$.  Then $Y$ is homeomorphic to a cube complex $C$, and this cube complex is virtually special.
\end{proposition}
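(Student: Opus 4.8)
The statement has two parts: first, that $Y$ is homeomorphic to a cube complex $C$; second, that $C$ is virtually special. For the first part, I would recall that a regular right-angled hyperbolic $k$-gon with all side-lengths $1$ is, as a non-positively curved square complex, just a subdivision of a disk into squares meeting the required right-angle condition at the barycentric vertex; concretely, one coning off the polygon from its center gives it the structure of a union of squares, so after this subdivision the whole complex $Y$ becomes a square complex (hence a cube complex $C$) with the same underlying topological space. I would note that the resulting metric on $C$ is still non-positively curved: links of the old vertices of $Y$ are unchanged up to subdivision, and the new central vertices of the polygons have links which are cycles of length $k\geq 4$ (since the polygon is right-angled, $k\geq 5$ in the hyperbolic case), hence flag with no short loops. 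So $C$ is a compact non-positively curved cube complex homeomorphic to $Y$.

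**Setting up relative hyperbolicity.** Let $G=\pi_1(Y)=\pi_1(C)$. The plan is to identify a natural relatively hyperbolic structure on $G$: the wedge summands are standard cubical tori, so their fundamental groups are free abelian subgroups $\Z^{n_1},\dots,\Z^{n_r}$ of $G$, and I would take $\mc{P}$ to be (conjugacy representatives of) these free abelian subgroups. Because the hyperbolic polygons are attached along loops, the complex $C$ is (after passing to the universal cover $\widetilde C$, a CAT$(0)$ cube complex) built so that the ``rest'' of $G$ is hyperbolic relative to these tori. More precisely I would argue that $\widetilde C$ is hyperbolic relative to the collection of lifts of the maximal cubical flats coming from the torus wedge summands — the hyperbolic polygons contribute negative curvature in the complement — so that $(G,\mc{P})$ is relatively hyperbolic with $\mc P$ a finite collection of f.g. free abelian groups. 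This is the step I expect to require the most care: one must verify the ``bounded coset penetration''/isolated-flats type condition, most cleanly by invoking a criterion (e.g. Hruska–Kleiner on CAT(0) spaces with isolated flats, or a combination theorem for graphs of groups with abelian and hyperbolic pieces) to conclude $(G,\mc P)$ is relatively hyperbolic and that $G$ acts properly cocompactly on the CAT$(0)$ cube complex $\widetilde C$.

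**Verifying the hypotheses of Theorem~\ref{t:RH Agol}.** Once $(G,\mc P)$ is relatively hyperbolic with each $P\in\mc P$ free abelian (in particular virtually abelian), I would invoke the remark in the introduction immediately following Theorem~\ref{t:RH Agol}: when $G$ is cubulated and the elements of $\mc P$ are virtually abelian, Assumptions~\ref{ass:weak} and~\ref{ass:DCS} hold automatically. The reason is that every subgroup of a f.g. abelian group is separable in it, and every double coset $(S_1\cap P)(S_2\cap P)$ in the abelian group $P$ is simply a subgroup (a product of subgroups of an abelian group is a subgroup), hence separable. Therefore $G$ acts properly cocompactly on the CAT$(0)$ cube complex $\widetilde C$ with Assumptions~\ref{ass:weak} and~\ref{ass:DCS} satisfied. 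Theorem~\ref{t:RH Agol} then gives that $G$ acts virtually co-specially on $\widetilde C$; equivalently, $C$ is virtually special. The main obstacle, as noted, is establishing the relatively hyperbolic structure cleanly — I would be careful to choose $\mc P$ to consist of the maximal abelian subgroups carried by the flats of the torus part and to check that no flat ``escapes'' into the polygonal part, which follows because the polygons are strictly negatively curved (hyperbolic) and attached along single loops.
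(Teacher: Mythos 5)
Your proposal is correct and follows essentially the same route as the paper: cubically subdivide $Y$, verify the link conditions at the old and new vertices, obtain relative hyperbolicity with free abelian peripherals from the isolated-flats structure of the universal cover, and apply Theorem~\ref{t:RH Agol} via the observation that Assumptions~\ref{ass:weak} and~\ref{ass:DCS} hold automatically when the parabolics are (virtually) abelian. The one detail you gloss over is that subdividing each polygon into squares introduces midpoints on the attaching edges, so the cubical tori must be subdivided as well (the paper cuts each $n$--torus into $2^n$ cubes) and the links of the resulting new vertices inside the tori must also be checked, which is routine.
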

\begin{proof}
  We first note that the complex $Y$ is negatively curved away from the tori, any two of which meet only in a point.  It follows that the universal cover of $Y$ is \CAT$(0)$ with isolated flats, and that $\pi_1Y$ is hyperbolic relative to the free abelian subgroups represented by the cubical tori (see \cite{hruska:invariants}).

  Now we describe a cube complex homeomorphic to $Y$, obtained by subdividing the cells of $Y$.  
  The complex $Y$ has a single vertex (the wedge point), so all corners of all the polygons are glued at the wedge point.  The only based loops in a cubical torus of length $1$ are the edges of the standard cellulation of that torus.  To obtain $C$, we subdivide each $n$--torus into $2^n$ cubes of dimension $n$, and each $p$--sided polygon into $p$ squares.  The link at the wedge point is unchanged, but now there are additional vertices.  The link of one of the new vertices in an $n$--torus is a standard $(n-1)$--sphere, together possibly with some arcs of length $\pi$ joining its north and south poles.  (This is if the new vertex lies in the middle of an edge of the standard cellulation which is traversed by some of the polygons.)  The new vertex at the center of a $p$--gon has link which is a circle of length $\frac{p\pi}{2}$.  Since $p\ge 5$, the link condition is satisfied here as well.

  The universal cover of $C$ is therefore a \CAT$(0)$ cube complex on which $\pi_1Y$ acts geometrically.  Since the parabolic subgroups are abelian, the Assumptions~\ref{ass:weak} and~\ref{ass:DCS} both hold, and we may apply our main theorem to conclude that $G$ acts virtually co-specially.
\end{proof}

\begin{example}
  Let $G$ be given by the following presentation:
  \[ \left\langle a_1\ldots,a_6,b_1,\ldots,b_6\left|
        \begin{array}{c} [a_1,b_1],\ldots,[a_6,b_6],\\
          a_1a_2a_3a_4a_5a_6,b_1b_2b_3b_4b_5b_6,\\
          a_4b_2b_5a_3a_1,a_3a_5b_1b_4a_6
        \end{array}
          \right.\right\rangle.\]
  A presentation complex for $G$ can be built from the wedge of six two-dimensional tori by attaching two right-angled hexagons and two right-angled pentagons.  One can check (by hand or with a computer) that the link of the vertex is a graph of girth 4.  This link is highly non-planar, suggesting that the boundary at infinity is most likely non-planar (but see \cite{DHW,BMM} for some cautionary tales).  If the boundary at infinity is non-planar, this gives a proof that $G$ is not virtually a $3$--manifold group, and therefore not covered by previous theorems about $3$--manifolds \cite{PW,WisePUP,VH}.
\end{example}

\section{Application to the Relative Cannon Conjecture} \label{sec:Cannon}
To any relatively hyperbolic pair $(G,\mc{P})$ is associated its \emph{Bowditch boundary} $\partial(G,\mc{P})$ \cite{bowditch:relhyp}.  The Relative Cannon Conjecture asserts that if $\mc{P}$ is a non-empty collection of free abelian groups and $\partial(G,\mc{P})$ is homeomorphic to a $2$--sphere, then $G$ is Kleinian (see \cite[Conjecture 1.3]{GMS} and the discussion in that paper).  The usual Cannon Conjecture makes the same assertion when $\mc{P}$ is empty and $G$ has no non-trivial finite normal subgroup (see \cite[Conjecture 11.34]{Cannon91} and \cite[Conjecture 5.1]{CannonSwenson}).

The following result is a generalization of \cite[Theorem 1.1]{EinsteinG}.  Given the work we have already done, the proof is very similar.  
\begin{theorem} \label{t:wRG RC}
 Suppose that $(G,\mc{P})$ is relatively hyperbolic and $\mc{P}$ is a non-empty collection of free abelian groups.  Suppose further that $\partial(G,\mc{P}) \cong \mathbb{S}^2$, and that $(G,\mc{P})$ acts weakly relatively geometrically on a \CAT$(0)$ cube complex.  Then $G$ is Kleinian.
\end{theorem}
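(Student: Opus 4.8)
The plan is to combine the recubulation technology of this paper with the Dehn filling approach to the Relative Cannon Conjecture from Einstein--Groves \cite{EinsteinG}. Since $(G,\mc{P})$ is already assumed to act weakly relatively geometrically on a \CAT$(0)$ cube complex $\widehat X$, and the elements of $\mc{P}$ are free abelian hence residually finite, Assumption~\ref{ass:G is recubulated} holds outright; so Theorem~\ref{t:quotient VS} applies directly and no recubulation is needed here. The idea is to produce, via long and suitably controlled Dehn fillings, a family of \emph{hyperbolic, virtually special} quotients $\overline G = G(\mc{N})$ whose Bowditch-boundary data lets us recognize $G$ as Kleinian.

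The key steps, in order: First I would recall the structure of $\mc{P}$: each $P\in\mc{P}$ is free abelian, and since $\partial(G,\mc{P})\cong\mathbb S^2$ each peripheral is of rank at most $3$ (and in fact the relevant ones have rank $2$, by the usual local-homology computation at parabolic points of a sphere boundary). Second, I would choose peripheral fillings $\mc{N}=\{N_P\}$ with $P/N_P$ finite when $P$ has rank $\le 2$ — wait, more precisely one wants the filled group to have $2$-sphere boundary, so one fills the rank-$2$ peripherals to finite quotients and keeps rank-$1$ peripherals unfilled or fills them appropriately; following \cite{EinsteinG} and \cite{GMS}, one arranges that for all sufficiently long fillings satisfying the hypotheses of Theorem~\ref{t:quotient VS}, the quotient $\overline G$ is a hyperbolic group with $\partial \overline G \cong \mathbb S^2$. (Here one uses that Dehn filling of a relatively hyperbolic group with sphere boundary, along finite-index peripheral subgroups, again has sphere boundary — this is exactly the boundary-behavior input used in \cite{GMS,EinsteinG}.) Third, these quotients are virtually special by Theorem~\ref{t:quotient VS}, hence in particular linear and residually finite; a hyperbolic group with $2$-sphere boundary that is virtually special (equivalently cubulated) is Kleinian by Agol's theorem together with the resolution of the Cannon Conjecture in the cubulated case (Markovic, Haïssinsky) — this is the step that makes each $\overline G$ Kleinian. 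Fourth, and this is the heart, I would run the ``un-filling'' argument of \cite{EinsteinG}: knowing enough quotients $G \to \overline G_i$ are Kleinian (with compatible enough geometry), together with the fact that $G$ is relatively hyperbolic with abelian peripherals and sphere boundary, one deduces that $G$ itself is Kleinian — concretely, one produces a discrete faithful representation $G \to \mathrm{PSL}_2(\mathbb C)$ by taking a limit of the representations $G \to \overline G_i \to \mathrm{PSL}_2(\mathbb C)$ as the fillings get longer, using Thurston-style geometrization/Dehn surgery for the peripheral (cusped) pieces, exactly as in the proof of \cite[Theorem 1.1]{EinsteinG}.

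I expect the main obstacle to be the final limiting/un-filling step: one must ensure that the Kleinian structures on the quotients $\overline G_i$ can be chosen coherently and that the sequence of representations converges (after conjugation) to a discrete faithful representation of $G$ with the correct parabolic locus, rather than degenerating. This is where the hypothesis that $\mc{P}$ consists of free abelian groups (so the cusps are the standard torus/annulus cusps of finite-volume hyperbolic $3$-orbifolds) is essential, and where one invokes the hyperbolic Dehn surgery theorem and the control on fillings coming from Theorem~\ref{t:quotient VS} and \cite{Hwide}. Since the argument is nearly identical to \cite{EinsteinG} once Theorem~\ref{t:quotient VS} is in hand — the only new content being that the hypothesis is now a \emph{weakly} relatively geometric action rather than a relatively geometric one, which is precisely what Theorem~\ref{t:quotient VS} was engineered to accommodate — I would structure the write-up as: (i) reduce to producing the filling family via Theorem~\ref{t:quotient VS}; (ii) cite the boundary-preservation and cubulated-Cannon inputs to get each $\overline G_i$ Kleinian; (iii) invoke the un-filling argument of \cite{EinsteinG} verbatim, indicating the one place where weak relative geometry is used in place of relative geometry. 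The proof then concludes that $G$ is Kleinian, as claimed.
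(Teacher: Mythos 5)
Your overall strategy is the right one (and is the paper's): apply Theorem~\ref{t:quotient VS} directly, produce hyperbolic virtually special Dehn filling quotients with $2$--sphere boundary, conclude each quotient is Kleinian, and then pass to a limit of representations as in \cite{EinsteinG,GMS}. But there is a genuine gap at the crucial step where you choose the filling kernels. You propose to fill the rank--$2$ peripherals along \emph{finite-index} subgroups (so $P/N_P$ is finite) and assert that ``Dehn filling \dots along finite-index peripheral subgroups again has sphere boundary --- this is exactly the boundary-behavior input used in \cite{GMS,EinsteinG}.'' That is not the input used there, and it is not true in the form you need. The correct choice, as in \cite{EinsteinG} and in this paper, is to take each $N_P$ \emph{infinite cyclic} inside $P\cong\Z^2$, so that $P/N_P$ is virtually cyclic (hence hyperbolic and virtually special, satisfying the hypotheses of Theorem~\ref{t:quotient VS}); it is for such fillings that \cite[Theorem 1.2]{GMS} guarantees $\partial G(\mc{N})\cong\mathbb{S}^2$ for all sufficiently long fillings --- this is the group-theoretic analogue of classical hyperbolic Dehn surgery on a torus cusp. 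Killing a finite-index subgroup of a $\Z^2$ cusp group has no such surgery interpretation (there is no $3$--orbifold piece with torus boundary and finite non-cyclic fundamental group to glue in), and such quotients should not be expected to have sphere boundary; in particular your step producing quotients $\overline G$ with $\partial\overline G\cong\mathbb{S}^2$ fails as written. (Also, there are no rank--$1$ peripherals to worry about: each $P$ acts properly cocompactly on $\partial(G,\mc{P})\smallsetminus\{\xi_P\}\cong\R^2$, so every peripheral has rank exactly $2$.)

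Two smaller points you should also address. First, Ha\"issinsky's theorem only gives that each quotient is \emph{virtually} Kleinian; to upgrade to Kleinian one argues as Markovic does, via quasi-isometry to $\Hyp^3$ and Cannon--Cooper, and one needs the quotient to act \emph{faithfully} on its boundary, which uses that sufficiently long fillings have no nontrivial finite normal subgroup (\cite[Theorem 7.2]{rhds}, using that $G$ itself has none since the peripherals are free abelian). Second, the final ``un-filling'' step is not Thurston's Dehn surgery theorem: one takes a sequence of longer and longer fillings which is \emph{stably faithful} (via \cite[Theorem 1.1]{osin:peripheral} and \cite[Definition 10.1]{GMS}) and then runs the convergence argument of \cite[Corollary 1.4]{GMS} to extract, after conjugation, a subsequence of the representations $G\to G_i\to\operatorname{Isom}(\Hyp^3)$ converging to a discrete faithful representation of $G$. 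With the filling kernels corrected to infinite cyclic subgroups and these two points supplied, your outline matches the paper's proof.
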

\begin{proof}
  Note that each element $P \in \mc{P}$ fixes a point $\xi_P$ in $\partial(G,\mc{P}) \cong \mathbb{S}^2$ and acts properly cocompactly on the complement \cite{bowditch:relhyp}.  Since this complement is homeomorphic to $\R^2$, each $P\in \mc{P}$ must have rank $2$.
  
  Since $(G,\mc{P})$ acts weakly relatively geometrically on a \CAT$(0)$ cube complex, we may apply Theorem~\ref{t:quotient VS}.  Let $\{ L_P \mid P \in \mc{P} \}$ be the subgroups from the conclusion of that theorem.  For any family of infinite cyclic subgroups $\mc{N}=\{ N_P \mid P \in \mc{P} \}$ with each $N_P \le L_P$ the quotient group $G(\mc{N})$ is a hyperbolic virtually special group (each $P/N_P$ is virtually cyclic, and hence hyperbolic and virtually special).  By \cite[Theorem 1.2]{GMS}, the boundary of $G(\mc{N})$ is a $2$--sphere so long as the filling is sufficiently long.  Since the elements of $\mc{P}$ are free abelian, the parent group $G$ has no nontrivial finite normal subgroup.  By \cite[Theorem 7.2]{rhds}, $G(\mc{N})$ also has no finite normal subgroup (at least for sufficiently long fillings of this form), so $G(\mc{N})$ acts faithfully on its boundary.  Ha\"issinsky~\cite[Theorem 1.10]{Haissinsky} states that a cubulated hyperbolic group with planar boundary is virtually Kleinian.  

  We claim that in fact the quotient $G(\mc{N})$ is Kleinian for sufficiently long such fillings.  We argue as in the last paragraph of Section 1.3 of Markovic~\cite{MarkovicCannon}.
  Ha\"issinsky's theorem gives us a finite index $\Gamma<G(\mc{N})$ so that $\Gamma$ is cocompact Kleinian and hence quasi-isometric to $\Hyp^3$.  This implies $G(\mc{N})$ is quasi-isometric to $\Hyp^3$.  By a result of Cannon--Cooper~\cite{CannonCooper} the group $G(\mc{N})$ acts geometrically on $\Hyp^3$.  Since $G(\mc{N})$ has no finite normal subgroup, it acts faithfully.  In other words it is cocompact Kleinian.

  We now take a sequence of longer and longer fillings of this form, obtaining a collection $\{ G \onto G_i \}$ of quotients so that each $G_i$ is Kleinian.  By the fundamental theorem of relatively hyperbolic Dehn filling~\cite[Theorem 1.1]{osin:peripheral} we may assume that this sequence is \emph{stably faithful} in the sense of~\cite[Definition 10.1]{GMS}.
  Each $G\onto G_i$ gives a representation $\rho_i \co G \to \mathrm{Isom}(\mathbb{H}^3)$.  We now argue as in the proof of~\cite[Corollary 1.4]{GMS} that these representations can be conjugated to a sequence of representations that subconverge to a discrete faithful representation of $G$ into $\operatorname{Isom}(\Hyp^3)$.
\end{proof}

The following is an immediate consequence of Theorems~\ref{t:recubulate} and~\ref{t:wRG RC}.
\begin{corollary}\label{cor:cubeRC}
Suppose that $(G,\mc{P})$ is relatively hyperbolic, that elements of $\mc{P}$ are free abelian, that $\partial(G,\mc{P}) \cong \mathbb{S}^2$, and that $G$ acts properly and cocompactly on a \CAT$(0)$ cube complex.  Then $G$ is Kleinian.
\end{corollary}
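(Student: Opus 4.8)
The plan is to simply chain together the two results cited in the statement: Theorem~\ref{t:recubulate} to produce a weakly relatively geometric action, and Theorem~\ref{t:wRG RC} to conclude. First I would observe that the hypotheses of Corollary~\ref{cor:cubeRC} include everything needed to invoke Theorem~\ref{t:recubulate}: we are told that $(G,\mc{P})$ is relatively hyperbolic and that $G$ acts properly cocompactly on a \CAT$(0)$ cube complex $X$. The only thing to check is Assumption~\ref{ass:weak} (separability of $S\cap P$ in $P$ for each hyperplane stabilizer $S$ and each $P\in\mc{P}$); but here the elements of $\mc{P}$ are free abelian, and every subgroup of a finitely generated abelian group is separable, so Assumption~\ref{ass:weak} holds automatically. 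Therefore Theorem~\ref{t:recubulate} applies and gives a weakly relatively geometric action of $G$ on some \CAT$(0)$ cube complex $\widehat{X}$.

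Next I would feed this into Theorem~\ref{t:wRG RC}. Its hypotheses are: $(G,\mc{P})$ relatively hyperbolic with $\mc{P}$ a non-empty collection of free abelian groups, $\partial(G,\mc{P})\cong\mathbb{S}^2$, and $(G,\mc{P})$ acting weakly relatively geometrically on a \CAT$(0)$ cube complex. The first two are part of the hypotheses of Corollary~\ref{cor:cubeRC}, and the third is exactly the output of Theorem~\ref{t:recubulate} from the previous paragraph. (One should note $\mc{P}$ non-empty is forced by $\partial(G,\mc{P})\cong\mathbb{S}^2$ together with $G$ being cubulated and not Kleinian on the nose — but in any case the statement of the conjecture presumes $\mc{P}\neq\emptyset$, and if $\mc{P}=\emptyset$ then $G$ is hyperbolic and cubulated with $2$--sphere boundary, which is the classical situation and still covered by the same machinery via Theorem~\ref{t:wRG RC} with an empty peripheral structure, or one simply takes this as an excluded/trivial case.) Applying Theorem~\ref{t:wRG RC} then yields that $G$ is Kleinian, which is the conclusion.

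So the proof is essentially a one-line deduction, and I do not anticipate a genuine obstacle: the only minor point requiring a sentence of justification is why Assumption~\ref{ass:weak} holds, which is immediate from the fact that subgroups of free abelian groups are separable (indeed, finitely generated abelian groups are subgroup separable, being polycyclic). I would write:

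\begin{proof}[Proof of Corollary~\ref{cor:cubeRC}]
  Since the elements of $\mc{P}$ are free abelian, every subgroup of every $P\in\mc{P}$ is separable in $P$; in particular Assumption~\ref{ass:weak} is satisfied by the action of $G$ on the \CAT$(0)$ cube complex. By Theorem~\ref{t:recubulate}, $G$ admits a weakly relatively geometric action on a \CAT$(0)$ cube complex. Since $\mc{P}$ is non-empty, consists of free abelian groups, and $\partial(G,\mc{P})\cong\mathbb{S}^2$, Theorem~\ref{t:wRG RC} applies and shows that $G$ is Kleinian.
\end{proof}
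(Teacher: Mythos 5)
Your proposal is exactly the paper's argument: the paper states Corollary~\ref{cor:cubeRC} as an immediate consequence of Theorems~\ref{t:recubulate} and~\ref{t:wRG RC}, and your only added justification (Assumption~\ref{ass:weak} holds because subgroups of the finitely generated free abelian parabolics are separable) is the same observation the paper makes in its remark that the assumptions hold automatically for virtually abelian peripherals. So the proof is correct and essentially identical to the paper's.
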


\appendix
\section{About double coset separability} \label{app:DCS}

In this appendix we generalize \cite[Proposition 6.2]{Hwide}, a technical result which helps to prove double coset separability.  Most of the proof is the same as in \cite{Hwide}, and we largely keep the same notation from there.

\begin{definition} (cf. \cite[Definition 3.1]{Hwide})
Let $P$ be a group, $B_1, B_2 \subseteq P$ are subgroups, and $S$ a finite subset.  A normal subgroup $N \unlhd P$ is {\em $(B_1,B_2,S)$--wide} if whenever there are $b_1 \in B_1$, $b_2 \in B_2$ and $s \in S$ so that $b_1sb_2 \in N$ we have $s \in B_1B_2$.
\end{definition}
\begin{remark}\label{rem:WideIsWide}
  This is a generalization of wideness as in Definition~\ref{def:Hwide}.  Namely, if $N$ is $(B,B,S)$--wide, then $N$ is $(B,S)$--wide.
\end{remark}

\begin{lemma} \label{lem:Wide lemma} (cf. \cite[Lemma 5.1]{Hwide}) 
Suppose that $P$ is a group and that $B_1, B_2$ are subgroups so that $B_1B_2$ is a separable subset of $P$.  For any finite set $S$ there exists a finite-index normal subgroup $K_S \le P$ so that for any $N \unlhd P$ with $N \le K_S$, the subgroup $N$ is $(B_1,B_2,S)$--wide in $P$.
\end{lemma}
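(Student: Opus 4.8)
The plan is to reduce the statement to a finiteness argument using separability of the set $B_1 B_2$. First I would enumerate the relevant ``bad'' group elements: for each $s \in S$ with $s \notin B_1 B_2$, I want to rule out the possibility that $b_1 s b_2 \in N$ for some $b_1 \in B_1$, $b_2 \in B_2$. The key observation is that $b_1 s b_2 \in N$ is equivalent, modulo the normal subgroup $N$, to a condition we can test in the finite quotient $P/N$; but to get a \emph{uniform} choice of $K_S$ working for \emph{all} such $b_1, b_2$ simultaneously, I need to repackage the condition. Namely, $b_1 s b_2 \in N$ with $N \unlhd P$ implies (conjugating by $b_1^{-1}$, which is harmless since $N$ is normal) that $s b_2 b_1 \in N$, i.e. $s (b_2 b_1) \in N$, so $s^{-1} \in N \cdot (B_2 B_1)$, hence $s^{-1} \in N \cdot (B_1 B_2)^{-1}$ — equivalently $s \in (B_1 B_2) \cdot N$. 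So the condition ``there exist $b_1, b_2$ with $b_1 s b_2 \in N$'' is equivalent to ``$s$ lies in the subset $(B_1 B_2) N$ of $P$,'' i.e. to the image of $s$ lying in the image of $B_1 B_2$ in $P/N$.

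Given that reformulation, here is how I would finish. For each $s \in S \setminus (B_1 B_2)$, since $B_1 B_2$ is separable and $s \notin B_1 B_2$, there is a finite-index normal subgroup $K_s \unlhd P$ with $s \notin (B_1 B_2) K_s$. (One first gets a finite-index subgroup whose complement contains $s$; intersecting over conjugates gives a normal one, and enlarging $B_1B_2$ to $(B_1B_2)K_s$ only helps because $K_s$ is contained in the original finite-index subgroup.) Then set
\[
  K_S = \bigcap_{s \in S \setminus (B_1 B_2)} K_s,
\]
a finite-index normal subgroup of $P$ since $S$ is finite. Now suppose $N \unlhd P$ with $N \le K_S$, and suppose $b_1 s b_2 \in N$ for some $b_1 \in B_1$, $b_2 \in B_2$, $s \in S$. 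By the reformulation above, $s \in (B_1 B_2) N \subseteq (B_1 B_2) K_S \subseteq (B_1 B_2) K_s$ for the relevant $s$. If $s \notin B_1 B_2$, this contradicts the choice of $K_s$; hence $s \in B_1 B_2$, which is exactly the conclusion that $N$ is $(B_1, B_2, S)$--wide.

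The only mildly delicate point — and the step I expect to need the most care — is the passage between the three equivalent forms of the condition ``$b_1 s b_2 \in N$'' and ``$s \in (B_1 B_2) N$'', using normality of $N$ to absorb conjugations and the fact that $B_1, B_2$ are subgroups (so closed under inverses). Once that bookkeeping is pinned down, the rest is the standard ``finite intersection of separating subgroups'' argument, and there is no real obstacle. I would also remark (as the paper does in Remark~\ref{rem:WideIsWide}) that in the special case $B_1 = B_2 = B$ this recovers the hypothesis needed for the $(B,S)$--wide fillings used elsewhere, so Lemma~\ref{lem:Wide lemma} is genuinely the double-coset generalization of \cite[Lemma 5.1]{Hwide}.
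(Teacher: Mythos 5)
Your proof is correct, but it is not the argument the paper gives, and the difference is worth spelling out. The paper chooses, for each $s \in S \smallsetminus B_1B_2$, a finite-index \emph{subgroup} $P_s \le P$ with $B_1B_2 \subseteq P_s$ and $s \notin P_s$, and then gets the contradiction $s = b_1^{-1}(b_1sb_2)b_2^{-1} \in P_S$ inside the subgroup $P_S = \bigcap_s P_s$; normality of $N$ is never used there, only $N \subseteq P_S$. Your argument instead works with the (typically non-subgroup) subset $B_1B_2K_s$: separability provides a finite-index normal $K_s$ with $s \notin B_1B_2K_s$, and normality of $N$ converts $b_1sb_2 \in N$ into $s \in B_1B_2N \subseteq B_1B_2K_s$ (only this forward implication is needed, and your verification of it is right). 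What your route buys is robustness: for a separable \emph{subgroup} $B$ one can take $P_s = BK$ with $K$ finite-index normal, which is exactly how \cite[Lemma 5.1]{Hwide} goes, but for a product $B_1B_2$ the set $B_1B_2K$ need not be a subgroup, and in general no proper finite-index subgroup need contain $B_1B_2$ at all -- for instance $B_1=\langle a\rangle$ and $B_2=\langle b\rangle$ in the free group on $a,b$: the product is separable, $ba \notin B_1B_2$, yet the only subgroup containing $B_1B_2$ is the whole group. So the existence of the paper's $P_s$ is not a formal consequence of separability of the double coset, whereas your normality trick sidesteps the issue entirely; in that sense your proof is the safer one. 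Two small points to tidy: your parenthetical justification for the existence of $K_s$ is muddled (nothing needs to be ``enlarged''; separability gives a finite quotient in which the image of $s$ avoids the image of $B_1B_2$, and taking $K_s$ to be its kernel yields $s \notin B_1B_2K_s$ directly), and when $S \subseteq B_1B_2$ the empty intersection defining $K_S$ should be read as $P$ itself.
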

\begin{proof}
  For each $s \in S \smallsetminus B_1B_2$, choose some $P_s \le P$ finite-index so that $B_1B_2 \subseteq P_s$ and $s \not\in P_s$.  Let $P_S = \bigcap \left\{ P_s \mid s \in S \smallsetminus B_1B_2 \right\}$ and let $K_S$ be the normal core of $P_S$.
  The subgroups $K_S \le P_S$ are both finite-index in $P$.  The double coset $B_1B_2$ is contained in  $P_S$.

  Choose $N \unlhd P$ finite-index and contained in $K_S$.  It follows that $N$ is contained in $P_S$, which is all we will use to show $N$ is $(B_1,B_2,S)$--wide.  Let $b_1 \in B_1, b_2 \in B_2$ and $s \in S$ and suppose $b_1sb_2 \in N$.  If $s \in B_1B_2$ then there is nothing to prove so suppose that $s \not\in B_1B_2$.  Then $s \not\in P_S$.  However, $b_1sb_2 \in N \subseteq P_S$, and also $b_1^{-1} \in B_1 \subseteq B_1B_2 \subseteq P_S$ and similarly $b_2^{-1} \in P_S$.  Since $P_S$ is a subgroup, we have $s = b_1^{-1} (b_1 s b_2) b_2^{-1} \in P_S$, which contradicts $s \not\in P_S$.
\end{proof}

The following theorem is a generalization of \cite[Proposition 6.2]{Hwide} and is the same as \cite[Theorem 3.21]{OregonReyes}.  Our proof is similar to the one there.

\begin{proposition} (cf. \cite[Proposition 6.2]{Hwide}) \label{prop:DCS}
Suppose $(G,\mc{P})$ is relatively hyperbolic. 
Let $\mc{H}$ be a finite collection of relatively quasi-convex subgroups of $(G,\mc{P})$, so that for any $H_1, H_2 \in \mc{H}$, any $g_1, g_2 \in G$, and any $P \in \mc{P}$ the double coset $\left( H_1^{g_1} \cap P \right) \left( H_2^{g_2} \cap P \right)$ is separable in $P$.\footnote{In particular (taking $H_1 = H_2$ and $g_1 = g_2$) the subgroups $H_1^{g_1} \cap P$ are separable in $P$.}

Let $F \subseteq G$ be a finite subset.
There exist finite-index subgroups $\left\{ K_P \unlhd P \mid P \in \mc{P} \right\}$ so that if $\mc{N} = \{N_P\unlhd P\mid P\in \mc{P}\}$ is a collection with $N_P\le K_P$ for each $P$, and $K = \llangle \bigcup\mc{N}\rrangle_G$, then for all $f \in F$ and $\Psi, \Theta \in \mc{H}$ for which $1 \not\in \Psi\Theta f$ we have $K \cap \Psi\Theta f = \emptyset$.
\end{proposition}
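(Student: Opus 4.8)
The plan is to follow the proof of \cite[Proposition 6.2]{Hwide} essentially line by line, keeping its notation, and to feed in the double-coset wideness of Lemma~\ref{lem:Wide lemma} wherever that proof uses ordinary (single-subgroup) wideness. First I would reduce to a single triple: since $F$ and $\mc{H}$ are finite, it suffices to produce, for each fixed $f\in F$ and each pair $\Psi,\Theta\in\mc{H}$ with $1\notin\Psi\Theta f$, a family $\{K_P^{f,\Psi,\Theta}\unlhd P\}$ of finite-index subgroups such that whenever $N_P\le K_P^{f,\Psi,\Theta}$ and $K=\llangle\bigcup\mc{N}\rrangle_G$ one has $K\cap\Psi\Theta f=\emptyset$; the subgroups required by the proposition are then the intersections of these over the finitely many triples.

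Fixing $f,\Psi,\Theta$, I would argue by contradiction and by induction on complexity. Suppose $w\in K\cap\Psi\Theta f$, write $w=\psi\theta f$ with $\psi\in\Psi$, $\theta\in\Theta$, and choose $\psi,\theta$ (and a van Kampen diagram $D$ over the Dehn-filling presentation witnessing $\psi\theta f=1$ in $G(\mc{N})$) of minimal complexity. As in \cite{osin:peripheral,agm,Hwide}, provided the filling is sufficiently long the boundary of $D$, after collapsing the parts labelled by relators of $G$, decomposes into the segment along a $\Psi$-coset (labelled $\psi$), the segment along a $\Theta$-coset (labelled $\theta$), the single edge labelled $f$, and finitely many segments each lying in a single peripheral coset $gP$ and each carrying a label in $N_P$. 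Relative quasi-convexity of $\Psi$ and $\Theta$ in $(G,\mc{P})$, together with the finiteness of the data relevant to words of bounded length, ensures that only finitely many conjugates $\Psi^{g_1}$, $\Theta^{g_2}$ and only finitely many short ``connecting'' elements enter the picture.

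The key step is then to observe that one such peripheral segment produces an identity $b_1 s b_2\in N_P$ with $b_1\in\Psi^{g_1}\cap P$, $b_2\in\Theta^{g_2}\cap P$, and $s$ in a finite set $S\subseteq P$ depending only on $f,\Psi,\Theta,\mc{P}$ (the edge $f$ together with bounded overlap corrections). Applying Lemma~\ref{lem:Wide lemma} to the double coset $\bigl(\Psi^{g_1}\cap P\bigr)\bigl(\Theta^{g_2}\cap P\bigr)$ --- which is separable in $P$ by the hypothesis of the proposition --- yields a finite-index normal $K_S\le P$ so that any $N_P\le K_S$ is $\bigl(\Psi^{g_1}\cap P,\,\Theta^{g_2}\cap P,\,S\bigr)$--wide, hence $s\in\bigl(\Psi^{g_1}\cap P\bigr)\bigl(\Theta^{g_2}\cap P\bigr)$. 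Writing $s=b_1'b_2'$ lets me push the peripheral excursion onto $\psi$ and $\theta$, strictly lowering the complexity; iterating reaches a diagram with no peripheral excursions, forcing $\psi\theta f=1$ already in $G$, i.e.\ $1\in\Psi\Theta f$, contrary to hypothesis. Taking $K_P^{f,\Psi,\Theta}$ to be the intersection of these $K_S$ over the finitely many relevant triples $(g_1,g_2,P)$, intersected with the finite-index subgroup making the filling ``sufficiently long'' (from the basic Dehn-filling theorems), completes the proof.

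The hard part will be the bookkeeping in the penultimate step: verifying that the diagram analysis of \cite[\S6]{Hwide} genuinely yields a \emph{two-sided} expression $b_1 s b_2\in N_P$ --- with $b_1$ from a conjugate of $\Psi$ and $b_2$ from a conjugate of $\Theta$ --- rather than the one-sided expression appearing in the single-subgroup case, that the resulting set of possible $s$ is finite and independent of $w$, and that $\bigl(\Psi^{g_1}\cap P,\Theta^{g_2}\cap P,S\bigr)$--wideness really does decrease the chosen complexity. Everything else (the length/area estimates, the precise meaning of ``sufficiently long'', the collapsing of $G$-relator regions) is a transcription of \cite{Hwide}, and the resulting statement agrees with \cite[Theorem 3.21]{OregonReyes}.
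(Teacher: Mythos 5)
Your overall strategy is the paper's: follow \cite[Proposition 6.2]{Hwide} and, wherever that argument uses one-sided wideness of the filling kernels, substitute the two-sided $(B_1,B_2,S)$--wideness coming from Lemma~\ref{lem:Wide lemma} together with the hypothesis that the double cosets $\left(\Psi^{g_1}\cap P\right)\left(\Theta^{g_2}\cap P\right)$ are separable in $P$. But the proposal has a genuine gap exactly where the content of the proposition lies, and you flag it yourself without closing it: you never show that the peripheral relation really takes the form $b_1 s b_2 \in N_P$ with $b_1$, $b_2$ in conjugates of parabolic subgroups of $\Psi$ and $\Theta$ that can be enumerated \emph{before} the filling kernels are chosen, and with $s$ in a finite set independent of the putative element of $K\cap\Psi\Theta f$. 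This finiteness is not an automatic consequence of relative quasi-convexity: a priori the subgroups $\Theta^{g_2}\cap P$ arising could run over infinitely many subgroups of $P$ as the counterexample varies, and then no finite intersection of the subgroups produced by Lemma~\ref{lem:Wide lemma} would serve as $K_P$. The paper resolves this with a specific device: in Case 2 of \cite{Hwide} one obtains $w^{-1}kw = b_1\,\bigl[(u^{-1}kv)(z^{-1}w)\bigr]\,b_2$, where $b_1$ lies in $D^{c_D^{-1}}$ for one of the finitely many fixed pairs $(D,c_D)$, where $b_2$ lies in $\bigl(E^{d_E^{-1}}\bigr)^{p}$ with $p=(z^{-1}w)^{-1}$ an element of $P$ of cusped-space length at most $2\alpha+4\delta$ (hence ranging over a finite set), and where $s=(u^{-1}kv)(z^{-1}w)$ has length bounded by the constants defining $S_P$. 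The wideness requirements can therefore be indexed by the finitely many triples $(c_D,d_E,p)$, and a concluding algebraic identity rewrites $k\psi\theta$ explicitly as an element of $\Psi\Theta$, so that $kg\in\Psi\Theta f$ contradicts minimality. None of this is supplied by the proposal; it is precisely the ``bookkeeping'' you defer.

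A secondary issue is the framework. Proposition 6.2 of \cite{Hwide} is not a van Kampen diagram induction: it is a cusped-space argument with geodesics, horoball penetration, and a minimal counterexample shortened by multiplying by an element $k\in K$ stabilizing a deeply penetrated horoball. Your proposed boundary decomposition of a diagram over the filled presentation (a $\Psi$--segment, a $\Theta$--segment, the edge $f$, and peripheral segments labelled by elements of $N_P$) is not something \cite{Hwide} provides, so ``transcribing'' that proof into your setting would itself require redoing its geometric estimates, including establishing that peripheral excursions interact with $\Psi$ and $\Theta$ only through the bounded data above. Since the deferred step is the mathematical heart of the statement, the proposal as written is a plausible plan rather than a proof.
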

\begin{proof}
We will follow the proof from \cite[Proposition 6.2]{Hwide}, keeping most of the notation and indicating only the changes which must be made.

We have the following setup from \cite{Hwide}.  It suffices to deal with a single pair $(\Psi,\Theta)$ from $\mc{H}$, for we can then take intersections of the $K_P$ over each of the finitely many pairs to get the required result.  We therefore fix such a pair.

The cusped space for $(G,\mc{P})$ is denoted $X$, and is $\delta$--hyperbolic.  Let $\lambda$ be a quasi-convexity constant for each $H \in \mc{H}$, let $M = \max \{ d_X(1,f) \mid f \in F\}$ and let $\alpha = 10 \delta + 2M + 4\lambda$.  Associated to $\Psi$ is a finite collection $\mc{D}$ of $\Psi$--conjugacy classes of maximal uniquely parabolic subgroups of $\Psi$, and for each $D \in \mc{D}$ there are $P_D \in \mc{P}$ and $c_D \in G$ so $D \le P_D^{c_D}$.  Similarly for $\Theta$ we have a finite collection $\mc{E}$ and for $E \in \mc{E}$ there are $P_E \in \mc{P}$ and $d_E \in G$ so $E \le P_E^{d_E}$.  For $P \in \mc{P}$ choose the set $S_P \subset P$ exactly as in \cite{Hwide}.  Namely, we define
\[ S_P \supseteq \{p\in P\mid d_X(1,p)\leq 44\delta + 8 M  + 16\lambda + 2L_1  + 3\} \]
to be a finite set so that any filling which is 
$(H,S_P)$--wide for every $H\in \mc{H}$ will satisfy the conclusion of \cite[Lemma~4.2]{Hwide} with $L_1,L_2$ as specified in the proof of \cite[Proposition~6.2]{Hwide}.

We make a change from \cite{Hwide} to the requirements on the filling kernels, as we now explain.  In \cite{Hwide} the additional requirements were indexed over certain pairs $(B_1,B_2)$ of parabolic subgroups of $\Psi$ and $\Theta$.  We instead index our requirements over the finite set of triples $(c_D,d_E,p)$, where $P_E = P_D$ and $p \in P_D$ satisfies $d_X(1,p) \le 2\alpha + 4\delta$.    For such a triple $q = (c_D,d_E,p)$ choose a finite index normal subgroup ${K}_{q} \le P_D$ so that any $N \unlhd P_D$ with $N \le {K}_{q}$ is
$\left(D^{c_D^{-1}}, \left(E^{d_E^{-1}} \right)^{p}, S_P \right)$--wide.  This is possible by Lemma~\ref{lem:Wide lemma}.  For each $P \in \mc{P}$ for which $P = P_D$ in one of these triples, let ${K}_P$ be the intersection of all such ${K}_{q}$.

Choose $N_P \le {K}_P$ and consider the filling
\[	G \to G\left( \left\{ N_P \mid P \in \mc{P} \right\} \right) = G/K.	\]

We remark that this filling must be $(H,S_P)$--wide for every $H\in\mc{H}$, by Remark~\ref{rem:WideIsWide}.

There is no issue with the setup for proving $K \cap \Psi\Theta f = \emptyset$.
Moreover, Case 1 just uses $(\Psi,S_P)$--wideness, and works verbatim in this setting.

For Case 2, the beginning of the proof works as written.
The first issue with the proof from \cite{Hwide} in this setting comes with the calculation about $u^{-1}ku$, which in \cite{Hwide} uses the assumption that parabolics are abelian.

We pick up the argument at the reference to Figure 2 in that proof, and adjust it as follows (keeping the notation from \cite{Hwide}):

Note that $k \in K \cap \Stab(A)$, where $A$ is the horoball based on $scP$ and $w \in scP$, so $w^{-1}kw \in N_P \le P$.  Also

\begin{eqnarray*}
w^{-1}kw &=&  (w^{-1}u) (u^{-1}kv) (v^{-1}z) (z^{-1}w)	\\
&=& (w^{-1}u) \left[ (u^{-1}kv) (z^{-1}w) \right] \left( (z^{-1}w)^{-1} (v^{-1}z) (z^{-1}w) \right)	.	
\end{eqnarray*}

Let $B_1 = D^{c^{-1}}$, $B_2 = (E^{d^{-1}})^{(z^{-1}w)^{-1}}$, and $s = (u^{-1}kv)(z^{-1}w)$.  
Note also that $z^{-1}w \in P$, so $B_1, B_2 \le P$.  Moreover, $d_X(1,(z^{-1}w)^{-1}) \le 2\alpha + 4\delta$ (see \cite{Hwide} for details of this claim).  Thus, the triple $(c,d,(z^{-1}w)^{-1})$ is one of the finitely many triples described above, so $N_P$ is $(B_1,B_2,S_P)$--wide.

As in \cite{Hwide}, the $X$--length of $(u^{-1}kv)$ is at most $2\alpha + 2L_1 + 3$ and the $X$--length of $(z^{-1}w)$ is at most $2\alpha + 4\delta$, so $\left[ (u^{-1}kv) (z^{-1}w) \right] \in S_P$.
Let $b_1 = w^{-1}u \in B_1$, and $b_2 = (z^{-1}w)^{-1} (v^{-1}z) (z^{-1}w)$.  The above expression shows
$b_1 \left[ (u^{-1}kv) (z^{-1}w) \right] b_2 \in N_P$.  Because $N_P$ is $(B_1,B_2,S_P)$--wide, we know that $\left[ (u^{-1}kv) (z^{-1}w) \right] \in B_1B_2$, so $w^{-1}kw \in B_1B_2$.  Choose $\beta_1 \in B_1$ and $\beta_2 \in B_2$ so $w^{-1}kw = \beta_1 \beta_2$.  From the definition of $B_2$ there is $e\in E$ so that
 $\beta_2 = w^{-1}z d^{-1} e d z^{-1}w$.

Recall the following equations from \cite{Hwide}:

\[	w = \psi_w c, \ \ z = \psi \theta_z d, \ \ v = \psi \theta_v d	,	\]
where $\psi_w \in \Psi$, $\theta_z , \theta_v \in \Theta$.

Now, similar to \cite{Hwide}, but with a slightly different calculation:

\begin{eqnarray*}
k\psi\theta &=& w (w^{-1}kw) (w^{-1}z) (z^{-1}v) (v^{-1}\psi\theta)	\\
&=& w \left( \beta_1 w^{-1}z d^{-1} e d  z^{-1}w \right) (w^{-1}z) (z^{-1}v) (v^{-1} \psi\theta) \\
&=& w \beta_1 (w^{-1}z)  d^{-1} e d  (z^{-1}v) (v^{-1} \psi\theta) \\
&=& \psi_w (c\beta_1c^{-1}) c(w^{-1}\psi) (\psi^{-1}z)  d^{-1} e d  (z^{-1}v) (v^{-1}\psi\theta) \\
&=& \left[ \psi_w (c\beta_1 c^{-1}) \psi_w^{-1}\psi \right]  \cdot  \left[ \theta_z\, e\, \theta_z^{-1} \theta \right]
\end{eqnarray*}
giving an expression for $k\psi\theta$ as an element of $\Psi\Theta$.
Thus, $k \cdot g = k\psi\theta f \in \Psi\Theta f$, the same contradiction as in \cite{Hwide}.
\end{proof}

\small

\end{document}